\numberwithin{equation}{section}
\newcommand{\B}{\mathbb{B}}
\newcommand{\V}{\mathbb{V}}
\newcommand{\N}{\mathbb{N}}
\newcommand{\R}{\mathbb{R}}
\newcommand{\mm}{{\mathfrak m}}
\newcommand{\sfd}{{\sf d}}
\renewcommand{\d}{{\mathrm d}}
\newcommand{\restr}[1]{\lower3pt\hbox{\(|_{#1}\)}}
\newcommand{\nchi}{{\raise.3ex\hbox{\(\chi\)}}}
\newcommand{\Der}{{\rm Der}}
\newcommand{\1}{\mathbbm 1}
\newcommand{\fr}{\penalty-20\null\hfill\(\blacksquare\)}
\newcommand{\ppi}{\boldsymbol{\pi}}
\newcommand{\sppi}{{\mbox{\scriptsize\boldmath\(\pi\)}}}
\newcommand{\Lip}{{\rm Lip}}
\newcommand{\lip}{{\rm lip}}
\renewcommand{\div}{{\rm div}}
\newtheorem{theorem}{Theorem}[section]
\newtheorem{corollary}[theorem]{Corollary}
\newtheorem{lemma}[theorem]{Lemma}
\newtheorem{proposition}[theorem]{Proposition}
\newtheorem{definition}[theorem]{Definition}
\newtheorem{example}[theorem]{Example}
\newtheorem{remark}[theorem]{Remark}
\title[Derivations and Sobolev functions on extended metric-measure spaces]{Derivations and Sobolev functions \\ on extended metric-measure spaces}
\author{Enrico Pasqualetto}
\address{Department of Mathematics and Statistics,
P.O.\ Box 35 (MaD), FI-40014 University of Jyvaskyla}
\email{enrico.e.pasqualetto@jyu.fi}
\author{Janne Taipalus}
\address{Department of Mathematics and Statistics,
P.O.\ Box 35 (MaD), FI-40014 University of Jyvaskyla}
\email{janne.m.m.taipalus@jyu.fi}
\begin{document}
\date{\today} 
\keywords{Sobolev space, extended metric-topological measure space, derivation, divergence}
\subjclass[2020]{49J52, 46E35, 53C23, 46N10, 28C20}
\begin{abstract}
We investigate the first-order differential calculus over extended metric-topological measure spaces.
The latter are quartets \(\mathbb X=(X,\tau,{\sf d},\mathfrak m)\), given by an extended metric space $(X,{\sf d})$
together with a weaker topology $\tau$ (satisfying suitable compatibility conditions) and a finite
Radon measure $\mathfrak m$ on $(X,\tau)$. The class of extended metric-topological measure spaces
encompasses all metric measure spaces and many infinite-dimensional metric-measure structures, such as
abstract Wiener spaces. In this framework, we study the following classes of objects:
\begin{itemize}
\item The Banach algebra ${\rm Lip}_b(X,\tau,{\sf d})$ of bounded $\tau$-continuous ${\sf d}$-Lipschitz functions on $X$.
\item Several notions of Lipschitz derivations on $X$, defined in duality with ${\rm Lip}_b(X,\tau,{\sf d})$.
\item The metric Sobolev space $W^{1,p}(\mathbb X)$, defined in duality with Lipschitz derivations on $X$.
\end{itemize}
Inter alia, we generalise both Weaver's and Di Marino's theories of Lipschitz derivations to the extended setting,
and we discuss their connections. We also introduce a Sobolev space $W^{1,p}(\mathbb X)$ via an integration-by-parts
formula, along the lines of Di Marino's notion of Sobolev space, and we prove its equivalence with
other approaches, studied in the extended setting by Ambrosio, Erbar and Savar\'{e}.
En route, we obtain some results of independent interest, among which are:
\begin{itemize}
\item A Lipschitz-constant-preserving extension result for $\tau$-continuous ${\sf d}$-Lipschitz functions.
\item A novel and rather robust strategy for proving the equivalence of Sobolev-type spaces defined via an
integration-by-parts formula and those obtained with a relaxation procedure.
\item A new description of an isometric predual of the metric Sobolev space $W^{1,p}(\mathbb X)$.
\end{itemize}
\end{abstract}
\maketitle
\tableofcontents
\section{Introduction}
\subsection{General overview}
In the last three decades, the analysis in nonsmooth spaces has undergone impressive developments.
After the first nonlocal notion of \emph{metric Sobolev space} over a metric measure space \((X,\sfd,\mm)\)
had been introduced by Haj\l asz in \cite{Haj:96}, several (essentially equivalent) local notions were studied
in the literature:
\begin{itemize}
\item[\(\rm A)\)] The space \(H^{1,p}(X)\) obtained by approximation, via a \emph{relaxation} procedure.
This approach was pioneered by Cheeger \cite{Cheeger00} and later revisited by Ambrosio, Gigli and Savar\'{e}
\cite{AmbrosioGigliSavare11,AmbrosioGigliSavare11-3}.
\item[\(\rm B)\)] The space \(W^{1,p}(X)\) proposed by Di Marino in \cite{DiMar:14,DiMarPhD}, based on an
integration-by-parts formula involving a suitable class of \emph{Lipschitz derivations} with divergence.
\item[\(\rm C)\)] The \emph{Newtonian space} \(N^{1,p}(X)\) introduced by Shanmugalingam \cite{Shanmugalingam00},
based on the concept of \emph{upper gradient} by Heinonen and Koskela \cite{Hei:Kos:98}, and on the metric
version of Fuglede's notion of \emph{\(p\)-modulus} \cite{Fug:57}.
\item[\(\rm D)\)] The `Beppo Levi space' \(B^{1,p}(X)\), where the exceptional curve families for the validity
of the upper gradient inequality are selected via \emph{test plans} of curves. The first definition of this type
is due to Ambrosio, Gigli and Savar\'{e} \cite{AmbrosioGigliSavare11,AmbrosioGigliSavare11-3}. The variant of
plan of curves we consider in this paper, involving the concept of \emph{barycenter}, was introduced by Ambrosio,
Di Marino and Savar\'{e} in \cite{Amb:Mar:Sav:15}. 
\end{itemize}
We point out that our choices of notation for the various metric Sobolev spaces may depart from the original ones,
but they are consistent with the presentation in \cite{AILP24}. Other definitions of metric Sobolev spaces were
introduced and studied in the literature, but we do not mention them here as they are not needed for the purposes
of this paper. Remarkably, all the above four theories -- the two `Eulerian approaches' A), B) and the two `Lagrangian
approaches' C), D) -- were proven to be fully equivalent on arbitrary \emph{complete} metric measure spaces
\cite{Cheeger00,Shanmugalingam00,AmbrosioGigliSavare11-3}. Other related equivalence results were then achieved
in \cite{Amb:Mar:Sav:15,EB:20,LP24,AILP24}.
\medskip

Nevertheless, there are many infinite-dimensional metric-measure structures of interest -- where a refined differential
calculus is available or feasible -- that are not covered by the theory of metric measure spaces. Due to this reason, Ambrosio,
Erbar and Savar\'{e} introduced in \cite{AmbrosioErbarSavare16} the language of \emph{extended metric-topological measure
spaces}, which we abbreviate to \emph{e.m.t.m.\ spaces}. The class of e.m.t.m.\ spaces includes, besides `standard'
metric measure spaces, \emph{abstract Wiener spaces} \cite{Bogachev15} and \emph{configuration spaces} \cite{AlbeverioKondratievRockner},
among others. The main goal of \cite{AmbrosioErbarSavare16} was to understand the connection between gradient contractivity,
transport distances and lower Ricci bounds, as well as the interplay between metric and differentiable structures,
in the setting of e.m.t.m.\ spaces. One of the numerous contributions of \cite{AmbrosioErbarSavare16} is the introduction
of the notion of Sobolev space \(H^{1,p}(X)\) on e.m.t.m.\ spaces, later investigated further by Savar\'{e} in the
lecture notes \cite{Savare22}. Therein, the e.m.t.m.\ versions of the Sobolev spaces \(N^{1,p}(X)\) and \(B^{1,p}(X)\)
were introduced and studied in detail, ultimately obtaining the identification \(H^{1,p}(X)=N^{1,p}(X)=B^{1,p}(X)\)
on all complete e.m.t.m.\ spaces. The duality properties of these metric Sobolev spaces were then investigated by Ambrosio
and Savar\'{e} in \cite{Amb:Sav:21}.
\medskip

The primary objectives of this paper are to introduce the Sobolev space \(W^{1,p}(X)\) via integration-by-parts for
e.m.t.m.\ spaces, to show its equivalence with the other approaches and to explore the benefits it brings to
the theory of metric Sobolev spaces. To achieve these goals, we first develop the machinery of Lipschitz derivations
for e.m.t.m.\ spaces, which in turn requires an in-depth understanding of the algebra of real-valued bounded
\(\tau\)-continuous \(\sfd\)-Lipschitz functions on \(X\).
\medskip

Before delving into a more detailed description of the contents of this paper, let us expound the advantages
of working in the extended setting. Besides its intrinsic interest, the study of e.m.t.m.\ spaces
has significant implications at the level of metric measure spaces. On e.m.t.m.\ spaces the roles of the topology
and of the distance are `decoupled', and it turned out that for this reason the category of e.m.t.m.\ spaces is
closed under several useful operations under which the category of metric measure spaces is not closed.
Key examples are the \emph{compactification} \cite[Section 2.1.7]{Savare22} and the passage to the
\emph{length-conformal distance} \cite[Section 2.3.2]{Savare22}. Therefore, once an effective calculus on e.m.t.m.\ spaces
is developed, it is possible to reduce some problems on metric measure spaces to problems on \(\tau\)-compact length e.m.t.m.\ spaces
(as done, for example, in \cite[Section 5.2]{Savare22}). We believe that the full potential of this technique
has not been fully explored yet. On the other hand, dealing with arbitrary e.m.t.m.\ spaces poses new challenges,
which require new ideas and solutions. In the remaining sections of the Introduction, we shall comment on some of them.
\subsection{The algebra of \texorpdfstring{\(\tau\)}{tau}-continuous \texorpdfstring{\(\sfd\)}{d}-Lipschitz functions}
Let \((X,\tau,\sfd)\) be an extended metric-topological space (see Definition \ref{def:emtm}). We consider the algebra
of bounded \(\tau\)-continuous \(\sfd\)-Lipschitz functions on \(X\), denoted by \(\Lip_b(X,\tau,\sfd)\). The latter
is a Banach algebra with respect to the norm
\[
\|f\|_{\Lip_b(X,\tau,\sfd)}\coloneqq\|f\|_{C_b(X,\tau)}+\Lip(f,\sfd).
\]
While the Banach algebra \(\Lip_b(Y,\sfd_Y)\) on a metric space \((Y,\sfd_Y)\) is (isometrically isomorphic to) the dual of a Banach
space, i.e.\ of the \emph{Arens--Eells space} \(\text{\AE}(Y)\) of \(Y\) \cite{Weaver2018}, the space \(\Lip_b(X,\tau,\sfd)\)
may not have a predual (as we show in Proposition \ref{prop:ex_no_predual}), thus it is not endowed with a weak\(^*\) topology.
This fact is relevant when discussing the continuity of derivations, see Section \ref{s:intro_der}.
\medskip

Another issue we need to address in the paper is whether it is possible to extend \(\tau\)-continuous \(\sfd\)-Lipschitz
functions preserving the Lipschitz constant. These kinds of extension results are very important e.g.\ in some localisation
arguments (such as in Proposition \ref{prop:glob_to_loc}). On metric spaces the McShane--Whitney extension theorem serves
the purpose, but on e.m.t.\ spaces the problem becomes much more delicate, because one has to preserve both
\(\tau\)-continuity and \(\sfd\)-Lipschitzianity when extending a function. In Section \ref{s:extLip} we deal with this
matter. Leveraging strong extension techniques by Matou\v{s}kov\'{a} \cite{Matouskova00}, we obtain
the sought-after Lipschitz-constant-preserving extension result for bounded \(\tau\)-continuous \(\sfd\)-Lipschitz functions
(Theorem \ref{thm:McShane_emts}), which is sharp (Remark \ref{rmk:ext_on_cpt_emt}).
\subsection{Metric derivations}\label{s:intro_der}
In Section \ref{s:Lipschitz_derivations}, we analyse various spaces of derivations on e.m.t.m.\ spaces.
In Definition \ref{def:der} we introduce a rather general (and purely algebraic) notion of derivation,
which comprises the different variants we will consider. By a \emph{Lipschitz derivation} on an e.m.t.m.\ space
\(\mathbb X=(X,\tau,\sfd,\mm)\) we mean a linear map \(b\colon\Lip_b(X,\tau,\sfd)\to L^0(\mm)\)
satisfying the \emph{Leibniz rule}:
\[
b(fg)=f\,b(g)+g\,b(f)\quad\text{ for every }f,g\in\Lip_b(X,\tau,\sfd).
\]
Here, \(L^0(\mm)\) denotes the algebra of all real-valued \(\tau\)-Borel functions on \(X\), up to \(\mm\)-a.e.\ equality.
Distinguished subclasses of derivations are those having \emph{divergence} (Definition \ref{def:div_Lip_der}),
that are \emph{local} (Definition \ref{def:loc_der}), or that satisfy \emph{`weak\(^*\)-type' (sequential) continuity}
properties (Definition \ref{def:w-type_cont}). In addition to these, we develop the basic theory of two crucial subfamilies
of Lipschitz derivations:
\begin{itemize}
\item \textsc{Weaver derivations.} In Definition \ref{def:Weaver_der} we propose a generalisation of Weaver's concept
of `bounded measurable vector field' \cite[Definition 10.30 a)]{Weaver2018} to the extended setting. Consistently e.g.\ with
\cite{Schioppa16}, we adopt the term \emph{Weaver derivation}. An important technical point here is that we ask
for the weak\(^*\)-type \emph{sequential} continuity, not for the weak\(^*\)-type continuity. The reason
is that weakly\(^*\)-type continuous derivations are trivial on the `purely non-\(\sfd\)-separable component'
\(X\setminus{\rm S}_{\mathbb X}\) of \(X\) (as in Lemma \ref{lem:d-sep_comp}), see
Proposition \ref{prop:w*_cont_vs_w*_seq_cont}.
\item \textsc{Di Marino derivations.} In Definition \ref{def:DiMar_der} we introduce the natural generalisation of Di Marino's
notion of derivation \cite{DiMar:14,DiMarPhD} to e.m.t.m.\ spaces. More specifically, we consider the space \(\Der^q(\mathbb X)\)
of \(q\)-integrable derivations, and its subspace \(\Der^q_q(\mathbb X)\) consisting of all those \(q\)-integrable derivations
having \(q\)-integrable divergence, for some given exponent \(q\in(1,\infty)\). This axiomatisation is tailored to the notion of metric Sobolev
space \(W^{1,p}(\mathbb X)\) (where \(p\in(1,\infty)\) is the conjugate exponent of \(q\)) that one aims at defining by means
of an integration-by-parts formula where \(\Der^q_q(\mathbb X)\) is used as the family of `test vector fields'.
\end{itemize}
Since in this paper we are primarily interested in the Sobolev calculus, we shall focus our attention mostly on Di Marino derivations.
Nevertheless, we set up also the basic theory of Weaver derivations and we debate their relation with the Di Marino ones
(see Proposition \ref{prop:glob_to_loc} or Theorem \ref{thm:W_vs_DM}, where we borrow some ideas from \cite{AILP24}).
We believe that Weaver derivations may find interesting applications even in the analysis on e.m.t.m.\ spaces,
for instance for studying suitable generalisations of metric currents or Alberti representations (cf.\ with \cite{Schioppa16,Schioppa16-2,Schioppa14}),
but addressing these kinds of issues is outside the scope of the present paper.
\subsection{Metric Sobolev spaces}
In Section \ref{s:Sob_space}, we introduce the metric Sobolev space \(W^{1,p}(\mathbb X)\), and we compare it
with \(H^{1,p}(\mathbb X)\), \(B^{1,p}(\mathbb X)\) and \(N^{1,p}(\mathbb X)\). Mimicking \cite[Definition 1.5]{DiMar:14},
we declare that some \(f\in L^p(\mm)\) belongs to \(W^{1,p}(\mathbb X)\) if there is a linear operator
\(L_f\colon\Der^q_q(\mathbb X)\to L^1(\mm)\) satisfying some algebraic and topological conditions, as well as the
following integration-by-parts formula:
\[
\int L_f(b)\,\d\mm=-\int f\,\div(b)\,\d\mm\quad\text{ for every }b\in\Der^q_q(\mathbb X);
\]
see Definition \ref{def:Sobolev_space_via_der}. Each \(f\in W^{1,p}(\mathbb X)\) is associated with a distinguished
function \(|Df|\in L^p(\mm)^+\), which has the role of the `modulus of the weak differential of \(f\)'.
\medskip

In Section \ref{s:H=W}, we show that on \emph{any} e.m.t.m.\ space it holds that
\[
H^{1,p}(\mathbb X)=W^{1,p}(\mathbb X),\;\text{ with }|Df|=|Df|_H\text{ for every }f\in W^{1,p}(\mathbb X);
\]
see Theorem \ref{thm:H=W}. The proof strategy for the inclusion \(H^{1,p}(\mathbb X)\subseteq W^{1,p}(\mathbb X)\)
is taken from \cite{DiMar:14} up to some technical discrepancies, whereas the verification of the converse inclusion
relies on a new argument, which was partially inspired by \cite{LP24}. In a nutshell, we first observe that \(H^{1,p}(\mathbb X)\)
induces a \emph{differential} \(\d\colon L^p(\mm)\to L^p(T^*\mathbb X)\), where \(L^p(T^*\mathbb X)\) is the e.m.t.m.\ version
of Gigli's notion of \emph{cotangent module} from \cite{Gigli14} (Theorem \ref{thm:cotg_mod}) and \(\d\) is an unbounded operator
with domain \(D(\d)=H^{1,p}(\mathbb X)\), then we prove that \(W^{1,p}(\mathbb X)\subseteq H^{1,p}(\mathbb X)\)
via a convex duality argument involving the adjoint \(\d^*\) of \(\d\). The latter proof strategy is rather robust and suitable
for being adapted to obtain analogous equivalence results for other functional spaces. We also point out that the identification
\(H^{1,p}(\mathbb X)=W^{1,p}(\mathbb X)\) for possibly non-complete spaces is new and interesting even in the particular case
where \((X,\sfd)\) is a metric space and \(\tau\) is the topology induced by \(\sfd\), and it covers e.g.\ those situations
in which \(X\) is an open domain in a larger (typically complete) ambient space.
\medskip

By combining Theorem \ref{thm:H=W} with \cite{Savare22}, we obtain that on \emph{complete} e.m.t.m.\ spaces it holds that
\[
W^{1,p}(\mathbb X)=B^{1,p}(\mathbb X),\;\text{ with }|Df|=|Df|_B\text{ for every }f\in W^{1,p}(\mathbb X);
\]
see Corollary \ref{cor:W=B}. If in addition \((X,\tau)\) is Souslin, then the space \(W^{1,p}(\mathbb X)\) can be identified
also with the Newtonian space \(N^{1,p}(\mathbb X)\); see Remark \ref{rmk:Newt}. On the other hand, these identities
are not always in force without the completeness assumption, cf.\ with the last paragraph of Section \ref{s:def_B}.
However, we show that -- on arbitrary e.m.t.m.\ spaces -- each \emph{\(\mathcal T_q\)-test plan} \(\ppi\)
(as in Definition \ref{def:test_plan}) induces a derivation \(b_\sppi\in\Der^q_q(\mathbb X)\)
(see Proposition \ref{prop:der_induced_by_tp}), and as a consequence we obtain that the inclusion
\(W^{1,p}(\mathbb X)\subseteq B^{1,p}(\mathbb X)\) holds and that \(|Df|_B\leq|Df|\) for all \(f\in W^{1,p}(\mathbb X)\)
(Theorem \ref{thm:W_in_B}).
\medskip

Finally, in Section \ref{s:predual_W1p} we present a quite elementary construction of some \emph{isometric predual}
of the metric Sobolev space \(W^{1,p}(\mathbb X)\), see Theorem \ref{thm:predual_W1p}. The formulation of the Sobolev
space in terms of derivations is particularly appropriate for this kind of construction. The existence of an isometric
predual of \(H^{1,p}(\mathbb X)\) was already known from \cite{Amb:Sav:21}.
\subsection*{Acknowledgements}
The first named author was supported by the Research Council of Finland grant 362898.
The second named author was supported by the Research Council of Finland grant 354241 and the Emil Aaltonen Foundation
through the research group ``Quasiworld network''. We thank Sylvester Eriksson-Bique and Timo Schultz for the several helpful discussions.
\section{Preliminaries}
Let us fix some general terminology and notation, which we will use throughout the whole paper. For any \(a,b\in\R\), we write \(a\vee b\coloneqq\max\{a,b\}\) and \(a\wedge b\coloneqq\min\{a,b\}\).
Given a set \(X\) and a function \(f\colon X\to\R\), we denote by \({\rm Osc}_S(f)\in[0,+\infty]\) the \textbf{oscillation} of \(f\) on a set \(S\subseteq X\), i.e.
\[
{\rm Osc}_S(f)\coloneqq\sup_S f-\inf_S f.
\]
For any Banach space \(\B\), we denote by \(\B'\) its dual Banach space. A map \(T\colon\B_1\to\B_2\) between two Banach spaces \(\B_1\) and \(\B_2\) is called an
\textbf{isomorphism} (resp.\ an \textbf{isometric isomorphism}) provided it is a linear homeomorphism (resp.\ a norm-preserving linear homeomorphism). Accordingly, we say that
\(\B_1\) and \(\B_2\) are \textbf{isomorphic} (resp.\ \textbf{isometrically isomorphic}) provided there exists an isomorphism (resp.\ an isometric isomorphism) \(T\colon\B_1\to\B_2\).
Finally, we say that \(\mathbb B_1\) \textbf{embeds} (resp.\ \textbf{isometrically embeds}) into \(\mathbb B_2\) provided \(\mathbb B_1\) is isomorphic (resp.\ isometrically isomorphic)
to some subspace of \(\B_2\).
\subsection{Topological and metric notions}
Let us recall some notions in topology, referring e.g.\ to the book \cite{Kelley75} for a detailed discussion on the topic.
Let \((X,\tau)\) be a topological space. Then:
\begin{itemize}
\item \((X,\tau)\) is said to be \textbf{completely regular} if for any \(x\in X\) and any neighbourhood
\(U\in\tau\) of \(x\) there exists a continuous function \(f\colon X\to[0,1]\) such that \(f(x)=1\)
and \(f|_{X\setminus U}=0\). Equivalently, \((X,\tau)\) is completely regular if \(\tau\) is induced
by a family of semidistances.
\item \((X,\tau)\) is said to be \textbf{normal} if for any pair of disjoint closed sets \(A,B\subseteq X\)
there exist disjoint open sets \(U_A,U_B\in\tau\) such that \(A\subseteq U_A\) and \(B\subseteq U_B\).
\item \((X,\tau)\) is said to be a \textbf{Tychonoff space} if it is completely regular and Hausdorff.
Every locally compact Hausdorff topological space is a Tychonoff space.
\end{itemize}
Given two topological spaces \((X,\tau_X)\) and \((Y,\tau_Y)\), we denote by \(C((X,\tau_X);(Y,\tau_Y))\)
the space of continuous maps from \((X,\tau_X)\) to \((Y,\tau_Y)\); we drop \(\tau_X\) or \(\tau_Y\)
from our notation when the chosen topologies are clear from the context. We use the shorthand notation
\(C(X,\tau)\coloneqq C((X,\tau);\R)\) for any topological space \((X,\tau)\), where the target \(\R\)
is equipped with the Euclidean topology. Then
\[
C_b(X,\tau)\coloneqq\big\{f\in C(X,\tau)\;\big|\;f\text{ is bounded}\big\}
\]
is a Banach space if endowed with the supremum norm \(\|f\|_{C_b(X,\tau)}\coloneqq\sup_{x\in X}|f(x)|\).
\medskip

Next, let us recall some metric concepts. By an \textbf{extended distance} on a set \(X\) we mean a symmetric function
\(\sfd\colon X\times X\to[0,+\infty]\) that satisfies the triangle inequality and vanishes exactly on the diagonal
\(\{(x,x):x\in X\}\). The pair \((X,\sfd)\) is called an \textbf{extended metric space}.
As usual, if \(\sfd(x,y)<+\infty\) for every \(x,y\in X\), then \(\sfd\) is called a distance and \((X,\sfd)\) is called a metric space.
Given an extended metric space \((X,\sfd)\), a center \(x\in X\) and a radius \(r\in(0,+\infty)\), we denote
\[
B_r^\sfd(x)\coloneqq\big\{y\in X\;\big|\;\sfd(x,y)<r\big\},\qquad\bar B_r^\sfd(x)\coloneqq\big\{y\in X\;\big|\;\sfd(x,y)\leq r\big\}.
\]
A map \(\varphi\colon X\to Y\) between two extended metric spaces \((X,\sfd_X)\) and \((Y,\sfd_Y)\)
is said to be Lipschitz (or \(L\)-Lipschitz) if for some constant \(L\geq 0\) we have that
\(\sfd_Y(\varphi(x),\varphi(y))\leq L\,\sfd_X(x,y)\) holds for every \(x,y\in X\). We denote by
\(\Lip_b(X,\sfd)\) the space of all bounded Lipschitz functions from an extended metric space \((X,\sfd)\)
to the real line \(\R\) (equipped with the Euclidean distance). Denote
\[
\Lip(f,A,\sfd)\coloneqq\sup\bigg\{\frac{|f(x)-f(y)|}{\sfd(x,y)}\;\bigg|\;x,y\in A,\,x\neq y\bigg\}
\quad\text{ for every }f\in\Lip_b(X,\sfd)\text{ and }A\subseteq X.
\]
For brevity, we write \(\Lip(f,\sfd)\coloneqq\Lip(f,X,\sfd)\). It is well known that \(\Lip_b(X,\sfd)\) is
a Banach space with respect to the norm \(\|f\|_{\Lip_b(X,\sfd)}\coloneqq\Lip(f,\sfd)+\sup_{x\in X}|f(x)|\).
\medskip

Now, consider an extended metric space \((X,\sfd)\) together with a topology \(\tau\) on \(X\). We define
\[
\Lip_b(X,\tau,\sfd)\coloneqq\Lip_b( X,\sfd)\cap C(X,\tau).
\]
We endow the vector space \(\Lip_b(X,\tau,\sfd)\) with the norm
\[
\|f\|_{\Lip_b(X,\tau,\sfd)}\coloneqq\Lip(f,\sfd)+\|f\|_{C_b(X,\tau)}\quad\text{ for every }f\in\Lip_b(X,\tau,\sfd).
\]
\begin{remark}{\rm
We claim that
\[
\big(\Lip_b(X,\tau,\sfd),\|\cdot\|_{\Lip_b(X,\tau,\sfd)}\big)\;\text{ is a Banach algebra.}
\]
Indeed, \(\|f\|_{\Lip_b(X,\tau,\sfd)}=\|f\|_{\Lip_b(X,\sfd)}\) holds for every \(f\in\Lip_b(X,\tau,\sfd)\),
and every uniform limit of \(\tau\)-continuous functions is \(\tau\)-continuous, thus \(\Lip_b(X,\tau,\sfd)\)
is a closed vector subspace of \(\Lip_b(X,\sfd)\). In particular, \(\Lip_b(X,\tau,\sfd)\) is a Banach space.
Moreover, it can be readily checked that for any given \(f,g\in\Lip_b(X,\tau,\sfd)\) we have that \(fg\in\Lip_b(X,\tau,\sfd)\),
\(\|fg\|_{C_b(X,\tau)}\leq\|f\|_{C_b(X,\tau)}\|g\|_{C_b(X,\tau)}\) and \(\Lip(fg,\sfd)\leq\|f\|_{C_b(X,\tau)}\Lip(g,\sfd)+\|g\|_{C_b(X,\tau)}\Lip(f,\sfd)\),
whence it follows that
\[\begin{split}
\|fg\|_{\Lip_b(X,\tau,\sfd)}&=\Lip(fg,\sfd)+\|fg\|_{C_b(X,\tau)}\\
&\leq\|f\|_{C_b(X,\tau)}\Lip(g,\sfd)+\|g\|_{C_b(X,\tau)}\Lip(f,\sfd)+\|f\|_{C_b(X,\tau)}\|g\|_{C_b(X,\tau)}\\
&\leq\big(\Lip(f,\sfd)+\|f\|_{C_b(X,\tau)}\big)\big(\Lip(g,\sfd)+\|g\|_{C_b(X,\tau)}\big)=\|f\|_{\Lip_b(X,\tau,\sfd)}\|g\|_{\Lip_b(X,\tau,\sfd)}.
\end{split}\]
All in all, we have shown that \(\Lip_b(X,\tau,\sfd)\) is a Banach algebra, as we claimed.
\fr}\end{remark}

At times, it is convenient to use the following shorthand notation:
\begin{equation}\label{eq:def_Lip_b_1}
\Lip_{b,1}(X,\tau,\sfd)\coloneqq\big\{f\in\Lip_b(X,\tau,\sfd)\;\big|\;\Lip(f,\sfd)\leq 1\big\}.
\end{equation}
Any given \(f\in\Lip_b(X,\tau,\sfd)\) is associated with a function \(\lip_\sfd(f)\) that accounts for the `infinitesimal Lipschitz constants'
of \(f\) at the different points of \(X\):
\begin{definition}[Asymptotic slope]
Let \((X,\sfd)\) be an extended metric space and let \(\tau\) be a topology on \(X\). Let \(f\in\Lip_b(X,\tau,\sfd)\) be given. Then we define
the function \(\lip_\sfd(f)\colon X\to[0,\Lip(f,\sfd)]\) as
\[
\lip_\sfd(f)(x)\coloneqq\inf\big\{\Lip(f,U,\sfd)\;\big|\;x\in U\in\tau\big\}\quad\text{ for every }x\in X.
\]
We say that \(\lip_\sfd(f)\) is the \textbf{asymptotic slope} of \(f\).
\end{definition}

The function \(\lip_\sfd(f)\) is \(\tau\)-upper semicontinuous, as it follows from the ensuing remark:
\begin{remark}\label{rmk:suff_cond_sc}{\rm
Let \((X,\tau)\) be a topological space and \(S\neq\varnothing\) a subset of \(\tau\). Let \(\mathcal F\colon S\to[0,+\infty]\) be any given functional. Define
\[
F(x)\coloneqq\inf\big\{\mathcal F(U)\;\big|\,x\in U\in S\big\}\quad\text{ for every }x\in X.
\]
Then \(F\colon X\to[0,+\infty]\) is a \(\tau\)-upper semicontinuous function. Indeed, for any \(U\in S\) we have that
\[
F_U(x)\coloneqq\left\{\begin{array}{ll}
\mathcal F(U)\\
+\infty
\end{array}\quad\begin{array}{ll}
\text{ for every }x\in U,\\
\text{ for every }x\in X\setminus U
\end{array}\right.
\]
defines a \(\tau\)-upper semicontinuous function \(F_U\colon X\to[0,+\infty]\), thus \(F=\inf_{U\in S}F_U\) is \(\tau\)-upper semicontinuous as well.
Similarly, we have that \(G(x)\coloneqq\sup\{\mathcal F(U):x\in U\in S\}\) (with the convention that \(\sup(\varnothing)=0\)) defines
a \(\tau\)-lower semicontinuous function \(G\colon X\to[0,+\infty]\).
\fr}\end{remark}
\subsection{Measure theory}
Let \((X,\Sigma,\mm)\) be a measure space. We denote by \(L^0(\mm)\) the algebra of all equivalence classes (up to \(\mm\)-a.e.\ equality)
of measurable functions \(f\colon X\to\R\). For any \(p\in[1,\infty]\), we denote by \((L^p(\mm),\|\cdot\|_{L^p(\mm)})\)
the \textbf{Lebesgue space} of exponent \(p\) on \((X,\Sigma,\mm)\). Then \(L^p(\mm)\) is a Banach space (and \(L^\infty(\mm)\) is also a Banach algebra).
Moreover, \(L^p(\mm)\) is a Riesz space with respect to the partial order given by the \(\mm\)-a.e.\ inequality: given any \(f,g\in L^p(\mm)\),
we declare that \(f\leq g\) if and only if \(f(x)\leq g(x)\) holds for \(\mm\)-a.e.\ \(x\in X\). Assuming that the measure \(\mm\) is \(\sigma\)-finite,
we also have that \(L^p(\mm)\) is \textbf{Dedekind complete}, which means that any family of functions \(\{f_i\}_{i\in I}\subseteq L^p(\mm)\) with an upper
bound (i.e.\ there exists \(g\in L^p(\mm)\) such that \(f_i\leq g\) for all \(i\in I\)) has a supremum \(\bigvee_{i\in I}f_i\in L^p(\mm)\).
The latter is the unique element of \(L^p(\mm)\) such that
\begin{itemize}
\item \(f_j\leq\bigvee_{i\in I}f_i\) for every \(j\in I\),
\item if \(\tilde f\in L^p(\mm)\) satisfies \(f_j\leq\tilde f\) for every \(j\in I\), then \(\bigvee_{i\in I}f_i\leq\tilde f\).
\end{itemize}
In addition, one can find an at most countable subset \(C\subseteq I\) such that \(\bigvee_{i\in I}f_i=\bigvee_{i\in C}f_i\) (i.e.\ \(L^p(\mm)\) has the so-called
\textbf{countable sup property}). Similarly, every set \(\{f_i\}_{i\in I}\subseteq L^p(\mm)\) with a lower bound has an infimum \(\bigwedge_{i\in I}f_i\in L^p(\mm)\)
and there exists \(\tilde C\subseteq I\) at most countable such that \(\bigwedge_{i\in I}f_i=\bigwedge_{i\in\tilde C}f_i\) (i.e.\ the countable inf property holds).
In particular, essential unions (and essential intersections) exist: given any family \(\{E_i\}_{i\in I}\subseteq\Sigma\), we can find a set \(E\in\Sigma\) such that
\begin{itemize}
\item \(\mm(E_i\setminus E)=0\) for every \(i\in I\),
\item if \(F\in\Sigma\) satisfies \(\mm(E_i\setminus F)=0\) for every \(i\in I\), then \(\mm(E\setminus F)=0\).
\end{itemize}
The set \(E\) is \(\mm\)-a.e.\ unique, in the sense that \(\mm(E\Delta\tilde E)=0\) for any other set \(\tilde E\in\Sigma\) having the same properties.
We say that \(E\) is the \textbf{\(\mm\)-essential union} of \(\{E_i\}_{i\in I}\). It also holds that \(E\) can be chosen of the form \(\bigcup_{i\in C}E_i\),
for some at most countable subset \(C\subseteq I\).
\medskip

Let \((X,\Sigma,\mm)\) be a finite measure space. Following \cite[\S 1.12(iii)]{Bogachev07}, we say that \(\mm\) is a \textbf{separable measure} if there exists a countable
family \(\mathcal C\subseteq\Sigma\) such that for every \(E\in\Sigma\) and \(\varepsilon>0\) we can find \(F\in\mathcal C\) such that \(\mm(E\Delta F)<\varepsilon\).
The following conditions are equivalent:
\begin{itemize}
\item \(\mm\) is a separable measure,
\item \(L^p(\mm)\) is separable for some \(p\in[1,\infty)\),
\item \(L^p(\mm)\) is separable for every \(p\in[1,\infty)\).
\end{itemize}
See for instance \cite[\S 7.14(iv) and Exercise 4.7.63]{Bogachev07}. In the class of spaces of our interest in this paper,
we can encounter examples of spaces whose reference measure is non-separable (cf.\ with Example \ref{ex:non-sep_m}).
An advantage of \(\mm\) being separable is that it is equivalent to the fact that the weak\(^*\) topology of
\(L^\infty(\mm)\) restricted to its closed unit ball is metrisable (see e.g.\ Lemma \ref{lem:suff_cond_wstar_seq_cont}).
\medskip

Let \((X,\tau)\) be a Hausdorff topological space. We denote by \(\mathscr B(X,\tau)\) its Borel \(\sigma\)-algebra. A finite Borel measure \(\mu\colon\mathscr B(X,\tau)\to[0,+\infty)\)
is called a \textbf{Radon measure} if it is \textbf{inner regular}, i.e.
\[
\mu(B)=\sup\big\{\mu(K)\;\big|\;K\subseteq B,\,K\text{ is \(\tau\)-compact}\big\}\quad\text{ for every }B\in\mathscr B(X,\tau).
\]
It follows that \(\mu\) is also \textbf{outer regular}, which means that
\[
\mu(B)=\inf\big\{\mu(U)\;\big|\;U\in\tau,\,B\subseteq U\big\}\quad\text{ for every }B\in\mathscr B(X,\tau).
\]
We denote by \(\mathcal M_+(X)\) or \(\mathcal M_+(X,\tau)\) the collection of all finite Radon measures on \((X,\tau)\). We refer to the monograph \cite{Schwartz73}
for a thorough account of the theory of Radon measures. Below we collect some more definitions and results that we shall need later in the paper.
\begin{remark}\label{rmk:monot_conv_for_nets}{\rm
Radon measures verify the following version of the monotone convergence theorem: if \(\mu\) is a finite Radon measure on a Hausdorff topological space \((X,\tau)\) and
\((f_i)_{i\in I}\) is a non-decreasing net of \(\tau\)-lower semicontinuous functions \(f_i\colon X\to[0,+\infty)\) satisfying \(\sup_{i\in I,x\in X}f_i(x)<+\infty\), then
\[
\lim_{i\in I}\int f_i\,\d\mu=\int\lim_{i\in I}f_i\,\d\mu.
\]
Note that \(\lim_{i\in I}f_i=\sup_{i\in I}f_i\) is \(\tau\)-lower semicontinuous, in particular it is Borel measurable and thus the right-hand
side of the identity above is meaningful. See e.g.\ \cite[Lemma 7.2.6]{Bogachev07}.
\fr}\end{remark}
Let \((X,\tau_X)\) and \((Y,\tau_Y)\) be Tychonoff spaces. Given a finite Radon measure \(\mu\) on \(X\), a map \(\varphi\colon X\to Y\) is said
to be \textbf{Lusin \(\mu\)-measurable} if for any \(\varepsilon>0\) there exists a compact set \(K_\varepsilon\subseteq X\) such that
\(\mu(X\setminus K_\varepsilon)\leq\varepsilon\) and \(\varphi|_{K_\varepsilon}\) is continuous. Each Lusin \(\mu\)-measurable map is in particular Borel \(\mu\)-measurable
(i.e.\ \(\varphi^{-1}(B)\) is a \(\mu\)-measurable subset of \(X\) for every Borel set \(B\subseteq Y\)). Moreover, if \(\mu\in\mathcal M_+(X)\) is given
and \(\varphi\colon X\to Y\) is Lusin \(\mu\)-measurable, then we have that
\[
(\varphi_\#\mu)(B)\coloneqq\mu(\varphi^{-1}(B))\quad\text{ for every Borel set }B\subseteq Y
\]
defines a Radon measure \(\varphi_\#\mu\in\mathcal M_+(Y)\), called the \textbf{pushforward} of \(\mu\) under \(\varphi\).
A map \(\varphi\colon X\to Y\) is said to be \textbf{universally Lusin measurable} if it is Lusin \(\mu\)-measurable for every \(\mu\in\mathcal M_+(X)\).
\begin{remark}\label{rmk:about_univ_Lusin_meas}{\rm
We point out that the \(\mu\)-a.e.\ pointwise limit of a sequence of Lusin \(\mu\)-measurable functions is Lusin \(\mu\)-measurable, thus in particular
the pointwise limit of a sequence of universally Lusin measurable functions is universally
Lusin measurable. Indeed, fix a Tychonoff space \((X,\tau)\) and a Radon measure \(\mu\in\mathcal M_+(X)\).
Assume that a sequence \((f_n)_n\) of Lusin \(\mu\)-measurable functions \(f_n\colon X\to\R\) and a limit function
\(f\colon X\to\R\) satisfy \(f(x)=\lim_n f_n(x)\in\R\) for \(\mu\)-a.e.\ \(x\in X\). Given any \(\varepsilon>0\) and \(n\in\N\),
we can find a compact set \(K_\varepsilon^n\subseteq X\) such that \(\mu(X\setminus K_\varepsilon^n)\leq\varepsilon/2^n\)
and \(f_n|_{K_\varepsilon^n}\) is continuous. Then \(K_\varepsilon\coloneqq\bigcap_{n\in\N}K_\varepsilon^n\) is a compact
set with \(\mu(X\setminus K_\varepsilon)\leq\varepsilon\) such that each \(f_n|_{K_\varepsilon}\) is continuous. Thanks to
Egorov's theorem, we can find a compact set \(\tilde K_\varepsilon\subseteq K_\varepsilon\) with
\(\mu(X\setminus\tilde K_\varepsilon)\leq 2\varepsilon\) such that \(f_n|_{\tilde K_\varepsilon}\to f|_{\tilde K_\varepsilon}\)
uniformly, so that \(f|_{\tilde K_\varepsilon}\) is continuous. Hence, \(f\) is Lusin \(\mu\)-measurable.

Furthermore, we point out that any bounded \(\tau\)-lower semicontinuous function \(f\colon X\to[0,+\infty)\) defined on a Tychonoff
space \((X,\tau)\) is universally Lusin measurable. To prove it, fix any Radon measure \(\mu\in\mathcal M_+(X)\). It follows
e.g.\ from \cite[Lemma 7.2.6]{Bogachev07} that
\[
\int f\,\d\mu=\sup\bigg\{\int g\,\d\mu\;\bigg|\;g\in C(X,\tau),\,g\leq f\bigg\}.
\]
Hence, we can find a non-decreasing sequence of functions \((g_n)_n\subseteq C(X,\tau)\) such that \(g_n\leq f\) for every
\(n\in\N\) and \(\lim_n\int g_n\,\d\mu=\int f\,\d\mu\). By applying the monotone convergence theorem, we deduce that
\(\lim_n g_n(x)=f(x)\) for \(\mu\)-a.e.\ \(x\in X\). Since each continuous function is clearly Lusin \(\mu\)-measurable,
by the first claim of this remark we conclude that \(f\) is Lusin \(\mu\)-measurable.
\fr}\end{remark}
\subsubsection{\(L^p(\mm)\)-Banach \(L^\infty(\mm)\)-modules}
In this section, we recall some key concepts in the theory of \(L^p\)-Banach \(L^\infty\)-modules, which are Banach spaces equipped
with additional structures (roughly speaking, with a `pointwise norm' and a multiplication by \(L^\infty\)-functions). This language
has been developed by Gigli in \cite{Gigli14}, with the aim of providing a functional-analytic framework for a vector calculus in
metric measure spaces. Strictly related notions were previously studied in the literature for different purposes, see e.g.\ the notion
of \emph{random normed module} introduced by Guo \cite{Guo-1989,Guo-1992} and investigated in a long series of works (see \cite{Guo-2011,Guo-2024}
and the references therein), or the notion of \emph{random Banach space} introduced by Haydon, Levy and Raynaud \cite{HLR91}.
The definitions and results presented below are taken from \cite{Gigli14,Gigli17}.
\medskip

For any measure space \((X,\Sigma,\mm)\), the space \(L^\infty(\mm)\) is a commutative ring (with unity)
with respect to the usual pointwise operations. Since the field of real numbers \(\R\) can be identified with a subring of
\(L^\infty(\mm)\) (via the map sending \(\lambda\in\R\) to the function that is \(\mm\)-a.e.\ equal to \(\lambda\)),
every module over \(L^\infty(\mm)\) is in particular a vector space. Recall also that a homomorphism
\(T\colon M\to N\) of \(L^\infty(\mm)\)-modules is an \(L^\infty(\mm)\)-linear operator, i.e.\ a map satisfying
\[
T(f\cdot v+g\cdot w)=f\cdot T(v)+g\cdot T(w)\quad\text{ for every }f,g\in L^\infty(\mm)\text{ and }v,w\in M.
\]
In particular, each homomorphism of \(L^\infty(\mm)\)-modules is a homomorphism of vector spaces, i.e.\ a linear operator.
Observe that \(L^p(\mm)\) is an \(L^\infty(\mm)\)-module for every \(p\in[1,\infty]\).
\begin{definition}[\(L^p(\mm)\)-Banach \(L^\infty(\mm)\)-module]
Let \((X,\Sigma,\mm)\) be a \(\sigma\)-finite measure space and let \(p\in(1,\infty)\). Then a module \(\mathscr M\)
over \(L^\infty(\mm)\) is said to be an \textbf{\(L^p(\mm)\)-Banach \(L^\infty(\mm)\)-module} if it is endowed with a
functional \(|\cdot|\colon\mathscr M\to L^p(\mm)^+\), called a \textbf{pointwise norm} on \(\mathscr M\), such that:
\begin{itemize}
\item[\(\rm i)\)] For any \(v\in\mathscr M\), it holds that \(|v|=0\) if and only if \(v=0\).
\item[\(\rm ii)\)] \(|v+w|\leq|v|+|w|\) for every \(v,w\in\mathscr M\).
\item[\(\rm iii)\)] \(|f\cdot v|=|f||v|\) for every \(f\in L^\infty(\mm)\) and \(v\in\mathscr M\).
\item[\(\rm iv)\)] The norm \(\|v\|_{\mathscr M}\coloneqq\||v|\|_{L^p(\mm)}\) on \(\mathscr M\) is complete.
\end{itemize}
\end{definition}

Every \(L^p(\mm)\)-Banach \(L^\infty(\mm)\)-module is in particular a Banach space. A map \(\Phi\colon\mathscr M\to\mathscr N\) between
\(L^p(\mm)\)-Banach \(L^\infty(\mm)\)-modules \(\mathscr M\), \(\mathscr N\) is said to be an \textbf{isomorphism of \(L^p(\mm)\)-Banach \(L^\infty(\mm)\)-modules}
if it is an isomorphism of \(L^\infty(\mm)\)-modules satisfying \(|\Phi(v)|=|v|\) for all \(v\in\mathscr M\).
\begin{definition}[Dual of an \(L^p(\mm)\)-Banach \(L^\infty(\mm)\)-module]
Let \((X,\Sigma,\mm)\) be a \(\sigma\)-finite measure space. Let \(p,q\in(1,\infty)\) be conjugate exponents
and let \(\mathscr M\) be an \(L^p(\mm)\)-Banach \(L^\infty(\mm)\)-module. Then we define \(\mathscr M^*\)
as the set of all those homomorphisms \(\omega\colon\mathscr M\to L^1(\mm)\) of \(L^\infty(\mm)\)-modules
for which there exists a function \(g\in L^q(\mm)^+\) such that
\begin{equation}\label{eq:dual_mod}
|\omega(v)|\leq g|v|\quad\text{ for every }v\in\mathscr M.
\end{equation}
The space \(\mathscr M^*\) is called the \textbf{continuous module dual} of \(\mathscr M\).
\end{definition}

The space \(\mathscr M^*\) is a module over \(L^\infty(\mm)\) if endowed with the following pointwise operations:
\[\begin{split}
(\omega+\eta)(v)&\coloneqq\omega(v)+\eta(v)\quad\text{ for every }\omega,\eta\in\mathscr M^*\text{ and }v\in\mathscr M,\\
(f\cdot\omega)(v)&\coloneqq f\,\omega(v)\quad\text{ for every }f\in L^\infty(\mm),\,\omega\in\mathscr M^*\text{ and }v\in\mathscr M.
\end{split}\]
Moreover, to any element \(\omega\in\mathscr M^*\) we associate the function \(|\omega|\in L^q(\mm)^+\), which we define as
\[
|\omega|\coloneqq\bigvee\big\{\omega(v)\;\big|\;v\in\mathscr M,\,|v|\leq 1\big\}=\bigwedge\big\{g\in L^q(\mm)^+\;\big|\;g\text{ satisfies \eqref{eq:dual_mod}}\big\}.
\]
It holds that \((\mathscr M^*,|\cdot|)\) is an \(L^q(\mm)\)-Banach \(L^\infty(\mm)\)-module.
\medskip

The continuous module dual \(\mathscr M^*\) of \(\mathscr M\) is in particular a Banach space, which can be identified with the dual Banach space
\(\mathscr M'\) through the operator \(\textsc{Int}_{\mathscr M}\colon\mathscr M^*\to\mathscr M'\), defined as
\begin{equation}\label{eq:def_Int}
\textsc{Int}_{\mathscr M}(\omega)(v)\coloneqq\int\omega(v)\,\d\mm\quad\text{ for every }\omega\in\mathscr M^*\text{ and }v\in\mathscr M.
\end{equation}
Indeed, the map \(\textsc{Int}_{\mathscr M}\) is an isometric isomorphism of Banach spaces (see \cite[Proposition 1.2.13]{Gigli14}).
\subsection{Extended metric-topological measure spaces}
In this section, we discuss the notion of \emph{extended metric-topological (measure) space} that was introduced by Ambrosio, Erbar and Savar\'{e}
in \cite[Definitions 4.1 and 4.7]{AmbrosioErbarSavare16} (see also \cite[Definition 2.1.3]{Savare22}).
\begin{definition}[Extended metric-topological measure space]\label{def:emtm}
Let \((X,\sfd)\) be an extended metric space and let \(\tau\) be a Hausdorff topology on \(X\). Then we say that \((X,\tau,\sfd)\) is an \textbf{extended metric-topological space}
(or an \textbf{e.m.t.\ space} for short) if the following conditions hold:
\begin{itemize}
\item[\({\rm i)}\)] The topology \(\tau\) coincides with the initial topology of \(\Lip_b(X,\tau,\sfd)\).
\item[\({\rm ii)}\)] The extended distance \(\sfd\) can be recovered through the formula
\begin{equation}\label{dist:recovery}
\sfd(x,y)=\sup\big\{|f(x)-f(y)|\;\big|\,f\in\Lip_{b,1}(X,\tau,\sfd)\big\}\quad\text{ for every }x,y\in X,
\end{equation}
where \(\Lip_{b,1}(X,\tau,\sfd)\) is defined as in \eqref{eq:def_Lip_b_1}.
\end{itemize}
When \((X,\tau,\sfd)\) is equipped with a finite Radon measure \(\mm\in\mathcal M_+(X)\), we say that \(\mathbb X\coloneqq(X,\tau,\sfd,\mm)\)
is an \textbf{extended metric-topological measure space} (or an \textbf{e.m.t.m.\ space} for short).
\end{definition}

In particular, if \((X,\tau,\sfd)\) is an e.m.t.\ space, then \((X,\tau)\) is a Tychonoff space.
Given an e.m.t.m.\ space \(\mathbb X=(X,\tau,\sfd,\mm)\), we know from \cite[Lemma 2.1.27]{Savare22} that
\begin{equation}\label{eq:Lip_dense_Lp}
\Lip_b(X,\tau,\sfd)\text{ is strongly dense in }L^p(\mm),\text{ for every }p\in[1,\infty).
\end{equation}
Moreover, given any set \(E\in\mathscr B(X,\tau)\), it can be readily checked that
\begin{equation}\label{eq:restr_emtm}
\mathbb X\llcorner E\coloneqq(E,\tau_E,\sfd_E,\mm\llcorner E)
\end{equation}
is an e.m.t.m.\ space, where \(\tau_E\) is the subspace topology on \(E\) induced by \(\tau\),
while \(\sfd_E\coloneqq\sfd|_{E\times E}\) and \(\mm\llcorner E\) denotes the Radon measure on \(E\) that is
obtained from \(\mm\) by restriction.
\medskip

Let us now prove some technical results, which will be needed later. First, we show that each e.m.t.m.\ space can be decomposed
(in an \(\mm\)-a.e.\ unique manner) into a \(\sfd\)-separable component and a `purely non-\(\sfd\)-separable' one:
\begin{lemma}[Maximal \(\sfd\)-separable component \({\rm S}_{\mathbb X}\)]\label{lem:d-sep_comp}
Let \(\mathbb X=(X,\tau,\sfd,\mm)\) be a given e.m.t.m.\ space. Then there exists a \(\sfd\)-separable set
\({\rm S}_{\mathbb X}\in\mathscr B(X,\tau)\) such that \(\mm(E)=0\) holds for any \(\sfd\)-separable set \(E\in\mathscr B(X,\tau)\)
satisfying \(E\subseteq X\setminus{\rm S}_{\mathbb X}\). Moreover, the set \({\rm S}_{\mathbb X}\) is unique in the \(\mm\)-a.e.\ sense,
meaning that \(\mm({\rm S}_{\mathbb X}\Delta\tilde{\rm S}_{\mathbb X})=0\) for any other set \(\tilde{\rm S}_{\mathbb X}\in\mathscr B(X,\tau)\)
having the same properties as \({\rm S}_{\mathbb X}\).
\end{lemma}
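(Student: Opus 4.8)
The plan is to construct \({\rm S}_{\mathbb X}\) as an \(\mm\)-essential union of a suitable family of \(\sfd\)-separable Borel sets, exploiting the countable sup property of \(L^1(\mm)\) (equivalently, the existence and countable-selection property of essential unions recorded in the Measure theory subsection). First I would introduce the family
\[
\mathcal E\coloneqq\big\{E\in\mathscr B(X,\tau)\;\big|\;(E,\sfd_E)\text{ is separable}\big\},
\]
which is nonempty (it contains \(\varnothing\), and every singleton), and let \({\rm S}_{\mathbb X}\in\mathscr B(X,\tau)\) be the \(\mm\)-essential union of \(\mathcal E\), as provided by the discussion after the definition of Dedekind completeness. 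By the countable-selection statement there, we may write \({\rm S}_{\mathbb X}=\bigcup_{n\in\N}E_n\) with each \(E_n\in\mathcal E\) (up to \(\mm\)-negligible sets; one can in fact take \({\rm S}_{\mathbb X}\) literally equal to \(\bigcup_n E_n\), which is Borel).

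The key point is that \({\rm S}_{\mathbb X}\) itself belongs to \(\mathcal E\), i.e.\ it is \(\sfd\)-separable. This is where a little care is needed: a countable union of \(\sfd\)-separable sets is \(\sfd\)-separable, because if \(D_n\subseteq E_n\) is a countable \(\sfd\)-dense subset of \((E_n,\sfd_{E_n})\), then \(\bigcup_n D_n\) is a countable \(\sfd\)-dense subset of \((\bigcup_n E_n,\sfd)\) — density is with respect to the \emph{same} extended distance \(\sfd\), so the metric balls behave as expected even though \(\sfd\) may take the value \(+\infty\) between different pieces. Hence \({\rm S}_{\mathbb X}=\bigcup_n E_n\in\mathcal E\). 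Now if \(E\in\mathscr B(X,\tau)\) is \(\sfd\)-separable with \(E\subseteq X\setminus{\rm S}_{\mathbb X}\), then \(E\in\mathcal E\), so by the defining property of the essential union we have \(\mm(E\setminus{\rm S}_{\mathbb X})=0\); but \(E\setminus{\rm S}_{\mathbb X}=E\), whence \(\mm(E)=0\). This establishes existence.

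For the \(\mm\)-a.e.\ uniqueness, suppose \(\tilde{\rm S}_{\mathbb X}\in\mathscr B(X,\tau)\) has the same two properties. Since \({\rm S}_{\mathbb X}\) is \(\sfd\)-separable, so is the Borel subset \({\rm S}_{\mathbb X}\setminus\tilde{\rm S}_{\mathbb X}\subseteq X\setminus\tilde{\rm S}_{\mathbb X}\); applying the defining property of \(\tilde{\rm S}_{\mathbb X}\) gives \(\mm({\rm S}_{\mathbb X}\setminus\tilde{\rm S}_{\mathbb X})=0\). Symmetrically, \(\tilde{\rm S}_{\mathbb X}\setminus{\rm S}_{\mathbb X}\) is \(\sfd\)-separable and contained in \(X\setminus{\rm S}_{\mathbb X}\), so \(\mm(\tilde{\rm S}_{\mathbb X}\setminus{\rm S}_{\mathbb X})=0\). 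Therefore \(\mm({\rm S}_{\mathbb X}\Delta\tilde{\rm S}_{\mathbb X})=0\), as required.

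I do not expect a serious obstacle here; the only genuinely delicate point is the verification that \({\rm S}_{\mathbb X}\) is \(\sfd\)-separable (not merely that it is an essential union of separable sets) — this forces the use of the countable-selection property of essential unions rather than an arbitrary-index essential union, and it uses that \(\sfd\)-separability is stable under countable unions even in the extended-distance setting. Once that is in hand, both halves of the statement follow formally from the defining properties of the essential union.
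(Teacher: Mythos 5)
Your proof is correct and follows essentially the same route as the paper: take the $\mm$-essential union of the $\sfd$-separable Borel sets, use the countable-selection property to realise it as a countable union of such sets (hence itself $\sfd$-separable), and deduce maximality and a.e.\ uniqueness directly from the defining property of the essential union. The only (immaterial) difference is that the paper restricts the family to separable Borel sets of positive measure, which changes nothing since adjoining $\mm$-null sets does not affect the essential union.
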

\begin{proof}
Fix any \(\mm\)-a.e.\ representative \({\rm S}_{\mathbb X}\in\mathscr B(X,\tau)\) of the \(\mm\)-essential union of
the family of sets
\[
\big\{S\in\mathscr B(X,\tau)\;\big|\;S\text{ is }\sfd\text{-separable and }\mm(S)>0\big\}.
\]
Recall that \({\rm S}_{\mathbb X}\) can be chosen to be of the form \(\bigcup_{n\in\N}S_n\), for some
sequence \((S_n)_n\subseteq\mathscr B(X,\tau)\) such that \(S_n\) is \(\sfd\)-separable and \(\mm(S_n)>0\)
for every \(n\in\N\). In particular, the set \({\rm S}_{\mathbb X}\) is \(\sfd\)-separable. If
\(E\subseteq X\setminus{\rm S}_{\mathbb X}\) is \(\tau\)-Borel and \(\sfd\)-separable, then
\(\mm(E)=0\) thanks to the definition of \(\mm\)-essential union. Finally, if \(\tilde{\rm S}_{\mathbb X}\)
is another set having the same properties as \({\rm S}_{\mathbb X}\), then the inclusion
\(\tilde{\rm S}_{\mathbb X}\setminus{\rm S}_{\mathbb X}\subseteq X\setminus{\rm S}_{\mathbb X}\)
(resp.\ \({\rm S}_{\mathbb X}\setminus\tilde{\rm S}_{\mathbb X}\subseteq X\setminus\tilde{\rm S}_{\mathbb X}\))
implies that \(\mm(\tilde{\rm S}_{\mathbb X}\setminus{\rm S}_{\mathbb X})=0\)
(resp.\ \(\mm({\rm S}_{\mathbb X}\setminus\tilde{\rm S}_{\mathbb X})=0\)), thus
\(\mm({\rm S}_{\mathbb X}\Delta\tilde{\rm S}_{\mathbb X})=0\).
\end{proof}

Next, we give sufficient conditions for the separability of the measure \(\mm\) of an e.m.t.m.\ space.
The proof of the ensuing result is rather standard, but we provide it for the reader's convenience.
\begin{lemma}\label{lem:separability_Lp}
Let \(\mathbb X=(X,\tau,\sfd,\mm)\) be an e.m.t.m.\ space. Assume either that \(\tau\) is metrisable on every \(\tau\)-compact
set or that \(\mm(X\setminus{\rm S}_{\mathbb X})=0\). Then it holds that the measure \(\mm\) is separable.
\end{lemma}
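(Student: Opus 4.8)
The plan is to produce, in each of the two cases, a countable family \(\mathcal C\subseteq\mathscr B(X,\tau)\) such that every \(E\in\mathscr B(X,\tau)\) can be approximated in \(\mm\)-measure by members of \(\mathcal C\); this is exactly the assertion that \(\mm\) is a separable measure. Two preliminary remarks will be used in both cases. First, condition ii) of Definition \ref{def:emtm} writes \(y\mapsto\sfd(x,y)\) as a pointwise supremum of \(\tau\)-continuous functions, so this map is \(\tau\)-lower semicontinuous for every fixed \(x\in X\); hence each \(\sfd\)-ball \(B_r^\sfd(x)=\bigcup_{k\in\N}\{y:\sfd(x,y)\le r-1/k\}\) is \(\tau\)-Borel (indeed \(F_\sigma\)). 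This is the bridge between the metric and the \(\sigma\)-algebra carrying \(\mm\). Second, I will use the standard facts that a finite measure on a countably generated \(\sigma\)-algebra is separable (a Dynkin-class argument, cf.\ \cite{Bogachev07}) and that, if \(Y\in\mathscr B(X,\tau)\) satisfies \(\mm(X\setminus Y)=0\), then \(\mm\) is separable as soon as \(\mm\llcorner Y\) is separable on \(\mathscr B(Y,\tau_Y)\).

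Suppose first that \(\mm(X\setminus{\rm S}_{\mathbb X})=0\) and set \(Y\coloneqq{\rm S}_{\mathbb X}\). By Lemma \ref{lem:d-sep_comp} the set \(Y\) is \(\sfd\)-separable, so it admits a countable \(\sfd\)-dense subset \(\{x_n\}_{n\in\N}\). Let \(\mathcal C\coloneqq\{B_r^\sfd(x_n)\cap Y:n\in\N,\ r\in\mathbb Q\cap(0,+\infty)\}\). By the first preliminary remark, \(\mathcal C\) is a countable family of \(\tau_Y\)-Borel subsets of \(Y\); on the other hand \(\{B_r^\sfd(x_n):n\in\N,\ r\in\mathbb Q\cap(0,+\infty)\}\) is a base for the \(\sfd\)-topology on \(Y\), so \(\sigma(\mathcal C)\) contains every \(\sfd\)-open subset of \(Y\), whence \(\sigma(\mathcal C)\supseteq\mathscr B(Y,\tau_Y)\) because \(\tau\) is coarser than the \(\sfd\)-topology (condition i) of Definition \ref{def:emtm} forces each \(f\in\Lip_b(X,\tau,\sfd)\) to be \(\sfd\)-continuous). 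Since every member of \(\mathcal C\) is also \(\tau_Y\)-Borel, we get \(\mathscr B(Y,\tau_Y)=\sigma(\mathcal C)\), a countably generated \(\sigma\)-algebra; hence \(\mm\llcorner Y\) is a separable measure, and so is \(\mm\).

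Suppose instead that \(\tau\) is metrisable on every \(\tau\)-compact set. Since \(\mm\) is a finite Radon measure, its inner regularity yields \(\tau\)-compact sets \(K'_n\) with \(\mm(K'_n)\to\mm(X)\); putting \(K_n\coloneqq K'_1\cup\dots\cup K'_n\) — a finite union of \(\tau\)-compact sets, hence \(\tau\)-compact — we obtain an increasing sequence \((K_n)_n\) with \(\mm(X\setminus Y)=0\), where \(Y\coloneqq\bigcup_{n\in\N}K_n\). Each \(K_n\) is \(\tau\)-closed (being \(\tau\)-compact in a Hausdorff space), so \(\mathscr B(K_n,\tau_{K_n})\subseteq\mathscr B(X,\tau)\); moreover \((K_n,\tau_{K_n})\) is compact metrisable, hence second countable, so \(\mathscr B(K_n,\tau_{K_n})\) is countably generated and the finite measure \(\mm\llcorner K_n\) is separable. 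Fix for each \(n\) a countable family \(\mathcal C_n\subseteq\mathscr B(K_n,\tau_{K_n})\subseteq\mathscr B(X,\tau)\) that \(\mm\)-approximates every element of \(\mathscr B(K_n,\tau_{K_n})\), and set \(\mathcal C\coloneqq\bigcup_{n\in\N}\mathcal C_n\). Then \(\mathcal C\) witnesses the separability of \(\mm\): given \(E\in\mathscr B(X,\tau)\) and \(\eps>0\), the sets \(E\setminus K_n\) decrease to the \(\mm\)-null set \(E\setminus Y\), so I may choose \(n\) with \(\mm(E\setminus K_n)<\eps/2\) and then \(F\in\mathcal C_n\) with \(\mm((E\cap K_n)\Delta F)<\eps/2\), whence \(\mm(E\Delta F)\le\mm(E\setminus K_n)+\mm((E\cap K_n)\Delta F)<\eps\).

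I do not anticipate a genuine obstacle. The only non-formal steps are the \(\tau\)-lower semicontinuity of \(\sfd(x,\cdot)\) in the first case — which is precisely what turns the \(\sfd\)-balls into legitimate \(\tau\)-Borel generators and thus reduces the separability of \(\mm\) on \({\rm S}_{\mathbb X}\) to the classical second-countable situation — and the use of inner regularity of the Radon measure \(\mm\) in the second case, in order to replace \(X\) by an increasing countable union of metrisable compacta carrying all the mass. Everything else is the routine Dynkin/monotone-class bookkeeping behind the two preliminary remarks, which I would not spell out in detail.
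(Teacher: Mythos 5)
Your proof is correct, and it takes a genuinely different route from the paper's. Where the paper constructs, in each case, an explicit countable family of (finite unions of) closed \(\sfd\)-balls and then verifies the approximation property by hand -- using inner \emph{and} outer regularity of \(\mm\), a compactness covering argument in the first case, and continuity from below in the second -- you instead reduce everything to the abstract fact that a finite measure on a countably generated \(\sigma\)-algebra is separable. The two non-trivial inputs you need for this reduction are exactly the right ones: the \(\tau\)-lower semicontinuity of \(\sfd(x,\cdot)\) (from Definition \ref{def:emtm} ii)), which makes the \(\sfd\)-balls \(\tau\)-Borel and hence lets you identify \(\mathscr B({\rm S}_{\mathbb X},\tau_{{\rm S}_{\mathbb X}})\) with the countably generated \(\sigma\)-algebra spanned by rational balls about a countable \(\sfd\)-dense set (here the inclusion \(\tau\subseteq\tau_\sfd\) closes the loop); and, in the other case, inner regularity of the Radon measure \(\mm\), which exhausts the mass by compact metrisable (hence second countable) sets. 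The paper's proof also uses these two facts, but packages them into a direct \(\varepsilon\)-approximation rather than a statement about generators. What your approach buys is brevity and the elimination of the outer-regularity step; what it costs is that the countable approximating family is no longer explicit (it is extracted from the algebra generated by the balls via the approximation theorem, the ``Dynkin/monotone-class bookkeeping'' you rightly leave implicit). Both arguments are complete; yours is arguably the cleaner one.
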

\begin{proof}
Let us distinguish the two cases. First, assume that \(\tau\) is metrisable on \(\tau\)-compact sets.
Take an increasing sequence \((K_n)_{n\in\N}\) of \(\tau\)-compact subsets of \(X\) with \(\mm\big(X\setminus\bigcup_n K_n\big)=0\).
For any \(n\in\N\), fix a distance \(\sfd_n\) on \(K_n\) metrising \(\tau\), and a \(\sfd_n\)-dense sequence \((x^n_j)_{j\in\N}\) in \(K_n\). Define
\[
\mathcal C\coloneqq\bigcup_{n\in\N}\bigg\{\bigcup_{j\in F}\bar B_{q_j}^{\sfd_n}(x^n_j)\;\bigg|\;F\subseteq\N\text{ finite},\,(q_j)_{j\in F}\subseteq\mathbb Q\cap(0,+\infty)\bigg\}.
\]
Note that \(\mathcal C\) is a countable family of \(\tau\)-closed subsets of \(X\), thus \(\mathcal C\subseteq\mathscr B(X,\tau)\). We claim that
\begin{equation}\label{eq:separability_Lp_claim}
\inf_{C\in\mathcal C}\mm(E\Delta C)=0\quad\text{ for every }E\in\mathscr B(X,\tau),
\end{equation}
whence the separability of \(\mm\) follows. To prove the claim, fix \(E\subseteq X\) \(\tau\)-Borel and \(\varepsilon>0\). We can choose
\(n\in\N\) so that \(\mm(E\setminus K_n)\leq\varepsilon\). By the inner regularity of \(\mm\), we can find a \(\tau\)-compact set \(K\subseteq E\cap K_n\)
such that \(\mm((E\cap K_n)\setminus K)\leq\varepsilon\). By the outer regularity of \(\mm\), we can find \(U\in\tau\) such that
\(K\subseteq U\) and \(\mm(U\setminus K)\leq\varepsilon\). Due to the compactness of \(K\), there exist \(y_1,\ldots,y_k\in K\) and \(r_1,\ldots,r_k>0\)
such that \(K\subseteq\bigcup_{i=1}^k\bar B_{r_i}^{\sfd_n}(y_i)\subseteq U\cap K_n\). Moreover, for any \(i=1,\ldots,k\) we can find
\(j_i\in\N\) and \(q_i\in\mathbb Q\cap(r_i,+\infty)\) such that \(\bar B_{r_i}^{\sfd_n}(y_i)\subseteq\bar B_{q_i}^{\sfd_n}(x^n_{j_i})\subseteq U\cap K_n\).
Therefore, we have that \(C\coloneqq\bigcup_{i=1}^k\bar B_{q_i}^{\sfd_n}(x^n_{j_i})\in\mathcal C\) satisfies \(K\subseteq C\subseteq U\),
whence it follows that \(\mm(E\Delta C)\leq 3\varepsilon\). This proves \eqref{eq:separability_Lp_claim}, which gives the statement in the case where \(\tau\)
is metrisable on \(\tau\)-compact sets.

Let us pass to the second case: assume \(\mm(X\setminus{\rm S}_{\mathbb X})=0\). Fix a \(\sfd\)-dense sequence \((y_k)_{k\in\N}\) in \({\rm S}_{\mathbb X}\).
In this case, we define the countable collection \(\mathcal C\) of \(\tau\)-closed subsets of \(X\) as
\[
\mathcal C\coloneqq\bigg\{\bigcup_{j\in F}\bar B_{q_k}^\sfd(y_k)\;\bigg|\;F\subseteq\N\text{ finite},\,(q_k)_{k\in F}\subseteq\mathbb Q\cap(0,+\infty)\bigg\}.
\]
We claim that \eqref{eq:separability_Lp_claim} holds. To prove it, fix any \(E\in\mathscr B(X,\tau)\) and \(\varepsilon>0\). By the outer regularity of \(\mm\),
we can find a \(\tau\)-open set \(U\subseteq X\) such that \(E\subseteq U\) and \(\mm(U\setminus E)\leq\varepsilon\). Since \(\tau\) is coarser than the topology
induced by \(\sfd\), we have that \(U\) is \(\sfd\)-open, thus there exist a subsequence \((y_{k_j})_{j\in\N}\) of \((y_k)_{k\in\N}\) and a sequence of radii
\((q_j)_{j\in\N}\subseteq\mathbb Q\cap(0,+\infty)\) such that \(E\cap{\rm S}_{\mathbb X}\subseteq\bigcup_{j\in\N}\bar B_{q_j}^\sfd(y_{k_j})\subseteq U\).
Thanks to the continuity from below of \(\mm\), we can thus find \(N\in\N\) such that the set \(C\coloneqq\bigcup_{j=1}^N\bar B_{q_j}^\sfd(y_{k_j})\in\mathcal C\)
satisfies \(\mm(E\Delta C)\leq 2\varepsilon\). This proves \eqref{eq:separability_Lp_claim}, thus the statement holds when \(\mm(X\setminus{\rm S}_{\mathbb X})=0\).
\end{proof}

Observe that the second assumption in Lemma \ref{lem:separability_Lp} is verified, for instance, when \((X,\sfd)\) is separable. We also point out that the first
assumption can be relaxed to: \emph{for some \(D\in\mathscr B(X,\tau)\) such that \(\mm\) is concentrated on \(D\), the topology \(\tau\) is metrisable
on every \(\tau\)-compact subset of \(D\).}
A significant example of a non-metrisable topology \(\tau\) that is metrisable on all \(\tau\)-compact sets is the weak\(^*\) topology of the dual \(\B'\)
of a separable infinite-dimensional Banach space \(\B\).
\subsubsection{Compactification of an extended metric-topological space}
A very important feature of the category of extended metric-topological spaces is that it is closed under a notion
of \emph{compactification}, devised in this framework by Savar\'{e} \cite[Section 2.1.7]{Savare22} via the Gelfand
theory of Banach algebras. By virtue of the existence of compactifications, one can reduce many proofs to the compact case.
\medskip

Let us briefly recall the construction of the Gelfand compactification of an e.m.t.\ space \((X,\tau,\sfd)\).
By a \textbf{character} of \(\Lip_b(X,\tau,\sfd)\) we mean a non-zero element \(\varphi\) of the dual Banach space
of the normed space \((\Lip_b(X,\tau,\sfd),\|\cdot\|_{C_b(X,\tau)})\) that satisfies
\begin{equation}\label{eq:charact_mult}
\varphi(fg)=\varphi(f)\varphi(g)\quad\text{ for every }f,g\in\Lip_b(X,\tau,\sfd).
\end{equation}
We denote by \(\hat X\) the set of all characters of \(\Lip_b(X,\tau,\sfd)\). We equip \(\hat X\) with the topology
\(\hat\tau\) obtained by restricting the weak\(^*\) topology of the dual of \((\Lip_b(X,\tau,\sfd),\|\cdot\|_{C_b(X,\tau)})\)
to \(\hat X\). The canonical embedding map \(\iota\colon X\hookrightarrow\hat X\) is given by
\[
\iota(x)(f)\coloneqq f(x)\quad\text{ for every }x\in X\text{ and }f\in\Lip_b(X,\tau,\sfd).
\]
Moreover, the \textbf{Gelfand transform} \(\Gamma\colon\Lip_b(X,\tau,\sfd)\to C_b(\hat X,\hat\tau)\) is defined as
\[
\Gamma(f)(\varphi)\coloneqq\varphi(f)\quad\text{ for every }f\in\Lip_b(X,\tau,\sfd)\text{ and }\varphi\in\hat X.
\]
Note that \(\Gamma(f)\circ\iota=f\) for every \(f\in\Lip_b(X,\tau,\sfd)\). Finally, we define the extended distance \(\hat\sfd\) as
\[
\hat\sfd(\varphi,\psi)\coloneqq\sup\big\{|\varphi(f)-\psi(f)|\;\big|\;
f\in\Lip_{b,1}(X,\tau,\sfd)\big\}\quad\text{ for every }\varphi,\psi\in\hat X.
\]
\begin{remark}\label{rmk:character_on_const}{\rm
We claim that
\[
\varphi(\lambda\1_X)=\lambda\quad\text{ for every }\varphi\in\hat X\text{ and }\lambda\in\R.
\]
Indeed, \eqref{eq:charact_mult} and the linearity of \(\varphi\) guarantee that
\(\varphi(\lambda\1_X)\varphi(\1_X)=\varphi(\lambda\1_X)=\lambda\,\varphi(\1_X)\), and
\eqref{eq:charact_mult} implies also that \(\varphi(\1_X)\neq 0\) (otherwise, we would have that \(\varphi(f)=
\varphi(f\1_X)=\varphi(f)\varphi(\1_X)=0\) for every \(f\in\Lip_b(X,\tau,\sfd)\), contradicting the fact
that \(\varphi\neq 0\)). It follows that \(\varphi(\lambda\1_X)=\lambda\).
\fr}\end{remark}

The objects \(\hat X\), \(\hat\tau\), \(\iota\), \(\Gamma\) and \(\hat\sfd\) defined above
have the following properties \cite[Theorem 2.1.34]{Savare22}:
\begin{theorem}[Gelfand compactification of an e.m.t.\ space]
Let \((X,\tau,\sfd)\) be an e.m.t.\ space. Then \((\hat X,\hat\tau,\hat\sfd)\) is an e.m.t.\ space
and \((\hat X,\hat\tau)\) is compact. Moreover, the following conditions hold:
\begin{itemize}
\item[\({\rm i)}\)] The map \(\iota\) is a homeomorphism between \((X,\tau)\) and its image \(\iota(X)\) in \((\hat X,\hat\tau)\).
\item[\({\rm ii)}\)] The set \(\iota(X)\) is a dense subset of \((\hat X,\hat\tau)\).
\item[\({\rm iii)}\)] We have that \(\hat\sfd(\iota(x),\iota(y))=\sfd(x,y)\) for every \(x,y\in X\).
\end{itemize}
We say that \((\hat X,\hat\tau,\hat\sfd)\) is the \textbf{compactification} of \((X,\tau,\sfd)\),
with embedding \(\iota\colon X\hookrightarrow\hat X\).
\end{theorem}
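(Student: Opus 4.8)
The plan is to follow the standard Gelfand-theoretic route, adapted to the extended metric-topological framework. Observe first that $\Lip_b(X,\tau,\sfd)$, equipped with the sup-norm $\|\cdot\|_{C_b(X,\tau)}$ rather than its own Banach-algebra norm, need not be complete; however, it is a subalgebra of $C_b(X,\tau)$ that is a unital commutative normed algebra. The character space $\hat X$, as a weak$^*$-closed subset of the unit ball of the dual of this normed algebra (every character has norm exactly $1$, by Remark \ref{rmk:character_on_const} and multiplicativity), is weak$^*$-compact by Banach--Alaoglu and Tychonoff, which gives that $(\hat X,\hat\tau)$ is compact and Hausdorff. One must check that $\hat X$ is nonempty, which holds since every point evaluation $\iota(x)$ is a character. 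The Gelfand transform $\Gamma$ is then a norm-decreasing unital algebra homomorphism into $C_b(\hat X,\hat\tau) = C(\hat X,\hat\tau)$; the relation $\Gamma(f)\circ\iota = f$ is immediate from the definitions.

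Next I would establish the three numbered properties and the two metric structural properties (i.e.\ that $(\hat X,\hat\tau,\hat\sfd)$ is an e.m.t.\ space and the recovery formula \eqref{dist:recovery} for $\hat\sfd$). For i), continuity of $\iota$ is clear since each $f\in\Lip_b(X,\tau,\sfd)$ is $\tau$-continuous and $\hat\tau$ is the initial topology of the functions $\varphi\mapsto\varphi(f)$; the fact that $\iota$ is a homeomorphism onto its image uses condition i) in Definition \ref{def:emtm}, namely that $\tau$ is the initial topology of $\Lip_b(X,\tau,\sfd)$, so that the $\tau$-topology is exactly the one pulled back from $\hat\tau$ along $\iota$. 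For iii), the identity $\hat\sfd(\iota(x),\iota(y)) = \sfd(x,y)$ is exactly the recovery formula \eqref{dist:recovery}: the supremum defining $\hat\sfd(\iota(x),\iota(y))$ is $\sup\{|f(x)-f(y)| : f\in\Lip_{b,1}(X,\tau,\sfd)\}$, which equals $\sfd(x,y)$ by Definition \ref{def:emtm} ii). For ii), density of $\iota(X)$ in $(\hat X,\hat\tau)$: if $\iota(X)$ were not dense, by Urysohn's lemma on the compact Hausdorff space $\hat X$ there would be a nonzero $g\in C(\hat X,\hat\tau)$ vanishing on $\overline{\iota(X)}$; one then needs $g$ (or a suitable approximant) to lie in the image of $\Gamma$, and concludes via the fact that $\Gamma(f)$ determines $f$ and injectivity-type arguments, together with the Stone--Weierstrass theorem, that the range of $\Gamma$ is dense in $C(\hat X,\hat\tau)$, forcing $g=0$.

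The remaining structural claims are that $(\hat X,\hat\tau,\hat\sfd)$ is an e.m.t.\ space: one verifies that $\hat\sfd$ is an extended distance (symmetry, triangle inequality, and vanishing exactly on the diagonal — the last point uses that characters are separated by $\Lip_{b,1}(X,\tau,\sfd)$ after rescaling, i.e.\ if $\hat\sfd(\varphi,\psi)=0$ then $\varphi(f)=\psi(f)$ for all $f$ of Lipschitz constant at most $1$, hence for all $f\in\Lip_b(X,\tau,\sfd)$ by scaling, hence $\varphi=\psi$). Then one checks that $\Gamma(f)\in\Lip_b(\hat X,\hat\tau,\hat\sfd)$ with $\Lip(\Gamma(f),\hat\sfd)\le\Lip(f,\sfd)$, so that $\hat\tau$ is coarser than (or equal to) the initial topology of $\Lip_b(\hat X,\hat\tau,\hat\sfd)$; the reverse inclusion and the two axioms of Definition \ref{def:emtm} for $(\hat X,\hat\tau,\hat\sfd)$ follow because $\hat\tau$ is by construction the initial topology of the family $\{\varphi\mapsto\varphi(f)\}_{f}=\{\Gamma(f)\}_f\subseteq\Lip_b(\hat X,\hat\tau,\hat\sfd)$ and because the definition of $\hat\sfd$ is precisely the recovery supremum over (a subfamily of, a priori) $\Lip_{b,1}(\hat X,\hat\tau,\hat\sfd)$ — one must argue that enlarging to all of $\Lip_{b,1}(\hat X,\hat\tau,\hat\sfd)$ does not change the supremum, which again reduces to density of $\Gamma(\Lip_b(X,\tau,\sfd))$ and a Stone--Weierstrass/approximation argument controlling Lipschitz constants under uniform approximation on the compact space $\hat X$.

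The main obstacle I anticipate is this last point: passing between the a priori smaller family $\{\Gamma(f):f\in\Lip_{b,1}(X,\tau,\sfd)\}$ used to define $\hat\sfd$ and the full space $\Lip_{b,1}(\hat X,\hat\tau,\hat\sfd)$, and more generally checking the two e.m.t.\ axioms for the compactified object. The subtlety is that uniform approximation on $\hat X$ by Gelfand transforms does not automatically preserve Lipschitz constants, so one needs a careful truncation/regularization argument (or a McShane-type extension on the compact space), together with the observation that the topology $\hat\tau$ is metrizable on $\hat\tau$-compact sets only in restricted situations — here $\hat X$ itself is compact, which actually simplifies matters. I would handle this by first proving density of $\Gamma(\Lip_b(X,\tau,\sfd))$ in $C(\hat X,\hat\tau)$ via Stone--Weierstrass (the range separates points of $\hat X$ by construction and contains the constants), then using that any $g\in\Lip_{b,1}(\hat X,\hat\tau,\hat\sfd)$ can be uniformly approximated and, via the structure of the algebra (closure under $\vee,\wedge$ and truncations), the approximants chosen inside $\Gamma(\Lip_{b,1+\eps}(X,\tau,\sfd))$ up to arbitrarily small error, which suffices to identify the two suprema and close the argument.
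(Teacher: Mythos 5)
The paper does not actually prove this theorem: it is quoted from \cite[Theorem 2.1.34]{Savare22} without proof, so there is no in-paper argument to compare against. Your outline is the standard Gelfand-theoretic proof and is essentially correct. The norm bound \(|\varphi(f)|\le\|f\|_{C_b(X,\tau)}\) (which follows from boundedness of \(\varphi\) together with \(|\varphi(f)|^n=|\varphi(f^n)|\le\|\varphi\|\,\|f\|_{C_b(X,\tau)}^n\)) plus the weak\(^*\)-closedness of the conditions \(\varphi(fg)=\varphi(f)\varphi(g)\) and \(\varphi(\1_X)=1\) gives compactness via Banach--Alaoglu; i) and iii) are, as you say, direct consequences of the two axioms of Definition \ref{def:emtm} for \((X,\tau,\sfd)\); and density of \(\iota(X)\) follows from real Stone--Weierstrass applied to the range of \(\Gamma\) combined with the identity \(\|\Gamma(f)\|_{C_b(\hat X,\hat\tau)}=\sup_{\iota(X)}|\Gamma(f)|=\|f\|_{C_b(X,\tau)}\). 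Make that identity explicit: it, rather than the ``injectivity-type arguments'' you gesture at, is what turns a Urysohn function vanishing on \(\overline{\iota(X)}\) but not on \(\hat X\) into a contradiction after uniform approximation by some \(\Gamma(f)\).

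The ``main obstacle'' you anticipate in the last paragraph is not actually there. To verify axiom ii) of Definition \ref{def:emtm} for \((\hat X,\hat\tau,\hat\sfd)\) you do not need to compare the two suprema by an approximation argument: the inequality \(\hat\sfd(\varphi,\psi)\le\sup\{|\hat f(\varphi)-\hat f(\psi)|:\hat f\in\Lip_{b,1}(\hat X,\hat\tau,\hat\sfd)\}\) holds because \(\Gamma\) maps \(\Lip_{b,1}(X,\tau,\sfd)\) into \(\Lip_{b,1}(\hat X,\hat\tau,\hat\sfd)\) (immediate from the definition of \(\hat\sfd\)), while the reverse inequality is tautological, since every \(\hat f\in\Lip_{b,1}(\hat X,\hat\tau,\hat\sfd)\) satisfies \(|\hat f(\varphi)-\hat f(\psi)|\le\hat\sfd(\varphi,\psi)\) by the very definition of being \(1\)-Lipschitz for \(\hat\sfd\). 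Likewise, axiom i) holds because the initial topology of any family of \(\hat\tau\)-continuous functions is coarser than \(\hat\tau\), and it is finer than \(\hat\tau\) as soon as the family contains \(\{\Gamma(f)\}_f\), whose initial topology is \(\hat\tau\) by construction. So no Lipschitz-constant-preserving approximation, truncation, or McShane-type extension is needed at that stage; Stone--Weierstrass enters only in the density claim ii).
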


If \(\mathbb X=(X,\tau,\sfd,\mm)\) is an e.m.t.m.\ space and \((\hat X,\hat\tau,\hat\sfd)\) denotes the compactification of
\((X,\tau,\sfd)\), with embedding \(\iota\colon X\hookrightarrow\hat X\), then we define the measure \(\hat\mm\) on \(\hat X\) as
\[
\hat\mm\coloneqq\iota_\#\mm\in\mathcal M_+(\hat X,\hat\tau).
\]
The fact that \(\hat\mm\) is a Radon measure follows from the continuity of \(\iota\) (as all continuous maps are
universally Lusin measurable). Given any exponent \(p\in[1,\infty]\), we have that \(\iota\colon X\hookrightarrow\hat X\)
induces via pre-composition a map \(\iota^*\colon L^p(\hat\mm)\to L^p(\mm)\) (sending the \(\hat\mm\)-a.e.\ equivalence class of
a \(p\)-integrable Borel function \(\hat f\colon\hat X\to\R\) to the \(\mm\)-a.e.\ equivalence class of \(\hat f\circ\iota\)),
which is an isomorphism of Banach spaces and of Riesz spaces (and also of Banach algebras when \(p=\infty\)). 
\medskip

Albeit implicitly contained in \cite{Savare22}, we isolate the following result for the reader's convenience:
\begin{lemma}\label{lem:Gelfand_isom}
Let \((X,\tau,\sfd)\) be an e.m.t.\ space. Let \((\hat X,\hat\tau,\hat\sfd)\) be its compactification, with embedding
\(\iota\colon X\hookrightarrow\hat X\). Then the Gelfand transform \(\Gamma\) maps \(\Lip_b(X,\tau,\sfd)\) to
\(\Lip_b(\hat X,\hat\tau,\hat\sfd)\). Moreover, it holds that \(\Gamma\colon\Lip_b(X,\tau,\sfd)\to\Lip_b(\hat X,\hat\tau,\hat\sfd)\)
is an isomorphism of Banach algebras, with inverse given by
\begin{equation}\label{eq:inverse_Gamma}
\Lip_b(\hat X,\hat\tau,\hat\sfd)\ni\hat f\longmapsto\hat f\circ\iota\in\Lip_b(X,\tau,\sfd).
\end{equation}
\end{lemma}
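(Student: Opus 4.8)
The plan is to verify the three assertions in turn: first that $\Gamma$ maps $\Lip_b(X,\tau,\sfd)$ into $\Lip_b(\hat X,\hat\tau,\hat\sfd)$, then that the map in \eqref{eq:inverse_Gamma} is a two-sided inverse, and finally that $\Gamma$ preserves the Banach algebra structure. The hardest part is the first assertion, since it requires controlling the Lipschitz constant of $\Gamma(f)$ with respect to $\hat\sfd$; everything else is essentially bookkeeping with the definitions already laid out.

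For the first step, fix $f\in\Lip_b(X,\tau,\sfd)$. That $\Gamma(f)\in C_b(\hat X,\hat\tau)$ is built into the Gelfand machinery. To see that $\Gamma(f)$ is $\hat\sfd$-Lipschitz, we may assume $\Lip(f,\sfd)\leq 1$ by homogeneity (treating the case $\Lip(f,\sfd)=0$, i.e. $f$ constant, separately via Remark \ref{rmk:character_on_const}). Then $f\in\Lip_{b,1}(X,\tau,\sfd)$, so for any $\varphi,\psi\in\hat X$ we have $|\Gamma(f)(\varphi)-\Gamma(f)(\psi)|=|\varphi(f)-\psi(f)|\leq\hat\sfd(\varphi,\psi)$ directly from the definition of $\hat\sfd$ as a supremum over $\Lip_{b,1}(X,\tau,\sfd)$. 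Hence $\Lip(\Gamma(f),\hat\sfd)\leq\Lip(f,\sfd)$, and in fact equality holds because $\hat\sfd(\iota(x),\iota(y))=\sfd(x,y)$ and $\Gamma(f)\circ\iota=f$ give the reverse inequality. Combined with $\|\Gamma(f)\|_{C_b(\hat X,\hat\tau)}=\|f\|_{C_b(X,\tau)}$ (another standard Gelfand fact, using density of $\iota(X)$), this shows $\Gamma(f)\in\Lip_b(\hat X,\hat\tau,\hat\sfd)$ and that $\Gamma$ is an isometry for the $\Lip_b$-norms.

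For the inverse, let $\Lambda\colon\Lip_b(\hat X,\hat\tau,\hat\sfd)\to\Lip_b(X,\tau,\sfd)$ denote the map $\hat f\mapsto\hat f\circ\iota$; it is well-defined since $\iota$ is $(\tau,\hat\tau)$-continuous by part i) of the compactification theorem and $\hat\sfd$-to-$\sfd$ isometric on $\iota(X)$ by part iii), so $\hat f\circ\iota$ is $\tau$-continuous and $\sfd$-Lipschitz. The identity $\Gamma(f)\circ\iota=f$ recorded before Remark \ref{rmk:character_on_const} says precisely $\Lambda\circ\Gamma=\mathrm{id}$. For the other composition, take $\hat f\in\Lip_b(\hat X,\hat\tau,\hat\sfd)$; then $\Gamma(\hat f\circ\iota)$ and $\hat f$ are both $\hat\tau$-continuous functions on $\hat X$ that agree on the dense set $\iota(X)$ (for $x\in X$, $\Gamma(\hat f\circ\iota)(\iota(x))=\iota(x)(\hat f\circ\iota)=(\hat f\circ\iota)(x)=\hat f(\iota(x))$), hence they coincide on all of $\hat X$ by part ii). Thus $\Gamma\circ\Lambda=\mathrm{id}$.

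Finally, $\Gamma$ is linear and multiplicative by construction (it is the restriction of the Gelfand transform of the commutative Banach algebra $\Lip_b(X,\tau,\sfd)$ to the character space $\hat X$, and multiplicativity $\Gamma(fg)(\varphi)=\varphi(fg)=\varphi(f)\varphi(g)=\Gamma(f)(\varphi)\Gamma(g)(\varphi)$ is exactly \eqref{eq:charact_mult}), and $\Gamma(\1_X)=\1_{\hat X}$ by Remark \ref{rmk:character_on_const}. Being a linear bijection between Banach algebras that is multiplicative, unital, and (as shown above) isometric for the $\Lip_b$-norms, $\Gamma$ is an isomorphism of Banach algebras with inverse $\Lambda$, which is the statement.
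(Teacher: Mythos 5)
Your proposal is correct and follows essentially the same route as the paper's proof: the case split on $\Lip(f,\sfd)=0$ versus the normalisation $\tilde f=\Lip(f,\sfd)^{-1}f\in\Lip_{b,1}(X,\tau,\sfd)$, the reverse inequality via the isometric embedding on $\iota(X)$, the sup-norm equality via density, and the two identities $\Lambda\circ\Gamma=\mathrm{id}$ and $\Gamma\circ\Lambda=\mathrm{id}$ (the latter by agreement of two $\hat\tau$-continuous functions on the dense set $\iota(X)$) are all exactly the steps in the paper. No gaps.
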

\begin{proof}
Fix \(f\in\Lip_b(X,\tau,\sfd)\). If \(\Lip(f,\sfd)=0\), then \(f\) is constant, thus \(|\Gamma(f)(\varphi)-\Gamma(f)(\psi)|=0\)
for every \(\varphi,\psi\in\hat X\) by Remark \ref{rmk:character_on_const}. If \(\Lip(f,\sfd)>0\), then \(\tilde f\coloneqq\Lip(f,\sfd)^{-1}f\in\Lip_{b,1}(X,\tau,\sfd)\), thus
\[
|\Gamma(f)(\varphi)-\Gamma(f)(\psi)|=|\varphi(f)-\psi(f)|=\Lip(f,\sfd)|\varphi(\tilde f)-\psi(\tilde f)|\leq\Lip(f,\sfd)\hat\sfd(\varphi,\psi)
\]
for all \(\varphi,\psi\in\hat X\). All in all, we have that \(\Gamma(f)\in\Lip_b(\hat X,\hat\tau,\hat\sfd)\) and \(\Lip(\Gamma(f),\hat\sfd)\leq\Lip(f,\sfd)\). Also,
\[\begin{split}
\Lip(\Gamma(f),\hat\sfd)&\geq\Lip(\Gamma(f),\iota(X),\hat\sfd)
=\sup\bigg\{\frac{|\Gamma(f)(\iota(x))-\Gamma(f)(\iota(y))|}{\hat\sfd(\iota(x),\iota(y))}\;\bigg|\;x,y\in X,\,x\neq y\bigg\}\\
&=\sup\bigg\{\frac{|f(x)-f(y)|}{\sfd(x,y)}\;\bigg|\;x,y\in X,\,x\neq y\bigg\}=\Lip(f,\sfd),
\end{split}\]
so that \(\Lip(\Gamma(f),\hat\sfd)=\Lip(f,\sfd)\). Moreover, since \(\Gamma(f)\) is \(\hat\tau\)-continuous and \(\iota(X)\) is \(\hat\tau\)-dense in \(\hat X\),
we have that \(\|\Gamma(f)\|_{C_b(\hat X,\hat\tau)}=\sup_{x\in X}|\Gamma(f)(\iota(x))|=\sup_{x\in X}|f(x)|=\|f\|_{C_b(X,\tau)}\). Hence, it holds that
\(\Gamma(\Lip_b(X,\tau,\sfd))\subseteq\Lip_b(\hat X,\hat\tau,\hat\sfd)\) and \(\|\Gamma(f)\|_{\Lip_b(\hat X,\hat\tau,\hat\sfd)}=\|f\|_{\Lip_b(X,\tau,\sfd)}\)
for every \(f\in\Lip_b(X,\tau,\sfd)\).

Now, denote by \(I\) the map in \eqref{eq:inverse_Gamma}. Clearly, \(\Gamma\) and \(I\) are homomorphisms of Banach algebras. As we
already pointed out, we have that \((I\circ\Gamma)(f)=\Gamma(f)\circ\iota=f\) for every \(f\in\Lip_b(X,\tau,\sfd)\), which means that
\(I\circ\Gamma={\rm id}_{\Lip_b(X,\tau,\sfd)}\). Conversely, for any \(\hat f\in\Lip_b(\hat X,\hat\tau,\hat\sfd)\) we have that
\[
(\Gamma\circ I)(\hat f)(\iota(x))=\Gamma(\hat f\circ\iota)(\iota(x))=\iota(x)(\hat f\circ\iota)=\hat f(\iota(x))\quad\text{ for every }x\in X,
\]
which gives that \((\Gamma\circ I)(\hat f)|_{\iota(X)}=\hat f|_{\iota(X)}\). Since \((\Gamma\circ I)(\hat f)\), \(\hat f\) are \(\hat\tau\)-continuous and \(\iota(X)\)
is \(\hat\tau\)-dense in \(\hat X\), we conclude that \((\Gamma\circ I)(\hat f)=\hat f\), thus \(\Gamma\circ I={\rm id}_{\Lip_b(\hat X,\hat\tau,\hat\sfd)}\). The proof is complete.
\end{proof}

Let us also point out that for any given function \(f\in\Lip_b(X,\tau,\sfd)\) it holds that
\begin{equation}\label{eq:ineq_lip_a_cpt}
\lip_\sfd(f)(x)\leq\lip_{\hat\sfd}(\Gamma(f))(\iota(x))\quad\text{ for every }x\in X,
\end{equation}
but it might happen that the inequality in \eqref{eq:ineq_lip_a_cpt} is not an equality. Hence, we have that
\begin{equation}\label{eq:ineq_lip_a_cpt_a.e.}
\lip_\sfd(f)\leq\iota^*\big(\lip_{\hat\sfd}(\Gamma(f))\big)\quad\text{ holds }\mm\text{-a.e.\ on }X,
\text{ for every }f\in\Lip_b(X,\tau,\sfd),
\end{equation}
but it might happen that the \(\mm\)-a.e.\ inequality in \eqref{eq:ineq_lip_a_cpt_a.e.} is not an \(\mm\)-a.e.\ equality.
\subsubsection{Examples of extended metric-topological spaces}\label{s:examples_emtms}
We collect here many examples of e.m.t.(m.) spaces. As observed in \cite[Section 13]{AmbrosioErbarSavare16} and \cite[Section 2.1.3]{Savare22}, the following are e.m.t.m.\ spaces:
\begin{itemize}
\item A metric space \((X,\sfd)\) together with the topology \(\tau_\sfd\) induced by \(\sfd\) and a finite Radon measure \(\mm\geq 0\) on \(X\).
In particular, a complete and separable metric space \((X,\sfd)\) together with the topology $\tau_\sfd$ and a finite Borel measure \(\mm\geq 0\) on \(X\)
(as all finite Borel measures on a complete and separable metric space are Radon). The latter are often referred to as \textbf{metric measure spaces} in the literature.
\item A Banach space \(\B\) together with the distance induced by its norm, the weak topology \(\tau_w\) and a finite Radon measure on \((\B,\tau_w)\).
\item The dual \(\B'\) of a Banach space \(\B\) together with the distance induced by the dual  norm, the weak\(^*\) topology \(\tau_{w^*}\) and a finite Radon measure
on \((\B',\tau_{w^*})\). We point out that if \(\B\) is separable, then \((\B',\tau_{w^*})\) is a Lusin space \cite[Corollary 1 at p.\ 115]{Schwartz73}, so that every
finite Borel measure on \((\B',\tau_{w^*})\) is Radon.
\item An \textbf{abstract Wiener space}, i.e.\ a separable Banach space \(X\) together with a (centered, non-degenerate) Gaussian measure \(\gamma\) and the extended distance
that is induced by the \textbf{Cameron--Martin space} of \((X,\gamma)\); see e.g.\ \cite{Bogachev15}.
\item Other important examples of e.m.t.m.\ spaces are given by some `extended sub-Finsler-type structures' \cite[Example 2.1.3]{Savare22} or the so-called
\emph{configuration spaces} \cite[Section 13.3]{AmbrosioErbarSavare16}.
\end{itemize}

The class of e.m.t.\ spaces in the first bullet point above (i.e.\ metric spaces equipped with the topology induced by the distance) shows that, in a sense,
the theory of e.m.t.\ spaces is an extension of that of metric spaces. On the other hand, as it is evident from Example \ref{ex:Schultz's_space} below (which
was pointed out to us by Timo Schultz), the category of e.m.t.\ spaces encompasses also the one of Tychonoff spaces, but in this paper we will not investigate
further in this direction.
\begin{example}[`Purely-topological' e.m.t.\ space]\label{ex:Schultz's_space}{\rm
Let \((X,\tau)\) be a given Tychonoff space. Let us denote by \(\sfd_{\rm discr}\) the \textbf{discrete distance} on \(X\), i.e.\ we define
\[
\sfd_{\rm discr}(x,y)\coloneqq\left\{\begin{array}{ll}
1\\
0
\end{array}\quad\begin{array}{ll}
\text{ for every }x,y\in X\text{ with }x\neq y,\\
\text{ for every }x,y\in X\text{ with }x=y.
\end{array}\right.
\]
Then \((X,\tau,\sfd_{\rm discr})\) is an e.m.t.\ space. Indeed, it can be readily checked that the \(\sfd_{\rm discr}\)-Lipschitz functions
\(f\colon X\to\R\) are exactly the bounded functions and \(\Lip(f,\sfd_{\rm discr})={\rm Osc}_X(f)\), in particular
\[
\Lip_b(X,\tau,\sfd_{\rm discr})=C_b(X,\tau),\qquad\|\cdot\|_{\Lip_b(X,\tau,\sfd_{\rm discr})}={\rm Osc}_X(\cdot)+\|\cdot\|_{C_b(X,\tau)}.
\]
Therefore, the complete regularity of \((X,\tau)\) ensures that the initial topology of \(\Lip_b(X,\tau,\sfd_{\rm discr})\) coincides with \(\tau\)
(so that Definition \ref{def:emtm} i) holds), and for any two distinct points \(x,y\in X\) we can find (as \((X,\tau)\) is completely Hausdorff)
a \(\tau\)-continuous function \(f\colon X\to[0,1]\) such that \(f(x)=1\) and \(f(y)=0\), so that \(\sfd_{\rm discr}(x,y)=1=|f(x)-f(y)|\) (whence
Definition \ref{def:emtm} ii) follows).
\fr}\end{example}
\begin{example}\label{ex:mixed_Schultz_space}{\rm
We endow \(X\coloneqq[0,1]^2\subseteq\R^2\) with the Euclidean topology \(\tau\) and the distance
\[
\sfd((x,t),(y,s))\coloneqq\max\{\sfd_{\rm discr}(x,y),\sfd_{\rm Eucl}(t,s)\}\quad\text{ for every }(x,t),(y,s)\in X,
\]
where \(\sfd_{\rm discr}\) denotes the discrete distance, while \(\sfd_{\rm Eucl}(t,s)\coloneqq|t-s|\) is the Euclidean distance.
One can easily check that \((X,\tau,\sfd)\) is an e.m.t.\ space, and that a given function \(f\colon X\to\R\) belongs to the space
\(\Lip_b(X,\tau,\sfd)\) if and only if it is \(\tau\)-continuous, \(f(x,\cdot)\in\Lip_b([0,1],\sfd_{\rm Eucl})\) for every \(x\in[0,1]\)
and \(\sup_{x\in[0,1]}\Lip(f(x,\cdot),\sfd_{\rm Eucl})<+\infty\). Moreover, straightforward arguments show that
\begin{equation}\label{eq:mixed_Schultz_space}
\Lip(f,\sfd)={\rm Osc}_X(f)\vee\sup_{x\in[0,1]}\Lip(f(x,\cdot),\sfd_{\rm Eucl})
\end{equation}
for every \(f\in\Lip_b(X,\tau,\sfd)\).
\fr}\end{example}

Whereas the Banach algebra \(\Lip_b(X,\sfd)\) associated to a metric space \((X,\sfd)\) is (isometrically isomorphic to)
a dual Banach space (see \cite[Corollary 3.4]{Weaver2018}), in the more general setting of e.m.t.\ spaces we can provide
examples where \(\Lip_b(X,\tau,\sfd)\) is not isometrically isomorphic (and not even just isomorphic) to a dual Banach space,
see Proposition \ref{prop:ex_no_predual} below. The possible non-existence of a predual of \(\Lip_b(X,\tau,\sfd)\) will have
an important role in Definition \ref{def:w-type_cont}.
\begin{proposition}\label{prop:ex_no_predual}
Let \((K,\tau)\) be an infinite compact metrisable topological space. Let \(\sfd_{\rm discr}\) denote the discrete distance on \(K\).
Then \(\Lip_b(K,\tau,\sfd_{\rm discr})\) is not isomorphic to a dual Banach space.
\end{proposition}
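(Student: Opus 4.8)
The plan is to notice that, as a Banach space, \(\Lip_b(K,\tau,\sfd_{\rm discr})\) is nothing but \(C(K)\) equipped with an equivalent norm, and then to invoke the classical fact that \(C\) of an infinite compact metric space admits no predual. Precisely, applying Example \ref{ex:Schultz's_space} to the Tychonoff space \((K,\tau)\) one gets \(\Lip_b(K,\tau,\sfd_{\rm discr})=C_b(K,\tau)=C((K,\tau);\R)=:C(K)\) as vector spaces (the last identity because \(K\) is \(\tau\)-compact), carrying the norm \(\|f\|_{\Lip_b(K,\tau,\sfd_{\rm discr})}={\rm Osc}_K(f)+\|f\|_{C_b(K,\tau)}\). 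Since \(0\leq{\rm Osc}_K(f)\leq 2\|f\|_{C_b(K,\tau)}\), we have \(\|f\|_{C_b(K,\tau)}\leq\|f\|_{\Lip_b(K,\tau,\sfd_{\rm discr})}\leq 3\|f\|_{C_b(K,\tau)}\), so the identity map is a Banach-space isomorphism onto \((C(K),\|\cdot\|_{C_b(K,\tau)})\). As the property of being isomorphic to a dual Banach space is invariant under Banach-space isomorphisms, it suffices to prove that \((C(K),\|\cdot\|_{C_b(K,\tau)})\) is not isomorphic to a dual space.

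First I would record two facts about \(C(K)\). It is \emph{separable}, being \(C\) of a compact metric space. Moreover, it contains an isometric copy of \(c_0\): fixing a metric \(\rho\) inducing \(\tau\), since \(K\) is infinite and \(\tau\)-compact it has a non-isolated point \(x_0\), hence a sequence of distinct points \(x_n\to x_0\) with \(x_n\neq x_0\), and after passing to a subsequence we may assume \(r_n\coloneqq\rho(x_n,x_0)\) is strictly decreasing to \(0\). Setting \(r_0\coloneqq r_1+1\), the sets \(A_n\coloneqq\{y\in K:\tfrac12(r_{n+1}+r_n)<\rho(y,x_0)<\tfrac12(r_n+r_{n-1})\}\) are \(\tau\)-open, pairwise disjoint, do not contain \(x_0\), and satisfy \(x_n\in A_n\); Urysohn's lemma then provides \(f_n\in C(K)\) with \(0\leq f_n\leq 1\), \(f_n(x_n)=1\) and \(f_n\equiv 0\) off \(A_n\). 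Because the \(f_n\) have pairwise disjoint supports, \((a_n)_n\mapsto\sum_n a_n f_n\) is an isometric embedding of \(c_0\) into \(C(K)\).

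Finally I would argue by contradiction. Suppose \(C(K)\cong Y^*\) for some Banach space \(Y\). Then \(Y^*\) is separable, being isomorphic to the separable space \(C(K)\), hence \(Y^*\) has the Radon--Nikod\'ym property (a classical theorem about separable dual spaces). The Radon--Nikod\'ym property is an isomorphic invariant and is inherited by closed subspaces, so the copy of \(c_0\) inside \(C(K)\) would have it as well. But \(c_0\) fails the Radon--Nikod\'ym property: its closed unit ball is a nonempty closed bounded convex set with no extreme points, which cannot occur in a space with that property (there, every closed bounded convex set is the closed convex hull of its extreme points). This contradiction shows that \(C(K)\), and therefore \(\Lip_b(K,\tau,\sfd_{\rm discr})\), is not isomorphic to a dual Banach space.

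The hard part is really just the clean assembly of these ingredients: the reduction to \(C(K)\) and the construction of the \(c_0\)-copy are elementary but must be laid out precisely, while the conclusion rests on the two standard external inputs that separable dual spaces have the Radon--Nikod\'ym property and that \(c_0\) does not. One could alternatively try to combine Sobczyk's theorem (that \(c_0\) is complemented in every separable superspace) with the failure of \(c_0\) to be complemented in \(\ell^\infty=c_0^{**}\), but this would require the delicate passage from ``complemented in a dual'' to ``is a dual'', so the Radon--Nikod\'ym route is the robust one.
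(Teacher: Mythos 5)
Your proof is correct and follows essentially the same route as the paper's: reduce to \(C(K)\) with an equivalent norm, observe separability, embed \(c_0\) via bump functions on pairwise disjoint open sets, and conclude from the fact that \(c_0\) does not embed into a separable dual space. The only difference is cosmetic: the paper cites that last fact as a black box (Albiac--Kalton, or Bessaga--Pe\l czy\'nski), whereas you unpack it via the Radon--Nikod\'ym property and the absence of extreme points in the unit ball of \(c_0\), which is a standard and valid proof of the same input.
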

\begin{proof}
We recall from Example \ref{ex:Schultz's_space} that \((K,\tau,\sfd_{\rm discr})\) is an extended
metric-topological space that satisfies \({\rm L}\coloneqq\Lip_b(K,\tau,\sfd_{\rm discr})=C(K,\tau)\) and
\(\|f\|_{\rm L}\coloneqq\|f\|_{\Lip_b(K,\tau,\sfd_{\rm discr})}={\rm Osc}_K(f)+\|f\|_{C(K,\tau)}\) for every
\(f\in{\rm L}\). Note that \(\|f\|_{C(K,\tau)}\leq\|f\|_{\rm L}\leq 3\|f\|_{C(K,\tau)}\) for every \(f\in{\rm L}\).
Since \((K,\tau)\) is a compact metrisable topological space, it holds that \(C(K,\tau)\) is separable \cite[Theorem 4.1.3]{Albiac-Kalton}
and thus \({\rm L}\) is separable.
Since \(\tau\) is a Hausdorff topology, by virtue of Remark \ref{rmk:inf_open_Hausd} below we can find a sequence \((U_n)_{n\in\N}\subseteq\tau\) of pairwise disjoint sets such that each set \(U_n\) contains at least two distinct points
\(x_n\) and \(y_n\). Since \((K,\tau)\) is completely regular, for any \(n\in\N\) we can find a \(\tau\)-continuous function \(f_n\colon K\to[-1,1]\)
such that \(\{f_n\neq 0\}\subseteq U_n\), \(f_n(x_n)=1\) and \(f_n(y_n)=-1\). Letting \(c_{00}\) be the vector space of real-valued
sequences \(a=(a_n)_n\) satisfying \(a_n=0\) for all but finitely many indices \(n\in\N\), we define the linear operator
\(\phi\colon c_{00}\to{\rm L}\) as
\[
\phi(a)\coloneqq\frac{1}{3}\sum_{\substack{n\in\N:\\a_n\neq 0}}a_n f_n\in{\rm L}\quad\text{ for every }a=(a_n)_n\in c_{00}.
\]
Recall that \(c_{00}\) is a dense subspace of the Banach space \((c_0,\|\cdot\|_{c_0})\), where \(c_0\) is the
space of real-valued sequences \(a=(a_n)_n\) with \(\lim_n a_n=0\), and \(\|\cdot\|_{c_0}\) is the supremum norm
\(\|a\|_{c_0}\coloneqq\sup_n|a_n|\). Given that \(\|\phi(a)\|_{\rm L}=\|a\|_{c_0}\) for every \(a\in c_{00}\) by construction,
we have that \(\phi\) can be uniquely extended to a linear isometry \(\bar\phi\colon c_0\to{\rm L}\).
Since \(c_0\) cannot be embedded in a separable dual Banach space (see \cite[Theorem 6.3.7]{Albiac-Kalton}
or \cite[Theorem 4]{Bessaga-Pelczynksi58}), we can finally conclude that \({\rm L}\) is not isomorphic to a dual Banach space.
\end{proof}
\begin{remark}\label{rmk:inf_open_Hausd}{\rm
If \((X,\tau)\) is an infinite Hausdorff space, then there exists a sequence \((U_n)_{n\in\N}\) of pairwise disjoint
non-empty open subsets of \(X\). To prove this claim, we distinguish two cases. If \(X\) has infinitely many isolated points,
take a sequence \((x_n)_{n\in\N}\) of pairwise distinct isolated points of \(X\), and note that letting \(U_n\coloneqq\{x_n\}\)
for every \(n\in\N\) does the job. If \(X\) has only finitely many isolated points, then the set \(\tilde X\) of all accumulation
points is an open subset of \(X\) (by the Hausdorff assumption); since each neighbourhood of an accumulation point is infinite
(again, by the Hausdorff assumption), we can construct recursively a sequence \((U_n)_{n\in\N}\) of pairwise disjoint infinite
open subsets of \(\tilde X\), which are -- a fortiori -- open subsets of \(X\). The claim is proved.
\fr}\end{remark}
\begin{example}[An e.m.t.m.\ space whose reference measure is non-separable]\label{ex:non-sep_m}{\rm
Let \((X,\tau,\sfd_{\rm discr})\) be the product \(X\coloneqq[0,1]^{\mathfrak c}\) of the continuum of intervals
together with the product topology \(\tau\) and the discrete distance \(\sfd_{\rm discr}\). Since \((X,\tau)\)
is compact and Hausdorff, we know from Example \ref{ex:Schultz's_space} that \((X,\tau,\sfd_{\rm discr})\) is an
e.m.t.\ space. Moreover, we equip \((X,\tau)\) with the probability Radon measure \(\mm\) obtained as the product
of the one-dimensional Lebesgue measures; to be precise, the product measure of the Lebesgue measures is defined
on the product \(\sigma\)-algebra \(\bigotimes_{t\in\mathfrak c}\mathscr B([0,1])\), but it extends to a Radon measure
\(\mm\) on \(\mathscr B(X,\tau)\) thanks to \cite[Theorem 7.14.3]{Bogachev07}. However, the measure \(\mm\) of the
e.m.t.m.\ space \((X,\tau,\sfd_{\rm discr},\mm)\) is not separable, see \cite[Section 7.14(iv)]{Bogachev07}.
\fr}\end{example}
\subsubsection{Rectifiable arcs and path integrals}
Let \((X,\tau,\sfd)\) be an e.m.t.\ space. As in \cite[Section 2.2.1]{Savare22}, we endow the space \(C([0,1];(X,\tau))\) of all \(\tau\)-continuous curves
\(\gamma\colon[0,1]\to X\) with the compact-open topology \(\tau_C\) and with the extended distance \(\sfd_C\colon C([0,1];(X,\tau))\times C([0,1];(X,\tau))\to[0,+\infty]\),
which we define as
\[
\sfd_C(\gamma,\sigma)\coloneqq\sup_{t\in[0,1]}\sfd(\gamma_t,\sigma_t)\quad\text{ for every }\gamma,\sigma\in C([0,1];(X,\tau)).
\]
Then \((C([0,1];(X,\tau)),\tau_C,\sfd_C)\) is an extended metric-topological space \cite[Proposition 2.2.2]{Savare22}. We recall that a subbasis for the compact-open
topology \(\tau_C\) is given by the family of sets
\[
\big\{S(K,V)\;\big|\;K\subseteq[0,1]\text{ compact},\,V\in\tau\big\},
\]
where we denote \(S(K,V)\coloneqq\big\{\gamma\in C([0,1];(X,\tau))\,:\,\gamma(K)\subseteq V\big\}\).
\medskip

Following \cite[Section 2.2.2]{Savare22}, we denote by \(\Sigma\) the set of all continuous, non-decreasing, surjective maps \(\phi\colon[0,1]\to[0,1]\). Let us consider
the following equivalence relation on \(C([0,1];(X,\tau))\): given any \(\gamma,\sigma\in C([0,1];(X,\tau))\), we declare that \(\gamma\sim\sigma\) if and only if
there exist \(\phi_\gamma,\phi_\sigma\in\Sigma\) such that
\[
\gamma\circ\phi_\gamma=\sigma\circ\phi_\sigma.
\]
We endow the associated quotient space \({\rm A}(X,\tau)\coloneqq C([0,1];(X,\tau))/\sim\) with the quotient topology \(\tau_{\rm A}\) induced by \(\tau_C\). The elements of
\({\rm A}(X,\tau)\) are called \textbf{arcs}. We denote by \([\gamma]\in {\rm A}(X,\tau)\) the equivalence class of a curve \(\gamma\in C([0,1];(X,\tau))\).
We define the subspace \({\rm A}(X,\sfd)\subseteq{\rm A}(X,\tau)\) as
\[
{\rm A}(X,\sfd)\coloneqq\big\{[\gamma]\;\big|\;\gamma\in C([0,1];(X,\sfd))\big\}.
\]
Letting \(\sfd_{\rm A}\colon{\rm A}(X,\sfd)\times{\rm A}(X,\sfd)\to[0,+\infty]\) be the extended distance on \({\rm A}(X,\sfd)\) given by
\[
\sfd_{\rm A}(\gamma,\sigma)\coloneqq\inf\big\{\sfd_C(\tilde\gamma,\tilde\sigma)\;\big|\;\tilde\gamma,\tilde\sigma\in C([0,1];(X,\tau)),\,[\tilde\gamma]=\gamma,\,[\tilde\sigma]=\sigma\big\}
\quad\text{ for every }\gamma,\sigma\in{\rm A}(X,\sfd),
\]
we have that \(({\rm A}(X,\sfd),\tau_{\rm A},\sfd_{\rm A})\) is an extended metric-topological space \cite[Proposition 2.2.6]{Savare22}.
\medskip

Given a curve \(\gamma\in C([0,1];(X,\sfd))\) and any \(t\in[0,1]\), the \textbf{\(\sfd\)-variation} of \(\gamma\)
on \([0,t]\) is defined as
\[
V_\gamma(t)\coloneqq\sup\bigg\{\sum_{i=1}^n\sfd(\gamma_{t_i},\gamma_{t_{i-1}})\;\bigg|\;n\in\N,\,\{t_i\}_{i=0}^n\subseteq[0,1],\,
t_0<t_1<\ldots<t_n\bigg\}\in[0,+\infty].
\]
The \textbf{\(\sfd\)-length} of \(\gamma\) is defined as \(\ell(\gamma)\coloneqq V_\gamma(1)\in[0,+\infty]\). As in \cite[Lemma 2.2.8]{Savare22}, we set
\[
{\rm BVC}([0,1];(X,\sfd))\coloneqq\big\{\gamma\in C([0,1];(X,\sfd))\;\big|\;\ell(\gamma)<+\infty\big\}.
\]
Since \(\ell\) is \(\tau_C\)-lower semicontinuous, the space \({\rm BVC}([0,1];(X,\sfd))\) is an \(F_\sigma\) subset of \(C([0,1];(X,\tau))\). We say that
a curve \(\gamma\in{\rm BVC}([0,1];(X,\sfd))\) has \textbf{constant \(\sfd\)-speed} if \(V_\gamma(t)=\ell(\gamma)t\) holds for every \(t\in[0,1]\).
For any given \(\gamma\in{\rm BVC}([0,1];(X,\sfd))\), there exists a unique \(\ell(\gamma)\)-Lipschitz curve \(R_\gamma\in{\rm BVC}([0,1];(X,\sfd))\) having
constant \(\sfd\)-speed such that
\[
\gamma(t)=R_\gamma(\ell(\gamma)^{-1}V_\gamma(t))\quad\text{ for every }t\in[0,1],
\]
with the convention that \(\ell(\gamma)^{-1}V_\gamma(t)=0\) if \(\ell(\gamma)=0\). Then it holds that \([\gamma]=[R_\gamma]\) and we say that
\(R_\gamma\) is the \textbf{arc-length parameterisation} of \(\gamma\). The space of \textbf{rectifiable arcs} is given by
\[
{\rm RA}(X,\sfd)\coloneqq\big\{[\gamma]\;\big|\;\gamma\in{\rm BVC}([0,1];(X,\sfd))\big\}\subseteq{\rm A}(X,\sfd).
\]
Then \(({\rm RA}(X,\sfd),\tau_{\rm A},\sfd_{\rm A})\) is an extended metric-topological space. Given \(\gamma,\sigma\in{\rm BVC}([0,1];(X,\sfd))\), we have that
\([\gamma]=[\sigma]\) if and only if \(R_\gamma=R_\sigma\) \cite[Lemma 2.2.11(b)]{Savare22}, thus we can unambiguously write \(R_\gamma\) for \(\gamma\in{\rm RA}(X,\sfd)\).
Similarly, we can write \(\gamma_0\), \(\gamma_1\) and \(\ell(\gamma)\) for \(\gamma\in{\rm RA}(X,\sfd)\), and
\begin{equation}\label{eq:ell_lsc}
{\rm RA}(X,\sfd)\ni\gamma\mapsto\ell(\gamma)\quad\text{ is }\tau_{\rm A}\text{-lower semicontinuous,}
\end{equation}
see \cite[Lemma 2.2.11(d)]{Savare22}. Given any \(\gamma\in{\rm RA}(X,\sfd)\) and a Borel function \(f\colon(X,\tau)\to\R\) such that \(f\circ R_\gamma\in L^1(0,1)\) (or a Borel function \(f\colon X\to[0,+\infty]\)), the \textbf{path integral} of \(f\) over \(\gamma\) is given by
\[
\int_\gamma f\coloneqq\ell(\gamma)\int_0^1 f(R_\gamma(t))\,\d t.
\]
When \(f\) is bounded, \(({\rm RA}(X,\sfd),\tau_{\rm A})\ni\gamma\mapsto\int_\gamma f\in\R\) is Borel measurable \cite[Theorem 2.2.13(e)]{Savare22}.
\medskip

For any \(t\in[0,1]\), the \textbf{arc-length evaluation map} \(\hat{\sf e}_t\colon{\rm RA}(X,\sfd)\to X\) \textbf{at time \(t\)} is defined as
\[
\hat{\sf e}_t(\gamma)\coloneqq R_\gamma(t)\quad\text{ for every }\gamma\in{\rm RA}(X,\sfd).
\]
We also introduce the \textbf{arc-length evaluation map} \(\hat{\sf e}\colon{\rm RA}(X,\sfd)\times[0,1]\to X\), given by
\begin{equation}\label{eq:def_e}
\hat{\sf e}(\gamma,t)\coloneqq\hat{\sf e}_t(\gamma)=R_\gamma(t)\quad\text{ for every }\gamma\in{\rm RA}(X,\sfd)\text{ and }t\in[0,1].
\end{equation}
Let us now prove some technical results, concerning the measurability properties of \(\hat{\sf e}\) and of a map that
describes the derivative of a continuous Lipschitz function along rectifiable arcs, which we will use in Section \ref{s:W=B}.
\begin{lemma}\label{lem:hat_e_univ_Lusin_meas}
Let \((X,\tau,\sfd)\) be an e.m.t.\ space. Then it holds that \(\hat{\sf e}\colon{\rm RA}(X,\sfd)\times[0,1]\to X\) is universally Lusin measurable
(when \({\rm RA}(X,\sfd)\times[0,1]\) is equipped with the product topology).
\end{lemma}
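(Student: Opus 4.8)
The plan is to deduce Lusin $\mu$-measurability of $\hat{\sf e}$ by exhibiting it, up to a $\mu$-negligible set, as a pointwise limit of continuous maps, and then invoking Remark \ref{rmk:about_univ_Lusin_meas}. Fix a Radon measure $\mu\in\mathcal M_+({\rm RA}(X,\sfd)\times[0,1])$. The first step is to reduce to a situation where the arc lengths are controlled: since ${\rm RA}(X,\sfd)\ni\gamma\mapsto\ell(\gamma)$ is $\tau_{\rm A}$-lower semicontinuous by \eqref{eq:ell_lsc}, hence Borel, the sets $\{\gamma:\ell(\gamma)\le m\}\times[0,1]$ are Borel and exhaust ${\rm RA}(X,\sfd)\times[0,1]$ up to a $\mu$-null set as $m\to\infty$. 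Thus it suffices to prove Lusin measurability of the restriction of $\hat{\sf e}$ to each such set (a standard compact-exhaustion/Lusin argument stitches the pieces back together), and on this set every arc is $m$-Lipschitz in arc-length parameterisation, so $t\mapsto R_\gamma(t)$ is uniformly (in $\gamma$) equicontinuous with respect to $\sfd$.

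The key step is to approximate $\hat{\sf e}$ by continuous maps using a dyadic interpolation of the endpoint/evaluation maps. For each $k\in\N$, define $\hat{\sf e}^k\colon\{\ell\le m\}\times[0,1]\to X$ by a formula that, on each dyadic interval $[j2^{-k},(j+1)2^{-k}]$, interpolates between $\hat{\sf e}_{j2^{-k}}$ and $\hat{\sf e}_{(j+1)2^{-k}}$. The point is that $\gamma\mapsto\hat{\sf e}_t(\gamma)=R_\gamma(t)$ is $\tau_{\rm A}$-to-$\tau$ continuous for each fixed $t$ — this should follow from the construction of $\tau_{\rm A}$ and the basic properties of arc-length reparameterisation in \cite{Savare22} (a subbasic open set $S(K,V)$ pulls back appropriately), and one can also invoke the fact that $({\rm RA}(X,\sfd),\tau_{\rm A},\sfd_{\rm A})$ is an e.m.t.\ space so that $\tau$-continuity of $\hat{\sf e}_t$ is detected by testing against $\Lip_b$. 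To make the interpolation itself land in $X$ (which has no linear structure), one instead argues with continuity directly: for $(\gamma,t)$ with $t\in[j2^{-k},(j+1)2^{-k}]$ set $\hat{\sf e}^k(\gamma,t)\coloneqq\hat{\sf e}_{\phi_k(t)}(\gamma)$ where $\phi_k$ is a suitable continuous selection (e.g.\ $\phi_k(t)=j2^{-k}$, the left dyadic endpoint), so that $\hat{\sf e}^k$ is continuous in $\gamma$ for $t$ in the open dyadic subintervals and one checks continuity across the grid points — actually $\phi_k$ is only upper semicontinuous, so it is cleaner to take $\hat{\sf e}^k(\gamma,t)\coloneqq\hat{\sf e}_{\psi_k(t)}(\gamma)$ with $\psi_k$ the continuous piecewise-linear map that agrees with the identity on the dyadic grid — no wait, that is the identity. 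The honest route: on the Lipschitz-bounded piece, $\sfd(\hat{\sf e}(\gamma,t),\hat{\sf e}(\gamma,s))\le m|t-s|$, so $\hat{\sf e}$ is $\sfd$-continuous in $t$ uniformly in $\gamma$; combined with $\tau$-continuity in $\gamma$ for each fixed $t$, a standard diagonal argument produces continuity of the joint map on $\{\ell\le m\}\times[0,1]$ when $X$ carries $\sfd$ — but we need continuity for $\tau$, and joint $\tau$-continuity need not hold.

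Given that subtlety, the cleanest plan is: (i) restrict to $\{\ell\le m\}\times[0,1]$; (ii) for fixed $t$, $\hat{\sf e}_t$ is $\tau_{\rm A}$-$\tau$ continuous, hence Lusin $\nu$-measurable for the $t$-marginal issues; (iii) use a countable dense set $D\subseteq[0,1]$ and define $\hat{\sf e}^k$ by $\hat{\sf e}^k(\gamma,t)=\hat{\sf e}_{t_k}(\gamma)$ where $t_k\in D$ is the nearest point to $t$ at dyadic level $k$ — this is a map that is $\tau$-continuous in $\gamma$ and locally constant (hence continuous) in $t$ away from the finitely many midpoints, so on the compact piece it is continuous off a set of the form ${\rm RA}\times(\text{finite set})$, which is $\mu$-negligible or at least Lusin-handled by a further compact exhaustion in the $t$-variable; (iv) by the arc-length Lipschitz bound, $\hat{\sf e}^k(\gamma,t)\to\hat{\sf e}(\gamma,t)$ in $\sfd$, a fortiori in $\tau$, for every $(\gamma,t)$; (v) apply the first claim of Remark \ref{rmk:about_univ_Lusin_meas} (pointwise limits of Lusin measurable maps are Lusin measurable) to conclude. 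The main obstacle I anticipate is precisely step (iii)–(v): making the approximating maps genuinely continuous (not merely Borel) on a co-negligible compact set, since the naive ``nearest dyadic point'' selection has jump discontinuities in $t$; the fix is to absorb those grid hyperplanes into the exceptional set using that $\mu$ assigns them small mass after an inner-regularity/compact-exhaustion argument, or alternatively to replace ``nearest point'' by a genuinely continuous selection that converges pointwise, exploiting that $t\mapsto\hat{\sf e}_t(\gamma)$ is itself $\tau$-continuous so that $\hat{\sf e}$ is separately continuous and bounded-Lipschitz in $t$, which by a Baire-type argument is already enough for universal Lusin measurability.
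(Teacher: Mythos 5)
There is a genuine gap, and it sits exactly at the point you flag as ``should follow from the construction of \(\tau_{\rm A}\)'': for a fixed interior time \(t\in(0,1)\), the arc-length evaluation \(\gamma\mapsto\hat{\sf e}_t(\gamma)=R_\gamma(t)\) is \emph{not} \(\tau_{\rm A}\)-to-\(\tau\) continuous. The reason is that the constant-speed reparameterisation \(\gamma\mapsto R_\gamma\) is not a continuous operation: the length functional \(\ell\) is only \(\tau_{\rm A}\)-lower semicontinuous, so along a convergent net the length can drop in the limit (think of increasingly oscillating curves converging to a segment), and then the point at arc-length fraction \(t\) of \(\gamma^i\) has no reason to converge to the point at arc-length fraction \(t\) of the limit arc. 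Restricting to \(\{\ell\le m\}\) does not repair this: it makes \(\ell\) bounded (which correctly gives you the equi-Lipschitz estimate \(\sfd(R_\gamma(t),R_\gamma(s))\le m|t-s|\) and hence the \(t\)-direction of your argument), but \(\ell\) can still be discontinuous on that closed set, so your approximants \(\hat{\sf e}^k(\gamma,t)=\hat{\sf e}_{t_k}(\gamma)\) are not continuous in \(\gamma\) on any co-negligible compact set without further work. The concluding ``separate continuity plus a Baire-type argument'' fallback therefore has no ground to stand on, since separate continuity in \(\gamma\) is precisely what fails.

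The correct repair — and this is the route the paper takes — is to apply Lusin's theorem not to \(\hat{\sf e}\) directly but to the lower semicontinuous function \((\gamma,t)\mapsto\ell(\gamma)\) (which is universally Lusin measurable by Remark \ref{rmk:about_univ_Lusin_meas}): given \(\mu\) and \(\varepsilon>0\), one obtains a compact set \(\mathcal K_\varepsilon\) of nearly full measure on which \(\ell\) is \emph{continuous}. The relevant continuity statement from \cite[Theorem 2.2.13(a)]{Savare22} is conditional: if \(\gamma^i\to\gamma\) in \(\tau_{\rm A}\) \emph{and} \(\ell(\gamma^i)\to\ell(\gamma)\), then \(R_{\gamma^i}\to R_\gamma\) in the compact-open topology, whence \(\hat{\sf e}(\gamma^i,t^i)\to\hat{\sf e}(\gamma,t)\). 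On \(\mathcal K_\varepsilon\) the hypothesis on lengths is automatic, so \(\hat{\sf e}|_{\mathcal K_\varepsilon}\) is jointly continuous and no dyadic approximation in \(t\) is needed at all. Your scheme could only be salvaged by first proving that each \(\hat{\sf e}_t\) is universally Lusin measurable, and the natural proof of that is again the restriction to sets where \(\ell\) is continuous — i.e.\ the paper's argument.
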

\begin{proof}
First of all, we claim that if \(((\gamma^i,t^i))_{i\in I}\subseteq{\rm RA}(X,\sfd)\times[0,1]\) is a given net converging to \((\gamma,t)\in{\rm RA}(X,\sfd)\times[0,1]\)
such that \(\lim_{i\in I}\ell(\gamma^i)=\ell(\gamma)\), then
\begin{equation}\label{eq:meas_hat_e_aux}
\lim_{i\in I}\hat{\sf e}(\gamma^i,t^i)=\hat{\sf e}(\gamma,t).
\end{equation}
To prove it, fix a neighbourhood \(V\in\tau\) of \(\hat{\sf e}(\gamma,t)\). By the complete regularity of \(\tau\), we can find a neighbourhood \(U\in\tau\) of \(R_\gamma(t)=\hat{\sf e}(\gamma,t)\)
whose \(\tau\)-closure \(\bar U\) is contained in \(V\). Since the curve \(R_\gamma\colon[0,1]\to X\) is \(\tau\)-continuous and \(\lim_{i\in I}t^i=t\), there exists \(i_0\in I\) such that
\(R_\gamma(t^i)\in U\) for every \(i\in I\) with \(i_0\preceq i\). Letting \(K\) denote the closure of \(\{t^i\,:\,i\in I,\,i_0\preceq i\}\), which is a compact subset of \([0,1]\), we have
that \(t\in K\) and \(R_\gamma(s)\in\bar U\subseteq V\) for every \(s\in K\), thus \(S(K,V)\in\tau_C\) is a neighbourhood of \(R_\gamma\). Since \(\lim_{i\in I}R_{\gamma^i}=R_\gamma\)
in \(\big(C([0,1];(X,\tau)),\tau_C\big)\) by \cite[Theorem 2.2.13(a)]{Savare22}, we deduce that there exists \(i_1\in I\) with \(i_0\preceq i_1\) and \(R_{\gamma^i}\in S(K,V)\) for every
\(i\in I\) with \(i_1\preceq i\). It follows that \(\hat{\sf e}(\gamma^i,t^i)=R_{\gamma^i}(t^i)\in V\) for every \(i\in I\) with \(i_1\preceq i\), which shows that \eqref{eq:meas_hat_e_aux} holds.

Now let \(\mu\in\mathcal M_+({\rm RA}(X,\sfd)\times[0,1])\) be fixed. By \eqref{eq:ell_lsc}, the map
\({\rm RA}(X,\sfd)\times[0,1]\ni(\gamma,t)\mapsto\ell(\gamma)\) is lower semicontinuous, thus it is Lusin \(\mu\)-measurable by
Remark \ref{rmk:about_univ_Lusin_meas}. Hence, for any \(\varepsilon>0\) we can find a compact set
\(\mathcal K_\varepsilon\subseteq{\rm RA}(X,\sfd)\times[0,1]\) such that
\(\mathcal K_\varepsilon\ni(\gamma,t)\mapsto\ell(\gamma)\) is continuous. The first part of the proof then gives that
\(\hat{\sf e}|_{\mathcal K_\varepsilon}\) is continuous, so that \(\hat{\sf e}\) is universally Lusin measurable.
\end{proof}
\begin{corollary}\label{cor:D_f_univ_Lusin_meas}
Let \((X,\tau,\sfd)\) be an e.m.t.\ space. Let \(f\in\Lip_b(X,\tau,\sfd)\) be given. Let us define the function
\({\rm D}_f\colon{\rm RA}(X,\sfd)\times[0,1]\to\R\) as
\[
{\rm D}_f(\gamma,t)\coloneqq\limsup_{h\to 0}\frac{f(R_\gamma(t+h))-f(R_\gamma(t))}{h}
\quad\text{ for every }\gamma\in{\rm RA}(X,\sfd)\text{ and }t\in[0,1].
\]
Then \({\rm D}_f\) is universally Lusin measurable.
\end{corollary}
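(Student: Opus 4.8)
The plan is to express $\mathrm{D}_f$ as a pointwise (in fact $\limsup$) limit of a countable family of universally Lusin measurable functions, and then invoke the closure of universal Lusin measurability under pointwise limits of sequences (Remark \ref{rmk:about_univ_Lusin_meas}). The key observation is that, for each fixed $h\neq 0$, the difference quotient
\[
\mathrm{D}_f^h(\gamma,t)\coloneqq\frac{f(R_\gamma(t+h))-f(R_\gamma(t))}{h}
\]
is built from compositions of the already-established universally Lusin measurable map $\hat{\sf e}$ (Lemma \ref{lem:hat_e_univ_Lusin_meas}) with the $\tau$-continuous function $f$ and with the continuous shift $(\gamma,t)\mapsto(\gamma,t+h)$ on the relevant subdomain. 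Since $f\circ\hat{\sf e}\colon{\rm RA}(X,\sfd)\times[0,1]\to\R$ is the composition of a continuous function with a universally Lusin measurable map, it is itself universally Lusin measurable (restricting to the compact set where $\hat{\sf e}$ is continuous, $f\circ\hat{\sf e}$ is continuous too); hence each $\mathrm{D}_f^h$ is universally Lusin measurable on the subset of pairs $(\gamma,t)$ for which $t+h\in[0,1]$.

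First I would fix a countable set of increments, say $h_n\to 0$ with $h_n\neq 0$, and note that
\[
\mathrm{D}_f(\gamma,t)=\limsup_{n\to\infty}\mathrm{D}_f^{h_n}(\gamma,t)
\]
whenever the limsup along the full real parameter agrees with the limsup along this sequence. In general a $\limsup$ over $h\to 0$ need not equal the $\limsup$ over a countable subsequence, but here the map $h\mapsto f(R_\gamma(t+h))$ is continuous (as $R_\gamma$ is $\tau$-continuous and $f$ is $\tau$-continuous), so $h\mapsto \mathrm{D}_f^h(\gamma,t)$ is continuous in $h$ on its domain, and therefore its $\limsup$ as $h\to 0$ is realized along any sequence $h_n\to 0$ that is, say, dense near $0$ — or more simply, one takes $h_n$ ranging over a sequence converging to $0$ through both signs and uses continuity in $h$ to pass from the countable $\limsup$ to the full one. (One must be slightly careful about the endpoints $t\in\{0,1\}$, where only one-sided increments are admissible; there the same argument applies using one-sided sequences $h_n\to 0^+$ or $h_n\to 0^-$.) The measurability of $\mathrm{D}_f$ is not affected by its values on the $\mu$-null issues at the two endpoints for any given $\mu$, but in fact no such exception is needed since the argument localizes.

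The main obstacle I anticipate is the bookkeeping around the domain of $\mathrm{D}_f^h$ and the endpoint behaviour: each $\mathrm{D}_f^{h_n}$ is only defined on the set $A_n\coloneqq\{(\gamma,t): t+h_n\in[0,1]\}$, which is a closed subset of ${\rm RA}(X,\sfd)\times[0,1]$, and one needs the $\limsup$ over $n$ to make sense at every $(\gamma,t)$, i.e.\ for each $(\gamma,t)$ the point must lie in $A_n$ for all large $n$ — which holds for $t\in(0,1)$ but requires care at $t=0,1$. The clean fix is to run the argument separately on ${\rm RA}(X,\sfd)\times(0,1)$ using two-sided sequences, on ${\rm RA}(X,\sfd)\times\{0\}$ using $h_n\to 0^+$, and on ${\rm RA}(X,\sfd)\times\{1\}$ using $h_n\to 0^-$; each piece is a Borel (indeed $F_\sigma$ or closed or open) subset, universal Lusin measurability is stable under restriction and under countable partition into Borel pieces, and reassembling gives the claim. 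The other routine point to verify is that the composition of a $\tau$-continuous real function with a universally Lusin measurable map into $(X,\tau)$ is universally Lusin measurable, which follows directly from the definition by restricting to the compact sets furnished by Lemma \ref{lem:hat_e_univ_Lusin_meas}.
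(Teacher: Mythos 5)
Your overall strategy -- writing ${\rm D}_f$ in terms of countably many building blocks of the form $(\gamma,t)\mapsto f(R_\gamma(t+h))$, each universally Lusin measurable by composition with $\hat{\sf e}$, and then invoking the stability of universal Lusin measurability under pointwise sequential limits from Remark \ref{rmk:about_univ_Lusin_meas} -- is exactly the right one, and your remarks on the endpoint bookkeeping and on the composition of a continuous function with a universally Lusin measurable map are correct. However, the specific reduction of the $\limsup$ to countable data is flawed. You claim that, since $h\mapsto{\rm D}_f^h(\gamma,t)$ is continuous in $h$, the identity ${\rm D}_f(\gamma,t)=\limsup_n{\rm D}_f^{h_n}(\gamma,t)$ holds for a single sequence $h_n\to 0$. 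This is false even for continuous functions of $h$: for $g(h)=\sin(1/h)$ one has $\limsup_{h\to 0}g(h)=1$, yet $\limsup_n g(h_n)=0$ along $h_n=1/(n\pi)$. A sequence converging to $0$ cannot have tails that are dense in a fixed punctured neighbourhood of $0$, so no choice of a single sequence $h_n\to 0$ repairs this; and if instead $(h_n)_n$ enumerates a dense subset of $(-1,1)\setminus\{0\}$ without converging to $0$, then every tail is still dense in the whole punctured interval and $\limsup_n g(h_n)$ computes $\sup_{0<|h|<1}g(h)$ rather than $\limsup_{h\to 0}g(h)$. What continuity in $h$ actually buys is the ability to replace a supremum over an \emph{interval} by the supremum over a countable dense subset of that interval; it does not let you compute a $\limsup_{h\to 0}$ along one sequence.

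The correct (and easy) fix is a two-index reduction: write
\[
{\rm D}_f(\gamma,t)=\lim_{n\to\infty}\,\sup\bigg\{\frac{f(R_\gamma(t+h))-f(R_\gamma(t))}{h}\;\bigg|\;h\in(\mathbb Q\setminus\{0\})\cap(-1/n,1/n)\bigg\},
\]
where the inner supremum over all of $0<|h|<1/n$ has been replaced by the supremum over the rationals using the continuity of $h\mapsto f(R_\gamma(t+h))$ (together with the obvious restriction to those $h$ with $t+h\in[0,1]$ near the endpoints). Each inner supremum is an increasing limit of finite maxima of universally Lusin measurable functions, hence universally Lusin measurable by Remark \ref{rmk:about_univ_Lusin_meas}, and the outer limit in $n$ preserves this property by the same remark. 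This is precisely the route the paper takes; with this replacement your argument goes through.
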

\begin{proof}
Note that \({\rm D}_f(\gamma,t)=\lim_{\N\ni n\to\infty}{\rm D}_f^n(\gamma,t)\) for every
\((\gamma,t)\in{\rm RA}(X,\sfd)\times[0,1]\), where we set
\[
{\rm D}_f^n(\gamma,t)\coloneqq\sup\bigg\{\frac{f(R_\gamma(t+h))-f(R_\gamma(t))}{h}\;\bigg|\;h\in(\mathbb Q\setminus\{0\})\cap(-1/n,1/n)\bigg\}
\]
for brevity. Fix \(n\in\N\). Let us enumerate the elements of \((\mathbb Q\setminus\{0\})\cap(-1/n,1/n)\) as \((q_i)_{i\in\N}\). Then
\[
{\rm D}_f^n(\gamma,t)=\lim_{k\to\infty}\max\bigg\{\frac{f(R_\gamma(t+q_i))-f(R_\gamma(t))}{q_i}\;\bigg|\;i=1,\ldots,k\bigg\}
\quad\text{ for all }(\gamma,t)\in{\rm RA}(X,\sfd)\times[0,1].
\]
Since the map \(\hat{\sf e}\) is universally Lusin measurable by Lemma \ref{lem:hat_e_univ_Lusin_meas}, one can easily deduce that
each function \((\gamma,t)\mapsto\max_{i\leq k}(f(R_\gamma(t+q_i))-f(R_\gamma(t)))/q_i\) is universally Lusin measurable.
By taking Remark \ref{rmk:about_univ_Lusin_meas} into account, we can finally conclude that \({\rm D}_f\) is universally Lusin measurable.
\end{proof}

Given \(\gamma\in{\rm RA}(X,\sfd)\) and \(f\in\Lip_b(X,\tau,\sfd)\), we have that \(f\circ R_\gamma\colon[0,1]\to\R\)
is a Lipschitz function, thus in particular it is \(\mathscr L^1\)-a.e.\ differentiable. Therefore, it holds that
\begin{equation}\label{eq:prop_D_f}
{\rm D}_f(\gamma,t)=(f\circ R_\gamma)'(t)\quad\text{ for }\mathscr L^1\text{-a.e.\ }t\in[0,1].
\end{equation}
In particular, it holds that
\begin{equation}\label{eq:ineq_D_f}
|{\rm D}_f(\gamma,t)|\leq\ell(\gamma)(\lip_\sfd(f)\circ R_\gamma)(t)\quad\text{ for }\mathscr L^1\text{-a.e.\ }t\in[0,1].
\end{equation}
\subsubsection{Uniform structure of an extended metric-topological space}
We assume the reader is familiar with the basics of the theory of uniform spaces, for which we refer e.g.\ to \cite{Bourbaki,Bourbaki2}.
It is well known that every completely regular topology is induced by a uniform structure (in fact, completely regular topological spaces are
exactly the uniformisable topological spaces). In the setting of e.m.t.\ spaces, we make a canonical choice of such a uniform structure:
\begin{definition}[Canonical uniform structure of an e.m.t.\ space]
Let \((X,\tau,\sfd)\) be an e.m.t.\ space. Then we define the \textbf{canonical uniformity} of \((X,\tau,\sfd)\) as the uniform structure
\(\mathfrak U_{\tau,\sfd}\) on \(X\) that is induced by the family of semidistances \(\{\delta_f:f\in\Lip_{b,1}(X,\tau,\sfd)\}\), which are defined as
\[
\delta_f(x,y)\coloneqq|f(x)-f(y)|\quad\text{ for every }f\in\Lip_{b,1}(X,\tau,\sfd)\text{ and }x,y\in X.
\]
\end{definition}

It can be readily checked that the following properties are verified:
\begin{itemize}
\item The topology induced by \(\mathfrak U_{\tau,\sfd}\) coincides with \(\tau\).
\item The topology \(\tau\) is metrisable if and only if \(\mathfrak U_{\tau,\sfd}\) has a countable basis of entourages.
\end{itemize}
Moreover, we denote by \(\mathfrak B_{\tau,\sfd}\subseteq\mathfrak U_{\tau,\sfd}\) the family of all \emph{open symmetric entourages} of \(\mathfrak U_{\tau,\sfd}\), i.e.
\[
\mathfrak B_{\tau,\sfd}\coloneqq\big\{\mathcal U\in\mathfrak U_{\tau,\sfd}\cap(\tau\times\tau)\;\big|\;
(y,x)\in\mathcal U\text{ for every }(x,y)\in\mathcal U\big\}.
\]
It holds that \(\mathfrak B_{\tau,\sfd}\) is a basis of entourages for \(\mathfrak U_{\tau,\sfd}\). In the case where \(\tau\) is metrisable,
it is possible to find a countable basis of entourages for \(\mathfrak U_{\tau,\sfd}\) consisting of elements of \(\mathfrak B_{\tau,\sfd}\).
\begin{remark}{\rm
Let \(f\in\Lip_b(X,\tau,\sfd)\) and \(\mathcal U\in\mathfrak B_{\tau,\sfd}\) be given. Then we claim that
\[
\Lip(f,\mathcal U[\cdot],\sfd)\colon X\to[0,\Lip(f,\sfd)]\quad\text{ is }\tau\text{-lower semicontinuous,}
\]
where \(\mathcal U[x]\coloneqq\{y\in X:(x,y)\in\mathcal U\}\) for all \(x\in X\). Indeed, \(\mathcal U[y]\cap\mathcal U[z]\in\tau\)
for every \(y,z\in X\) and
\[
\Lip(f,\mathcal U[x],\sfd)=\sup\bigg\{\frac{|f(y)-f(z)|}{\sfd(y,z)}\;\bigg|\;
y,z\in X,\,y\neq z,\,x\in\mathcal U[y]\cap\mathcal U[z]\bigg\}\quad\text{ for every }x\in X,
\]
so that the function \(\Lip(f,\mathcal U[\cdot],\sfd)\) is \(\tau\)-lower semicontinuous thanks to Remark \ref{rmk:suff_cond_sc}.
\fr}\end{remark}

Let us now discuss how the canonical uniform structure behaves under restriction of the e.m.t.\ space. Let \((X,\tau,\sfd)\) be a given e.m.t.\ space and fix
\(E\in\mathscr B(X,\tau)\). Consider the restricted e.m.t.\ space \((E,\tau_E,\sfd_E)\) (as in \eqref{eq:restr_emtm}). Then it holds that
\begin{equation}\label{eq:unif_struct_restr}
\mathfrak U_{\tau_E,\sfd_E}=\{\mathcal U|_{E\times E}\;|\;\mathcal U\in\mathfrak U_{\tau,\sfd}\},\qquad
\mathfrak B_{\tau_E,\sfd_E}=\{\mathcal U|_{E\times E}\;|\;\mathcal U\in\mathfrak B_{\tau,\sfd}\}.
\end{equation}
The first identity follows easily from the definition of canonical uniformity. The second identity follows from \(\tau_{E\times E}=\tau_E\times\tau_E\)
and from the fact that \(\mathcal U\cap\mathcal U^{-1}\in\mathfrak B_{\tau,\sfd}\) for every \(\mathcal U\in\mathfrak U_{\tau,\sfd}\cap(\tau\times\tau)\),
where we set \(\mathcal U^{-1}\coloneqq\{(y,x):(x,y)\in\mathcal U\}\).
\subsection{Sobolev spaces \texorpdfstring{\(H^{1,p}\)}{H1p} via relaxation}\label{s:H1p}
The first notion of Sobolev space over an e.m.t.m.\ space we consider is the one obtained by \emph{relaxation}, which was introduced
in \cite[Section 3.1]{Savare22} as a generalisation of \cite{Cheeger00,AmbrosioGigliSavare11,AmbrosioGigliSavare11-3}. A function
\(f\in L^p(\mm)\) is declared to be in the Sobolev space \(H^{1,p}(\mathbb X)\) if it is the \(L^p(\mm)\)-limit of a sequence
\((f_n)_n\) of functions in \(\Lip_b(X,\tau,\sfd)\) whose asymptotic slopes \((\lip_\sfd(f_n))_n\) form a bounded sequence in \(L^p(\mm)\).
Namely, following \cite[Definitions 3.1.1 and 3.1.3]{Savare22}:
\begin{definition}[The Sobolev space \(H^{1,p}(\mathbb X)\)]
Let \(\mathbb X=(X,\tau,\sfd,\mm)\) be an e.m.t.m.\ space and \(p\in(1,\infty)\). Then we define the \textbf{Cheeger \(p\)-energy functional}
\(\mathcal E_p\colon L^p(\mm)\to[0,+\infty]\) of \(\mathbb X\) as
\[
\mathcal E_p(f)\coloneqq\inf\bigg\{\liminf_{n\to\infty}\frac{1}{p}\int\lip_\sfd(f_n)^p\,\d\mm\;\bigg|\;(f_n)_n\subseteq\Lip_b(X,\tau,\sfd),\,f_n\to f\text{ in }L^p(\mm)\bigg\}
\]
for all \(f\in L^p(\mm)\). Then we define the \textbf{Sobolev space} \(H^{1,p}(\mathbb X)\) as the finiteness domain of \(\mathcal E_p\), i.e.
\[
H^{1,p}(\mathbb X)\coloneqq\big\{f\in L^p(\mm)\;\big|\;\mathcal E_p(f)<+\infty\big\}.
\]
\end{definition}

The Cheeger \(p\)-energy functional is convex, \(p\)-homogeneous and \(L^p(\mm)\)-lower semicontinuous. The vector subspace \(H^{1,p}(\mathbb X)\) of \(L^p(\mm)\)
is a Banach space with respect to the Sobolev norm
\[
\|f\|_{H^{1,p}(\mathbb X)}\coloneqq\big(\|f\|_{L^p(\mm)}^p+p\,\mathcal E_p(f)\big)^{1/p}\quad\text{ for every }f\in H^{1,p}(\mathbb X).
\]
Also, \(\mathcal E_p\) admits an integral representation, in terms of \emph{relaxed slopes} \cite[Definition 3.1.5]{Savare22}:
\begin{definition}[Relaxed slope]
Let \(\mathbb X=(X,\tau,\sfd,\mm)\) be an e.m.t.m.\ space and \(p\in(1,\infty)\). Let \(f\in L^p(\mm)\) be given. Then we say that a function \(G\in L^p(\mm)^+\)
is a \textbf{\(p\)-relaxed slope} of \(f\) if there exist a sequence \((f_n)_n\subseteq\Lip_b(X,\tau,\sfd)\) and a function \(\tilde G\in L^p(\mm)^+\) such that
the following hold:
\begin{itemize}
\item[\(\rm i)\)] \(f_n\to f\) strongly in \(L^p(\mm)\),
\item[\(\rm ii)\)] \(\lip_\sfd(f_n)\rightharpoonup\tilde G\) weakly in \(L^p(\mm)\),
\item[\(\rm iii)\)] \(\tilde G\leq G\) in the \(\mm\)-a.e.\ sense.
\end{itemize}
\end{definition}

Below, we collect many properties and calculus rules for \(p\)-relaxed slopes (see \cite[Section 3.1.1]{Savare22}).
\begin{itemize}
\item The set of all \(p\)-relaxed slopes of a given \(f\in H^{1,p}(\mathbb X)\) is a closed sublattice of \(L^p(\mm)\).
Its (unique) \(\mm\)-a.e.\ minimal element is denoted by \(|Df|_H\in L^p(\mm)^+\) and is called the \textbf{minimal \(p\)-relaxed slope} of \(f\).
\item The Cheeger \(p\)-energy functional can be represented as
\[
\mathcal E_p(f)=\frac{1}{p}\int|Df|_H^p\,\d\mm\quad\text{ for every }f\in H^{1,p}(\mathbb X).
\]
\item Given any \(f\in H^{1,p}(\mathbb X)\), there exists a sequence \((f_n)_n\subseteq\Lip_b(X,\tau,\sfd)\) such that \(f_n\to f\) and \(\lip_\sfd(f_n)\to|Df|_H\)
strongly in \(L^p(\mm)\).
\item \(\Lip_b(X,\tau,\sfd)\subseteq H^{1,p}(\mathbb X)\), and \(|Df|_H\leq\lip_\sfd(f)\) holds \(\mm\)-a.e.\ for every \(f\in\Lip_b(X,\tau,\sfd)\).
\item We have that \(|D(f+g)|_H\leq|Df|_H+|Dg|_H\) and \(|D(\lambda f)|_H=|\lambda||Df|_H\) hold \(\mm\)-a.e.\ for every \(f,g\in H^{1,p}(\mathbb X)\) and \(\lambda\in\R\).
\item \textsc{Locality property.} If \(f\in H^{1,p}(\mathbb X)\) and \(N\subseteq\R\) is a Borel set with \(\mathscr L^1(N)=0\), then
\[
|Df|_H=0\quad\text{ holds }\mm\text{-a.e.\ on }f^{-1}(N).
\]
In particular, \(|Df|_H=|Dg|_H\) holds \(\mm\)-a.e.\ on \(\{f=g\}\) for every \(f,g\in H^{1,p}(\mathbb X)\).
\item \textsc{Chain rule.} If \(f\in H^{1,p}(\mathbb X)\) and \(\phi\in\Lip_b(\R)\), then \(\phi\circ f\in H^{1,p}(\mathbb X)\) and
\[
|D(\phi\circ f)|_H\leq|\phi'|\circ f\,|Df|_H\quad\text{ holds }\mm\text{-a.e.\ on }X.
\]
\item \textsc{Leibniz rule.} If \(f,g\in H^{1,p}(\mathbb X)\cap L^\infty(\mm)\) are given, then \(fg\in H^{1,p}(\mathbb X)\) and
\[
|D(fg)|_H\leq|f||Dg|_H+|g||Df|_H\quad\text{ holds }\mm\text{-a.e.\ on }X.
\]
\end{itemize}

Minimal \(p\)-relaxed slopes are induced by a linear \emph{differential} operator \(\d\colon H^{1,p}(\mathbb X)\to L^p(T^*\mathbb X)\), where \(L^p(T^*\mathbb X)\)
is a distinguished \(L^p(\mm)\)-Banach \(L^\infty(\mm)\)-module, called the \emph{\(p\)-cotangent module}:
\begin{theorem}[Cotangent module]\label{thm:cotg_mod}
Let \(\mathbb X=(X,\tau,\sfd,\mm)\) be an e.m.t.m.\ space and \(p\in(1,\infty)\). Then there exist an \(L^p(\mm)\)-Banach \(L^\infty(\mm)\)-module
\(L^p(T^*\mathbb X)\) (called the \textbf{\(p\)-cotangent module}) and a linear operator \(\d\colon H^{1,p}(\mathbb X)\to L^p(T^*\mathbb X)\) (called the \textbf{differential}) such that:
\begin{itemize}
\item[\(\rm i)\)] \(|\d f|=|Df|_H\) for every \(f\in H^{1,p}(\mathbb X)\).
\item[\(\rm ii)\)] The \(L^\infty(\mm)\)-linear span of \(\{\d f:f\in H^{1,p}(\mathbb X)\}\) is dense in \(L^p(T^*\mathbb X)\).
\end{itemize}
The pair \((L^p(T^*\mathbb X),\d)\) is unique up to a unique isomorphism: for any \((\mathscr M,\tilde\d)\) having the same properties,
there exists a unique isomorphism of \(L^p(\mm)\)-Banach \(L^\infty(\mm)\)-modules \(\Phi\colon L^p(T^*\mathbb X)\to\mathscr M\) such that
\[\begin{tikzcd}
H^{1,p}(\mathbb X) \arrow[r,"\d"] \arrow[rd,swap,"\tilde\d"] & L^p(T^*\mathbb X) \arrow[d,"\Phi"] \\
& \mathscr M
\end{tikzcd}\]
is a commutative diagram. Moreover, the differential \(\d\) satisfies the following \textbf{Leibniz rule}:
\begin{equation}\label{eq:Leibniz_rule_diff}
\d(fg)=f\cdot\d g+g\cdot\d f\quad\text{ for every }f,g\in H^{1,p}(\mathbb X)\cap L^\infty(\mm).
\end{equation}
\end{theorem}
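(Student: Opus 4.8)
The construction of the cotangent module follows the now-standard ``universal derivation'' procedure of Gigli (as in \cite[Theorem 2.2.1]{Gigli14}), adapted to the e.m.t.m.\ setting. The point is that all the calculus rules for the minimal relaxed slope $|D\cdot|_H$ listed above are exactly what is needed to make the construction go through, so the metric/topological subtleties of the extended setting are confined to \S\ref{s:H1p} and do not resurface here.

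\emph{Construction.} I would first form the set of formal finite sums $\mathcal{V} \coloneqq \big\{\sum_{i=1}^n \chi_{A_i}\cdot \d f_i \;\big|\; n\in\N,\ f_i\in H^{1,p}(\mathbb X),\ A_i\in\mathscr B(X,\tau)\big\}$, more precisely the free $L^\infty(\mm)$-module generated by the symbols $\{\d f : f\in H^{1,p}(\mathbb X)\}$ quotiented by the submodule generated by the relations $\d(f+g)-\d f-\d g$, $\d(\lambda f)-\lambda\,\d f$ and $\d f$ for $f$ constant. On $\mathcal V$ define the pointwise seminorm
\[
\Big|\sum_{i=1}^n \chi_{A_i}\cdot\d f_i\Big| \coloneqq \bigvee\Big\{\,\textstyle\sum_{i=1}^n \chi_{A_i\cap B_i}\,|D f_i|_H \;\Big|\; (B_i)_{i=1}^n\text{ Borel partition of }X\Big\}\in L^p(\mm)^+,
\]
the supremum being taken in the Dedekind-complete lattice $L^p(\mm)$; the partition refinement is precisely the device that encodes the locality property of $|D\cdot|_H$ and makes $|\cdot|$ well defined on equivalence classes. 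One checks using subadditivity, the locality property (``$|Df|_H=|Dg|_H$ $\mm$-a.e.\ on $\{f=g\}$'') and $p$-homogeneity that $|\cdot|$ satisfies $|v+w|\le|v|+|w|$ and $|h\cdot v|=|h||v|$ for $h\in L^\infty(\mm)$ (first for simple $h$, then by approximation). Then $\|v\|\coloneqq\||v|\|_{L^p(\mm)}$ is a seminorm; quotient by its kernel and take the completion to obtain $L^p(T^*\mathbb X)$, an $L^p(\mm)$-Banach $L^\infty(\mm)$-module. The differential $\d f$ is the image of the generating symbol; $\rm i)$ holds by construction (take the trivial partition $B_1=X$ and compare with locality) and $\rm ii)$ holds because the $L^\infty(\mm)$-span of the $\d f$ is dense in $\mathcal V$, and $\mathcal V$ is dense by construction — modulo the standard lemma that simple $L^\infty(\mm)$-coefficients suffice to approximate arbitrary ones in an $L^p(\mm)$-module. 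One must also verify that $\d$ is linear, i.e.\ that the relations we quotiented by are respected by $|\cdot|$; this again reduces to the calculus rules $|D(f+g)|_H\le|Df|_H+|Dg|_H$ and $|D(\lambda f)|_H=|\lambda||Df|_H$, together with locality to upgrade subadditivity to the genuine module relation.

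\emph{Uniqueness.} Given $(\mathscr M,\tilde\d)$ with the same two properties, define $\Phi$ on the $L^\infty(\mm)$-span of $\{\d f\}$ by $\Phi(\sum_i h_i\cdot\d f_i)\coloneqq\sum_i h_i\cdot\tilde\d f_i$. Well-definedness and the isometry $|\Phi(v)|=|v|$ both follow from the observation that the pointwise-norm formula displayed above is forced by properties $\rm i)$ and $\rm iii)$ (locality) in \emph{any} module with a universal differential — so the pointwise norm of $\sum_i h_i\cdot\tilde\d f_i$ in $\mathscr M$ equals that of $\sum_i h_i\cdot\d f_i$ in $L^p(T^*\mathbb X)$. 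Hence $\Phi$ is an $L^\infty(\mm)$-linear isometry on a dense subset, so it extends uniquely to an isometric module morphism $L^p(T^*\mathbb X)\to\mathscr M$, which is surjective by property $\rm ii)$ for $\mathscr M$ and injective by the isometry; and it is the unique such morphism making the triangle commute since it is determined on a dense set. Finally, the Leibniz rule \eqref{eq:Leibniz_rule_diff}: for $f,g\in H^{1,p}(\mathbb X)\cap L^\infty(\mm)$ one has $fg\in H^{1,p}(\mathbb X)$ and $|D(fg)|_H\le|f||Dg|_H+|g||Df|_H$ from the list above; applying this to suitable truncations/combinations (the standard trick: $|D(fg) - f\,Dg - g\,Df|$ is controlled by considering $fg$, $(f+g)^2$ etc., or more directly by a density/approximation argument reducing to $f,g\in\Lip_b$ where the classical Leibniz rule for $\lip_\sfd$ combined with the chain rule applies) yields $\d(fg)=f\cdot\d g+g\cdot\d f$ in $L^p(T^*\mathbb X)$.

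\emph{Main obstacle.} The delicate point is not any single step but the bookkeeping of \emph{locality}: one must be careful that the pointwise norm $|\cdot|$ descends to the quotient $\mathcal V$ and is genuinely $L^\infty(\mm)$-homogeneous, which requires repeatedly invoking ``$|Df|_H=|Dg|_H$ $\mm$-a.e.\ on $\{f=g\}$'' together with the Dedekind completeness and countable sup property of $L^p(\mm)$ to handle the lattice suprema over partitions. Everything else is formal. I would organise the proof so that this locality bookkeeping is done once, in a preliminary lemma establishing that the displayed formula for $|\sum_i h_i\cdot\d f_i|$ is well defined and satisfies $\rm ii)$--$\rm iii)$ of the module axioms; the rest then follows the template of \cite[Section 2.2]{Gigli14} essentially verbatim.
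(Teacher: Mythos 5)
Your proposal takes the same route as the paper, which does not reprove the construction but defers verbatim to Gigli's universal-derivation argument (\cite[Section 2.2.1]{Gigli14} and its variants); your sketch is essentially a reconstruction of that argument, and you correctly identify that the only inputs needed from the extended setting are the calculus rules for \(|D\cdot|_H\) collected before the statement, so no new metric--topological difficulty arises here.

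There is, however, one concrete error in your write-up: the displayed formula for the pointwise seminorm is not well defined on the algebraic quotient \(\mathcal V\). Take \(A_1=A_2=A\), \(f_1=f\) and \(f_2=-f\). The element \(\chi_A\cdot\d f+\chi_A\cdot\d(-f)\) is zero in \(\mathcal V\) (since \(\d(-f)=-\d f\) is among your relations), yet for \emph{every} Borel partition \((B_1,B_2)\) your formula evaluates to
\[
\chi_{A\cap B_1}|Df|_H+\chi_{A\cap B_2}|D(-f)|_H=\chi_A|Df|_H,
\]
so the supremum is \(\chi_A|Df|_H\neq 0\) whenever \(|Df|_H\) is nonzero on \(A\). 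The partition-refinement device handles the ambiguity of presenting the \emph{same} partition-indexed family in different ways, but it does not handle overlapping \(A_i\): on a region where several \(A_i\) overlap the norm must be \(|D(\sum_i f_i)|_H\), i.e.\ one has to first combine the functions on each cell of a common refinement and only then take the minimal relaxed slope. The standard fix (and what the cited construction actually does) is to work from the outset with elements indexed by Borel partitions, define the sum of two such elements via the common refinement together with the sum of the representatives on each cell, set \(|\{(f_i,A_i)\}_i|\coloneqq\sum_i\chi_{A_i}|Df_i|_H\), and quotient by the kernel of the induced \(L^p(\mm)\)-norm; well-definedness then follows from locality exactly as you describe, and the algebraic relations you listed are automatically contained in that kernel. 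With this correction the remainder of your sketch (completion, uniqueness via the forced formula for the pointwise norm of \(\sum_i h_i\cdot\tilde\d f_i\), and the Leibniz rule via truncation and the product estimate) goes through as in the references the paper points to.
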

\begin{proof}
This construction is due to Gigli \cite{Gigli14}. The existence and uniqueness of \((L^p(T^*\mathbb X),\d)\) can be proved by repeating verbatim
the proof of \cite[Section 2.2.1]{Gigli14} or \cite[Theorem/Definition 2.8]{Gigli17} (see also \cite[Theorem 4.1.1]{GP20}, or \cite[Theorem 3.2]{Gig:Pas:19}
for the case \(p\neq 2\)). Alternatively, one can apply \cite[Theorem 3.19]{Luc:Pas:23}. The Leibniz rule \eqref{eq:Leibniz_rule_diff} can be proved by arguing
as in \cite[Corollary 2.2.8]{Gigli14} (or as in \cite[Proposition 2.12]{Gigli17}, or as in \cite[Theorem 4.1.4]{GP20}, or as in \cite[Proposition 3.5]{Gig:Pas:19}).
\end{proof}

Following \cite[Definition 2.3.1]{Gigli14}, we then introduce the \emph{\(q\)-tangent module} of \(\mathbb X\) by duality:
\begin{definition}[Tangent module]\label{def:tg_mod}
Let \(\mathbb X=(X,\tau,\sfd,\mm)\) be an e.m.t.m.\ space. Let \(p,q\in(1,\infty)\) be conjugate exponents. Then we define the
\textbf{\(q\)-tangent module} \(L^q(T\mathbb X)\) of \(\mathbb X\) as
\[
L^q(T\mathbb X)\coloneqq L^p(T^*\mathbb X)^*.
\]
\end{definition}
Recall that \(L^q(T\mathbb X)\), when regarded as a Banach space, can be identified with the dual Banach space \(L^p(T^*\mathbb X)'\) through the isomorphism
\begin{equation}\label{eq:def_I_pX}
\textsc{I}_{p,\mathbb X}\coloneqq\textsc{Int}_{L^p(T^*\mathbb X)}\colon L^q(T\mathbb X)\to L^p(T^*\mathbb X)'
\end{equation}
defined in \eqref{eq:def_Int}. The following result can be proved by suitably adapting
\cite[Proposition 1.4.8]{Gigli14} (or by applying \cite[Proposition 3.20]{Luc:Pas:23}):
\begin{proposition}\label{prop:univ_prop_cotg_mod}
Let \(\mathbb X=(X,\tau,\sfd,\mm)\) be an e.m.t.m.\ space. Let \(p,q\in(1,\infty)\) be conjugate exponents.
Assume that \(\varphi\colon H^{1,p}(\mathbb X)\to L^1(\mm)\) is a linear map with the following property: there exists
a function \(G\in L^q(\mm)^+\) such that \(|\varphi(f)|\leq G|Df|_H\) holds for every \(f\in H^{1,p}(\mathbb X)\).
Then there exists a unique vector field \(v_\varphi\in L^q(T\mathbb X)\) such that
\[\begin{tikzcd}
H^{1,p}(\mathbb X) \arrow[d,swap,"\d"] \arrow[r,"\varphi"] & L^1(\mm) \\
L^p(T^*\mathbb X) \arrow[ur,swap,"v_\varphi"] &
\end{tikzcd}\]
is a commutative diagram. Moreover, it holds that \(|v_\varphi|\leq G\).
\end{proposition}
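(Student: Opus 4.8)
The plan is to reproduce, in the present framework, Gigli's construction of the vector field dual to an operator defined on \(H^{1,p}(\mathbb X)\), cf.\ \cite[Proposition 1.4.8]{Gigli14}: one defines \(v_\varphi\) first on a dense linear subspace of \(L^p(T^*\mathbb X)\) made of `simple' cotangent fields, and then extends it by continuity. Concretely, I would introduce the family \(\mathcal V\) of all elements of the form \(\sum_{i=1}^n\1_{E_i}\,\d f_i\), where \(\{E_i\}_{i=1}^n\subseteq\mathscr B(X,\tau)\) is a Borel partition of \(X\) and \(f_1,\dots,f_n\in H^{1,p}(\mathbb X)\). Using the linearity of \(\d\) (so that \(\sum_k a_k\,\d f_k=\d\big(\sum_k a_k f_k\big)\) for real coefficients \(a_k\)), together with Theorem \ref{thm:cotg_mod} ii) and the fact that every function in \(L^\infty(\mm)\) is a uniform limit of simple functions, one checks that \(\mathcal V\) is dense in \(L^p(T^*\mathbb X)\). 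On \(\mathcal V\) I would set \(V\big(\sum_{i=1}^n\1_{E_i}\,\d f_i\big)\coloneqq\sum_{i=1}^n\1_{E_i}\,\varphi(f_i)\in L^1(\mm)\).

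The first point to settle is that \(V\) is well defined: passing to a common refinement of two Borel partitions, it suffices to show that \(\1_E\,\d f=\1_E\,\d g\) --- equivalently \(|D(f-g)|_H=0\) \(\mm\)-a.e.\ on \(E\) --- forces \(\1_E\,\varphi(f)=\1_E\,\varphi(g)\), and this is immediate from the hypothesis \(|\varphi(f-g)|\leq G\,|D(f-g)|_H\), whose right-hand side vanishes \(\mm\)-a.e.\ on \(E\). The very same local estimate, applied on each piece \(E_i\) where \(\big|\sum_j\1_{E_j}\,\d f_j\big|=|\d f_i|=|Df_i|_H\), yields the pointwise bound \(|V(\omega)|\leq G\,|\omega|\) for every \(\omega\in\mathcal V\), whence by the H\"older inequality \(\|V(\omega)\|_{L^1(\mm)}\leq\|G\|_{L^q(\mm)}\,\|\omega\|_{L^p(T^*\mathbb X)}\). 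Since \(V\) is linear, bounded from \((\mathcal V,\|\cdot\|_{L^p(T^*\mathbb X)})\) to \(L^1(\mm)\), and commutes with multiplication by simple functions, it extends uniquely to a bounded linear operator \(v_\varphi\colon L^p(T^*\mathbb X)\to L^1(\mm)\), which --- by a routine density argument approximating \(L^\infty(\mm)\)-multipliers by simple ones --- turns out to be \(L^\infty(\mm)\)-linear.

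It remains to propagate the pointwise bound to all of \(L^p(T^*\mathbb X)\): picking \(\omega_n\in\mathcal V\) with \(\omega_n\to\omega\), one has \(v_\varphi(\omega_n)\to v_\varphi(\omega)\) in \(L^1(\mm)\) and \(|\omega_n|\to|\omega|\) in \(L^p(\mm)\), so along a common \(\mm\)-a.e.\ convergent subsequence the inequalities \(|v_\varphi(\omega_n)|\leq G\,|\omega_n|\) pass to the limit, giving \(|v_\varphi(\omega)|\leq G\,|\omega|\) \(\mm\)-a.e. Hence \(v_\varphi\) satisfies \eqref{eq:dual_mod} with the function \(G\in L^q(\mm)^+\) as a witness, so that \(v_\varphi\in L^p(T^*\mathbb X)^*=L^q(T\mathbb X)\) and \(|v_\varphi|\leq G\) by the definition of the module-dual pointwise norm. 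Choosing the trivial partition \(\{X\}\) gives \(v_\varphi(\d f)=\varphi(f)\) for every \(f\in H^{1,p}(\mathbb X)\), i.e.\ the required diagram commutes; uniqueness follows since two such vector fields agree on \(\{\d f:f\in H^{1,p}(\mathbb X)\}\), hence on its \(L^\infty(\mm)\)-linear span by \(L^\infty(\mm)\)-linearity, hence on all of \(L^p(T^*\mathbb X)\) by density and continuity. I do not anticipate a genuine obstacle beyond bookkeeping: the only delicate points are the density of \(\mathcal V\) in \(L^p(T^*\mathbb X)\) and the refinement-of-partitions argument behind the well-definedness of \(V\), both ultimately resting on the locality encoded in the inequality \(|\varphi(f)|\leq G\,|Df|_H\).
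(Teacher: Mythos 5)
Your argument is correct and is precisely the adaptation of Gigli's Proposition 1.4.8 that the paper itself invokes as its proof: define the operator on the dense class of simple covector fields $\sum_i\1_{E_i}\,\d f_i$, use the pointwise hypothesis $|\varphi(f)|\leq G|Df|_H$ both for well-definedness (via common refinements of partitions) and for the bound $|V(\omega)|\leq G|\omega|$, and extend by continuity and $L^\infty(\mm)$-linearity. All the delicate points you flag (density of $\mathcal V$, the refinement argument) are handled correctly, so there is nothing to add.
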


Exactly as in \cite[Section 2.3.1]{Gigli14}, the tangent module \(L^q(T\mathbb X)\) can be equivalently characterised
in terms of a suitable notion of derivation, which we call `Sobolev derivation' (in order to make a distinction with
the notion of `Lipschitz derivation', which we will introduce in Section \ref{s:Lipschitz_derivations}). Namely:
\begin{definition}[Sobolev derivation]
Let \(\mathbb X=(X,\tau,\sfd,\mm)\) be an e.m.t.m.\ space and \(q\in(1,\infty)\). Then by a \textbf{Sobolev derivation}
(of exponent \(q\)) on \(\mathbb X\) we mean a linear map \(\delta\colon H^{1,p}(\mathbb X)\to L^1(\mm)\) such that
the following conditions hold:
\begin{itemize}
\item[\(\rm i)\)] \(\delta(fg)=f\,\delta(g)+g\,\delta(f)\) for every \(f,g\in H^{1,p}(\mathbb X)\cap L^\infty(\mm)\).
\item[\(\rm ii)\)] There exists a function \(G\in L^q(\mm)^+\) such that \(|\delta(f)|\leq G|Df|_H\) for every \(f\in H^{1,p}(\mathbb X)\).
\end{itemize}
We denote by \(L^q_{\rm Sob}(T\mathbb X)\) the set of all Sobolev derivations of exponent \(q\) on \(\mathbb X\).
\end{definition}

The above definition is adapted from \cite[Definition 2.3.2]{Gigli14}. To any derivation
\(\delta\in L^q_{\rm Sob}(T\mathbb X)\), we associate the function \(|\delta|\in L^q(\mm)^+\) given by
\[
|\delta|\coloneqq\bigwedge\Big\{G\in L^q(\mm)^+\;\Big|\;|\delta(f)|\leq G|Df|_H\text{ for every }f\in H^{1,p}(\mathbb X)\Big\}.
\]
Note that \(|\delta(f)|\leq|\delta||Df|_H\) for all \(f\in H^{1,p}(\mathbb X)\). It is straightforward to check that
\((L^q_{\rm Sob}(T\mathbb X),|\cdot|)\) is an \(L^q(\mm)\)-Banach \(L^\infty(\mm)\)-module. The latter can be identified with the
tangent module \(L^q(T\mathbb X)\), as the next result (which is essentially taken from \cite[Theorem 2.3.3]{Gigli14}) shows:
\begin{proposition}[Identification between \(L^q(T\mathbb X)\) and \(L^q_{\rm Sob}(T\mathbb X)\)]
Let \(\mathbb X=(X,\tau,\sfd,\mm)\) be an e.m.t.m.\ space and \(q\in(1,\infty)\). Then for any \(v\in L^q(T\mathbb X)\)
we have that \(v\circ\d\colon H^{1,p}(\mathbb X)\to L^1(\mm)\) is an element of \(L^q_{\rm Sob}(T\mathbb X)\). Moreover,
the resulting map \(\Phi\colon L^q(T\mathbb X)\to L^q_{\rm Sob}(T\mathbb X)\) is an isomorphism of \(L^q(\mm)\)-Banach
\(L^\infty(\mm)\)-modules.
\end{proposition}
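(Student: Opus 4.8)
The plan is to set $\Phi(v)\coloneqq v\circ\d$ and to verify, in order: (i) $\Phi$ takes values in $L^q_{\rm Sob}(T\mathbb X)$, with $|\Phi(v)|\le|v|$; (ii) $\Phi$ is $L^\infty(\mm)$-linear; (iii) $\Phi$ is injective; (iv) $\Phi$ is surjective; (v) $\Phi$ preserves the pointwise norm. The only non-elementary ingredient is the universal property of the cotangent module, Proposition~\ref{prop:univ_prop_cotg_mod}, which will drive steps (iv) and (v); everything else is bookkeeping with the definitions of Section~\ref{s:H1p}.

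For (i), fix $v\in L^q(T\mathbb X)$, viewed as a homomorphism $v\colon L^p(T^*\mathbb X)\to L^1(\mm)$ of $L^\infty(\mm)$-modules with $|v(\omega)|\le|v||\omega|$ for every $\omega$. The composition $v\circ\d\colon H^{1,p}(\mathbb X)\to L^1(\mm)$ is linear, and for $f,g\in H^{1,p}(\mathbb X)\cap L^\infty(\mm)$ the Leibniz rule \eqref{eq:Leibniz_rule_diff} and the $L^\infty(\mm)$-linearity of $v$ give $v(\d(fg))=v(f\cdot\d g+g\cdot\d f)=f\,v(\d g)+g\,v(\d f)$, which is exactly the Leibniz property i) in the definition of Sobolev derivation; moreover $|v(\d f)|\le|v||\d f|=|v||Df|_H$ for every $f\in H^{1,p}(\mathbb X)$, which is property ii) with $G\coloneqq|v|$. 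Hence $v\circ\d\in L^q_{\rm Sob}(T\mathbb X)$, and since $|v|$ is then a competitor in the infimum defining $|v\circ\d|$, we get $|\Phi(v)|\le|v|$ $\mm$-a.e. Step (ii) is immediate: evaluating at an arbitrary $f\in H^{1,p}(\mathbb X)$ shows $\Phi(h_1\cdot v_1+h_2\cdot v_2)=h_1\cdot\Phi(v_1)+h_2\cdot\Phi(v_2)$ for all $h_1,h_2\in L^\infty(\mm)$ and $v_1,v_2\in L^q(T\mathbb X)$. For (iii), if $\Phi(v)=0$ then $v$ vanishes on $\{\d f:f\in H^{1,p}(\mathbb X)\}$ and, being $L^\infty(\mm)$-linear, on the $L^\infty(\mm)$-linear span of that set, which is dense in $L^p(T^*\mathbb X)$ by Theorem~\ref{thm:cotg_mod}~ii); since $v$ is a bounded operator (by H\"older, $\|v(\omega)\|_{L^1(\mm)}\le\||v|\|_{L^q(\mm)}\|\omega\|_{L^p(T^*\mathbb X)}$), it follows that $v=0$.

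For (iv), given $\delta\in L^q_{\rm Sob}(T\mathbb X)$, the map $\delta\colon H^{1,p}(\mathbb X)\to L^1(\mm)$ is linear and satisfies $|\delta(f)|\le|\delta||Df|_H$ with $|\delta|\in L^q(\mm)^+$, so Proposition~\ref{prop:univ_prop_cotg_mod}, applied with $\varphi=\delta$ and $G=|\delta|$, yields a (unique) $v_\delta\in L^q(T\mathbb X)$ with $v_\delta\circ\d=\delta$, i.e.\ $\Phi(v_\delta)=\delta$, and with $|v_\delta|\le|\delta|$. Finally, for (v): set $\delta\coloneqq\Phi(v)$ and let $v_\delta$ be the vector field furnished by Proposition~\ref{prop:univ_prop_cotg_mod}, so that $v_\delta\circ\d=\delta=v\circ\d$ and $|v_\delta|\le|\delta|=|\Phi(v)|$; the uniqueness clause of Proposition~\ref{prop:univ_prop_cotg_mod} forces $v_\delta=v$, whence $|v|\le|\Phi(v)|$. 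Combined with step (i) this gives $|\Phi(v)|=|v|$ $\mm$-a.e.; in particular $\Phi$ is norm-preserving as a map of Banach spaces, hence a homeomorphism, so it is a bijective, norm-preserving homomorphism of $L^\infty(\mm)$-modules, that is, an isomorphism of $L^q(\mm)$-Banach $L^\infty(\mm)$-modules.

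I expect no genuine obstacle here: the substantive content is packaged into Proposition~\ref{prop:univ_prop_cotg_mod} (whose proof adapts \cite[Proposition 1.4.8]{Gigli14}), and the rest is a repackaging of the definitions. The one point that deserves a little care is the reverse inequality $|v|\le|\Phi(v)|$ in step (v): rather than trying to estimate it directly, the clean route is to feed $\Phi(v)$ back into the universal property and then invoke the \emph{uniqueness} of the associated vector field to identify it with $v$, thereby transferring the bound $|v_\delta|\le|\delta|$ into the desired estimate.
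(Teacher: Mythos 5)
Your proposal is correct and follows essentially the same route as the paper: verify the Leibniz rule and the bound $|\Phi(v)|\le|v|$ directly from \eqref{eq:Leibniz_rule_diff}, check $L^\infty(\mm)$-linearity pointwise, and obtain surjectivity together with $|v_\delta|\le|\delta|$ from Proposition~\ref{prop:univ_prop_cotg_mod}, with the reverse norm inequality extracted from the uniqueness clause exactly as you describe. Your separate injectivity step (iii) via density of the span of $\{\d f\}$ is harmless but redundant, since norm preservation already forces injectivity.
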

\begin{proof}
Let \(v\in L^q(T\mathbb X)\) be a given vector field. Then \(v\circ\d\colon H^{1,p}(\mathbb X)\to L^1(\mm)\) is linear and
\[
(v\circ\d)(fg)=\d(fg)(v)=f\,\d g(v)+g\,\d f(v)=f\,(v\circ\d)(g)+g\,(v\circ\d)(f)
\]
for every \(f,g\in H^{1,p}(\mathbb X)\cap L^\infty(\mm)\) by \eqref{eq:Leibniz_rule_diff}. Moreover,
\(|(v\circ\d)(f)|=|\d f(v)|\leq|Df|_H|v|\) for every \(f\in H^{1,p}(\mathbb X)\). This gives
\(v\circ\d\in L^q_{\rm Sob}(T\mathbb X)\) and \(|v\circ\d|\leq|v|\). It follows that
\(\Phi\colon L^q(T\mathbb X)\to L^q_{\rm Sob}(T\mathbb X)\) is a linear map such that \(|\Phi(v)|\leq|v|\)
for every \(v\in L^q(T\mathbb X)\). Since we have that
\[
\Phi(h\cdot v)(f)=((h\cdot v)\circ\d)(f)=\d f(h\cdot v)=h\,\d f(v)=h\,\Phi(v)(f)=(h\cdot\Phi(v))(f)
\]
for every \(h\in L^\infty(\mm)\) and \(f\in H^{1,p}(\mathbb X)\), we deduce that \(\Phi\) is \(L^\infty(\mm)\)-linear.
To conclude, it remains to check that for any \(\delta\in L^q_{\rm Sob}(T\mathbb X)\) there exists \(v_\delta\in L^q(T\mathbb X)\)
such that \(\Phi(v_\delta)=\delta\) and \(|v_\delta|\leq|\delta|\). Since \(\delta\colon H^{1,p}(\mathbb X)\to L^1(\mm)\)
is linear and \(|\delta(f)|\leq|\delta||Df|_H\) for every \(f\in H^{1,p}(\mathbb X)\), we deduce from Proposition
\ref{prop:univ_prop_cotg_mod} that there exists (a unique) \(v_\delta\in L^q(T\mathbb X)\) such that
\(\delta=v_\delta\circ\d=\Phi(v_\delta)\), and it holds that \(|v_\delta|\leq|\delta|\). All in all, the statement is achieved.
\end{proof}
\subsection{Sobolev spaces \texorpdfstring{\(B^{1,p}\)}{B1p} via test plans}\label{s:def_B}
The second notion of Sobolev space over an e.m.t.m.\ space we consider is the one obtained by investigating the behaviour of functions along
suitably chosen curves. The relevant object here is that of a \emph{\(\mathcal T_q\)-test plan} (see Definition \ref{def:test_plan} below),
which was introduced in \cite[Section 4.2]{Savare22} after \cite{AmbrosioGigliSavare11,AmbrosioGigliSavare11-3,Amb:Mar:Sav:15}. A function
\(f\in L^p(\mm)\) is declared to be in the Sobolev space \(B^{1,p}(\mathbb X)\) if it has a \(p\)-integrable \emph{\(\mathcal T_q\)-weak upper gradient}
(where \(p\), \(q\) are conjugate exponents), i.e.\ a function satisfying the \emph{upper gradient} inequality \cite{Koskela-MacManus98,Hei:Kos:98,Cheeger00}
along \(\ppi\)-a.e.\ curve, for every \(\mathcal T_q\)-test plan \(\ppi\). Our notation `\(B^{1,p}\)' is different from the one of \cite{Savare22},
where `\(W^{1,p}\)' is used instead. The reason is that in this paper we prefer to denote by \(W^{1,p}(\mathbb X)\) the Sobolev space that we will define
through an integration-by-parts formula in Section \ref{s:def_W1p}, which comes with a notion of `weak derivative'.
In analogy with \cite{AILP24}, the notation \(B^{1,p}(\mathbb X)\) is chosen to remind the resemblance to Beppo Levi's approach
to weakly differentiable functions.
\medskip

Let \(\mathbb X=(X,\tau,\sfd,\mm)\) be an e.m.t.m.\ space. According to \cite[Definition 4.2.1]{Savare22}, a \textbf{dynamic plan}
on \(\mathbb X\) is a Radon measure \(\ppi\in\mathcal M_+({\rm RA}(X,\sfd),\tau_{\rm A})\) satisfying
\[
\int\ell(\gamma)\,\d\ppi(\gamma)<+\infty.
\]
The \textbf{barycenter} of \(\ppi\) is defined as the unique Radon measure \(\mu_\sppi\in\mathcal M_+(X,\tau)\) such that
\[
\int f\,\d\mu_\sppi=\int\bigg(\int_\gamma f\bigg)\,\d\ppi(\gamma)\quad\text{ for every bounded Borel function }f\colon(X,\tau)\to\R.
\]
Moreover, we say that \(\ppi\) has \textbf{\(q\)-barycenter}, for some \(q\in(1,\infty)\), if it holds that \(\mu_\sppi\ll\mm\) and
\[
h_\sppi\coloneqq\frac{\d\mu_\sppi}{\d\mm}\in L^q(\mm)^+.
\]
The following definition is taken from \cite[Definition 5.1.1]{Savare22}:
\begin{definition}[\(\mathcal T_q\)-test plan]\label{def:test_plan}
Let \(\mathbb X=(X,\tau,\sfd,\mm)\) be an e.m.t.m.\ space and \(q\in(1,\infty)\). Then a dynamic plan \(\ppi\) on \(\mathbb X\) is said
to be a \textbf{\(\mathcal T_q\)-test plan} provided it has \(q\)-barycenter and it holds that
\[
(\hat{\sf e}_0)_\#\ppi,(\hat{\sf e}_1)_\#\ppi\ll\mm,\qquad\frac{\d(\hat{\sf e}_0)_\#\ppi}{\d\mm},\frac{\d(\hat{\sf e}_1)_\#\ppi}{\d\mm}\in L^q(\mm)^+.
\]
We denote by \(\mathcal T_q(\mathbb X)\) the set of all \(\mathcal T_q\)-test plans on \(\mathbb X\).
\end{definition}

The corresponding notion of weak upper gradient is the following (from \cite[Definition 5.1.4]{Savare22}):
\begin{definition}[\(\mathcal T_q\)-weak upper gradient]
Let \(\mathbb X=(X,\tau,\sfd,\mm)\) be an e.m.t.m.\ space and \(q\in(1,\infty)\). Let \(f\colon X\to\R\) and
\(G\colon X\to[0,+\infty)\) be given \(\tau\)-Borel functions. Then we say that \(G\) is a \textbf{\(\mathcal T_q\)-weak upper
gradient} of \(f\) provided for any \(\ppi\in\mathcal T_q(\mathbb X)\) it holds that
\begin{equation}\label{eq:def_Tq_wug}
|f(\gamma_1)-f(\gamma_0)|\leq\int_\gamma G<+\infty\quad\text{ for }\ppi\text{-a.e.\ }\gamma\in{\rm RA}(X,\sfd).
\end{equation}
\end{definition}

If \(f,\tilde f\colon X\to\R\) are \(\tau\)-Borel functions satisfying \(f=\tilde f\) in the \(\mm\)-a.e.\ sense,
then \(f\) and \(\tilde f\) have the same \(\mathcal T_q\)-weak upper gradients. Hence, we can unambiguously
say that a function \(f\in L^1(\mm)\) has a \(\mathcal T_q\)-weak upper gradient. 
\begin{lemma}\label{lem:integral_def_B1p}
Let \(\mathbb X=(X,\tau,\sfd,\mm)\) be an e.m.t.m.\ space. Let \(p,q\in(1,\infty)\) be conjugate exponents. Let \(f\colon X\to\R\) and \(G\colon X\to[0,+\infty)\)
be given \(\tau\)-Borel functions with \(\int G^p\,\d\mm<+\infty\). Then the function \(G\) is a \(\mathcal T_q\)-weak upper gradient of \(f\) if and only if
\begin{equation}\label{eq:int_Tq_wug}
\int f(\gamma_1)-f(\gamma_0)\,\d\ppi(\gamma)\leq\int G\,h_\sppi\,\d\mm\quad\text{ for every }\ppi\in\mathcal T_q(\mathbb X).
\end{equation}
\end{lemma}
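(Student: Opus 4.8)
The statement asserts the equivalence of the pointwise (along-curves) definition \eqref{eq:def_Tq_wug} of a \(\mathcal T_q\)-weak upper gradient with the integrated inequality \eqref{eq:int_Tq_wug}, under the extra \(p\)-integrability hypothesis \(\int G^p\,\d\mm<+\infty\). The forward implication is the easy one: if \(G\) is a \(\mathcal T_q\)-weak upper gradient of \(f\), then for each \(\ppi\in\mathcal T_q(\mathbb X)\) the pointwise inequality \(f(\gamma_1)-f(\gamma_0)\leq|f(\gamma_1)-f(\gamma_0)|\leq\int_\gamma G\) holds for \(\ppi\)-a.e.\ \(\gamma\). I would integrate this over \(\ppi\), use that by definition of the barycenter \(\mu_\sppi\) one has \(\int\bigl(\int_\gamma G\bigr)\,\d\ppi(\gamma)=\int G\,\d\mu_\sppi=\int G\,h_\sppi\,\d\mm\) (the last step being exactly the definition of \(q\)-barycenter, with \(h_\sppi=\d\mu_\sppi/\d\mm\)), and I would note that the right-hand side is finite by Hölder, since \(G\in L^p(\mm)\) and \(h_\sppi\in L^q(\mm)\); this finiteness also justifies integrating \(f(\gamma_1)-f(\gamma_0)\) (it is \(\ppi\)-integrable because the evaluation maps push \(\ppi\) forward to measures with \(L^q\) densities and \(f\in L^p\)). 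That gives \eqref{eq:int_Tq_wug}.

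The substantive direction is the converse: assume \eqref{eq:int_Tq_wug} for every \(\ppi\in\mathcal T_q(\mathbb X)\) and deduce that for each fixed \(\ppi\in\mathcal T_q(\mathbb X)\) the pointwise inequality \eqref{eq:def_Tq_wug} holds \(\ppi\)-a.e. The idea is a restriction-and-localisation argument. Fix \(\ppi\in\mathcal T_q(\mathbb X)\). First observe that the finiteness of \(\int_\gamma G\) for \(\ppi\)-a.e.\ \(\gamma\) is automatic: \(\int\bigl(\int_\gamma G\bigr)\,\d\ppi=\int G\,h_\sppi\,\d\mm<+\infty\) by Hölder, so \(\gamma\mapsto\int_\gamma G\) is \(\ppi\)-integrable, hence \(\ppi\)-a.e.\ finite. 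It remains to show \(f(\gamma_1)-f(\gamma_0)\leq\int_\gamma G\) for \(\ppi\)-a.e.\ \(\gamma\) (applying this to \(-f\) as well, since \(-f\) has the same \(G\), yields the absolute value). Suppose not: then the Borel set \(B\coloneqq\bigl\{\gamma\in{\rm RA}(X,\sfd):f(\gamma_1)-f(\gamma_0)>\int_\gamma G\bigr\}\) has \(\ppi(B)>0\). Consider the restricted plan \(\ppi'\coloneqq\ppi\llcorner B\). I would check that \(\ppi'\) is again a \(\mathcal T_q\)-test plan: it is a finite Radon measure on \({\rm RA}(X,\sfd)\) (restriction of a Radon measure to a Borel set is Radon), \(\int\ell(\gamma)\,\d\ppi'\leq\int\ell(\gamma)\,\d\ppi<+\infty\), its barycenter satisfies \(\mu_{\sppi'}\leq\mu_\sppi\) so \(\mu_{\sppi'}\ll\mm\) with density \(h_{\sppi'}\leq h_\sppi\in L^q(\mm)\), hence \(h_{\sppi'}\in L^q(\mm)^+\); and similarly \((\hat{\sf e}_i)_\#\ppi'\leq(\hat{\sf e}_i)_\#\ppi\ll\mm\) with densities in \(L^q(\mm)^+\) for \(i=0,1\). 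Applying \eqref{eq:int_Tq_wug} to \(\ppi'\) gives
\[
\int_B \bigl(f(\gamma_1)-f(\gamma_0)\bigr)\,\d\ppi(\gamma)\leq\int G\,h_{\sppi'}\,\d\mm=\int\Bigl(\int_\gamma G\Bigr)\,\d\ppi'(\gamma)=\int_B\Bigl(\int_\gamma G\Bigr)\,\d\ppi(\gamma),
\]
where the middle equality is the defining property of the barycenter of \(\ppi'\). But on \(B\) the integrand on the left strictly exceeds the integrand on the right and \(\ppi(B)>0\), so (since everything is \(\ppi\)-integrable on \(B\)) the left side is strictly larger than the right — a contradiction. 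Hence \(\ppi(B)=0\), which is \eqref{eq:def_Tq_wug}.

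The main obstacle, and the place requiring care, is the verification that the restricted plan \(\ppi'=\ppi\llcorner B\) is still a \(\mathcal T_q\)-test plan — specifically that its barycenter and its endpoint marginals remain absolutely continuous with respect to \(\mm\) with \(L^q\) densities. This is where the inequalities \(\mu_{\sppi'}\leq\mu_\sppi\) and \((\hat{\sf e}_i)_\#\ppi'\leq(\hat{\sf e}_i)_\#\ppi\) do the work: dominating measures with \(L^q\) Radon–Nikodym derivative pass their good densities down to dominated ones. I would also need the elementary fact that \(\ell\) being \(\tau_{\rm A}\)-lower semicontinuous (hence Borel, by \eqref{eq:ell_lsc}) and \(\gamma\mapsto\int_\gamma G\) being Borel for bounded \(G\) — together with a truncation \(G\wedge k\) and monotone convergence to handle general \(G\in L^p(\mm)^+\) — ensure that \(B\) is a genuine Borel subset of \({\rm RA}(X,\sfd)\), so that the restriction makes sense. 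Apart from these measure-theoretic bookkeeping points, the argument is a clean duality-free localisation.
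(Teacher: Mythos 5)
Your overall strategy for the nontrivial direction -- restrict \(\ppi\) to the bad set, verify that the restriction is again a \(\mathcal T_q\)-test plan by domination, and feed it back into \eqref{eq:int_Tq_wug} to force a contradiction -- is exactly the one used in the paper, and the bookkeeping you flag (Radonness of \(\ppi\llcorner B\), the bounds \(h_{\sppi'}\leq h_\sppi\) and \((\hat{\sf e}_i)_\#\ppi'\leq(\hat{\sf e}_i)_\#\ppi\), \(\ppi\)-a.e.\ finiteness of \(\gamma\mapsto\int_\gamma G\) via H\"{o}lder, measurability of \(B\)) is handled correctly. The forward implication is also fine.

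The one genuine gap is the parenthetical reduction of the two-sided inequality \(|f(\gamma_1)-f(\gamma_0)|\leq\int_\gamma G\) to the one-sided one by ``applying this to \(-f\).'' To run your argument for \(-f\) you need the hypothesis \eqref{eq:int_Tq_wug} with \(f\) replaced by \(-f\), i.e.\ \(\int f(\gamma_0)-f(\gamma_1)\,\d\ppi(\gamma)\leq\int G\,h_\sppi\,\d\mm\), and this is \emph{not} what is assumed: \eqref{eq:int_Tq_wug} is a one-sided, orientation-dependent statement, and ``\(-f\) has the same \(G\)'' does not transfer it. The missing ingredient is the time-reversal map \({\rm Rev}\colon{\rm RA}(X,\sfd)\to{\rm RA}(X,\sfd)\) sending \([\gamma]\) to \([t\mapsto\gamma_{1-t}]\): one checks that \({\rm Rev}_\#\ppi\) is again a \(\mathcal T_q\)-test plan with the same barycenter (the two endpoint marginals are merely swapped), so applying \eqref{eq:int_Tq_wug} to \({\rm Rev}_\#\ppi\) yields precisely the hypothesis for \(-f\). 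Equivalently -- and this is what the paper does -- split the bad set into \(\Gamma_+=\{f(\gamma_1)\geq f(\gamma_0)\}\) and \(\Gamma_-=\Gamma\setminus\Gamma_+\) and test \eqref{eq:int_Tq_wug} against \(\ppi|_{\Gamma_+}\) and \({\rm Rev}_\#(\ppi|_{\Gamma_-})\) separately. With this single addition your proof closes. (A cosmetic difference: the paper first extracts an \(\varepsilon>0\) with \(|f(\gamma_1)-f(\gamma_0)|\geq\varepsilon+\int_\gamma G\) on a set of positive \(\ppi\)-measure, whereas you derive the strict inequality of integrals directly from a strict pointwise inequality on a positive-measure set; both are valid given the integrability you establish.)
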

\begin{proof}
Necessity can be shown by integrating \eqref{eq:def_Tq_wug}. For sufficiency, we argue by contradiction: suppose that \eqref{eq:int_Tq_wug} holds,
but \(G\) is not a \(\mathcal T_q\)-weak upper gradient of \(f\). Then there exist a \(\mathcal T_q\)-test plan \(\ppi\in\mathcal T_q(\mathbb X)\),
a Borel set \(\Gamma\subseteq{\rm RA}(X,\sfd)\) with \(\ppi(\Gamma)>0\) and some \(\varepsilon>0\) such that
\begin{equation}\label{eq:int_Tq_wug_aux}
|f(\gamma_1)-f(\gamma_0)|\geq\varepsilon+\int_\gamma G\quad\text{ for every }\gamma\in\Gamma.
\end{equation}
Denote \(\Gamma_+\coloneqq\{\gamma\in\Gamma\,:\,f(\gamma_1)\geq f(\gamma_0)\}\) and \(\Gamma_-\coloneqq\Gamma\setminus\Gamma_+\).
Now let us consider \(\ppi_+\coloneqq\ppi|_{\Gamma_+}\in\mathcal T_q(\mathbb X)\) and \(\ppi_-\coloneqq{\rm Rev}_\#(\ppi|_{\Gamma_-})\in\mathcal T_q(\mathbb X)\),
where \({\rm Rev}\colon{\rm RA}(X,\sfd)\to{\rm RA}(X,\sfd)\) denotes the map sending a rectifiable arc \([\gamma]\) to the
\(\sim\)-equivalence class of the curve \([0,1]\ni t\mapsto\gamma_{1-t}\in X\). We deduce that
\[\begin{split}
\varepsilon\ppi(\Gamma_\pm)+\int G\,h_{\sppi_\pm}\,\d\mm&=\int\bigg(\varepsilon+\int_\gamma G\bigg)\,\d\ppi_\pm(\gamma)
\overset{\eqref{eq:int_Tq_wug_aux}}\leq\int f(\gamma_1)-f(\gamma_0)\,\d\ppi_\pm(\gamma)
\overset{\eqref{eq:int_Tq_wug}}\leq\int G\,h_{\sppi_\pm}\,\d\mm.
\end{split}\]
Either \(\ppi(\Gamma_+)>0\) or \(\ppi(\Gamma_-)>0\), thus the above estimates lead to a contradiction.
\end{proof}

If \(f\in L^1(\mm)\) has a \(\mathcal T_q\)-weak upper gradient in \(L^p(\mm)\) (where \(p\in(1,\infty)\) denotes the conjugate
exponent of \(q\)), then there exists a unique function \(|Df|_B\in L^p(\mm)^+\), which we call the
\textbf{minimal \(\mathcal T_q\)-weak upper gradient} of \(f\), such that the following hold:
\begin{itemize}
\item[\(\rm i)\)] \(|Df|_B\) has a representative \(G_f\colon X\to[0,+\infty)\) that is
a \(\mathcal T_q\)-weak upper gradient of \(f\).
\item[\(\rm ii)\)] If \(G\) is a \(\mathcal T_q\)-weak upper gradient of \(f\), then \(|Df|_B\leq G\)
holds \(\mm\)-a.e.\ in \(X\).
\end{itemize}
See \cite[paragraph after Definition 5.1.23]{Savare22}. Consequently, the following definition (which is taken
from \cite[Definition 5.1.24]{Savare22}) is well posed:
\begin{definition}[The Sobolev space \(B^{1,p}(\mathbb X)\)]
Let \(\mathbb X=(X,\tau,\sfd,\mm)\) be an e.m.t.m.\ space. Let \(p,q\in(1,\infty)\) be conjugate exponents.
Then we define the \textbf{Sobolev space} \(B^{1,p}(\mathbb X)\) as the set of all functions \(f\in L^p(\mm)\)
having a \(\mathcal T_q\)-weak upper gradient in \(L^p(\mm)\). Moreover, we define
\[
\|f\|_{B^{1,p}(\mathbb X)}\coloneqq\big(\|f\|_{L^p(\mm)}^p+\||Df|_B\|_{L^p(\mm)}^p\big)^{1/p}\quad\text{ for every }f\in B^{1,p}(\mathbb X).
\]
\end{definition}

It holds that \((B^{1,p}(\mathbb X),\|\cdot\|_{B^{1,p}(\mathbb X)})\) is a Banach space.
In the setting of \(\sfd\)-complete e.m.t.m.\ spaces, the full equivalence of \(H^{1,p}\) and \(W^{1,p}\)
was obtained by Savar\'{e} in \cite[Theorem 5.2.7]{Savare22} (see Theorem \ref{thm:H=B} below for the
precise statement), thus generalising previous results for metric measure spaces
\cite{Cheeger00,Shanmugalingam00,AmbrosioGigliSavare11,AmbrosioGigliSavare11-3}.
See also \cite{EB:20,LP24,AILP24} for other related equivalence results.
\begin{theorem}[\(H^{1,p}=B^{1,p}\) on complete e.m.t.m.\ spaces]\label{thm:H=B}
Let \(\mathbb X=(X,\tau,\sfd,\mm)\) be an e.m.t.m.\ space such that \((X,\sfd)\) is a complete extended metric space.
Let \(p\in(1,\infty)\) be given. Then
\[
H^{1,p}(\mathbb X)=B^{1,p}(\mathbb X).
\]
Moreover, it holds that \(|Df|_B=|Df|_H\) for every \(f\in H^{1,p}(\mathbb X)\).
\end{theorem}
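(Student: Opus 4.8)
The plan is to establish the two inclusions $H^{1,p}(\mathbb X)\subseteq B^{1,p}(\mathbb X)$ with $|Df|_B\le|Df|_H$ and $B^{1,p}(\mathbb X)\subseteq H^{1,p}(\mathbb X)$ with $|Df|_H\le|Df|_B$; combining them gives both the equality of the spaces and the identity of the minimal objects. The first inclusion holds on any e.m.t.m.\ space and is the elementary one. First I would check that every $f\in\Lip_b(X,\tau,\sfd)$ admits $\lip_\sfd(f)$ as a $\mathcal T_q$-weak upper gradient: for any $\gamma\in{\rm RA}(X,\sfd)$ the composition $f\circ R_\gamma$ is Lipschitz on $[0,1]$ (since $R_\gamma$ is $\ell(\gamma)$-Lipschitz and $f$ is $\Lip(f,\sfd)$-Lipschitz), hence absolutely continuous, so by \eqref{eq:prop_D_f} and \eqref{eq:ineq_D_f},
\[
|f(\gamma_1)-f(\gamma_0)|=\bigg|\int_0^1{\rm D}_f(\gamma,t)\,\d t\bigg|\le\ell(\gamma)\int_0^1\lip_\sfd(f)(R_\gamma(t))\,\d t=\int_\gamma\lip_\sfd(f)<+\infty,
\]
which is exactly \eqref{eq:def_Tq_wug} for every (hence every $\ppi$-a.e.) arc $\gamma$; since $\lip_\sfd(f)$ is bounded and $\tau$-upper semicontinuous and $\mm$ is finite, $\lip_\sfd(f)\in L^p(\mm)$, so $f\in B^{1,p}(\mathbb X)$ with $|Df|_B\le\lip_\sfd(f)$ $\mm$-a.e. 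Then, for a general $f\in H^{1,p}(\mathbb X)$, I would pick (as recalled in Section \ref{s:H1p}) a sequence $(f_n)_n\subseteq\Lip_b(X,\tau,\sfd)$ with $f_n\to f$ and $\lip_\sfd(f_n)\to|Df|_H$ strongly in $L^p(\mm)$, apply Lemma \ref{lem:integral_def_B1p} to each $f_n$ to get $\int f_n(\gamma_1)-f_n(\gamma_0)\,\d\ppi(\gamma)\le\int\lip_\sfd(f_n)\,h_\sppi\,\d\mm$ for every $\ppi\in\mathcal T_q(\mathbb X)$, and let $n\to\infty$: since $(\hat{\sf e}_0)_\#\ppi,(\hat{\sf e}_1)_\#\ppi\ll\mm$ with densities in $L^q(\mm)^+$ and $h_\sppi\in L^q(\mm)^+$, both sides pass to the limit by Hölder's inequality, yielding $\int f(\gamma_1)-f(\gamma_0)\,\d\ppi(\gamma)\le\int|Df|_H\,h_\sppi\,\d\mm$ for all $\ppi\in\mathcal T_q(\mathbb X)$. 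By the sufficiency direction of Lemma \ref{lem:integral_def_B1p} (applicable since $|Df|_H\in L^p(\mm)$), the function $|Df|_H$ is a $\mathcal T_q$-weak upper gradient of $f$, whence $f\in B^{1,p}(\mathbb X)$ and $|Df|_B\le|Df|_H$ $\mm$-a.e.

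The converse inclusion $B^{1,p}(\mathbb X)\subseteq H^{1,p}(\mathbb X)$ is the delicate one, and it is precisely here that the $\sfd$-completeness of $(X,\sfd)$ is indispensable (cf.\ the last paragraph of Section \ref{s:def_B}). By truncation — both $H^{1,p}$ and $B^{1,p}$ are stable under post-composition with $1$-Lipschitz maps, with the expected chain-rule bound on the minimal slope, respectively the minimal weak upper gradient — one reduces to $f\in B^{1,p}(\mathbb X)\cap L^\infty(\mm)$, which, $\mm$ being finite, lies in every $L^r(\mm)$. The strategy I would follow is the gradient-flow argument of Ambrosio--Gigli--Savar\'e, carried out in the extended setting in \cite{Savare22}: regularise $f$ through the $L^2(\mm)$-gradient flow $(f_t)_{t>0}$ of the Cheeger $p$-energy $\mathcal E_p$, so that $f_t\in H^{1,p}(\mathbb X)$ for $t>0$ and $f_t\to f$ in $L^p(\mm)$ as $t\to 0^+$, and then establish the a priori bound
\[
\limsup_{t\to 0^+}\ p\,\mathcal E_p(f_t)\le\big\||Df|_B\big\|_{L^p(\mm)}^p.
\]
The heart of this estimate is a \emph{horizontal--vertical} comparison: on a $\sfd$-complete space one can represent the action of the flow $(f_t)$ by $\mathcal T_q$-test plans — via a superposition principle identifying solutions of the continuity equation with measures concentrated on rectifiable arcs (as in \cite{Amb:Mar:Sav:15}), together with a Kuwada-type duality between $(f_t)$ and a $q$-transport dynamics — and along such plans the slope $\lip_\sfd$ of the time-discretised flow, hence after relaxation the minimal relaxed slope $|Df_t|_H$, is controlled through the upper gradient inequality \eqref{eq:def_Tq_wug} by $|Df|_B$. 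Granted the displayed bound, the $L^p(\mm)$-lower semicontinuity of $\mathcal E_p$ and $f_t\to f$ in $L^p(\mm)$ give $f\in H^{1,p}(\mathbb X)$ with $p\,\mathcal E_p(f)\le\||Df|_B\|_{L^p(\mm)}^p$, i.e.\ $|Df|_H\le|Df|_B$ $\mm$-a.e.

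The main obstacle is exactly this last comparison: constructing, on an arbitrary \emph{complete} e.m.t.m.\ space, enough $\mathcal T_q$-test plans to transfer the upper gradient inequality into an energy estimate — equivalently, the superposition principle and the accompanying Kuwada-type bound in this generality. This is where $\sfd$-completeness cannot be removed, and it is the technical core of \cite[Theorem 5.2.7]{Savare22}; an alternative but essentially equivalent route passes through the Newtonian space $N^{1,p}(\mathbb X)$ and a density-of-Lipschitz-functions argument, which again rests on completeness to produce the relevant curves. Finally, the two inclusions together yield $H^{1,p}(\mathbb X)=B^{1,p}(\mathbb X)$ and $|Df|_B=|Df|_H$ for every $f$ in the common space.
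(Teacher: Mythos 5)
The paper itself does not prove this theorem: it is imported verbatim from \cite[Theorem 5.2.7]{Savare22}, so there is no in-paper argument to compare against. Your first inclusion (\(H^{1,p}\subseteq B^{1,p}\) with \(|Df|_B\le|Df|_H\)) is correct and self-contained: the upper-gradient inequality for \(f\in\Lip_b(X,\tau,\sfd)\) along rectifiable arcs follows from \eqref{eq:prop_D_f}--\eqref{eq:ineq_D_f} exactly as you write, and the passage to general \(f\in H^{1,p}(\mathbb X)\) via Lemma \ref{lem:integral_def_B1p} and H\"{o}lder is sound; note that the paper reaches the same inclusion by a different route, namely Proposition \ref{prop:der_induced_by_tp} combined with Theorems \ref{thm:H=W} and \ref{thm:W_in_B}. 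For the converse inclusion you correctly identify that completeness is essential and that the content is \cite[Theorem 5.2.7]{Savare22}; since the paper also simply cites that result, deferring here is consistent with its treatment. One caveat: the machinery you sketch (the \(L^2(\mm)\)-gradient flow of \(\mathcal E_p\), Kuwada-type duality, superposition for the continuity equation) is the Ambrosio--Gigli--Savar\'{e} route for metric measure spaces; Savar\'{e}'s proof in the extended setting runs differently, reducing by compactification and the length-conformal distance to \(\tau\)-compact length spaces and there identifying the Cheeger energy with a dual dynamic cost via Hopf--Lax and min--max arguments in the spirit of \cite{Amb:Mar:Sav:15}. This is a matter of which known proof one invokes rather than an error, but if you intended to reconstruct rather than cite the argument, the gradient-flow route would itself need substantial justification in this generality (well-posedness of the flow of \(\mathcal E_p\) for \(p\neq 2\), the accompanying Kuwada estimate, and the superposition principle on extended spaces), so as written the hard direction remains an appeal to the literature rather than a proof.
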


The completeness assumption in Theorem \ref{thm:H=B} cannot be dropped. For instance, let us consider
the space \((-1,1)\setminus\{0\}\) equipped with the restriction of the Euclidean distance, its induced
topology and the restriction of the one-dimensional Lebesgue measure. It can be readily checked that the
function \(\1_{(0,1)}\) is \(B^{1,p}\)-Sobolev with null minimal \(\mathcal T_q\)-weak upper gradient,
but not \(H^{1,p}\)-Sobolev.
\section{Extensions of \texorpdfstring{\(\tau\)}{tau}-continuous \texorpdfstring{\(\sfd\)}{d}-Lipschitz functions}\label{s:extLip}
A fundamental tool in metric geometry is the \emph{McShane--Whitney extension theorem}, which states that every real-valued Lipschitz function
defined on some subset of a metric space can be extended to a Lipschitz function on the whole metric space, also preserving the Lipschitz constant.
In the setting of extended metric-topological spaces, we rather need an extension theorem for \(\tau\)-continuous \(\sfd\)-Lipschitz
functions for which both the \(\tau\)-continuity and the \(\sfd\)-Lipschitz conditions are preserved. The extension results obtained by Matou\v{s}kov\'{a} in
\cite{Matouskova00} are fit for our purposes:
\begin{theorem}[Extension result]\label{thm:McShane_emts}
Let \((X,\tau,\sfd)\) be an e.m.t.\ space with \((X,\tau)\) normal. Assume
\begin{equation}\label{eq:hp_extension}
\bar B^\sfd_r(C)\;\text{ is }\tau\text{-closed, for every }\tau\text{-closed set }C\subseteq X\text{ and }r\in(0,+\infty).
\end{equation}
 Let \(C\subseteq X\) be a \(\tau\)-closed set. Let \(f\colon C\to\R\) be a bounded \(\tau\)-continuous \(\sfd\)-Lipschitz function.
 Then there exists a function \(\bar f\in\Lip_b(X,\tau,\sfd)\) such that
\[
\bar f|_C=f,\qquad\Lip(\bar f,\sfd)=\Lip(f,C,\sfd),\qquad\inf_C f\leq\bar f\leq\sup_C f.
\]
\end{theorem}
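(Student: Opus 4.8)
The plan is to reduce the extension problem to the results of Matou\v{s}kov\'{a} \cite{Matouskova00}, which handle exactly this kind of simultaneous preservation of continuity and Lipschitzianity. First I would normalise: after replacing $f$ by $\lambda f$ and adding a constant, I may assume $\Lip(f,C,\sfd)\le 1$ (the case $\Lip(f,C,\sfd)=0$ being trivial, as $f$ is then constant on each $\sfd$-connected piece and can be handled directly, or simply absorbed into the general argument), and I may also assume $0\le f\le 1$ by composing with an affine map, so that the target bound $\inf_C f\le\bar f\le\sup_C f$ becomes the requirement $0\le\bar f\le 1$ together with the correct values on $C$. The point of hypothesis \eqref{eq:hp_extension} — that $\bar B^\sfd_r(C)$ is $\tau$-closed for every $\tau$-closed $C$ — is precisely what is needed to run the inf-convolution construction
\[
g(x)\coloneqq\inf_{y\in C}\big(f(y)+\sfd(x,y)\big)
\]
within the $\tau$-topology: this $g$ is $1$-Lipschitz for $\sfd$ and agrees with $f$ on $C$, but on an e.m.t.\ space one must check that $g$ is $\tau$-continuous (or at least $\tau$-upper semicontinuous), and the level sets $\{g\le t\}$ are governed by sets of the form $\bar B^\sfd_{t-f(y)}(\{y\})$, whose $\tau$-closedness is guaranteed by \eqref{eq:hp_extension}. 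Symmetrically, $h(x)\coloneqq\sup_{y\in C}\big(f(y)-\sfd(x,y)\big)$ gives a $\tau$-lower semicontinuous $1$-Lipschitz extension, and one has $h\le g$ with $h=g=f$ on $C$.

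The heart of the matter is to interpolate between the semicontinuous barriers $h\le g$ to produce a genuinely $\tau$-\emph{continuous} $\sfd$-Lipschitz function squeezed between them. This is where I would invoke Matou\v{s}kov\'{a}'s sandwich/extension theorem directly: her results assert that on a normal topological space equipped with a lower semicontinuous "distance-like" structure satisfying exactly the closed-enlargement condition \eqref{eq:hp_extension}, given a lower semicontinuous $1$-Lipschitz minorant and an upper semicontinuous $1$-Lipschitz majorant that agree on a closed set, there is a continuous $1$-Lipschitz function between them. Concretely, one applies her theorem to the pair $(h,g)$ on $X$ with the closed set $C$; the output $\bar f$ satisfies $h\le\bar f\le g$, hence $\bar f|_C=f$ and $\Lip(\bar f,\sfd)\le 1=\Lip(f,C,\sfd)$, and since $\bar f$ restricts to $f$ on $C$ the Lipschitz constant cannot drop, giving equality $\Lip(\bar f,\sfd)=\Lip(f,C,\sfd)$. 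The bound $\inf_C f\le\bar f\le\sup_C f$ is then obtained for free: truncating $\bar f$ to the interval $[\inf_C f,\sup_C f]$ (that is, replacing $\bar f$ by $(\bar f\vee\inf_C f)\wedge\sup_C f$) does not increase the Lipschitz constant, does not destroy $\tau$-continuity, and does not change the values on $C$ since $f$ already takes values in that interval; one then checks this truncation still lies in $\Lip_b(X,\tau,\sfd)$, which is immediate since $\Lip_b(X,\tau,\sfd)$ is a lattice and contains constants.

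The main obstacle I anticipate is verifying that the hypotheses of Matou\v{s}kov\'{a}'s theorem are met in the precise form stated there — in particular, matching her framework (which is phrased for normal spaces with a family of semidistances, or for a single lower semicontinuous semimetric) to the canonical uniform structure $\mathfrak U_{\tau,\sfd}$ of the e.m.t.\ space and checking that $\sfd$ itself, or the relevant generating semidistances $\delta_f$, interact correctly with $\tau$ via \eqref{eq:hp_extension}. A secondary technical point is the lower/upper semicontinuity of the inf- and sup-convolutions $g$ and $h$: this is not automatic on an e.m.t.\ space because $\sfd$ need not be $\tau$-continuous, and it is exactly \eqref{eq:hp_extension} — not merely the definition of an e.m.t.\ space — that rescues the argument, so I would spell out carefully that $\{g\le t\}=\bigcap_{y\in C}\{x: \sfd(x,y)\le t-f(y)\}$ is a $\tau$-closed set (an intersection of $\tau$-closed enlargements of singletons) whenever $C$ is $\tau$-closed, and dually for $h$. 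Once these preliminaries are in place the rest is routine and follows the classical McShane--Whitney template adapted to the topological setting.
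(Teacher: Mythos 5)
Your overall strategy (reduce to Matou\v{s}kov\'{a}'s extension results) is the same as the paper's, but the concrete route you propose has genuine problems, and it misses the one point where the extended setting actually requires an idea. The paper's proof is a direct reduction: it replaces \(\sfd\) by the truncated distance \(\tilde\sfd\coloneqq\sfd\wedge M\) with \(M\coloneqq{\rm Osc}_C(f)/\Lip(f,C,\sfd)\), observes that \(\tilde\sfd\) is a finite, \((\tau\times\tau)\)-lower semicontinuous distance still satisfying \eqref{eq:hp_extension}, that \(f\) is \(\tilde\sfd\)-Lipschitz on \(C\) \emph{with the same constant} (boundedness of \(f\) is used exactly here), and then applies \cite[Theorem 2.4]{Matouskova00} verbatim; since \(\tilde\sfd\leq\sfd\), the output is automatically \(\sfd\)-Lipschitz with the right constant. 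Your proposal never addresses the fact that \(\sfd\) is an \emph{extended} distance (possibly \(+\infty\)), which is the actual obstruction to quoting her theorem directly and the reason the truncation is needed; your inf-convolution \(g\) can even take the value \(+\infty\).

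The more serious gap is in your treatment of the barriers \(g\) and \(h\). The identity \(\{g\leq t\}=\bigcap_{y\in C}\{x:\sfd(x,y)\leq t-f(y)\}\) is false: an \emph{infimum} being \(\leq t\) corresponds (up to an \(\varepsilon\)) to a \emph{union} over \(y\in C\), not an intersection, and a union of closed balls with radii varying with \(y\) is not controlled by \eqref{eq:hp_extension}, which only concerns enlargements of a closed set by a \emph{single} radius \(r\); to get closedness of \(\{g\leq t\}\) one must discretise the range of \(f\) into pieces \(C_j\) where \(f\) oscillates by at most \(\varepsilon\) and apply \eqref{eq:hp_extension} to each \(C_j\) with a fixed radius. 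Moreover, the semicontinuities you assert are the wrong way around: \(\sfd(\cdot,y)\) is \(\tau\)-lower semicontinuous, so \(g\) is an infimum of l.s.c.\ functions and \(h\) a supremum of u.s.c.\ ones, and what the (correct, discretised) argument yields is that \(g\) is \(\tau\)-l.s.c.\ and \(h\) is \(\tau\)-u.s.c.\ --- not \(g\) u.s.c.\ and \(h\) l.s.c.\ as you claim. Finally, the ``sandwich theorem'' you invoke to insert a continuous \emph{Lipschitz} function between \(h\) and \(g\) is precisely the hard content of Matou\v{s}kov\'{a}'s paper (a Kat\v{e}tov--Tong insertion on a normal space gives only continuity, with no Lipschitz control); if you are willing to use her results at that level of strength, you should apply her Theorem 2.4 directly and the whole inf-/sup-convolution apparatus becomes unnecessary, whereas if you are not, your proof has an unproved key step.
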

\begin{proof}
Without loss of generality, we can assume that \(\Lip(f,C,\sfd)>0\). Let us define
\[
M\coloneqq\frac{{\rm Osc}_C(f)}{\Lip(f,C,\sfd)}>0
\]
and let us consider the truncated distance \(\tilde\sfd\coloneqq\sfd\wedge M\). Then \(\tilde\sfd\) is \((\tau\times\tau)\)-lower semicontinuous,
\(f\) is \(\tilde\sfd\)-Lipschitz and \(\Lip(f,C,\tilde\sfd)=\Lip(f,C,\sfd)\). By virtue of \cite[Theorem 2.4]{Matouskova00}, we can find
a \(\tau\)-continuous \(\tilde\sfd\)-Lipschitz extension \(\bar f\colon X\to\R\) of \(f\) such that \(\Lip(\bar f,\tilde\sfd)=\Lip(f,C,\tilde\sfd)\)
and \(\inf_C f\leq\bar f\leq\sup_C f\). Given that \(\tilde\sfd\leq\sfd\), we can thus conclude that \(\bar f\in\Lip_b(X,\tau,\sfd)\) and
\(\Lip(\bar f,\sfd)=\Lip(f,C,\sfd)\).
\end{proof}
\begin{remark}\label{rmk:ext_on_cpt_emt}{\rm
Let us make some comments on Theorem \ref{thm:McShane_emts}:
\begin{itemize}
\item[\(\rm i)\)] Every \(\tau\)-compact e.m.t.\ space \((X,\tau,\sfd)\) satisfies the assumptions of Theorem \ref{thm:McShane_emts}.
Indeed, all compact Hausdorff spaces are normal, and \eqref{eq:hp_extension} holds by \cite[proof of Corollary 2.5]{Matouskova00}.
\item[\(\rm ii)\)] If \(\B\) is a Banach space, \(\sfd_{\B'}\) denotes the distance on \(\B'\) induced by its norm and \(\tau_{w^*}\) is
the weak\(^*\) topology of \(\B'\), then \((\B',\tau_{w^*},\sfd_{\B'})\) fulfils the assumptions of Theorem \ref{thm:McShane_emts}, as it
is shown in the proof of \cite[Corollary 2.6]{Matouskova00}.
\item[\(\rm iii)\)] The requirement \eqref{eq:hp_extension} cannot be dropped. Indeed, if \(\B\) is a non-reflexive Banach space,
\(\sfd_\B\) denotes its induced distance and \(\tau_w\) is its weak topology, then \((\B,\tau_w,\sfd_\B)\) neither fulfils \eqref{eq:hp_extension}
nor the conclusions of Theorem \ref{thm:McShane_emts}; see the proof of \cite[Theorem 3.1]{Matouskova00}.
Note also that if in addition \(\B'\) is separable, then \((\B,\tau_w)\) is normal (it can be readily checked that it is both
regular and Lindel\"{o}f, thus it is normal by \cite[Lemma at page 113]{Kelley75}).
\item[\(\rm iv)\)] If \((X,\tau)\) is a normal Hausdorff space and \(\sfd\coloneqq\sfd_{\rm discr}\) denotes the discrete distance on \(X\), then
Theorem \ref{thm:McShane_emts} for \((X,\tau,\sfd)\) reduces to the \emph{Tietze extension theorem} for bounded functions (note
that \eqref{eq:hp_extension} holds in this case, since \(\bar B_r^\sfd(C)=C\) if \(r<1\), \(\bar B_r^\sfd(C)=X\) otherwise).
In particular, in Theorem \ref{thm:McShane_emts} both the assumptions that \(\tau\) is normal and that the set \(C\) is \(\tau\)-closed are needed.
\item[\(\rm v)\)] If \((X,\sfd)\) is a metric space and \(\tau_\sfd\) denotes the topology induced by \(\sfd\), then Theorem
\ref{thm:McShane_emts} for \((X,\tau_\sfd,\sfd)\) implies the McShane--Whitney extension theorem for bounded functions.
\item[\(\rm vi)\)] Differently from the Tietze and the McShane--Whitney extension theorems, in Theorem \ref{thm:McShane_emts} the
boundedness assumption on \(f\) cannot be dropped; see e.g.\ \cite[Example 3.2]{Matouskova00}.
\fr
\end{itemize}
}\end{remark}

In Section \ref{s:Lipschitz_derivations}, the above extension result will be used to study the relation between
different notions of Lipschitz derivations. Rather than Theorem \ref{thm:McShane_emts}, we will apply a consequence of it:
\begin{corollary}\label{cor:Lip_ext_cpt}
Let \((X,\tau,\sfd)\) be an e.m.t.\ space. Let \(K\subseteq X\) be a \(\tau\)-compact set. Let \(f\colon K\to\R\) be a bounded
\(\tau\)-continuous \(\sfd\)-Lipschitz function. Then there exists \(\bar f\in\Lip_b(X,\tau,\sfd)\) such that
\[
\bar f|_K=f,\qquad\Lip(\bar f,\sfd)=\Lip(f,K,\sfd),\qquad\min_K f\leq\bar f\leq\max_K f.
\]
\end{corollary}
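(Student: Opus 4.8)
The plan is to deduce Corollary \ref{cor:Lip_ext_cpt} from Theorem \ref{thm:McShane_emts} applied not to \((X,\tau,\sfd)\) itself but to its Gelfand compactification \((\hat X,\hat\tau,\hat\sfd)\), where the hypotheses of the extension theorem are automatically satisfied (by Remark \ref{rmk:ext_on_cpt_emt}(i)). The function \(f\) on \(K\) should first be transported to the compact space via the embedding \(\iota\colon X\hookrightarrow\hat X\), then extended there, then pulled back to \(X\).

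First I would set \(\hat K\coloneqq\iota(K)\subseteq\hat X\). Since \(\iota\) is a homeomorphism onto its image (Gelfand compactification theorem i)) and \(K\) is \(\tau\)-compact, \(\hat K\) is \(\hat\tau\)-compact, hence \(\hat\tau\)-closed in the Hausdorff space \(\hat X\). Moreover, by property iii) of the compactification, \(\hat\sfd(\iota(x),\iota(y))=\sfd(x,y)\) for all \(x,y\in X\), so the map \(g\coloneqq f\circ(\iota|_K)^{-1}\colon\hat K\to\R\) is bounded, \(\hat\tau\)-continuous, and \(\hat\sfd\)-Lipschitz with \(\Lip(g,\hat K,\hat\sfd)=\Lip(f,K,\sfd)\); it also satisfies \(\inf_{\hat K} g=\inf_K f\) and \(\sup_{\hat K} g=\sup_K f\), and these infima/suprema are attained (minima and maxima) by compactness. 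Now I apply Theorem \ref{thm:McShane_emts} to \((\hat X,\hat\tau,\hat\sfd)\), which is a \(\hat\tau\)-compact e.m.t.\ space and therefore satisfies both the normality requirement and \eqref{eq:hp_extension} by Remark \ref{rmk:ext_on_cpt_emt}(i), with \(C=\hat K\): this yields \(\hat g\in\Lip_b(\hat X,\hat\tau,\hat\sfd)\) with \(\hat g|_{\hat K}=g\), \(\Lip(\hat g,\hat\sfd)=\Lip(g,\hat K,\hat\sfd)=\Lip(f,K,\sfd)\), and \(\min_K f\leq\hat g\leq\max_K f\).

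Finally I would pull back: set \(\bar f\coloneqq\hat g\circ\iota\). By Lemma \ref{lem:Gelfand_isom}, precomposition with \(\iota\) maps \(\Lip_b(\hat X,\hat\tau,\hat\sfd)\) into \(\Lip_b(X,\tau,\sfd)\) and is in fact the inverse of the Gelfand transform, so \(\bar f\in\Lip_b(X,\tau,\sfd)\) and, since \(\Gamma\) preserves the norm and in particular the Lipschitz constant (shown in the proof of Lemma \ref{lem:Gelfand_isom}, \(\Lip(\Gamma(\bar f),\hat\sfd)=\Lip(\bar f,\sfd)\) with \(\Gamma(\bar f)=\hat g\)), we get \(\Lip(\bar f,\sfd)=\Lip(\hat g,\hat\sfd)=\Lip(f,K,\sfd)\). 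For \(x\in K\) we have \(\iota(x)\in\hat K\), whence \(\bar f(x)=\hat g(\iota(x))=g(\iota(x))=f(x)\), so \(\bar f|_K=f\). The bound \(\min_K f\leq\hat g\leq\max_K f\) on \(\hat X\) transfers directly to \(\min_K f\leq\bar f\leq\max_K f\) on \(X\), completing the proof.

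I do not anticipate a serious obstacle here: the only points requiring a little care are checking that \(\Lip(f,K,\sfd)\) is genuinely preserved at each transfer step — for the passage to \(\hat K\) this uses the isometry property iii) of the compactification on \(\iota(X)\times\iota(X)\), and for the passage back it uses the Lipschitz-constant-preservation part of Lemma \ref{lem:Gelfand_isom} — and noting that the extremal values are attained (so that ``\(\inf\)''/``\(\sup\)'' can be written as ``\(\min\)''/``\(\max\)'' in the statement), which is immediate from \(\tau\)-compactness of \(K\) and continuity of \(f\). If one prefers, the degenerate case \(\Lip(f,K,\sfd)=0\) (where \(f\) is constant on \(K\), if \(K\) is \(\sfd\)-connected, or more generally where the constant extension works) need not even be separated out, since Theorem \ref{thm:McShane_emts} handles it through the compactification without issue.
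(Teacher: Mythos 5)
Your proposal is correct and follows essentially the same route as the paper: transport \(f\) to the Gelfand compactification via \(\iota\), apply Theorem \ref{thm:McShane_emts} there (using Remark \ref{rmk:ext_on_cpt_emt} i) to verify its hypotheses), and pull the extension back by precomposition with \(\iota\), checking via property iii) of the compactification and Lemma \ref{lem:Gelfand_isom} that the Lipschitz constant is preserved at each step. No gaps.
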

\begin{proof}
Consider the compactification \((\hat X,\hat\tau,\hat\sfd)\) of \((X,\tau,\sfd)\) and the canonical embedding \(\iota\colon X\hookrightarrow\hat X\).
Since \(\iota\) is continuous, we have that \(\iota(K)\) is \(\hat\tau\)-compact. The function \(g\colon\iota(K)\to\R\), which we
define as \(g(y)\coloneqq f(\iota^{-1}(y))\) for every \(y\in\iota(K)\), is \(\hat\tau\)-continuous and \(\hat\sfd\)-Lipschitz.
By applying Theorem \ref{thm:McShane_emts} (taking also Remark \ref{rmk:ext_on_cpt_emt} i) into account), we deduce that there exists
a function \(\bar g\colon\hat X\to\R\) such that \(\bar g|_{\iota(K)}=g\), \(\Lip(\bar g,\hat\sfd)=\Lip(g,\iota(K),\hat\sfd)\) and
\(\min_{\iota(K)}g\leq\bar g\leq\max_{\iota(K)}g\). Now define \(\bar f\colon X\to\R\) as \(\bar f(x)\coloneqq\bar g(\iota(x))\) for every \(x\in X\).
Observe that \(\bar f\in\Lip_b(X,\tau,\sfd)\), \(\bar f|_K=f\), \(\Lip(\bar f,\sfd)=\Lip(f,K,\sfd)\) and \(\min_K f\leq\bar f\leq\max_K f\).
Therefore, the statement is proved.
\end{proof}
\section{Lipschitz derivations}\label{s:Lipschitz_derivations}
Let us begin by introducing a rather general notion of \emph{Lipschitz derivation} over an arbitrary e.m.t.m.\ space. In Sections \ref{s:Weaver} and \ref{s:Di_Marino},
we will then identify and study two special classes of derivations, which extend previous notions by Weaver \cite{Weaver01,Weaver2018} and Di Marino \cite{DiMar:14,DiMarPhD},
respectively.
\begin{definition}[Lipschitz derivation]\label{def:der}
Let \(\mathbb X=(X,\tau,\sfd,\mm)\) be an e.m.t.m.\ space. Then by a \textbf{Lipschitz derivation} on \(\mathbb X\)
we mean a linear operator \(b\colon\Lip_b(X,\tau,\sfd)\to L^0(\mm)\) such that
\begin{equation}\label{eq:Leibniz_Lip_der}
b(fg)=f\,b(g)+g\,b(f)\quad\text{ for every }f,g\in\Lip_b(X,\tau,\sfd).
\end{equation}
We refer to \eqref{eq:Leibniz_Lip_der} as the \textbf{Leibniz rule}. We denote by \({\rm Der}(\mathbb X)\) the set of all derivations on \(\mathbb X\).
\end{definition}

It can be readily checked that the space \({\rm Der}(\mathbb X)\) is a module over \(L^0(\mm)\) if endowed with
\[\begin{split}
(b+\tilde b)(f)\coloneqq b(f)+\tilde b(f)&\quad\text{ for every }b,\tilde b\in{\rm Der}(\mathbb X)\text{ and }f\in\Lip_b(X,\tau,\sfd),\\
(hb)(f)\coloneqq h\,b(f)&\quad\text{ for every }b\in{\rm Der}(\mathbb X)\text{, }h\in L^0(\mm)\text{ and }f\in\Lip_b(X,\tau,\sfd).
\end{split}\]
In particular, \({\rm Der}(\mathbb X)\) is a vector space (since the field \(\R\) can be identified with a subring of
\(L^0(\mm)\), via the map that associates to every number \(\lambda\in\R\) the function that is \(\mm\)-a.e.\ equal to \(\lambda\)).
\begin{definition}[Divergence of a Lipschitz derivation]\label{def:div_Lip_der}
Let \(\mathbb X=(X,\tau,\sfd,\mm)\) be an e.m.t.m.\ space and \(b\in{\rm Der}(\mathbb X)\). Then we say that \(b\)
\textbf{has divergence} provided it holds that \(b(f)\in L^1(\mm)\) for every \(f\in\Lip_b(X,\tau,\sfd)\) and there
exists a function \(\div(b)\in L^1(\mm)\) such that
\begin{equation}\label{eq:def_div}
\int b(f)\,\d\mm=-\int f\,\div(b)\,\d\mm\quad\text{ for every }f\in\Lip_b(X,\tau,\sfd).
\end{equation}
We denote by \(D(\div;\mathbb X)\) the set of all Lipschitz derivations on \(\mathbb X\) having divergence.
\end{definition}

Let us make some comments on Definition \ref{def:div_Lip_der}:
\begin{itemize}
\item Since \(\Lip_b(X,\tau,\sfd)\) is weakly\(^*\) dense in \(L^1(\mm)\) (as it easily follows from \eqref{eq:Lip_dense_Lp}),
it holds that the divergence \(\div(b)\) is uniquely determined by \eqref{eq:def_div}.
\item \(D(\div;\mathbb X)\) is a vector subspace of \(\Der(\mathbb X)\).
\item \(\div\colon D(\div;\mathbb X)\to L^1(\mm)\) is a linear operator.
\item The divergence satisfies the \textbf{Leibniz rule}, i.e.\ for every \(b\in D(\div;\mathbb X)\) and
\(h\in\Lip_b(X,\tau,\sfd)\) it holds that \(hb\in D(\div;\mathbb X)\) and
\[
\div(hb)=h\,\div(b)+b(h).
\]
In particular, \(D(\div;\mathbb X)\) is a \(\Lip_b(X,\tau,\sfd)\)-submodule of \(\Der(\mathbb X)\).
\end{itemize}

We shall focus on classes of derivations satisfying additional locality or continuity properties:
\begin{definition}[Local derivation]\label{def:loc_der}
Let \(\mathbb X=(X,\tau,\sfd,\mm)\) be an e.m.t.m.\ space. Let \(b\in\Der(\mathbb X)\) be a given derivation.
Then we say that \(b\) is \textbf{local} if for every function \(f\in\Lip_b(X,\tau,\sfd)\) we have that
\[
b(f)=0\quad\text{ holds }\mm\text{-a.e.\ on }\{f=0\}.
\]
\end{definition}

Let \(E\in\mathscr B(X,\tau)\) be such that \(\mm(E)>0\). Then every local derivation \(b\in{\rm Der}(\mathbb X)\)
induces by restriction a local derivation \(b\llcorner E\in{\rm Der}(\mathbb X\llcorner E)\), where \(\mathbb X\llcorner E\)
is as in \eqref{eq:restr_emtm}, in the following way. Thanks to the inner regularity of \(\mm\), we can find a sequence
\((K_n)_n\) of pairwise disjoint \(\tau\)-compact subsets of \(E\) such that \(\mm\big(E\setminus\bigcup_{n\in\N}K_n\big)=0\).
For any \(f\in\Lip_b(E,\tau_E,\sfd_E)\) and \(n\in\N\), we know from Corollary \ref{cor:Lip_ext_cpt} that there
exists \(\bar f_n\in\Lip_b(X,\tau,\sfd)\) such that \(\bar f_n|_{K_n}=f|_{K_n}\). We then define
\begin{equation}\label{eq:restr_loc_der}
(b\llcorner E)(f)\coloneqq\sum_{n\in\N}\1_{K_n}b(\bar f_n)\in L^0(\mm\llcorner E).
\end{equation}
By using the locality of \(b\), one can readily check that \(b\llcorner E\) is well defined and local.
\medskip

In the following definition, we endow the closed unit ball \(\bar B_{\Lip_b(X,\tau,\sfd)}\) of \(\Lip_b(X,\tau,\sfd)\)
with the topology \(\tau_{pt}\) \emph{of pointwise convergence}, and the space \(L^\infty(\mm)\) with its weak\(^*\)
topology \(\tau_{w^*}\).
\begin{definition}[Weak\(^*\)-type continuity of derivations]\label{def:w-type_cont}
Let \(\mathbb X=(X,\tau,\sfd,\mm)\) be an e.m.t.m.\ space. Let \(b\in{\rm Der}(\mathbb X)\) be a given derivation
satisfying \(b(f)\in L^\infty(\mm)\) for every \(f\in\Lip_b(X,\tau,\sfd)\). Then:
\begin{itemize}
\item[\(\rm i)\)] We say that \(b\) is \textbf{weakly\(^*\)-type continuous} provided the map
\(b|_{\bar B_{\Lip_b(X,\tau,\sfd)}}\) is continuous between \((\bar B_{\Lip_b(X,\tau,\sfd)},\tau_{pt})\) and \((L^\infty(\mm),\tau_{w^*})\).
\item[\(\rm ii)\)] We say that \(b\) is \textbf{weakly\(^*\)-type sequentially continuous} provided the map
\(b|_{\bar B_{\Lip_b(X,\tau,\sfd)}}\) is sequentially continuous between \((\bar B_{\Lip_b(X,\tau,\sfd)},\tau_{pt})\) and \((L^\infty(\mm),\tau_{w^*})\).
\end{itemize}
\end{definition}

Some comments on the weak\(^*\)-type continuity and the weak\(^*\)-type sequential continuity:
\begin{itemize}
\item Since derivations are linear, the weak\(^*\)-type continuity can be equivalently reformulated by asking that
\emph{if a bounded net \((f_i)_{i\in I}\subseteq\Lip_b(X,\tau,\sfd)\) and a function \(f\in\Lip_b(X,\tau,\sfd)\)
satisfy \(\lim_{i\in I}f_i(x)=f(x)\) for every \(x\in X\), then \(\lim_{i\in I}b(f_i)=b(f)\) with
respect to the weak\(^*\) topology of \(L^\infty(\mm)\).} Similarly, the weak\(^*\)-type sequential continuity is
equivalent to asking that \emph{if a bounded sequence \((f_n)_{n\in\N}\subseteq\Lip_b(X,\tau,\sfd)\) and a function
\(f\in\Lip_b(X,\tau,\sfd)\) satisfy \(\lim_n f_n(x)=f(x)\) for every \(x\in X\), then \(b(f_n)\overset{*}\rightharpoonup b(f)\)
weakly\(^*\) in \(L^\infty(\mm)\) as \(n\to\infty\).}
\item The terminology `weak\(^*\)-type (sequential) continuity' is motivated by the fact that it strongly resembles the weak\(^*\) (sequential) continuity
in the Banach algebra \(\Lip_b(X,\sfd)\) of bounded Lipschitz functions on a metric space (see \cite[Corollary 3.4]{Weaver2018}), even though in
the setting of e.m.t.\ spaces one has that \(\Lip_b(X,\tau,\sfd)\) does not always have a predual (see Proposition \ref{prop:ex_no_predual})
and thus we cannot talk about an actual weak\(^*\) topology on it.
\item We point out that if a bounded sequence \((f_n)_n\subseteq\Lip_b(X,\tau,\sfd)\) and a function \(f\colon X\to\R\) satisfy \(f_n(x)\to f(x)\)
for every \(x\in X\), then \(f\) is \(\sfd\)-Lipschitz, but it can happen that it is not \(\tau\)-continuous, and thus it does not belong to
\(\Lip_b(X,\tau,\sfd)\); see Example \ref{ex:tau_pt_not_cpt} below.
\item The weak\(^*\)-type continuity is stronger than the weak\(^*\)-type sequential continuity, but they are not equivalent concepts, as we will see
in Proposition \ref{prop:w*_cont_vs_w*_seq_cont} and Remark \ref{ex:der_Wiener}.
\end{itemize}
\begin{example}\label{ex:tau_pt_not_cpt}{\rm
When \((X,\sfd)\) is a metric space, the topology \(\tau_{pt}\) on \(\bar B_{\Lip_b(X,\sfd)}\) coincides with
the restriction of the weak\(^*\) topology of \(\Lip_b(X,\tau,\sfd)\), thus in particular \((\bar B_{\Lip_b(X,\sfd)},\tau_{pt})\)
is a compact Hausdorff topological space. On the contrary, in the more general setting of e.m.t.\ spaces the
Hausdorff topological space \((\bar B_{\Lip_b(X,\tau,\sfd)},\tau_{pt})\) needs not be compact. For example,
consider the unit interval \([0,1]\) together with the Euclidean topology \(\tau\) and the discrete distance
\(\sfd_{\rm discr}\), which gives a `purely-topological' e.m.t.\ space as in Example \ref{ex:Schultz's_space}.
Letting \((f_n)_{n\in\N}\subseteq\Lip_b([0,1],\tau,\sfd_{\rm discr})\) be defined as \(f_n(t)\coloneqq(nt)\wedge 1\)
for every \(n\in\N\) and \(t\in[0,1]\), we have that \(\|f_n\|_{\Lip_b([0,1],\tau,\sfd_{\rm discr})}=2\) for every \(n\in\N\)
and \(\1_{(0,1]}(t)=\lim_n f_n(t)\) for every \(t\in[0,1]\), but \(\1_{(0,1]}\notin\Lip_b([0,1],\tau,\sfd_{\rm discr})\)
(because it is not \(\tau\)-continuous at \(0\)). In particular, \((\bar B_{\Lip_b([0,1],\tau,\sfd_{\rm discr})},\tau_{pt})\)
is not compact.
\fr}\end{example}

The weak\(^*\)-type sequential continuity condition implies both locality and strong continuity:
\begin{theorem}\label{thm:Weaver_locality_and_bdd}
Let \(\mathbb X=(X,\tau,\sfd,\mm)\) be an e.m.t.m.\ space. Let \(b\in\Der(\mathbb X)\) be weakly\(^*\)-type sequentially
continuous. Then \(b\) is a local derivation. Moreover, the map \(b\colon\Lip_b(X,\tau,\sfd)\to L^\infty(\mm)\)
is a bounded linear operator.
\end{theorem}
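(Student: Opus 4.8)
The plan is to prove locality first, then boundedness, using the weak\(^*\)-type sequential continuity to reduce both statements to a localization/truncation argument.

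\textbf{Locality.} I would first prove a weaker-looking but key fact: if \(f\in\Lip_b(X,\tau,\sfd)\) vanishes on a \(\tau\)-open set \(U\) (or more precisely, if \(\{f\neq 0\}\) has \(\tau\)-closure disjoint from some point), then \(b(f)=0\) there; then bootstrap to the full locality on \(\{f=0\}\). Actually the cleanest route uses the Leibniz rule directly: suppose \(f(x_0)=0\) and consider the standard trick of writing, for \(\phi\in\Lip_b(\R)\) with \(\phi(0)=0\), the approximations \(\phi_n\circ f\). A more robust approach: fix \(f\) and let \(N=\{f=0\}\). For each \(n\), define \(f_n\coloneqq(|f|-1/n)^+\mathrm{sign}(f)\in\Lip_b(X,\tau,\sfd)\) (a truncation toward zero); then \(f_n=g_n f\) is \emph{not} quite of product form, so instead use \(f=f_n+r_n\) where \(r_n\) is supported near \(N\) with \(\|r_n\|_\infty\le 1/n\). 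The genuinely clean argument: for \(\eps>0\) write \(f=f\cdot\eta_\eps\) where... — this doesn't factor either. I would therefore use the chain-rule-free argument standard for derivations: since \(b\) is linear and satisfies Leibniz, one shows \(b(f^2)=2f\,b(f)\), hence \(b(f^2)=0\) \(\mm\)-a.e.\ on \(\{f=0\}\); iterating, \(b(f^{2^k})=0\) on \(\{f=0\}\); but to pass from powers back to \(f\) one needs a limiting argument, and here weak\(^*\)-type sequential continuity enters: approximate \(f\) uniformly on bounded sets by combinations for which the conclusion holds. Concretely, fix \(\phi_n\in\Lip_b(\R)\) with \(\phi_n(t)=t\) for \(|t|\ge 1/n\), \(\phi_n(t)=0\) for \(|t|\le 1/(2n)\), \(\|\phi_n\|_{\Lip}\le 2\); then \(\phi_n\circ f\to f\) pointwise on \(X\), boundedly, so by sequential continuity \(b(\phi_n\circ f)\overset{*}{\rightharpoonup}b(f)\) in \(L^\infty(\mm)\). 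It remains to see \(b(\phi_n\circ f)=0\) \(\mm\)-a.e.\ on \(\{f=0\}\supseteq\{|f|<1/(2n)\}\); this follows because on the \(\tau\)-open set \(\{|f|<1/(2n)\}\) the function \(\phi_n\circ f\) vanishes identically, and a derivation applied to a function vanishing on an open set vanishes there (itself a small Leibniz argument: multiply by a bump). Passing to the weak\(^*\) limit, \(\mathbbm 1_{\{f=0\}}b(\phi_n\circ f)=0\) for all \(n\) forces \(\mathbbm 1_{\{f=0\}}b(f)=0\), i.e.\ \(b(f)=0\) \(\mm\)-a.e.\ on \(\{f=0\}\).

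\textbf{Boundedness.} Here I would argue by contradiction combined with a gliding-hump / uniform boundedness reduction. Suppose \(b\colon\Lip_b(X,\tau,\sfd)\to L^\infty(\mm)\) is unbounded; then there is a sequence \((f_n)_n\) with \(\|f_n\|_{\Lip_b(X,\tau,\sfd)}\le 1\) but \(\|b(f_n)\|_{L^\infty(\mm)}\to\infty\). The idea is to build from the \(f_n\) a single bounded pointwise-convergent sequence whose image under \(b\) fails to converge weakly\(^*\), contradicting sequential continuity. First normalize: replace \(f_n\) by \(\lambda_n f_n\) with \(\lambda_n\to 0\) chosen so that \(\|\lambda_n f_n\|_{\Lip_b}\to 0\) while still \(\|b(\lambda_n f_n)\|_{L^\infty}=\lambda_n\|b(f_n)\|_{L^\infty}\to\infty\) — e.g.\ \(\lambda_n=\|b(f_n)\|_{L^\infty}^{-1/2}\). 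Then \(g_n\coloneqq\lambda_n f_n\to 0\) in \(\Lip_b(X,\tau,\sfd)\) norm, hence uniformly, hence pointwise on \(X\), and \((g_n)\) is bounded; so by weak\(^*\)-type sequential continuity \(b(g_n)\overset{*}{\rightharpoonup}b(0)=0\) in \(L^\infty(\mm)\). But a weakly\(^*\)-convergent sequence in \(L^\infty(\mm)\) need not be norm-bounded in general — except that the Banach–Steinhaus theorem applies: a weakly\(^*\) convergent sequence in a dual space \emph{is} norm-bounded (uniform boundedness principle, since \(L^\infty(\mm)=L^1(\mm)'\) and pointwise-bounded families of functionals on the Banach space \(L^1(\mm)\) are uniformly bounded). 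This contradicts \(\|b(g_n)\|_{L^\infty(\mm)}\to\infty\). Hence \(b\) is bounded.

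\textbf{Main obstacle.} The subtle point is justifying that a derivation kills functions vanishing on a \(\tau\)-open set, since in the extended setting one cannot freely multiply by a compactly supported bump that is \(1\) near a point — one must use a \(\tau\)-continuous \(\sfd\)-Lipschitz function \(\eta\) with \(\eta\equiv 1\) on a neighborhood of the relevant point and \(\eta\equiv 0\) where \(\phi_n\circ f\ne 0\); the existence of such an \(\eta\) with controlled Lipschitz constant is exactly what the extension machinery (Theorem \ref{thm:McShane_emts}, Corollary \ref{cor:Lip_ext_cpt}) provides, but one must be a little careful because \(\{|f|<1/(2n)\}\) and \(\{|f|\ge 1/n\}\) are separated by \(f\) itself, so \(\eta\) can be taken of the form \(\psi\circ f\) for a suitable \(\psi\in\Lip_b(\R)\) and no extension theorem is actually needed. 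Once that local-vanishing lemma is in hand, both locality and boundedness follow by the routine sequential-continuity plus Banach–Steinhaus arguments sketched above. I expect the write-up effort to concentrate entirely on stating and proving this local-vanishing lemma cleanly.
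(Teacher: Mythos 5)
Your proof is correct. For locality you arrive, after some detours, at essentially the paper's argument: truncate \(f\) near its zero set, use the Leibniz rule to see that \(b\) kills the truncations a.e.\ on \(\{f=0\}\), and pass to the weak\(^*\) limit via the sequential continuity. The paper's truncation \(\psi_n(t)=t(1-e^{-nt^2})\) is chosen so that \(\psi_n\circ f=(\phi_n\circ f)\,f\) factors outright as a product of two elements of \(\Lip_b(X,\tau,\sfd)\), so a single application of the Leibniz rule gives \(b(\psi_n\circ f)=f\,b(\phi_n\circ f)+(\phi_n\circ f)\,b(f)\) with both summands visibly vanishing a.e.\ on \(\{f=0\}\); your piecewise-linear truncations need the extra ``bump'' step with \(\eta=\psi\circ f\), \(\eta\,(\phi_n\circ f)\equiv 0\) and \(\eta\equiv 1\) on \(\{f=0\}\), which -- as you correctly observe -- requires no extension theorem since the bump is itself a function of \(f\). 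Both routes are sound. For boundedness your argument is genuinely different in arrangement from the paper's: the paper fixes \(h\in L^1(\mm)\), shows that each functional \(T_h(f)=\int h\,b(f)\,\d\mm\) is continuous on \(\Lip_b(X,\tau,\sfd)\) (again via the sequential continuity), and applies the Uniform Boundedness Principle to the family \(\{T_h:\|h\|_{L^1(\mm)}\leq 1\}\); you instead rescale a putative unbounded sequence by \(\lambda_n=\|b(f_n)\|_{L^\infty(\mm)}^{-1/2}\) so that \(g_n\to 0\) in norm (hence pointwise and boundedly) while \(\|b(g_n)\|_{L^\infty(\mm)}\to\infty\), and invoke the norm-boundedness of weakly\(^*\) convergent sequences in \(L^\infty(\mm)=L^1(\mm)'\). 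Both are applications of Banach--Steinhaus; yours is arguably more direct, while the paper's version yields the explicit bound \(\|b(f)\|_{L^\infty(\mm)}\leq M\|f\|_{\Lip_b(X,\tau,\sfd)}\) without arguing by contradiction.
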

\begin{proof}
The proof of locality is essentially taken from \cite[Lemma 10.34]{Weaver2018}. Fix any \(f\in\Lip_b(X,\tau,\sfd)\). For any \(n\in\N\), we define the auxiliary functions
\(\phi_n,\psi_n\colon\R\to\R\) as \(\phi_n(t)\coloneqq 1-e^{-nt^2}\) and \(\psi_n(t)\coloneqq t\,\phi_n(t)\) for every \(t\in\R\). Since \(0\leq\phi_n(t)\leq 1\) and
\(\phi'_n(t)=2nte^{-nt^2}\) for all \(t\in\R\), we have that \(\phi_n\) is Lipschitz on \(f(X)\) and thus \(\phi_n\circ f\in\Lip_b(X,\tau,\sfd)\). Moreover, \(-|t|\leq\psi_n(t)\leq|t|\)
and \(0\leq\psi'_n(t)\leq 1+2e^{-3/2}\) for all \(t\in\R\), so that \(\psi_n\circ f\in\Lip_b(X,\tau,\sfd)\) with \(\|\psi_n\circ f\|_{C_b(X,\tau)}\leq\|f\|_{C_b(X,\tau)}\) and
\(\Lip(\psi_n\circ f,\sfd)\leq(1+2e^{-3/2})\Lip(f,\sfd)\). In particular, the sequence \((\psi_n\circ f)_n\) is norm bounded in \(\Lip_b(X,\tau,\sfd)\). Note also that
\(\lim_n(\psi_n\circ f)(x)=f(x)\) for every \(x\in X\), whence it follows that
\[
f\,b(\phi_n\circ f)+(\phi_n\circ f)b(f)=b((\phi_n\circ f)f)=b(\psi_n\circ f)\overset{*}{\rightharpoonup}b(f)\quad\text{ weakly\(^*\) in }L^\infty(\mm)\text{ as }n\to\infty
\]
by the weak\(^*\)-type sequential continuity of \(b\). In particular, as \(\1_{\{f=0\}}(f\,b(\phi_n\circ f)+(\phi_n\circ f)b(f))=0\)
holds \(\mm\)-a.e.\ for every \(n\in\N\), we conclude that \(\1_{\{f=0\}}b(f)=0\) in the \(\mm\)-a.e.\ sense, thus \(b\) is local.

Let us now prove that \(b\colon\Lip_b(X,\tau,\sfd)\to L^\infty(\mm)\) is a bounded linear operator. Given any function
\(h\in L^1(\mm)\), we define the linear operator \(T_h\colon\Lip_b(X,\tau,\sfd)\to\R\) as
\[
T_h(f)\coloneqq\int h\,b(f)\,\d\mm\quad\text{ for every }f\in\Lip_b(X,\tau,\sfd).
\]
If \((f_n)_{n\in\N}\subseteq\Lip_b(X,\tau,\sfd)\) and \(f\in\Lip_b(X,\tau,\sfd)\) satisfy \(\|f_n-f\|_{\Lip_b(X,\tau,\sfd)}\to 0\)
as \(n\to\infty\), then we have in particular that \(\sup_{n\in\N}\|f_n\|_{\Lip_b(X,\tau,\sfd)}<+\infty\) and
\(f(x)=\lim_n f_n(x)\) for every \(x\in X\), so that \(b(f_n)\overset{*}{\rightharpoonup}b(f)\) weakly\(^*\) in \(L^\infty(\mm)\)
by the weak\(^*\)-type sequential continuity of \(b\), and thus accordingly \(T_h(f_n)=\int h\,b(f_n)\,\d\mm\to\int h\,b(f)\,\d\mm=T_h(f)\).
This shows that \(T_h\colon\Lip_b(X,\tau,\sfd)\to\R\) is continuous, thus \(T_h\in\Lip_b(X,\tau,\sfd)'\).
Next, denote \(B\coloneqq\{h\in L^1(\mm):\|h\|_{L^1(\mm)}\leq 1\}\). Note that
\[
\sup_{h\in B}|T_h(f)|\leq\sup_{h\in B}\int|h||b(f)|\,\d\mm\leq\|b(f)\|_{L^\infty(\mm)}\quad\text{ for every }f\in\Lip_b(X,\tau,\sfd)
\]
by H\"{o}lder's inequality. Thanks to the Uniform Boundedness Principle, we then deduce that
\[
M\coloneqq\sup_{h\in B}\|T_h\|_{\Lip_b(X,\tau,\sfd)'}<+\infty.
\]
Therefore, we can conclude that for any \(f\in\Lip_b(X,\tau,\sfd)\) with \(\|f\|_{\Lip_b(X,\tau,\sfd)}\leq 1\) it holds that
\[
\|b(f)\|_{L^\infty(\mm)}=\sup_{h\in B}\int h\,b(f)\,\d\mm=\sup_{h\in B}T_h(f)\leq\sup_{h\in B}\|T_h\|_{\Lip_b(X,\tau,\sfd)'}=M,
\]
whence it follows that \(b\colon\Lip_b(X,\tau,\sfd)\to L^\infty(\mm)\) is a bounded linear operator.
\end{proof}

The next result clarifies the interplay between weak\(^*\)-type continuous derivations and the decomposition
of an e.m.t.m.\ space into its maximal \(\sfd\)-separable and purely non-\(\sfd\)-separable components.
The proof of i) was suggested to us by Sylvester Eriksson-Bique.
\begin{proposition}\label{prop:w*_cont_vs_w*_seq_cont}
Let \(\mathbb X=(X,\tau,\sfd,\mm)\) be an e.m.t.m.\ space. Let \(b\in{\rm Der}(\mathbb X)\) be given. Then:
\begin{itemize}
\item[\(\rm i)\)] If \(b\) is weakly\(^*\)-type continuous, then \(b(f)=0\) \(\mm\)-a.e.\ on
\(X\setminus{\rm S}_{\mathbb X}\) for every \(f\in\Lip_b(X,\tau,\sfd)\).
\item[\(\rm ii)\)] If \(b\) is a local derivation and \(b\llcorner{\rm S}_{\mathbb X}\) is weakly\(^*\)-type sequentially
continuous, then \(b\llcorner{\rm S}_{\mathbb X}\) is weakly\(^*\)-type continuous. In particular, if
\(b\) is weakly\(^*\)-type sequentially continuous, then \(b\llcorner{\rm S}_{\mathbb X}\) is weakly\(^*\)-type continuous.
\end{itemize}
\end{proposition}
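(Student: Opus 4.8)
The two claims are closely related; the second follows from the first applied to the restricted derivation. So the plan is to concentrate on part i) and then on the key topological input for part ii).

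For part i), the strategy is to exploit the structure of $X \setminus {\rm S}_{\mathbb X}$: by Lemma \ref{lem:d-sep_comp}, every $\sfd$-separable $\tau$-Borel subset of $X\setminus{\rm S}_{\mathbb X}$ is $\mm$-null. The idea suggested by Eriksson-Bique is presumably the following. Fix $f\in\Lip_b(X,\tau,\sfd)$; we want $b(f)=0$ $\mm$-a.e.\ on $X\setminus{\rm S}_{\mathbb X}$. Consider the net of all finite subsets $F\subseteq X$, directed by inclusion. For each such $F$, use the McShane--Whitney extension (applied to $f|_F$ on the finite, hence $\sfd$-separable, metric space $(F,\sfd)$, then extended via Corollary \ref{cor:Lip_ext_cpt} or rather via the construction allowed since finite sets are $\tau$-closed) to build $f_F\in\Lip_b(X,\tau,\sfd)$ with $f_F|_F=f|_F$, $\Lip(f_F,\sfd)\le\Lip(f,\sfd)$, and $\|f_F\|_{C_b}\le\|f\|_{C_b}$; moreover one can arrange that $f_F$ is constant outside a small $\sfd$-neighbourhood of $F$, so that $f_F$ is \emph{$\sfd$-separably supported} in the sense that it factors (up to an additive constant) through a $\sfd$-separable set. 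Then $(f_F)_F$ is a bounded net in $\Lip_b(X,\tau,\sfd)$ with $f_F(x)\to f(x)$ for every $x\in X$ (eventually $x\in F$), so by weak\(^*\)-type continuity $b(f_F)\overset{*}{\rightharpoonup}b(f)$ in $L^\infty(\mm)$. On the other hand, each $f_F$ being essentially a $\sfd$-separably-supported Lipschitz function, one argues that $b(f_F)=0$ $\mm$-a.e.\ on $X\setminus{\rm S}_{\mathbb X}$: indeed $f_F - c_F\1_X$ vanishes on the complement of a $\sfd$-separable $\tau$-open set $V_F$, and the Leibniz rule together with a locality-type argument forces $b(f_F)$ to be supported in (the essential closure of) $V_F$, which is $\mm$-equivalent to a subset of ${\rm S}_{\mathbb X}$. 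Passing to the weak\(^*\) limit, $\int_{X\setminus{\rm S}_{\mathbb X}} h\,b(f)\,\d\mm = \lim_F \int_{X\setminus{\rm S}_{\mathbb X}} h\,b(f_F)\,\d\mm = 0$ for every $h\in L^1(\mm)$, hence $b(f)=0$ $\mm$-a.e.\ there.

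For part ii), note first that $b\llcorner{\rm S}_{\mathbb X}$ is a well-defined local derivation on the e.m.t.m.\ space $\mathbb X\llcorner{\rm S}_{\mathbb X}$, which has a \emph{$\sfd$-separable} underlying extended metric space. By Lemma \ref{lem:separability_Lp}, the measure $\mm\llcorner{\rm S}_{\mathbb X}$ is separable, hence the weak\(^*\) topology of $L^\infty(\mm\llcorner{\rm S}_{\mathbb X})$ is metrisable on bounded sets (Lemma \ref{lem:suff_cond_wstar_seq_cont}, as announced in the preliminaries). The remaining point is that on $(\bar B_{\Lip_b({\rm S}_{\mathbb X},\tau,\sfd)},\tau_{pt})$ the relevant convergence is also sequential: since $({\rm S}_{\mathbb X},\sfd)$ is $\sfd$-separable, the pointwise topology on the unit ball, when restricted to the relevant image, is determined by a countable dense set, so net convergence can be tested against sequences — more precisely, a bounded net $f_i\to f$ pointwise on ${\rm S}_{\mathbb X}$ can be replaced, for the purpose of computing the weak\(^*\) limit of $b(f_i)$, by a sequence, using a diagonal argument over a countable $\sfd$-dense set together with the bound $\Lip(f_i,\sfd)\le 1$ to control oscillation. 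Then weak\(^*\)-type sequential continuity of $b\llcorner{\rm S}_{\mathbb X}$ upgrades to weak\(^*\)-type continuity. The final sentence of ii) is immediate: if $b$ itself is weakly\(^*\)-type sequentially continuous, then so is $b\llcorner{\rm S}_{\mathbb X}$ (extensions of Lipschitz functions on ${\rm S}_{\mathbb X}$ to $X$ preserve norm bounds and pointwise convergence), and $b$ is local by Theorem \ref{thm:Weaver_locality_and_bdd}, so the previous implication applies.

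The main obstacle I anticipate is the construction in part i): one must produce, for each finite (or $\sfd$-separable) subset, a genuine element of $\Lip_b(X,\tau,\sfd)$ that agrees with $f$ there, is uniformly bounded in norm, and is ``$\sfd$-separably supported'' so that a locality argument confines $b$ of it to ${\rm S}_{\mathbb X}$ — simultaneously preserving $\tau$-continuity. This is where the extension results of Section \ref{s:extLip} (Corollary \ref{cor:Lip_ext_cpt}) are indispensable, and the delicate combinatorial point is arranging the support condition while keeping the net pointwise-convergent to $f$ on \emph{all} of $X$. A secondary subtlety in ii) is justifying the net-to-sequence reduction for $\tau_{pt}$ on the unit ball of $\Lip_{b,1}$ over a $\sfd$-separable space; this uses that the $1$-Lipschitz bound makes the functions equi-uniformly-continuous for $\sfd$, so their values on a countable $\sfd$-dense set determine them, reducing pointwise net convergence to convergence of countably many real nets, which can be diagonalised.
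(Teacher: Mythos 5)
Your part ii) is essentially the paper's argument: boundedness of the restricted derivation (via Theorem \ref{thm:Weaver_locality_and_bdd}), separability of \(\mm\llcorner{\rm S}_{\mathbb X}\) from Lemma \ref{lem:separability_Lp} to metrise the weak\(^*\) topology on bounded subsets of \(L^\infty\), and a countable \(\sfd\)-dense set plus equi-Lipschitzness to metrise \(\tau_{pt}\) on bounded subsets of \(\Lip_b({\rm S}_{\mathbb X},\cdot,\cdot)\), so that continuity and sequential continuity coincide. (Your opening remark that ii) ``follows from i) applied to the restricted derivation'' is not right -- the two parts are independent -- but you then give a self-contained argument, so this does no harm.)

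Part i), however, has a fatal gap. Your plan hinges on producing, for each finite \(F\subseteq X\), a function \(f_F\in\Lip_b(X,\tau,\sfd)\) agreeing with \(f\) on \(F\), uniformly bounded in norm, and \emph{constant outside a small \(\sfd\)-neighbourhood of \(F\)}. Such functions do not exist in general, precisely on the spaces for which the statement is interesting. Take an abstract Wiener space \((X,\gamma)\) with the norm topology \(\tau\) and the Cameron--Martin extended distance \(\sfd\): an \(\sfd\)-ball \(B^\sfd_r(p)=p+\{h\in H(\gamma):|h|_{H(\gamma)}<r\}\) is relatively norm-compact, hence nowhere dense, so the complement of a finite union of such balls is \(\tau\)-dense; a \(\tau\)-continuous function that is constant there is constant on all of \(X\), and cannot interpolate the values of \(f\) on \(F\) unless \(f|_F\) is constant. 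This is exactly the tension between \(\tau\)-continuity and \(\sfd\)-localisation that makes the extended setting delicate, and neither Theorem \ref{thm:McShane_emts} nor Corollary \ref{cor:Lip_ext_cpt} gives any control on where the extension is constant. The paper's proof sidesteps the issue entirely: it argues by contradiction on a positive-measure Borel set \(P\subseteq\{b(f)\geq\lambda\}\setminus{\rm S}_{\mathbb X}\), and uses the net \(u_{F,G}(x)=\min_{p\in F}\max_{g\in G}|g(x)-g(p)|\wedge 1\) indexed by pairs of finite sets \(F\subseteq X\), \(G\subseteq\Lip_{b,1}(X,\tau,\sfd)\). These functions are \(\tau\)-continuous by construction (finite lattice operations on \(\tau\)-continuous functions), converge pointwise to \(0\), and only in the monotone limit over \(G\) recover \(\min_{p\in F}\sfd(\cdot,p)\wedge 1\) via \eqref{dist:recovery}. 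Weak\(^*\)-type continuity plus the Leibniz rule gives \(u_{F,G}\,b(f)\to 0\) weakly\(^*\), hence \(\int_P u_{F,G}\,\d\mm\to 0\); extracting an increasing sequence \((F_k,G_k)\) and applying monotone convergence for nets (Remark \ref{rmk:monot_conv_for_nets}) one finds a countable set \(C=\bigcup_k F_k\) with \(\int_P\inf_{p\in C}\sfd(\cdot,p)\wedge 1\,\d\mm=0\), i.e.\ \(C\) is \(\sfd\)-dense in \(P\) up to an \(\mm\)-null set -- contradicting \(\mm(P)>0\) and \(P\subseteq X\setminus{\rm S}_{\mathbb X}\). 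If you want to salvage your approach you would need to replace the nonexistent \(f_F\) by some such ``doubly indexed'' approximation in which \(\sfd\) is only reached as a supremum of \(\tau\)-continuous quantities.
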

\begin{proof}
\ \\
{\bf i)}
Assume that \(b\) is weakly\(^*\)-type continuous. We argue by contradiction: suppose that there exists a function
\(f\in\Lip_b(X,\tau,\sfd)\) such that \(\mm(\{b(f)\neq 0\}\setminus{\rm S}_{\mathbb X})>0\). Up to replacing
\(f\) with \(-f\), we can assume that
\(\mm(\{b(f)>0\}\setminus{\rm S}_{\mathbb X})>0\), so that there exists a real number \(\lambda>0\) such that \(\mm(\{b(f)\geq\lambda\}\setminus{\rm S}_{\mathbb X})>0\).
Fix any \(\tau\)-Borel \(\mm\)-a.e.\ representative \(P\) of \(\{b(f)\geq\lambda\}\setminus{\rm S}_{\mathbb X}\)
satisfying \(P\subseteq X\setminus{\rm S}_{\mathbb X}\). Next, let us define
\[
\mathcal I\coloneqq\big\{(F,G)\;\big|\;F\subseteq X\text{ finite,}\,G\subseteq\Lip_{b,1}(X,\tau,\sfd)\text{ finite}\big\}.
\]
For any \((F,G),(\tilde F,\tilde G)\in\mathcal I\), we declare that \((F,G)\preceq(\tilde F,\tilde G)\) if and only if \(F\subseteq\tilde F\) and \(G\subseteq\tilde G\).
Note that \((\mathcal I,\preceq)\) is a directed set. We then define the net \((u_{F,G})_{(F,G)\in\mathcal I}\subseteq\Lip_b(X,\tau,\sfd)\) as
\[
u_{F,G}(x)\coloneqq\min_{p\in F}\max_{g\in G}|g(x)-g(p)|\wedge 1\quad\text{ for every }(F,G)\in\mathcal I\text{ and }x\in X.
\]
Given any \(x\in X\), we have that \(u_{F,G}(x)=0\) holds for every \((F,G)\in\mathcal I\) with
\((\{x\},\varnothing)\preceq(F,G)\), thus accordingly \(\lim_{(F,G)\in\mathcal I}u_{F,G}(x)=0\)
and \(\lim_{(F,G)\in\mathcal I}(u_{F,G}f)(x)=0\). Since \(\{u_{F,G}:(F,G)\in\mathcal I\}\) and
\(\{u_{F,G}f:(F,G)\in\mathcal I\}\) are bounded subsets of \(\Lip_b(X,\tau,\sfd)\), we deduce that
\[
\lim_{(F,G)\in\mathcal I}u_{F,G}\,b(f)=\lim_{(F,G)\in\mathcal I}\big(b(u_{F,G}f)-f\,b(u_{F,G})\big)=0\quad
\text{ weakly\(^*\) in }L^\infty(\mm),
\]
by the weak\(^*\)-type continuity of \(b\) and the Leibniz rule. Hence,
\(\lim_{(F,G)\in\mathcal I}\int_P u_{F,G}\,b(f)\,\d\mm=0\). Since \(0\leq\lambda\int_P u_{F,G}\,\d\mm
\leq\int_P u_{F,G}\,b(f)\,\d\mm\) for every \((F,G)\in\mathcal I\), we get \(\lim_{(F,G)\in\mathcal I}\int_P u_{F,G}\,\d\mm=0\).
Then we can find a \(\preceq\)-increasing sequence \(((F_k,G_k))_{k\in\N}\subseteq\mathcal I\) such that
\begin{equation}\label{eq:w_star_cont_triv_aux1}
\int_P u_{F,G}\,\d\mm\leq\frac{1}{k}\quad\text{ for every }k\in\N\text{ and }(F,G)\in\mathcal I
\text{ with }(F_k,G_k)\preceq(F,G).
\end{equation}
Given any \(k\in\N\), consider the directed set
\(I_k\coloneqq\big\{G\subseteq\Lip_{b,1}(X,\tau,\sfd):G_k\subseteq G\text{ with }G\text{ finite}\big\}\)
ordered by inclusion. Being \((u_{F_k,G})_{G\in I_k}\) a non-decreasing net of \(\tau\)-continuous functions, we have
\begin{equation}\label{eq:w_star_cont_triv_aux2}
\int_P\min_{p\in F_k}\sfd(x,p)\wedge 1\,\d\mm(x)=\int_P\lim_{G\in I_k}u_{F_k,G}\,\d\mm
=\lim_{G\in I_k}\int_P u_{F_k,G}\,\d\mm\leq\frac{1}{k}\quad\text{ for all }k\in\N
\end{equation}
thanks to \eqref{dist:recovery}, Remark \ref{rmk:monot_conv_for_nets} and \eqref{eq:w_star_cont_triv_aux1}.
Now, observe that \(\min_{p\in F_k}\sfd(x,p)\wedge 1\searrow\inf_{p\in C}\sfd(x,p)\wedge 1\) as \(k\to\infty\) for every \(x\in X\),
where \(C\) denotes the countable set \(\bigcup_{k\in\N}F_k\). By the dominated convergence theorem,
we deduce from \eqref{eq:w_star_cont_triv_aux2} that \(\int_P\inf_{p\in C}\sfd(x,p)\wedge 1\,\d\mm(x)=0\),
which implies that there exists a set \(N\in\mathscr B(X,\tau)\) such that \(\mm(N)=0\) and
\(\inf_{p\in C}\sfd(x,p)\wedge 1=0\) for every \(x\in P\setminus N\). Therefore, \(C\) is \(\sfd\)-dense
in \(P\setminus N\), in contradiction with the fact that \(P\setminus N\subseteq X\setminus{\rm S}_{\mathbb X}\)
and \(\mm(P\setminus N)>0\).\\
{\bf ii)} Assume that \(b\) is local and that \(b\llcorner{\rm S}_{\mathbb X}\) is weakly\(^*\)-type sequentially
continuous. Theorem \ref{thm:Weaver_locality_and_bdd} ensures that there exists a constant \(C>0\) such that
\(|(b\llcorner{\rm S}_{\mathbb X})(f)|\leq C\|f\|_{\Lip_b({\rm S}_{\mathbb X},\tau_{{\rm S}_{\mathbb X}},\sfd_{{\rm S}_{\mathbb X}})}\)
holds \(\mm\llcorner{\rm S}_{\mathbb X}\)-a.e.\ on \({\rm S}_{\mathbb X}\) for every
\(f\in\Lip_b({\rm S}_{\mathbb X},\tau_{{\rm S}_{\mathbb X}},\sfd_{{\rm S}_{\mathbb X}})\).
For any \(R>0\), let us denote
\[\begin{split}
A_R&\coloneqq\big\{f\in\Lip_b({\rm S}_{\mathbb X},\tau_{{\rm S}_{\mathbb X}},\sfd_{{\rm S}_{\mathbb X}})
\;\big|\;\|f\|_{\Lip_b({\rm S}_{\mathbb X},\tau_{{\rm S}_{\mathbb X}},\sfd_{{\rm S}_{\mathbb X}})}\leq R\big\},\\
B_R&\coloneqq\big\{h\in L^\infty(\mm\llcorner{\rm S}_{\mathbb X})
\;\big|\;\|h\|_{L^\infty(\mm\llcorner{\rm S}_{\mathbb X})}\leq CR\big\}.
\end{split}\]
Observe that \(b(f)\in B_R\) for every \(f\in A_R\). Since \(({\rm S}_{\mathbb X},\sfd_{{\rm S}_{\mathbb X}})\) is separable,
we know from Lemma \ref{lem:separability_Lp} that \(L^1(\mm\llcorner{\rm S}_{\mathbb X})\) is separable,
so that the restriction of the weak\(^*\) topology of \(L^\infty(\mm\llcorner{\rm S}_{\mathbb X})\) to \(B_R\)
is metrised by some distance \(\delta_R\). Moreover, fixed some countable \(\sfd\)-dense subset \((x_n)_{n\in\N}\)
of \({\rm S}_{\mathbb X}\), we define the distance \(\sfd^R\) on \(A_R\) as
\[
\sfd^R(f,g)\coloneqq\sum_{n\in\N}\frac{|f(x_n)-g(x_n)|}{2^n}\quad\text{ for every }f,g\in A_R.
\]
Using the fact that the set \(A_R\) is \(\sfd\)-equi-Lipschitz, it is straightforward to check that
\(\sfd^R\) metrises the pointwise convergence of functions in \(A_R\). Therefore, for the derivation
\(b\llcorner{\rm S}_{\mathbb X}\) the weak\(^*\)-type continuity is equivalent to the weak\(^*\)-type sequential
continuity, since both conditions are equivalent to the continuity of \(b|_{A_R}\colon(A_R,\sfd^R)\to(B_R,\delta_R)\)
for every \(R>0\).
\end{proof}

We highlight the following facts, which are immediate consequences of Proposition \ref{prop:w*_cont_vs_w*_seq_cont}:
\begin{corollary}\label{cor:w*_cont_trivial}
Let \(\mathbb X=(X,\tau,\sfd,\mm)\) be an e.m.t.m.\ space. Then the following properties hold:
\begin{itemize}
\item[\(\rm i)\)] If \(\mm({\rm S}_{\mathbb X})=0\), the null derivation is the unique weakly\(^*\)-type
continuous derivation on \(\mathbb X\).
\item[\(\rm ii)\)] If \(\mm(X\setminus{\rm S}_{\mathbb X})=0\), a derivation \(b\in\Der(\mathbb X)\)
is weakly\(^*\)-type continuous if and only if it is weakly\(^*\)-type sequentially continuous.
\end{itemize}
\end{corollary}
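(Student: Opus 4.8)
The plan is to deduce both statements directly from Proposition \ref{prop:w*_cont_vs_w*_seq_cont}, after recording the elementary observation that when one of the two components \({\rm S}_{\mathbb X}\), \(X\setminus{\rm S}_{\mathbb X}\) is \(\mm\)-negligible, the ambient e.m.t.m.\ space and its restriction to the other component share the same spaces \(L^0\), \(L^\infty\) (and the same weak\(^*\) topology on \(L^\infty\)), and a local derivation is canonically identified with its restriction.

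For \(\rm i)\), first note that the null derivation is trivially weakly\(^*\)-type continuous, so only uniqueness is at stake. Let \(b\in\Der(\mathbb X)\) be weakly\(^*\)-type continuous. By Proposition \ref{prop:w*_cont_vs_w*_seq_cont} \(\rm i)\), we have \(b(f)=0\) \(\mm\)-a.e.\ on \(X\setminus{\rm S}_{\mathbb X}\) for every \(f\in\Lip_b(X,\tau,\sfd)\). Since \(\mm({\rm S}_{\mathbb X})=0\), this forces \(b(f)=0\) \(\mm\)-a.e.\ on \(X\), i.e.\ \(b(f)=0\) in \(L^0(\mm)\) for all \(f\), whence \(b=0\).

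For \(\rm ii)\), one implication (weak\(^*\)-type continuity \(\Rightarrow\) weak\(^*\)-type sequential continuity) holds on any e.m.t.m.\ space, so I would only need to prove the converse. Assume \(b\in\Der(\mathbb X)\) is weakly\(^*\)-type sequentially continuous. Then \(b\) is local by Theorem \ref{thm:Weaver_locality_and_bdd}, hence Proposition \ref{prop:w*_cont_vs_w*_seq_cont} \(\rm ii)\) applies and yields that \(b\llcorner{\rm S}_{\mathbb X}\) is weakly\(^*\)-type continuous. It then remains to transfer this back to \(b\). Since \(\mm(X\setminus{\rm S}_{\mathbb X})=0\), restriction identifies \(L^\infty(\mm\llcorner{\rm S}_{\mathbb X})\) with \(L^\infty(\mm)\) (isometrically and as \(L^\infty(\mm)\)-modules, and weak\(^*\)-homeomorphically), and using the locality of \(b\) together with the inner-regularity construction in \eqref{eq:restr_loc_der} one checks that \((b\llcorner{\rm S}_{\mathbb X})(f|_{{\rm S}_{\mathbb X}})=b(f)\) holds \(\mm\)-a.e.\ for every \(f\in\Lip_b(X,\tau,\sfd)\). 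Given any bounded net \((f_i)_{i\in I}\subseteq\Lip_b(X,\tau,\sfd)\) converging pointwise on \(X\) to some \(f\in\Lip_b(X,\tau,\sfd)\), its restriction to \({\rm S}_{\mathbb X}\) is a bounded net in \(\Lip_b({\rm S}_{\mathbb X},\tau_{{\rm S}_{\mathbb X}},\sfd_{{\rm S}_{\mathbb X}})\) converging pointwise to \(f|_{{\rm S}_{\mathbb X}}\); weak\(^*\)-type continuity of \(b\llcorner{\rm S}_{\mathbb X}\) gives \((b\llcorner{\rm S}_{\mathbb X})(f_i|_{{\rm S}_{\mathbb X}})\overset{*}{\rightharpoonup}(b\llcorner{\rm S}_{\mathbb X})(f|_{{\rm S}_{\mathbb X}})\), which under the above identification reads \(b(f_i)\overset{*}{\rightharpoonup}b(f)\) in \(L^\infty(\mm)\), as desired.

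The only slightly delicate point — bookkeeping rather than a genuine obstacle — is verifying that \(b\llcorner{\rm S}_{\mathbb X}\) coincides with \(b\) under the canonical identifications when \(\mm\) is concentrated on \({\rm S}_{\mathbb X}\), and that the relevant topological notions (boundedness, pointwise convergence of nets, weak\(^*\) convergence) are preserved by these identifications; once this is in place, both statements are immediate consequences of Proposition \ref{prop:w*_cont_vs_w*_seq_cont}.
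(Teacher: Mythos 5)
Your proposal is correct and is exactly the intended argument: the paper states the corollary without proof as an ``immediate consequence'' of Proposition \ref{prop:w*_cont_vs_w*_seq_cont}, and your writeup supplies precisely the routine verifications (part i) from Proposition \ref{prop:w*_cont_vs_w*_seq_cont} i), part ii) via locality from Theorem \ref{thm:Weaver_locality_and_bdd}, Proposition \ref{prop:w*_cont_vs_w*_seq_cont} ii), and the identification \((b\llcorner{\rm S}_{\mathbb X})(f|_{{\rm S}_{\mathbb X}})=b(f)\) when \(\mm\) is concentrated on \({\rm S}_{\mathbb X}\), which follows from the well-definedness of \eqref{eq:restr_loc_der} by choosing \(\bar f_n=f\)). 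Nothing is missing.
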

\subsection{Weaver derivations}\label{s:Weaver}
Motivated by Corollary \ref{cor:w*_cont_trivial}, we give the following definition:
\begin{definition}[Weaver derivation]\label{def:Weaver_der}
Let \(\mathbb X=(X,\tau,\sfd,\mm)\) be an e.m.t.m.\ space and \(b\in{\rm Der}(\mathbb X)\). Then we say that
\(b\) is a \textbf{Weaver derivation} on \(\mathbb X\) if it is weakly\(^*\)-type sequentially continuous.
We denote by \(\mathscr X(\mathbb X)\) the set of all Weaver derivations on \(\mathbb X\).
\end{definition}

The goal of our axiomatisation above is to extend Weaver's notion of `bounded measurable vector field' \cite[Definition 10.30 a)]{Weaver2018}
to the setting of e.m.t.m.\ spaces. In fact, in those cases where the set \(X\setminus{\rm S}_{\mathbb X}\) is \(\mm\)-negligible (which cover
e.g.\ all metric measure spaces), we know from Corollary \ref{cor:w*_cont_trivial} ii) that our notion of Weaver derivation is consistent with
\cite[Definition 10.30 a)]{Weaver2018}. On the other hand, many e.m.t.m.\ spaces of interest (e.g.\ Example \ref{ex:Schultz's_space} or
abstract Wiener spaces) are `purely non-\(\sfd\)-separable', meaning that \(\mm({\rm S}_{\mathbb X})=0\). If this is the case, then no non-null
derivation is weakly\(^*\)-type continuous by Corollary \ref{cor:w*_cont_trivial} i). Due to this reason, in our definition of Weaver derivation
we ask for the weak\(^*\)-type sequential continuity in lieu. As we will see in Example \ref{ex:der_Wiener}, abstract Wiener spaces -- despite
lacking in weak\(^*\)-type continuous derivations -- have plenty of weak\(^*\)-type sequential ones. The axiomatisation
we have chosen is also motivated by Theorem \ref{thm:W_vs_DM}.
\medskip

The space \(\mathscr X(\mathbb X)\) is an \(L^\infty(\mm)\)-submodule (and, thus, a vector subspace) of \(\Der(\mathbb X)\).
To any Weaver derivation \(b\in\mathscr X(\mathbb X)\), we associate the function \(|b|_W\in L^\infty(\mm)^+\), which we define as
\[
|b|_W\coloneqq\bigwedge\big\{g\in L^\infty(\mm)^+\;\big|\;|b(f)|\leq g\|f\|_{\Lip_b(X,\tau,\sfd)}\;
\mm\text{-a.e.\ for every }f\in\Lip_b(X,\tau,\sfd)\big\}.
\]
Note that \(|b(f)|\leq|b|_W\|f\|_{\Lip_b(X,\tau,\sfd)}\) holds \(\mm\)-a.e.\ on \(X\) for every \(f\in\Lip_b(X,\tau,\sfd)\).
\medskip

We also point out that all Weaver derivations \(b\in\mathscr X(\mathbb X)\) are bounded linear operators (thanks to Theorem \ref{thm:Weaver_locality_and_bdd}).
For `bounded measurable vector fields', this fact was observed in \cite[paragraph after Definition 10.30]{Weaver2018}, but in that case a stronger statement
actually holds: the image of the closed unit ball of \(\Lip_b(X,\sfd)\) under \(b\) is a weakly\(^*\) compact subset of \(L^\infty(\mm)\) (since the closed
unit ball is weakly\(^*\) compact by the Banach--Alaoglu theorem, and \(b\) is weakly\(^*\) continuous). In our setting, we have seen already in Example
\ref{ex:tau_pt_not_cpt} that \((\bar B_{\Lip_b(X,\tau,\sfd)},\tau_{pt})\) is not always compact. The following example shows that for Weaver derivations
\(b\in\mathscr X(\mathbb X)\) on an e.m.t.m.\ space \(\mathbb X\) it is not necessarily true that the image \(b(\bar B_{\Lip_b(X,\tau,\sfd)})\subseteq L^\infty(\mm)\)
is a weakly\(^*\) compact set.
\begin{example}{\rm
Let \((X,\tau,\sfd)\) be the e.m.t.\ space described in Example \ref{ex:mixed_Schultz_space}. We equip it with the restriction \(\mm\) of the 2-dimensional
Lebesgue measure, so that \(\mathbb X\coloneqq(X,\tau,\sfd,\mm)\) is an e.m.t.m.\ space. Given any function
\(f\in\Lip_b(X,\tau,\sfd)\), we have that \(f(x,\cdot)\in\Lip_b([0,1],\sfd_{\rm Eucl})\) for every \(x\in[0,1]\),
thus the derivative \(f'(x,\cdot)(t)\in\R\) exists for \(\mathscr L^1\)-a.e.\ \(t\in[0,1]\) by Rademacher's theorem.
In particular, thanks to Fubini's theorem and to \eqref{eq:mixed_Schultz_space}, it makes sense to define
\(b(f)\in L^\infty(\mm)\) as
\[
b(f)(x,t)\coloneqq f'(x,\cdot)(t)\quad\text{ for }\mm\text{-a.e.\ }(x,t)\in X.
\]
It easily follows from the classical calculus rules for the a.e.\ derivatives of Lipschitz functions from \([0,1]\)
to \(\R\) that the resulting operator \(b\colon\Lip_b(X,\tau,\sfd)\to L^\infty(\mm)\) is a derivation on \(\mathbb X\).
Moreover, if \((f_n)_{n\in\N}\subseteq\Lip_b(X,\tau,\sfd)\) and \(f\in\Lip_b(X,\tau,\sfd)\) are such that
\(\sup_{n\in\N}\|f_n\|_{\Lip_b(X,\tau,\sfd)}<+\infty\) and \(f(x,t)=\lim_n f_n(x,t)\) for every \((x,t)\in X\),
then for every \(x\in[0,1]\) the sequence \((f_n(x,\cdot))_n\) is equi-Lipschitz and equibounded,
thus \(f'_n(x,\cdot)\overset{*}{\rightharpoonup}f'(x,\cdot)\) weakly\(^*\) in \(L^\infty(0,1)\) (as \(f'_n(x,\cdot)\)
is the weak derivative of \(f_n(x,\cdot)\) by Rademacher's theorem). Hence, for any \(h\in L^1(\mm)\) we have that
\[
\int h\,b(f_n)\,\d\mm=\int_0^1\!\!\int_0^1 h(x,t)f'_n(x,\cdot)(t)\,\d t\,\d x
\to\int_0^1\!\!\int_0^1 h(x,t)f'(x,\cdot)(t)\,\d t\,\d x=\int h\,b(f)\,\d\mm
\]
as \(n\to\infty\), by Fubini's theorem, the fact that \(h(x,\cdot)\in L^1(0,1)\) for a.e.\ \(x\in[0,1]\),
and the dominated convergence theorem. This proves that \(b\) is weakly\(^*\)-type sequentially continuous,
so that \(b\in\mathscr X(\mathbb X)\).

Next, we claim that \(b(\bar B_{\Lip_b(X,\tau,\sfd)})\) is not a weakly\(^*\) closed subset of \(L^\infty(\mm)\),
thus in particular it is not a weakly\(^*\) compact subset of \(L^\infty(\mm)\). To prove it, we define
\((f_n)_{n\in\N}\subseteq\Lip_b(X,\tau,\sfd)\) as
\[
f_n(x,t)\coloneqq\psi_n(x)t\quad\text{ for every }n\in\N\text{ and }(x,t)\in X,
\]
where the function \(\psi_n\colon[0,1]\to\big[0,\frac{1}{2}\big]\) is given by
\(\psi_n(x)\coloneqq\big(\frac{n}{2}\big(x-\frac{1}{2}\big)\vee 0\big)\wedge\frac{1}{2}\) for every \(x\in[0,1]\).
As \(\|f_n\|_{C_b(X,\tau)}={\rm Osc}_X(f_n)=\frac{1}{2}\) and \(\sup_{x\in[0,1]}\Lip(f_n(x,\cdot),\sfd_{\rm Eucl})=\frac{1}{2}\),
we have \(\|f_n\|_{\Lip_b(X,\tau,\sfd)}=1\) for every \(n\in\N\) thanks to \eqref{eq:mixed_Schultz_space}.
Furthermore, for every \(n\in\N\) we have that
\[
b(f_n)(x,t)=\psi_n(x)\quad\text{ for }\mm\text{-a.e.\ }(x,t)\in X,
\]
so accordingly \(b(f_n)\overset{*}{\rightharpoonup}\frac{1}{2}\1_{[\frac{1}{2},1]\times[0,1]}\eqqcolon g\) weakly\(^*\)
in \(L^\infty(\mm)\) as \(n\to\infty\). To conclude, it remains to show that \(g\notin b(\Lip_b(X,\tau,\sfd))\),
which implies that \(b(\bar B_{\Lip_b(X,\tau,\sfd)})\) is not weakly\(^*\) closed in \(L^\infty(\mm)\). We argue
by contradiction: assume that \(g=b(f)\) for some \(f\in\Lip_b(X,\tau,\sfd)\). By Fubini's theorem, we deduce that
for a.e.\ \(x\in\big(0,\frac{1}{2}\big)\) we have \(f'(x,\cdot)(t)=0\) for a.e.\ \(t\in(0,1)\), and
for a.e.\ \(x\in\big(\frac{1}{2},1\big)\) we have \(f'(x,\cdot)(t)=\frac{1}{2}\) for a.e.\ \(t\in(0,1)\). In particular,
we can find sequences \((x_k)_k\subseteq\big(0,\frac{1}{2}\big)\) and \((y_k)_k\subseteq\big(\frac{1}{2},1\big)\) such that
\(\big|x_k-\frac{1}{2}\big|,\big|y_k-\frac{1}{2}\big|\to 0\) as \(k\to\infty\), as well as \(f'(x_k,\cdot)=0\) and
\(f'(y_k,\cdot)=\frac{1}{2}\) a.e.\ on \((0,1)\) for every \(k\in\N\). Therefore, the fundamental theorem of calculus gives that
\[
f(x_k,1)-f(x_k,0)=\int_0^1 f'(x_k,\cdot)(t)\,\d t=0,\qquad f(y_k,1)-f(y_k,0)=\int_0^1 f'(y_k,\cdot)(t)\,\d t=\frac{1}{2}.
\]
On the other hand, the \(\tau\)-continuity of \(f\) ensures that \(f(x_k,1)-f(x_k,0)\) and \(f(y_k,1)-f(y_k,0)\) converge
to the same number \(f\big(\frac{1}{2},1\big)-f\big(\frac{1}{2},0\big)\) as \(k\to\infty\), thus leading to a contradiction.
\fr}\end{example}
\begin{lemma}\label{lem:suff_cond_wstar_seq_cont}
Let \(\mathbb X=(X,\tau,\sfd,\mm)\) be an e.m.t.m.\ space such that \(\mm\) is separable. Let \(b\in D(\div;\mathbb X)\)
be given. Assume that there exists \(C>0\) such that \(|b(f)|\leq C\,\Lip(f,\sfd)\) holds \(\mm\)-a.e.\ on \(X\)
for every \(f\in\Lip_b(X,\tau,\sfd)\). Then \(b\) is a Weaver derivation.
\end{lemma}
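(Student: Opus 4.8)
The plan is to verify directly the defining condition of weak$^*$-type sequential continuity. Fix a bounded sequence $(f_n)_n\subseteq\Lip_b(X,\tau,\sfd)$ and a function $f\in\Lip_b(X,\tau,\sfd)$ with $f_n(x)\to f(x)$ for every $x\in X$; put $K\coloneqq\sup_n\|f_n\|_{C_b(X,\tau)}<+\infty$ and $M\coloneqq\sup_n\Lip(f_n,\sfd)<+\infty$. The standing bound $|b(g)|\leq C\,\Lip(g,\sfd)$ forces $b$ to map $\Lip_b(X,\tau,\sfd)$ into $L^\infty(\mm)$, with $\|b(f_n)\|_{L^\infty(\mm)}\leq CM$ for all $n$ and $\|b(f)\|_{L^\infty(\mm)}\leq C\,\Lip(f,\sfd)\leq CM$ (here $\Lip(f,\sfd)\leq\liminf_n\Lip(f_n,\sfd)$ by pointwise convergence). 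Since $\mm$ is finite, $L^\infty(\mm)=L^1(\mm)'$, so $b(f_n)\overset{*}{\rightharpoonup}b(f)$ will follow once we show $\int h\,b(f_n)\,\d\mm\to\int h\,b(f)\,\d\mm$ for every $h\in L^1(\mm)$. Using the strong density of $\Lip_b(X,\tau,\sfd)$ in $L^1(\mm)$ from \eqref{eq:Lip_dense_Lp} together with the uniform bound $\|b(f_n)\|_{L^\infty(\mm)}\leq CM$, a routine three-$\varepsilon$ estimate reduces this to the special case $h=\phi\in\Lip_b(X,\tau,\sfd)$. (The separability of $\mm$, equivalently of $L^1(\mm)$, moreover makes the weak$^*$ topology on bounded subsets of $L^\infty(\mm)$ metrisable, so the convergence obtained is genuinely the weak$^*$ one.)

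The crux is that the existence of a divergence lets us move the integration off $b(f_n)$ and onto $f_n$. Fix $\phi\in\Lip_b(X,\tau,\sfd)$. By the Leibniz rule for the divergence recorded after Definition \ref{def:div_Lip_der} we have $\phi b\in D(\div;\mathbb X)$ with $\div(\phi b)=\phi\,\div(b)+b(\phi)$, so applying the defining identity \eqref{eq:def_div} of the divergence to the derivation $\phi b$ and using $(\phi b)(g)=\phi\,b(g)$ gives
\[
\int\phi\,b(g)\,\d\mm=-\int g\,\phi\,\div(b)\,\d\mm-\int g\,b(\phi)\,\d\mm\qquad\text{for every }g\in\Lip_b(X,\tau,\sfd).
\]
Note that both integrands on the right-hand side are $\mm$-integrable: $\phi\,\div(b)\in L^1(\mm)$ since $\phi$ is bounded and $\div(b)\in L^1(\mm)$, while $b(\phi)\in L^\infty(\mm)\subseteq L^1(\mm)$.

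Now take $g=f_n$ in the displayed identity and let $n\to\infty$. As $f_n\to f$ pointwise with $|f_n|\leq K$, we have $|f_n\,\phi\,\div(b)|\leq K|\phi\,\div(b)|$ and $|f_n\,b(\phi)|\leq K|b(\phi)|$, both dominated by fixed $L^1(\mm)$-functions; hence the dominated convergence theorem yields $\int f_n\,\phi\,\div(b)\,\d\mm\to\int f\,\phi\,\div(b)\,\d\mm$ and $\int f_n\,b(\phi)\,\d\mm\to\int f\,b(\phi)\,\d\mm$. Applying the displayed identity once more, this time with $g=f$ (admissible because $f\in\Lip_b(X,\tau,\sfd)$), we conclude
\[
\int\phi\,b(f_n)\,\d\mm\ \longrightarrow\ -\int f\,\phi\,\div(b)\,\d\mm-\int f\,b(\phi)\,\d\mm=\int\phi\,b(f)\,\d\mm.
\]
This is the required convergence for $h=\phi\in\Lip_b(X,\tau,\sfd)$, and by the reduction of the first paragraph it extends to all $h\in L^1(\mm)$; therefore $b(f_n)\overset{*}{\rightharpoonup}b(f)$ weakly$^*$ in $L^\infty(\mm)$, i.e.\ $b$ is weakly$^*$-type sequentially continuous and hence a Weaver derivation. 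There is no genuine obstacle here beyond correctly invoking the Leibniz rule for the divergence to set up the integration-by-parts transfer; the remaining steps are dominated convergence and the standard density argument.
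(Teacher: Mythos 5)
Your proof is correct and follows essentially the same route as the paper: the heart of both arguments is the integration-by-parts identity $\int \phi\,b(g)\,\d\mm=-\int g\,(\phi\,\div(b)+b(\phi))\,\d\mm$ obtained from the Leibniz rule together with the existence of the divergence, followed by dominated convergence. The only (cosmetic) difference is that you verify weak$^*$ convergence directly by testing against all $h\in L^1(\mm)$ via density and the uniform $L^\infty$ bound, whereas the paper first extracts a weak$^*$-convergent subsequence via Banach--Alaoglu (this is where it uses the separability of $\mm$, which your argument in fact never needs) and then identifies the limit.
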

\begin{proof}
Let \((f_n)_n\subseteq\Lip_b(X,\tau,\sfd)\) and \(f\in\Lip_b(X,\tau,\sfd)\) be such that \(f_n(x)\to f(x)\) for every \(x\in X\) and
\(M\coloneqq\sup_{n\in\N}\|f_n\|_{\Lip_b(X,\tau,\sfd)}<+\infty\). Since \(|b(f_n)|\leq CM\) holds \(\mm\)-a.e.\ for every \(n\in\N\),
the sequence \((b(f_n))_n\) is bounded in \(L^\infty(\mm)\). An application of the Banach--Alaoglu theorem, together with the
separability of \(L^1(\mm)\), ensures (up to a non-relabelled subsequence) that \(b(f_n)\overset{*}{\rightharpoonup}h\)
weakly\(^*\) in \(L^\infty(\mm)\) for some \(h\in L^\infty(\mm)\). Now fix any \(g\in\Lip_b(X,\tau,\sfd)\). We have that
\[\begin{split}
\int gh\,\d\mm&=\lim_{n\to\infty}\int g\,b(f_n)\,\d\mm=\lim_{n\to\infty}\int b(f_n g)-f_n b(g)\,\d\mm=
-\lim_{n\to\infty}\int f_n(g\,\div(b)+b(g))\,\d\mm\\
&=-\int f(g\,\div(b)+b(g))\,\d\mm=\int b(fg)-f\,b(g)\,\d\mm=\int g\,b(f)\,\d\mm
\end{split}\]
by the dominated convergence theorem. As \(\Lip_b(X,\tau,\sfd)\) is dense in \(L^1(\mm)\) (see \eqref{eq:Lip_dense_Lp}), we deduce that
\(h=b(f)\), so that the original sequence \((f_n)_n\) satisfies \(b(f_n)\overset{*}{\rightharpoonup}b(f)\)
weakly\(^*\) in \(L^\infty(\mm)\). This shows that \(b\) is weakly\(^*\)-type sequentially continuous,
so that \(b\in\mathscr X(\mathbb X)\).
\end{proof}
\subsection{Di Marino derivations}\label{s:Di_Marino}
We now introduce another subclass of Lipschitz derivations, which generalises to e.m.t.m.\ spaces the notions that have been introduced by Di Marino in \cite{DiMar:14,DiMarPhD}.
After having given the relevant definitions and discussed their main properties, we will investigate (in Theorem \ref{thm:W_vs_DM}) the relation between our notions of Weaver
derivation and of Di Marino derivation.
\begin{definition}[Di Marino derivation]\label{def:DiMar_der}
Let \(\mathbb X=(X,\tau,\sfd,\mm)\) be an e.m.t.m.\ space. Then we say that \(b\in{\rm Der}(\mathbb X)\) is a
\textbf{Di Marino derivation} on \(\mathbb X\) if there exists \(g\in L^0(\mm)^+\) such that
\begin{equation}\label{eq:DiMar_der}
|b(f)|\leq g\,\lip_\sfd(f)\quad\text{ holds }\mm\text{-a.e.\ on }X,\text{ for every }f\in\Lip_b(X,\tau,\sfd).
\end{equation}
We denote by \({\rm Der}^0(\mathbb X)\) the set of all Di Marino derivations on \(\mathbb X\). For any \(q,r\in[1,\infty]\),
we define
\[\begin{split}
{\rm Der}^q(\mathbb X)&\coloneqq\big\{b\in{\rm Der}^0(\mathbb X)\;\big|\;\eqref{eq:DiMar_der}\text{ holds for some }g\in L^q(\mm)^+\big\},\\
\Der_r^q(\mathbb X)&\coloneqq\big\{b\in\Der^q(\mathbb X)\cap D(\div;\mathbb X)\;\big|\;\div(b)\in L^r(\mm)\big\}.
\end{split}\]
\end{definition}

The space \({\rm Der}^0(\mathbb X)\) is an \(L^0(\mm)\)-submodule (and, thus, a vector subspace) of \({\rm Der}(\mathbb X)\).
Moreover, \({\rm Der}^q(\mathbb X)\) is an \(L^\infty(\mm)\)-submodule of \({\rm Der}^0(\mathbb X)\), and \(\Der^q_q(\mathbb X)\)
is a \(\Lip_b(X,\tau,\sfd)\)-submodule of \(\Der^q(\mathbb X)\), for every \(q\in[1,\infty]\). To any Di Marino derivation
\(b\in\Der^0(\mathbb X)\), we associate the function
\[
|b|\coloneqq\bigwedge\big\{g\in L^0(\mm)^+\;\big|\;|b(f)|\leq g\,\lip_\sfd(f)
\;\mm\text{-a.e.\ for every }f\in\Lip_b(X,\tau,\sfd)\big\}\in L^0(\mm)^+.
\]
Since in this paper we are primarily interested in Di Marino derivations (for defining a metric Sobolev space, in Section \ref{s:def_W1p}),
we use the notation \(|b|\) (instead e.g.\ of the more descriptive \(|b|_{DM}\)). In this regard, it is worth pointing out that if a derivation
\(b\) is both a Weaver derivation and a Di Marino derivation, it might happen that \(|b|_W\) and \(|b|\) are different.
\medskip

Note that \(|b(f)|\leq|b|\,\lip_\sfd(f)\) holds \(\mm\)-a.e.\ on \(X\) for every \(f\in\Lip_b(X,\tau,\sfd)\), and that
\[
\Der^q(\mathbb X)=\big\{b\in\Der^0(\mathbb X)\;\big|\;|b|\in L^q(\mm)\big\}.
\]
One can readily check that \((\Der^q(\mathbb X),|\cdot|)\) is an \(L^q(\mm)\)-Banach \(L^\infty(\mm)\)-module for any \(q\in(1,\infty)\).
In particular, \((\Der^q(\mathbb X),\|\cdot\|_{\Der^q(\mathbb X)})\) is a Banach space for every \(q\in(1,\infty)\), where we define
\[
\|b\|_{\Der^q(\mathbb X)}\coloneqq\||b|\|_{L^q(\mm)}\quad\text{ for every }b\in\Der^q(\mathbb X).
\]
Furthermore, in analogy with \cite[Eq.\ (4.9)]{AILP24}, for any \(q\in(1,\infty)\) we define the space \(L^q_\Lip(T\mathbb X)\) as
\begin{equation}\label{eq:def_Lip_tg_mod}
L^q_\Lip(T\mathbb X)\coloneqq{\rm cl}_{\Der^q(\mathbb X)}(\Der^q_q(\mathbb X)).
\end{equation}
The notation \(L^q_\Lip(T\mathbb X)\), which reminds of the fact that its elements are defined in duality with the space \(\Lip_b(X,\tau,\sfd)\),
is needed to distinguish it from the `Sobolev' tangent modules \(L^q(T\mathbb X)\) and \(L^q_{\rm Sob}(T\mathbb X)\) that we introduced in Section \ref{s:H1p}.
The relation between \(L^q_\Lip(T\mathbb X)\) and \(L^q_{\rm Sob}(T\mathbb X)\) (in the setting of metric measure spaces) will
be an object of study in the forthcoming paper \cite{AILP25}. We claim that
\[
hb\in L^q_\Lip(T\mathbb X)\quad\text{ for every }h\in L^\infty(\mm)\text{ and }b\in L^q_\Lip(T\mathbb X).
\]
To prove it, take a sequence \((b_n)_n\subseteq\Der^q_q(\mathbb X)\) such that \(b_n\to b\) strongly in \(\Der^q(\mathbb X)\), and (using \eqref{eq:Lip_dense_Lp})
one can find a sequence \((h_n)_n\subseteq\Lip_b(X,\tau,\sfd)\) such that \(\sup_{n\in\N}\|h_n\|_{C_b(X,\tau)}\leq\|h\|_{L^\infty(\mm)}\) and \(h(x)=\lim_n h_n(x)\)
for \(\mm\)-a.e.\ \(x\in X\), so that \(\Der^q_q(\mathbb X)\ni h_n b_n\to h b\) strongly in \(\Der^q(\mathbb X)\) by the dominated convergence theorem,
and thus accordingly \(hb\in L^q_\Lip(T\mathbb X)\). Since \((\Der^q(\mathbb X),|\cdot|)\) is an \(L^q(\mm)\)-Banach \(L^\infty(\mm)\)-module,
we deduce that \(L^q_\Lip(T\mathbb X)\) is an \(L^q(\mm)\)-Banach \(L^\infty(\mm)\)-module.
\medskip

The next result, whose proof is very similar to that of Lemma \ref{lem:suff_cond_wstar_seq_cont}, studies the continuity properties
of Di Marino derivations with divergence.
\begin{lemma}\label{lem:DM_der_w-cont}
Let \(\mathbb X=(X,\tau,\sfd,\mm)\) be an e.m.t.m.\ space. Let \(q\in[1,\infty)\) and \(b\in\Der^q_q(\mathbb X)\). Then
\begin{equation}\label{eq:DM_der_w-cont}
b|_{\bar B_{\Lip_b(X,\tau,\sfd)}}\colon(\bar B_{\Lip_b(X,\tau,\sfd)},\tau_{pt})\longrightarrow(L^q(\mm),\tau_w)\;\text{ is sequentially continuous},
\end{equation}
where \(\tau_w\) denotes the weak topology of \(L^q(\mm)\).
\end{lemma}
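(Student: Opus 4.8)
The plan is to follow the scheme of the proof of Lemma~\ref{lem:suff_cond_wstar_seq_cont}, replacing the weak\(^*\) compactness of bounded sets of \(L^\infty(\mm)\) with weak compactness of bounded, uniformly integrable sets of \(L^q(\mm)\). Fix \(f\in\bar B_{\Lip_b(X,\tau,\sfd)}\) and a sequence \((f_n)_n\subseteq\bar B_{\Lip_b(X,\tau,\sfd)}\) with \(f_n(x)\to f(x)\) for every \(x\in X\); one must then show that \(b(f_n)\rightharpoonup b(f)\) weakly in \(L^q(\mm)\). Since in any topological space a sequence converges to a point \(x\) as soon as every subsequence has a further subsequence converging to \(x\), it suffices to prove that an arbitrary subsequence of \((b(f_n))_n\) admits a further subsequence converging weakly in \(L^q(\mm)\) to \(b(f)\).

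To this aim, first observe that \(\lip_\sfd(f_n)\le\Lip(f_n,\sfd)\le\|f_n\|_{\Lip_b(X,\tau,\sfd)}\le 1\), so that \(b\in\Der^q(\mathbb X)\) yields \(|b(f_n)|\le|b|\,\lip_\sfd(f_n)\le|b|\) \(\mm\)-a.e.\ for every \(n\), with \(|b|\in L^q(\mm)\) (in particular \(b(f_n)\in L^q(\mm)\), and likewise \(b(f)\in L^q(\mm)\), so the target space is meaningful). Hence \((b(f_n))_n\) is bounded in \(L^q(\mm)\) and, being dominated by the fixed function \(|b|\in L^1(\mm)\), it is uniformly integrable. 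Therefore -- by reflexivity of \(L^q(\mm)\) when \(q\in(1,\infty)\), and by the Dunford--Pettis theorem (together with the Eberlein--\v{S}mulian theorem) when \(q=1\) -- from any subsequence one can extract a further subsequence, which we do not relabel, such that \(b(f_n)\rightharpoonup h\) weakly in \(L^q(\mm)\) for some \(h\in L^q(\mm)\).

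It remains to identify \(h\) with \(b(f)\). Fix \(g\in\Lip_b(X,\tau,\sfd)\), which belongs to \(L^{q'}(\mm)\) (the conjugate exponent of \(q\)) since \(g\) is bounded and \(\mm\) is finite. Using the Leibniz rule, the fact that \(gf_n\in\Lip_b(X,\tau,\sfd)\), and the definition \eqref{eq:def_div} of divergence,
\[
\int g\,b(f_n)\,\d\mm=\int b(gf_n)\,\d\mm-\int f_n\,b(g)\,\d\mm=-\int f_n\big(g\,\div(b)+b(g)\big)\,\d\mm.
\]
Now \(g\,\div(b)+b(g)\in L^1(\mm)\), because \(\div(b)\in L^q(\mm)\subseteq L^1(\mm)\) and \(b(g)\in L^1(\mm)\); since moreover \(|f_n|\le 1\) and \(f_n\to f\) pointwise, the dominated convergence theorem gives
\[
\int g\,b(f_n)\,\d\mm\longrightarrow-\int f\big(g\,\div(b)+b(g)\big)\,\d\mm=\int b(fg)\,\d\mm-\int f\,b(g)\,\d\mm=\int g\,b(f)\,\d\mm,
\]
where the last two equalities again use \eqref{eq:def_div} (now for \(fg\in\Lip_b(X,\tau,\sfd)\)) and the Leibniz rule. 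On the other hand, weak convergence gives \(\int g\,b(f_n)\,\d\mm\to\int g\,h\,\d\mm\). Thus \(\int g\,(h-b(f))\,\d\mm=0\) for every \(g\in\Lip_b(X,\tau,\sfd)\). When \(q\in(1,\infty)\), the density \eqref{eq:Lip_dense_Lp} of \(\Lip_b(X,\tau,\sfd)\) in \(L^{q'}(\mm)\) forces \(h=b(f)\); when \(q=1\), one approximates an arbitrary \(\varphi\in L^\infty(\mm)\) by functions in \(\Lip_b(X,\tau,\sfd)\) converging to \(\varphi\) \(\mm\)-a.e.\ and with sup norms bounded by \(\|\varphi\|_{L^\infty(\mm)}\) (truncating if necessary), and a further application of dominated convergence yields \(\int\varphi\,(h-b(f))\,\d\mm=0\) for all \(\varphi\in L^\infty(\mm)\), whence \(h=b(f)\) as well. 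This completes the plan. The only point that is not entirely routine is the use of uniform integrability and the Dunford--Pettis theorem to obtain weak sequential compactness in the non-reflexive case \(q=1\); all the rest parallels the proof of Lemma~\ref{lem:suff_cond_wstar_seq_cont}.
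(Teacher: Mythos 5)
Your proposal is correct and follows essentially the same route as the paper's proof: domination of \((b(f_n))_n\) by \(|b|\in L^q(\mm)\), extraction of a weakly convergent subsequence, and identification of the limit via the Leibniz rule, the divergence identity and dominated convergence, followed by the density of \(\Lip_b(X,\tau,\sfd)\) in the dual exponent space. Your explicit separation of the cases \(q\in(1,\infty)\) (reflexivity) and \(q=1\) (Dunford--Pettis plus Eberlein--\v{S}mulian, and the \(\mm\)-a.e.\ approximation of \(L^\infty\) test functions) is a slightly more careful rendering of the same argument.
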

\begin{proof}
First, note that \(|b(f)|\leq|b|\,\lip_\sfd(f)\leq\Lip(f,\sfd)|b|\in L^q(\mm)\) \(\mm\)-a.e.\ for every \(f\in\Lip_b(X,\tau,\sfd)\),
thus \(b(f)\in L^q(\mm)\) for every \(f\in\Lip_b(X,\tau,\sfd)\). Now fix any sequence \((f_n)_n\subseteq\bar B_{\Lip_b(X,\tau,\sfd)}\).
The above estimate shows that the sequence \((b(f_n))_n\) is dominated in \(L^q(\mm)\), thus the Dunford--Pettis theorem ensures
(up to taking a non-relabelled subsequence) that \(b(f_n)\rightharpoonup G\) weakly in \(L^q(\mm)\), for some \(G\in L^q(\mm)\).
By using also the dominated convergence theorem, we then obtain that
\[\begin{split}
\int h\,G\,\d\mm&=\lim_{n\to\infty}\int h\,b(f_n)\,\d\mm=\lim_{n\to\infty}\int b(hf_n)-f_n\,b(h)\,\d\mm\\
&=-\lim_{n\to\infty}\int f_n(h\,\div(b)+b(h))\,\d\mm=-\int f(h\,\div(b)+b(h))\,\d\mm=\int h\,b(h)\,\d\mm
\end{split}\]
for every \(h\in\Lip_b(X,\tau,\sfd)\). Letting \(p\in(1,\infty)\) be the conjugate exponent of \(q\), we know
from \eqref{eq:Lip_dense_Lp} that \(\Lip_b(X,\tau,\sfd)\)
is strongly dense (resp.\ weakly\(^*\) dense) in \(L^p(\mm)\) if \(p<\infty\) (resp.\ if \(p=\infty\)), thus we get that \(G=b(f)\).
Consequently, we have that the original sequence \((f_n)_n\) satisfies \(b(f_n)\rightharpoonup b(f)\) weakly in \(L^q(\mm)\). This shows
the validity of \eqref{eq:DM_der_w-cont}.
\end{proof}

As a consequence, Di Marino derivations with divergence are local:
\begin{corollary}\label{cor:DM_der_local}
Let \(\mathbb X=(X,\tau,\sfd,\mm)\) be an e.m.t.m.\ space. Let \(q\in[1,\infty)\) and \(b\in\Der^q_q(\mathbb X)\) be given. Then \(b\) is a local derivation.
\end{corollary}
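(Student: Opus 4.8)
The plan is to deduce locality from the weak sequential continuity established in Lemma \ref{lem:DM_der_w-cont}, following the same strategy used in the proof of Theorem \ref{thm:Weaver_locality_and_bdd}. First I would fix an arbitrary \(f\in\Lip_b(X,\tau,\sfd)\) and introduce, for each \(n\in\N\), the auxiliary functions \(\phi_n,\psi_n\colon\R\to\R\) given by \(\phi_n(t)\coloneqq 1-\e^{-nt^2}\) and \(\psi_n(t)\coloneqq t\,\phi_n(t)\). As in the proof of Theorem \ref{thm:Weaver_locality_and_bdd}, one checks that \(\phi_n\) is Lipschitz on \(f(X)\) (so \(\phi_n\circ f\in\Lip_b(X,\tau,\sfd)\)), that \(-|t|\leq\psi_n(t)\leq|t|\) and \(0\leq\psi_n'(t)\leq 1+2\e^{-3/2}\), so that \(\psi_n\circ f\in\Lip_b(X,\tau,\sfd)\) with \(\|\psi_n\circ f\|_{C_b(X,\tau)}\leq\|f\|_{C_b(X,\tau)}\) and \(\Lip(\psi_n\circ f,\sfd)\leq(1+2\e^{-3/2})\Lip(f,\sfd)\); in particular the sequence \((\psi_n\circ f)_n\) is norm-bounded in \(\Lip_b(X,\tau,\sfd)\). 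Moreover \((\psi_n\circ f)(x)\to f(x)\) for every \(x\in X\).

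Next I would apply Lemma \ref{lem:DM_der_w-cont} (after rescaling the bounded sequence \((\psi_n\circ f)_n\) to land in the closed unit ball of \(\Lip_b(X,\tau,\sfd)\), which is harmless by linearity of \(b\)): this yields
\[
f\,b(\phi_n\circ f)+(\phi_n\circ f)\,b(f)=b\big((\phi_n\circ f)f\big)=b(\psi_n\circ f)\rightharpoonup b(f)\quad\text{ weakly in }L^q(\mm)\text{ as }n\to\infty,
\]
using the Leibniz rule \eqref{eq:Leibniz_Lip_der} in the first equality and \(\psi_n=t\,\phi_n(t)\) in the second. Since \(\1_{\{f=0\}}\big(f\,b(\phi_n\circ f)+(\phi_n\circ f)\,b(f)\big)=0\) holds \(\mm\)-a.e.\ for every \(n\in\N\) (the first summand vanishes because \(f=0\) there, and the second because \(\phi_n(0)=0\) forces \((\phi_n\circ f)=0\) on \(\{f=0\}\)), multiplying the weak convergence above by the fixed \(L^\infty(\mm)\) function \(\1_{\{f=0\}}\) — which does not disturb weak \(L^q\)-convergence — gives \(\1_{\{f=0\}}b(f)=0\) in \(L^q(\mm)\), hence \(b(f)=0\) \(\mm\)-a.e.\ on \(\{f=0\}\). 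As \(f\) was arbitrary, \(b\) is local.

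There is no real obstacle here, since the heavy lifting — the weak sequential continuity with values in \(L^q(\mm)\) — has already been carried out in Lemma \ref{lem:DM_der_w-cont}; the only point requiring a moment's care is that one must work with weak \(L^q\)-convergence rather than weak\(^*\) \(L^\infty\)-convergence, but multiplication by the bounded Borel indicator \(\1_{\{f=0\}}\in L^\infty(\mm)\) is still a weakly continuous operation on \(L^q(\mm)\), so the argument of Theorem \ref{thm:Weaver_locality_and_bdd} transfers without change. Alternatively, one could observe that \(|b(f)|\leq|b|\,\lip_\sfd(f)\) together with the locality of the asymptotic slope (\(\lip_\sfd(f)=0\) \(\mm\)-a.e.\ on the interior of \(\{f=\mathrm{const}\}\)) does not immediately give locality on all of \(\{f=0\}\), which is precisely why the truncation argument via \(\phi_n,\psi_n\) is needed; I would therefore present the proof in the form above.
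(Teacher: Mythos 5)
Your argument is correct, and it reaches the conclusion by a genuinely different mechanism than the paper, even though both proofs share the same skeleton (approximate \(f\) by a bounded pointwise-convergent sequence \(g_n\) with \(b(g_n)=0\) \(\mm\)-a.e.\ on \(\{f=0\}\), then pass to the weak \(L^q\)-limit via Lemma \ref{lem:DM_der_w-cont}). The paper takes \(g_n=\phi_n\circ f\) with \(\phi_n\) the piecewise-affine soft-threshold vanishing on \((-1/n,1/n)\), and obtains \(b(g_n)=0\) on \(\{f=0\}\) from the defining pointwise bound \(|b(g_n)|\le|b|\,\lip_\sfd(g_n)\) of Di Marino derivations together with the chain-rule estimate \(\lip_\sfd(\phi_n\circ f)\le(\lip_{\sfd_{\rm Eucl}}(\phi_n)\circ f)\,\lip_\sfd(f)\) and \(\lip_{\sfd_{\rm Eucl}}(\phi_n)(0)=0\). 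You instead transplant the Weaver-style argument of Theorem \ref{thm:Weaver_locality_and_bdd}: with \(g_n=\psi_n\circ f=f\cdot(\phi_n\circ f)\) and \(\phi_n(t)=1-\e^{-nt^2}\), the vanishing of \(b(g_n)\) on \(\{f=0\}\) comes purely from the Leibniz rule, since both summands \(f\,b(\phi_n\circ f)\) and \((\phi_n\circ f)\,b(f)\) are killed there by \(f=0\) and \(\phi_n(0)=0\) respectively. Your route therefore uses only the algebraic Leibniz rule plus the weak sequential continuity of Lemma \ref{lem:DM_der_w-cont}, and not the pointwise domination by \(\lip_\sfd\); it would prove locality for any derivation with divergence enjoying the conclusion of that lemma, which is marginally more general. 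The paper's route is shorter once the Di Marino structure is assumed, and avoids the (harmless) rescaling into the unit ball and the remark that multiplication by \(\1_{\{f=0\}}\in L^\infty(\mm)\) preserves weak \(L^q\)-convergence, both of which you handle correctly.
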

\begin{proof}
Fix any \(f\in\Lip_b(X,\tau,\sfd)\). For any \(n\in\N\), we define the auxiliary function \(\phi_n\colon\R\to\R\) as
\[
\phi_n(t)\coloneqq\left\{\begin{array}{lll}
t+\frac{1}{n}\\
0\\
t-\frac{1}{n}
\end{array}\quad\begin{array}{lll}
\text{ if }t\leq-\frac{1}{n},\\
\text{ if }-\frac{1}{n}<t<\frac{1}{n},\\
\text{ if }t\geq\frac{1}{n}.
\end{array}\right.
\]
Note that \(\phi_n\circ f\in\Lip_b(X,\tau,\sfd)\) with \(\|\phi_n\circ f\|_{C_b(X,\tau)}\leq\|f\|_{C_b(X,\tau)}+1\) and \(\Lip(\phi_n\circ f,\sfd)\leq\Lip(f,\sfd)\).
It also holds that \((\phi_n\circ f)(x)\to f(x)\) for every \(x\in X\), thus Lemma \ref{lem:DM_der_w-cont} gives that \(b(\phi_n\circ f)\rightharpoonup b(f)\) weakly
in \(L^q(\mm)\). Moreover, one can readily check that \(\lip_\sfd(\phi_n\circ f)\leq(\lip_{\sfd_{\rm Eucl}}(\phi_n)\circ f)\,\lip_\sfd(f)\), so that the \(\mm\)-a.e.\ inequality
\(|b(\phi_n\circ f)|\leq|b|\,\lip_\sfd(\phi_n\circ f)\) implies that \(b(\phi_n\circ f)=0\) holds \(\mm\)-a.e.\ on the set \(\{f=0\}\) (as \(\lip_{\sfd_{\rm Eucl}}(\phi_n)(0)=0\)), 
thus accordingly \(b(f)=0\) holds \(\mm\)-a.e.\ on \(\{f=0\}\).
\end{proof}
\begin{proposition}\label{prop:glob_to_loc}
Let \(\mathbb X=(X,\tau,\sfd,\mm)\) be an e.m.t.m.\ space. Let \(b\in{\rm Der}(\mathbb X)\) be a local derivation.
Assume that there exists a function \(g\in L^0(\mm)^+\) such that
\[
|b(f)|\leq g\|f\|_{\Lip_b(X,\tau,\sfd)}\quad\text{ holds }\mm\text{-a.e.\ on }X,\text{ for every }f\in\Lip_b(X,\tau,\sfd).
\]
Let \(C\subseteq X\) be a \(\tau\)-closed set. Then for any entourage \(\mathcal U\in\mathfrak B_{\tau,\sfd}\) we have that
\begin{equation}\label{eq:glob_to_loc_cl1}
|b(f)|\leq g\,\Lip(f,C\cap\mathcal U[\cdot],\sfd)\quad\text{ holds }\mm\text{-a.e.\ on }C,\text{ for every }f\in\Lip_b(X,\tau,\sfd).
\end{equation}
In particular, if the topology \(\tau\) is metrisable on \(C\), then (letting \(\sfd_C\coloneqq\sfd|_{C\times C}\)) we have that
\begin{equation}\label{eq:glob_to_loc_cl2}
|b(f)|\leq g\,\lip_{\sfd_C}(f|_C)\quad\text{ holds }\mm\text{-a.e.\ on }C,\text{ for every }f\in\Lip_b(X,\tau,\sfd).
\end{equation}
\end{proposition}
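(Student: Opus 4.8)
The plan is to localise: reduce the global bound $|b(f)|\le g\,\|f\|_{\Lip_b(X,\tau,\sfd)}$ to a $\tau$-compact, $\mathcal U$-small setting, and there feed a good Lipschitz extension of $f$ (from Corollary \ref{cor:Lip_ext_cpt}) into the hypothesis, using locality of $b$ to transport the estimate back to $f$. Fix $f\in\Lip_b(X,\tau,\sfd)$ throughout and put $\ell(x)\coloneqq\Lip(f,C\cap\mathcal U[x],\sfd)\in[0,\Lip(f,\sfd)]$; by the remark preceding the statement $\ell$ is $\tau$-lower semicontinuous, so each set $E_\lambda\coloneqq C\cap\{\ell\le\lambda\}$ is $\tau$-closed. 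First I would observe that \eqref{eq:glob_to_loc_cl1} follows once one proves $|b(f)|\le\lambda g$ $\mm$-a.e.\ on $E_\lambda$ for every rational $\lambda>0$: collecting the countably many exceptional null sets and letting $\lambda\downarrow\ell(x)$ (which is finite) yields the claim. By inner regularity of $\mm\llcorner E_\lambda$ it then suffices to fix a $\tau$-compact $K\subseteq E_\lambda$ and show $|b(f)|\le\lambda g$ $\mm$-a.e.\ on $K$.

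The heart of the matter is a two-stage partition of $K$. Since $K$ is $\tau$-compact it is totally bounded for $\mathfrak U_{\tau,\sfd}$, so by a standard argument one can cover it by finitely many $\tau$-compact sets $K^1,\dots,K^m$ with $K^i\times K^i\subseteq\mathcal U$ (take a symmetric $\mathcal V\in\mathfrak B_{\tau,\sfd}$ small enough that a bounded number of compositions $\mathcal V\circ\cdots\circ\mathcal V$ lies in $\mathcal U$, cover $K$ by finitely many balls $\mathcal V[x_i]$ with $x_i\in K$, and put $K^i\coloneqq K\cap\overline{\mathcal V[x_i]}$, using $\overline{\mathcal V[x]}\subseteq(\mathcal V\circ\mathcal V)[x]$). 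On $K^i$: if $y,z\in K^i$ are distinct then $y\in K^i\subseteq K\subseteq E_\lambda$ gives $\ell(y)\le\lambda$, while $z\in K^i\subseteq C$ and $(y,z)\in\mathcal U$ give $z\in C\cap\mathcal U[y]$, so $|f(y)-f(z)|\le\lambda\,\sfd(y,z)$; hence $\Lip(f,K^i,\sfd)\le\lambda$. Because the hypothesis controls $b$ by the full norm $\|\cdot\|_{\Lip_b(X,\tau,\sfd)}$ and not merely by the Lipschitz seminorm, I also need small oscillations: fix $\eta>0$, cover a bounded interval containing $f(X)$ by finitely many closed intervals $\bar I_1,\dots,\bar I_N$ of length $\le\eta$, and set $K^{i,k}\coloneqq K^i\cap f^{-1}(\bar I_k)$. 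Each $K^{i,k}$ is $\tau$-compact, $K=\bigcup_{i,k}K^{i,k}$, and on every nonempty $K^{i,k}$ the restriction of $f$ is $\lambda$-Lipschitz with ${\rm Osc}_{K^{i,k}}(f)\le\eta$.

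On each $K^{i,k}$ I would apply Corollary \ref{cor:Lip_ext_cpt} to $f|_{K^{i,k}}$, obtaining $\bar f_{i,k}\in\Lip_b(X,\tau,\sfd)$ with $\bar f_{i,k}|_{K^{i,k}}=f|_{K^{i,k}}$, $\Lip(\bar f_{i,k},\sfd)\le\lambda$, and $\min_{K^{i,k}}f\le\bar f_{i,k}\le\max_{K^{i,k}}f$. Setting $c_{i,k}\coloneqq\min_{K^{i,k}}f$, this last bound gives $0\le\bar f_{i,k}-c_{i,k}\le\eta$, whence $\|\bar f_{i,k}-c_{i,k}\|_{\Lip_b(X,\tau,\sfd)}\le\lambda+\eta$. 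Since $f-\bar f_{i,k}\in\Lip_b(X,\tau,\sfd)$ vanishes on $K^{i,k}$, locality of $b$ yields $b(f)=b(\bar f_{i,k})$ $\mm$-a.e.\ on $K^{i,k}$; and $b(\bar f_{i,k})=b(\bar f_{i,k}-c_{i,k})$ because $b(\1_X)=0$ by the Leibniz rule. Therefore the assumed bound on $b$ gives, $\mm$-a.e.\ on $K^{i,k}$, $|b(f)|=|b(\bar f_{i,k}-c_{i,k})|\le g\,\|\bar f_{i,k}-c_{i,k}\|_{\Lip_b(X,\tau,\sfd)}\le(\lambda+\eta)g$. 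Taking the finite union over $(i,k)$ gives $|b(f)|\le(\lambda+\eta)g$ $\mm$-a.e.\ on $K$; letting $\eta\downarrow0$ along a sequence removes $\eta$, and reassembling (over the compacta exhausting $E_\lambda$, then over $\lambda\in\mathbb Q\cap(0,+\infty)$) proves \eqref{eq:glob_to_loc_cl1}.

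Finally, for \eqref{eq:glob_to_loc_cl2}, when $\tau$ is metrisable on $C$ the uniformity $\mathfrak U_{\tau_C,\sfd_C}$ admits a countable basis of open symmetric entourages, which by \eqref{eq:unif_struct_restr} may be taken of the form $\mathcal U_n|_{C\times C}$ with $\mathcal U_n\in\mathfrak B_{\tau,\sfd}$; hence $\{C\cap\mathcal U_n[x]\}_n$ is a neighbourhood basis of $x$ in $(C,\tau_C)$, so $\lip_{\sfd_C}(f|_C)(x)=\inf_n\Lip(f,C\cap\mathcal U_n[x],\sfd)$. Applying \eqref{eq:glob_to_loc_cl1} with $\mathcal U=\mathcal U_n$ for each $n$ and taking the infimum of the countably many resulting $\mm$-a.e.\ inequalities gives \eqref{eq:glob_to_loc_cl2}. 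I expect the main obstacle to be precisely the gap between the hypothesis (control by $\|\cdot\|_{\Lip_b(X,\tau,\sfd)}$) and the conclusion (control by a local Lipschitz constant): removing the sup-norm contribution forces the second, level-set layer of the partition together with the constant-shift trick $b(\1_X)=0$, while the need to invoke Corollary \ref{cor:Lip_ext_cpt} forces the pieces to be $\tau$-compact, which is what makes the two-stage structure necessary.
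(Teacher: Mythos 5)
Your proposal is correct and follows essentially the same route as the paper's proof: both reduce to a $\tau$-compact piece of $C$, cover it by finitely many $\tau$-compact, $\mathcal U$-small sets on which $f$ has small oscillation, extend $f$ from each piece via Corollary \ref{cor:Lip_ext_cpt}, subtract a constant to kill the sup-norm contribution of $\|\cdot\|_{\Lip_b(X,\tau,\sfd)}$, and use locality of $b$ to transfer the bound back to $f$. The only difference is organizational: you discretize the target through the closed superlevel sets $E_\lambda$ of the lower semicontinuous function $\Lip(f,C\cap\mathcal U[\cdot],\sfd)$ and pass to the limit $\lambda\downarrow\ell(x)$ at the end, whereas the paper keeps the pointwise bound throughout via the chain $\Lip(f,K_{n,i},\sfd)\leq\Lip(f,C\cap\mathcal V[x_{n,i}],\sfd)\leq\Lip(f,C\cap\mathcal U[x],\sfd)$ with $\mathcal V\circ\mathcal V\subseteq\mathcal U$ --- a purely cosmetic distinction.
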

\begin{proof}
By definition of uniform structure, we can find \(\mathcal V\in\mathfrak U_{\tau,\sfd}\) such that \(\mathcal V\circ\mathcal V\subseteq\mathcal U\), where we set
\[
\mathcal V\circ\mathcal V\coloneqq\big\{(x,z)\in X\times X\;\big|\;(x,y),(y,z)\in\mathcal V\text{ for some }y\in X\big\}.
\]
Fix any \(\varepsilon>0\) and \(f\in\Lip_b(X,\tau,\sfd)\). Since \(\mm\) is a Radon measure,
we can find a sequence \((K_n)_n\) of pairwise disjoint \(\tau\)-compact subsets of \(X\) such that \({\rm Osc}_{K_n}(f)\leq\varepsilon\)
for every \(n\in\N\) and \(\mm\big(X\setminus\bigcup_{n\in\N}K_n\big)=0\). Now fix \(n\in\N\). Given any \(x\in K_n\cap C\), we can find
a \(\tau\)-closed \(\tau\)-neighbourhood \(F^x_n\) of \(x\) such that \(F^x_n\subseteq\mathcal V[x]\). Since \(K_n\cap C\) is \(\tau\)-compact, there exist \(k(n)\in\N\) and
\(x_{n,1},\ldots,x_{n,k(n)}\in K_n\cap C\) such that \(K_n\cap C\subseteq\bigcup_{i=1}^{k(n)}F_{n,i}\), where we set \(F_{n,i}\coloneqq F^{x_{n,i}}_n\). Denote also
\(K_{n,i}\coloneqq K_n\cap C\cap F_{n,i}\) for every \(i=1,\ldots,k(n)\). Since \(K_{n,i}\) is \(\tau\)-compact, by applying Corollary \ref{cor:Lip_ext_cpt} we obtain
a function \(\tilde f_{n,i}\in\Lip_b(X,\tau,\sfd)\) such that
\[
\tilde f_{n,i}|_{K_{n,i}}=f|_{K_{n,i}},\qquad\Lip(\tilde f_{n,i},\sfd)=\Lip(f,K_{n,i},\sfd),\qquad{\rm Osc}_X(\tilde f_{n,i})={\rm Osc}_{K_{n,i}}(f)\leq\varepsilon.
\]
Next, let us define the function \(f_{n,i}\in\Lip_b(X,\tau,\sfd)\) as \(f_{n,i}\coloneqq\tilde f_{n,i}-\inf_X\tilde f_{n,i}\). Note that
\[
\Lip(f_{n,i},\sfd)=\Lip(f,K_{n,i},\sfd),\qquad\|f_{n,i}\|_{C_b(X,\tau)}\leq\varepsilon.
\]
Therefore, the locality of \(b\) ensures that the following inequalities hold for \(\mm\)-a.e.\ point \(x\in K_{n,i}\):
\[\begin{split}
|b(f)|(x)&=|b(\tilde f_{n,i})|(x)=|b(f_{n,i})|(x)\leq g(x)\|f_{n,i}\|_{\Lip_b(X,\tau,\sfd)}\leq g(x)(\Lip(f,K_{n,i},\sfd)+\varepsilon)\\
&\leq g(x)(\Lip(f,C\cap\mathcal V[x_{n,i}],\sfd)+\varepsilon)\leq g(x)(\Lip(f,C\cap\mathcal U[x],\sfd)+\varepsilon).
\end{split}\]
By the arbitrariness of \(n\in\N\) and \(i=1,\ldots,k(n)\), it follows that \(|b(f)|\leq g(\Lip(f,C\cap\mathcal U[\cdot],\sfd)+\varepsilon)\) holds \(\mm\)-a.e.\ on \(C\).
Thanks to the arbitrariness of \(\varepsilon>0\), we thus conclude that \eqref{eq:glob_to_loc_cl1} is verified.

Finally, assume that the restriction \(\tau_C\) of the topology \(\tau\) to \(C\) is metrisable. Recalling \eqref{eq:unif_struct_restr}, we can find a sequence
\((\mathcal U_n)_{n\in\N}\subseteq\mathfrak B_{\tau,\sfd}\) such that \(\{\mathcal U_n|_{C\times C}:n\in\N\}\subseteq\mathfrak B_{\tau_C,\sfd_C}\) is a basis of
entourages for \(\mathfrak U_{\tau_C,\sfd_C}\). Given that \((\mathcal U_n|_{C\times C})[x]=C\cap\mathcal U_n[x]\) and \(\lip_{\sfd_C}(f|_C)(x)=\inf_{n\in\N}\Lip(f,C\cap\mathcal U_n[x],\sfd)\)
hold for every \(x\in C\), we have that the inequality in \eqref{eq:glob_to_loc_cl2} follows from \eqref{eq:glob_to_loc_cl1}.
\end{proof}
\begin{theorem}[Relation between Weaver and Di Marino derivations]\label{thm:W_vs_DM}
Let \(\mathbb X=(X,\tau,\sfd,\mm)\) be an e.m.t.m.\ space such that \(\mm\) is separable. Then it holds that
\[
\Der^\infty_1(\mathbb X)\subseteq\mathscr X(\mathbb X)
\]
and \(|b|_W\leq|b|\) holds \(\mm\)-a.e.\ on \(X\) for every \(f\in\Der^\infty_1(\mathbb X)\). Assuming in addition that \(\tau\)
is metrisable on all \(\tau\)-compact subsets of \(X\), we also have that
\[
\mathscr X(\mathbb X)\subseteq{\rm Der}^\infty(\mathbb X)
\]
and \(|b|_W=|b|\) holds \(\mm\)-a.e.\ on \(X\) for every \(b\in\mathscr X(\mathbb X)\).
\end{theorem}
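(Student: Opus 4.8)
The plan is to establish the two inclusions separately, in the order in which they appear in the statement.

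\textbf{The inclusion $\Der^\infty_1(\mathbb X)\subseteq\mathscr X(\mathbb X)$.} Let $b\in\Der^\infty_1(\mathbb X)$ be given. By definition, $b$ is a Di Marino derivation with $|b|\in L^\infty(\mm)$ and with $q$-integrable divergence for $q=1$, i.e.\ $\div(b)\in L^1(\mm)$. Since $\mm$ is separable by hypothesis, and since $|b(f)|\leq|b|\,\lip_\sfd(f)\leq|b|\,\Lip(f,\sfd)$ holds $\mm$-a.e.\ for every $f\in\Lip_b(X,\tau,\sfd)$ — so that $|b(f)|\leq C\,\Lip(f,\sfd)$ with $C\coloneqq\||b|\|_{L^\infty(\mm)}$ — the hypotheses of Lemma \ref{lem:suff_cond_wstar_seq_cont} are all met. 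That lemma directly yields $b\in\mathscr X(\mathbb X)$. For the pointwise-norm inequality $|b|_W\leq|b|$ $\mm$-a.e., I would just unwind the two definitions: from $|b(f)|\leq|b|\,\lip_\sfd(f)\leq|b|\,\|f\|_{\Lip_b(X,\tau,\sfd)}$ $\mm$-a.e.\ for every $f\in\Lip_b(X,\tau,\sfd)$, the function $|b|$ is an admissible competitor in the infimum (essential infimum) defining $|b|_W$, hence $|b|_W\leq|b|$ $\mm$-a.e. This half requires essentially no new work beyond invoking Lemma \ref{lem:suff_cond_wstar_seq_cont}.

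\textbf{The inclusion $\mathscr X(\mathbb X)\subseteq\Der^\infty(\mathbb X)$ under the extra metrisability hypothesis.} Let $b\in\mathscr X(\mathbb X)$, so $b$ is weakly$^*$-type sequentially continuous; by Theorem \ref{thm:Weaver_locality_and_bdd}, $b$ is local and a bounded linear operator $\Lip_b(X,\tau,\sfd)\to L^\infty(\mm)$, with $|b(f)|\leq|b|_W\,\|f\|_{\Lip_b(X,\tau,\sfd)}$ $\mm$-a.e. Now the key is to upgrade this global bound — in terms of $\|f\|_{\Lip_b(X,\tau,\sfd)}$ — to the infinitesimal bound $|b(f)|\leq|b|_W\,\lip_\sfd(f)$ defining membership in $\Der^\infty(\mathbb X)$. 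Here Proposition \ref{prop:glob_to_loc} is tailor-made: using the inner regularity of $\mm$, cover $X$ (up to an $\mm$-null set) by a sequence $(K_n)_n$ of pairwise disjoint $\tau$-compact sets; on each $K_n$ the topology $\tau$ is metrisable by hypothesis, so applying the second assertion of Proposition \ref{prop:glob_to_loc} with $C=K_n$ and $g=|b|_W$ gives $|b(f)|\leq|b|_W\,\lip_{\sfd_{K_n}}(f|_{K_n})$ $\mm$-a.e.\ on $K_n$, for every $f\in\Lip_b(X,\tau,\sfd)$. It remains to check that $\lip_{\sfd_{K_n}}(f|_{K_n})(x)\leq\lip_\sfd(f)(x)$ for $x\in K_n$, which is immediate from the definition of asymptotic slope: restricting the class of neighbourhoods to those of the form $U\cap K_n$ can only enlarge the infimum over $\Lip(f,U\cap K_n,\sfd)$ — wait, more carefully, $\Lip(f,U\cap K_n,\sfd)\leq\Lip(f,U,\sfd)$ since $U\cap K_n\subseteq U$, so taking infima over $\tau_{K_n}$-neighbourhoods $U\cap K_n$ of $x$ yields $\lip_{\sfd_{K_n}}(f|_{K_n})(x)\leq\lip_\sfd(f)(x)$. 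Patching over $n$ gives $|b(f)|\leq|b|_W\,\lip_\sfd(f)$ $\mm$-a.e.\ on $X$, hence $b\in\Der^0(\mathbb X)$ with competitor $g=|b|_W\in L^\infty(\mm)^+$, so in fact $b\in\Der^\infty(\mathbb X)$; moreover this shows $|b|\leq|b|_W$ $\mm$-a.e. Combined with the inequality $|b|_W\leq|b|$ already obtained for derivations lying in both classes (the argument of the first part applies verbatim once we know $b\in\Der^\infty(\mathbb X)$, since then $|b|\leq|b|_W$ gives also the reverse via $|b(f)|\le|b|\,\lip_\sfd(f)\le|b|\,\|f\|_{\Lip_b(X,\tau,\sfd)}$), we conclude $|b|_W=|b|$ $\mm$-a.e.

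\textbf{Main obstacle.} The only genuinely delicate point is the passage from the global operator bound to the infinitesimal one on the $\tau$-compact pieces, and this is precisely what Proposition \ref{prop:glob_to_loc} handles — its proof already absorbs the technical work of extending locally defined Lipschitz functions (via Corollary \ref{cor:Lip_ext_cpt}) and exploiting the uniform structure. Thus, granting Proposition \ref{prop:glob_to_loc}, Lemma \ref{lem:suff_cond_wstar_seq_cont} and Theorem \ref{thm:Weaver_locality_and_bdd}, the remaining steps are bookkeeping: verifying the elementary monotonicity $\lip_{\sfd_{K_n}}(f|_{K_n})\le\lip_\sfd(f)$ on $K_n$, patching the $\mm$-a.e.\ inequalities over the countable cover, and reading off the two pointwise-norm comparisons from the definitions of $|b|$ and $|b|_W$.
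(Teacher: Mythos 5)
Your proposal is correct and follows essentially the same route as the paper: the first inclusion via Lemma \ref{lem:suff_cond_wstar_seq_cont} together with the chain $|b(f)|\leq|b|\,\lip_\sfd(f)\leq|b|\,\|f\|_{\Lip_b(X,\tau,\sfd)}$, and the second via locality from Theorem \ref{thm:Weaver_locality_and_bdd}, a countable $\tau$-compact cover, and Proposition \ref{prop:glob_to_loc} with $g=|b|_W$ on each piece. Your explicit remark that the reverse inequality $|b|_W\leq|b|$ for a general $b\in\mathscr X(\mathbb X)\cap\Der^\infty(\mathbb X)$ needs no divergence hypothesis is a small but welcome clarification of a step the paper leaves implicit.
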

\begin{proof}
Assume \(\mm\) is separable and fix \(b\in\Der^\infty_1(\mathbb X)\). As \(|b(f)|\leq|b|\,\lip_\sfd(f)\leq\||b|\|_{L^\infty(\mm)}\Lip(f,\sfd)\)
holds \(\mm\)-a.e.\ on \(X\) for every \(f\in\Lip_b(X,\tau,\sfd)\), we know from Lemma \ref{lem:suff_cond_wstar_seq_cont} that \(b\in\mathscr X(\mathbb X)\).
Moreover, the \(\mm\)-a.e.\ inequalities \(|b(f)|\leq|b|\,\lip_\sfd(f)\leq\|f\|_{\Lip_b(X,\tau,\sfd)}|b|\) imply that \(|b|_W\leq|b|\) \(\mm\)-a.e.\ on \(X\).

Now, assume in addition that \(\tau\) is metrisable on all \(\tau\)-compact sets and fix any \(b\in\mathscr X(\mathbb X)\). As \(\mm\) is a Radon measure,
we find a sequence \((K_n)_n\) of \(\tau\)-compact sets such that \(\mm\big(X\setminus\bigcup_{n\in\N}K_n\big)=0\). Since \(b\) is local by Theorem
\ref{thm:Weaver_locality_and_bdd}, and \(\tau\) is metrisable on \(K_n\), we deduce from Proposition \ref{prop:glob_to_loc} that
\[
|b(f)|\leq|b|_W\,\lip_{\sfd_{K_n}}(f|_{K_n})\leq|b|_W\,\lip_\sfd(f)\quad\text{ holds }\mm\text{-a.e.\ on }K_n\text{, for every }f\in\Lip_b(X,\tau,\sfd).
\]
By the arbitrariness of \(n\in\N\), it follows that \(|b(f)|\leq|b|_W\,\lip_\sfd(f)\) holds \(\mm\)-a.e.\ on \(X\) for every \(f\in\Lip_b(X,\tau,\sfd)\).
This proves that \(b\in{\rm Der}^\infty(\mathbb X)\) and \(|b|\leq|b|_W\), thus yielding the statement.
\end{proof}

We close this section with a result that illustrates the relation between derivations on an e.m.t.m.\ space and derivations on
its compactification. We denote by \(\iota^*\colon\Lip_b(\hat X,\hat\tau,\hat\sfd)\to\Lip_b(X,\tau,\sfd)\)
the inverse of the Gelfand transform \(\Gamma\colon\Lip_b(X,\tau,\sfd)\to\Lip_b(\hat X,\hat\tau,\hat\sfd)\), cf.\ with
Lemma \ref{lem:Gelfand_isom}. With the same symbol \(\iota^*\) we denote the linear bijection
\(\iota^*\colon L^0(\hat\mm)\to L^0(\mm)\) that maps the \(\hat\mm\)-a.e.\ equivalence class of a Borel function
\(\hat f\colon\hat X\to\R\) to the \(\mm\)-a.e.\ equivalence class of \(\hat f\circ\iota\colon X\to\R\), whereas
\(\iota_*\colon L^0(\mm)\to L^0(\hat\mm)\) denotes its inverse.
\begin{proposition}[Derivations on the compactification]
Let \(\mathbb X=(X,\tau,\sfd,\mm)\) be an e.m.t.m.\ space. Denote by \(\hat{\mathbb X}=(\hat X,\hat\tau,\hat\sfd,\hat\mm)\)
its compactification, with embedding \(\iota\colon X\hookrightarrow\hat X\). Let us define the operator
\(\iota_*\colon{\rm Der}(\mathbb X)\to{\rm Der}(\hat{\mathbb X})\) as
\[
(\iota_*b)(\hat f)\coloneqq\iota_*(b(\iota^*\hat f))\in L^0(\hat\mm)\quad\text{ for every }b\in{\rm Der}(\mathbb X)\text{ and }\hat f\in\Lip_b(\hat X,\hat\tau,\hat\sfd).
\]
Then \(\iota_*\) is a linear bijection such that \(\iota_*(hb)=(\iota_*h)(\iota_*b)\) for every
\(b\in{\rm Der}(\mathbb X)\) and \(h\in L^0(\mm)\). Moreover, the following properties are satisfied:
\begin{itemize}
\item[\(\rm i)\)] \(\iota_*(D(\div;\mathbb X))=D(\div;\hat{\mathbb X})\) and \(\div(\iota_*b)=\iota_*(\div(b))\)
for every \(b\in D(\div;\mathbb X)\).
\item[\(\rm ii)\)] \(\iota_*(\mathscr X(\mathbb X))\subseteq\mathscr X(\hat{\mathbb X})\)
and \(|\iota_*b|_W=\iota_*|b|_W\) for every \(b\in\mathscr X(\mathbb X)\).
\item[\(\rm iii)\)] Given any derivation \(b\in\Der(\mathbb X)\), we have that \(b\) is local if and only if \(\iota_*b\) is local.
\item[\(\rm iv)\)] \(\iota_*(\Der^0(\mathbb X))\subseteq\Der^0(\hat{\mathbb X})\) and \(|\iota_*b|\leq\iota_*|b|\) for every \(b\in\Der^0(\mathbb X)\).
In particular, we have that \(\iota_*(\Der^q(\mathbb X))\subseteq\Der^q(\hat{\mathbb X})\) and \(\iota_*(\Der_r^q(\mathbb X))\subseteq\Der_r^q(\hat{\mathbb X})\)
for every \(q,r\in[1,\infty]\).
\item[\(\rm v)\)] Assume in addition that \(\tau\) is metrisable on all \(\tau\)-compact subsets of \(X\). Then it holds that
\(\iota_*({\rm Der}^q_q(\mathbb X))={\rm Der}^q_q(\hat{\mathbb X})\) for every \(q\in[1,\infty]\), and that \(|\iota_*b|=\iota_*|b|\)
for every \(b\in{\rm Der}^q_q(\mathbb X)\).
\end{itemize}
\end{proposition}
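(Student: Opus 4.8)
The strategy is to reduce everything to two facts supplied by the Gelfand machinery: Lemma~\ref{lem:Gelfand_isom}, which says that \(\iota^*=\Gamma^{-1}\colon\Lip_b(\hat X,\hat\tau,\hat\sfd)\to\Lip_b(X,\tau,\sfd)\) is an isometric isomorphism of Banach algebras; and the fact that \(\iota^*\colon L^0(\hat\mm)\to L^0(\mm)\), \(\hat f\mapsto\hat f\circ\iota\), is simultaneously a ring isomorphism, a Riesz isomorphism, and --- because \(\hat\mm=\iota_\#\mm\) --- integral-preserving, i.e.\ \(\int\hat h\,\d\hat\mm=\int\iota^*\hat h\,\d\mm\) for every \(\hat h\in L^1(\hat\mm)\); \(\iota_*\) denotes the inverse in each case and inherits all these properties. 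First I would check that \(\iota_*b\in\Der(\hat{\mathbb X})\) whenever \(b\in\Der(\mathbb X)\): linearity is clear, and the Leibniz rule follows from multiplicativity of \(\iota^*\) on \(\Lip_b\) and of \(\iota_*\) on \(L^0\). The two-sided inverse of \(\iota_*\) on derivations is \(b'\mapsto\big(f\mapsto\iota^*(b'(\Gamma f))\big)\), which gives the bijectivity, and \(\iota_*(hb)=(\iota_*h)(\iota_*b)\) is immediate from multiplicativity of \(\iota_*\) on \(L^0(\mm)\). Throughout, the reverse inclusions in i)--v) and the reverse inequalities for the pointwise norms will be obtained by applying the analogous argument to the inverse map \((\iota_*)^{-1}\), which is of the same structural form.

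For i), I would insert \((\iota_*b)(\hat f)=\iota_*(b(\iota^*\hat f))\) into the integration-by-parts identity for \(b\) and transport it to \(\hat{\mathbb X}\) using integral-preservation of \(\iota^*\) and \(\iota^*\circ\iota_*=\mathrm{id}\): this yields \(\int(\iota_*b)(\hat f)\,\d\hat\mm=-\int\hat f\,\iota_*(\div(b))\,\d\hat\mm\) for all \(\hat f\in\Lip_b(\hat X,\hat\tau,\hat\sfd)\), so \(\iota_*b\in D(\div;\hat{\mathbb X})\) with \(\div(\iota_*b)=\iota_*(\div(b))\) (divergences are unique by weak\(^*\)-density, \eqref{eq:Lip_dense_Lp}); the inverse map gives \(\iota_*(D(\div;\mathbb X))=D(\div;\hat{\mathbb X})\). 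For ii), the key point is that \(\iota_*\colon L^\infty(\mm)\to L^\infty(\hat\mm)\) is weak\(^*\)-to-weak\(^*\) continuous, its pre-adjoint being \(\iota^*\colon L^1(\hat\mm)\to L^1(\mm)\); combined with the observation (from Lemma~\ref{lem:Gelfand_isom}) that a bounded pointwise-convergent sequence in \(\Lip_b(\hat X,\hat\tau,\hat\sfd)\) pulls back under \(\iota^*\) to a bounded pointwise-convergent sequence in \(\Lip_b(X,\tau,\sfd)\), this shows \(\iota_*b\in\mathscr X(\hat{\mathbb X})\); the norm identity follows from \(|(\iota_*b)(\hat f)|=\iota_*|b(\iota^*\hat f)|\le(\iota_*|b|_W)\,\|\hat f\|_{\Lip_b(\hat X,\hat\tau,\hat\sfd)}\), using the isometry of \(\iota^*\). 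For iii), since \(\iota^*(\1_{\{\hat f=0\}})=\1_{\{\iota^*\hat f=0\}}\) and \(\iota^*\) is an injective ring isomorphism of \(L^0\), the element \((\iota_*b)(\hat f)\) vanishes \(\hat\mm\)-a.e.\ on \(\{\hat f=0\}\) if and only if \(b(\iota^*\hat f)\) vanishes \(\mm\)-a.e.\ on \(\{\iota^*\hat f=0\}\), which gives the equivalence of locality. For iv), I would apply \eqref{eq:ineq_lip_a_cpt_a.e.} to \(\iota^*\hat f\) and use \(\Gamma(\iota^*\hat f)=\hat f\) to get \(\lip_\sfd(\iota^*\hat f)\le\iota^*(\lip_{\hat\sfd}(\hat f))\) \(\mm\)-a.e., whence \(|(\iota_*b)(\hat f)|=\iota_*|b(\iota^*\hat f)|\le\iota_*\big(|b|\,\lip_\sfd(\iota^*\hat f)\big)\le(\iota_*|b|)\,\lip_{\hat\sfd}(\hat f)\) \(\hat\mm\)-a.e.; so \(\iota_*b\in\Der^0(\hat{\mathbb X})\) with \(|\iota_*b|\le\iota_*|b|\), and the assertions about \(\Der^q\) and \(\Der^q_r\) follow by combining this with i) and the fact that \(\iota_*\) maps \(L^q(\mm)\) onto \(L^q(\hat\mm)\).

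The main obstacle is v): here \eqref{eq:ineq_lip_a_cpt_a.e.} is genuinely one-sided, so the estimate of iv) cannot be reversed pointwise, and this is exactly where metrisability on \(\tau\)-compact sets enters. The plan for \(q\in[1,\infty)\): given \(b'\in\Der^q_q(\hat{\mathbb X})\), set \(b\coloneqq(\iota_*)^{-1}b'\); by i) \(b\in D(\div;\mathbb X)\) with \(\div(b)=\iota^*(\div(b'))\in L^q(\mm)\), by Corollary~\ref{cor:DM_der_local} \(b'\) is local so \(b\) is local by iii), and the global bound \(|b(f)|=\iota^*|b'(\Gamma f)|\le\iota^*\big(|b'|\,\|\Gamma f\|_{\Lip_b(\hat X,\hat\tau,\hat\sfd)}\big)=\iota^*(|b'|)\,\|f\|_{\Lip_b(X,\tau,\sfd)}\) holds \(\mm\)-a.e.\ with \(\iota^*(|b'|)\in L^q(\mm)^+\), again by the isometry of \(\iota^*\). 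Thus \(b\) satisfies the hypotheses of Proposition~\ref{prop:glob_to_loc} with \(g=\iota^*(|b'|)\); applying \eqref{eq:glob_to_loc_cl2} on every \(\tau\)-closed set \(C\) on which \(\tau\) is metrisable, and using \(\lip_{\sfd_C}(f|_C)\le\lip_\sfd(f)\) on \(C\), yields \(|b(f)|\le\iota^*(|b'|)\,\lip_\sfd(f)\) \(\mm\)-a.e.\ on \(C\). Since \(\mm\) is Radon and \(\tau\) is metrisable on all \(\tau\)-compact sets, exhausting \(X\) up to an \(\mm\)-null set by a sequence of \(\tau\)-compact (hence \(\tau\)-closed, metrisable) sets promotes this to an \(\mm\)-a.e.\ inequality on \(X\), so \(b\in\Der^q(\mathbb X)\), hence \(b\in\Der^q_q(\mathbb X)\); with iv) this gives \(\iota_*(\Der^q_q(\mathbb X))=\Der^q_q(\hat{\mathbb X})\). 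Running the same localisation with \(b'=\iota_*b\) and \(|b'|=|\iota_*b|\) shows \(|b|\le\iota^*(|\iota_*b|)\), i.e.\ \(\iota_*|b|\le|\iota_*b|\), which together with iv) gives \(|\iota_*b|=\iota_*|b|\). Finally, for \(q=\infty\): since \(\mm\) is finite one has \(\Der^\infty_\infty\subseteq\Der^2_2\) on both spaces, so the \(q=2\) bijection restricts to \(\Der^\infty_\infty\) once one checks (from iv), i) and the localisation bound \(|b|\le\iota^*(|b'|)\in L^\infty\)) that \(\iota_*\) and \(\iota^*\) preserve \(L^\infty\)-membership of both \(|b|\) and \(\div(b)\), and the pointwise-norm identity already established for finite \(q\) carries over unchanged.
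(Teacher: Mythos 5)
Your proposal is correct and follows essentially the same route as the paper: the same Gelfand-transform bookkeeping for the bijection, the module property and items i)--iv) (including the use of \eqref{eq:ineq_lip_a_cpt_a.e.} for iv)), and for v) the same combination of locality (Corollary \ref{cor:DM_der_local}), the global bound via the isometry of \(\Gamma\), Proposition \ref{prop:glob_to_loc} on \(\tau\)-compact sets, and exhaustion of \(\mm\) by compacts. The only (harmless) deviation is the case \(q=\infty\) in v), which you reduce to finite \(q\) via \(\Der^\infty_\infty\subseteq\Der^2_2\) (legitimate since \(\mm\) is finite), whereas the paper instead invokes Theorems \ref{thm:W_vs_DM} and \ref{thm:Weaver_locality_and_bdd} to get locality there.
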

\begin{proof}
Let \(b\in\Der(\mathbb X)\) be given. The map \(\iota_*b\colon\Lip_b(\hat X,\hat\tau,\hat\sfd)\to L^0(\hat\mm)\) is linear
(as a composition of linear maps). Moreover, for every \(\hat f,\hat g\in\Lip_b(\hat X,\hat\tau,\hat\sfd)\) we have that
\[
(\iota_*b)(\hat f\hat g)=\iota_*\big(b((\iota^*\hat f)(\iota^*\hat g))\big)
=\iota_*\big((\iota^*\hat f)\,b(\iota^*\hat g)+(\iota^*\hat g)\,b(\iota^*\hat f)\big)
=\hat f\,(\iota_*b)(\hat g)+\hat g\,(\iota_*b)(\hat f),
\]
so that \(\iota_*b\) satisfies the Leibniz rule, thus \(\iota_*b\in\Der(\hat{\mathbb X})\). The resulting map
\(\iota_*\colon\Der(\mathbb X)\to\Der(\hat{\mathbb X})\) is clearly linear. Similar arguments show that
\[
(\iota^*\hat b)(f)\coloneqq\iota^*\big(\hat b(\Gamma(f))\big)\in L^0(\mm)\quad\text{ for every }\hat b\in\Der(\hat{\mathbb X})
\text{ and }f\in\Lip_b(X,\tau,\sfd)
\]
defines a linear operator \(\iota^*\colon\Der(\hat{\mathbb X})\to\Der(\mathbb X)\) whose inverse is the map
\(\iota_*\colon\Der(\mathbb X)\to\Der(\hat{\mathbb X})\), thus in particular the latter is a bijection. For any
\(b\in\Der(\mathbb X)\) and \(h\in L^0(\mm)\), we also have that
\[
(\iota_*(hb))(\hat f)=\iota_*(h\,b(\iota^*\hat f))=(\iota_*h)\,\iota_*(b(\iota^*\hat f))=(\iota_*h)\,(\iota_*b)(\hat f)
\quad\text{ for every }\hat f\in\Lip_b(\hat X,\hat\tau,\hat\sfd),
\]
which gives that \(\iota_*(hb)=(\iota_*h)(\iota_*b)\). Let us now pass to the verification of i), ii), iii), iv) and v).\\
\(\bf i)\) Let \(b\in\Der(\mathbb X)\) be a given derivation. Note that \(b(f)\in L^1(\mm)\) for every \(f\in\Lip_b(X,\tau,\sfd)\)
if and only if \((\iota_*b)(\hat f)\in L^1(\hat\mm)\) for every \(\hat f\in\Lip_b(\hat X,\hat\tau,\hat\sfd)\). Moreover,
if \(b\in D(\div;\mathbb X)\), then
\[
\int(\iota_*b)(\hat f)\,\d\hat\mm=\int b(\iota^*\hat f)\,\d\mm=-\int(\iota^*\hat f)\,\div(b)\,\d\mm
=-\int\hat f\,\iota_*(\div(b))\,\d\hat\mm
\]
holds for every \(\hat f\in\Lip_b(\hat X,\hat\tau,\hat\sfd)\), so that \(\iota_*b\in D(\div;\hat{\mathbb X})\)
and \(\div(\iota_*b)=\iota_*(\div(b))\). Conversely, if we assume \(\iota_*b\in D(\div;\hat{\mathbb X})\),
then similar computations show that \(b\in D(\div;\mathbb X)\). This proves i).\\
\(\bf ii)\) If \(b\in\mathscr X(\mathbb X)\), then \((\iota_*b)(\hat f)=\iota_*(b(\iota^*\hat f))\in L^\infty(\hat\mm)\)
for every \(\hat f\in\Lip_b(\hat X,\hat\tau,\hat\sfd)\). Moreover, assuming that \((\hat f_n)_n\subseteq\Lip_b(\hat X,\hat\tau,\hat\sfd)\)
and \(\hat f\in\Lip_b(\hat X,\hat\tau,\hat\sfd)\) satisfy \(\sup_{n\in\N}\|\hat f_n\|_{\Lip_b(\hat X,\hat\tau,\hat\sfd)}<+\infty\)
and \(\hat f(\varphi)=\lim_n\hat f_n(\varphi)\) for every \(\varphi\in\hat X\), we have
\(\sup_{n\in\N}\|\iota^*\hat f_n\|_{\Lip_b(X,\tau,\sfd)}<+\infty\) by Lemma \ref{lem:Gelfand_isom} and
\((\iota^*\hat f)(x)=\hat f(\iota(x))=\lim_n\hat f_n(\iota(x))=\lim_n(\iota^*\hat f_n)(x)\) for every \(x\in X\). Hence,
the weak\(^*\)-type sequential continuity of \(b\) ensures that \(b(\iota^*\hat f_n)\overset{*}{\rightharpoonup}b(\iota^*\hat f)\)
weakly\(^*\) in \(L^\infty(\mm)\), so that accordingly
\[
(\iota_*b)(\hat f_n)=\iota_*(b(\iota^*\hat f_n))\overset{*}{\rightharpoonup}\iota_*(b(\iota^*\hat f))=(\iota_*b)(\hat f)
\quad\text{ weakly\(^*\) in }L^\infty(\hat\mm).
\]
This shows that \(\iota_*b\in\mathscr X(\hat{\mathbb X})\). Finally, it follows from the \(\hat\mm\)-a.e.\ inequalities
\[\begin{split}
|(\iota_*b)(\hat f)|&=\iota_*|b(\iota^*\hat f)|\leq\|\iota^*\hat f\|_{\Lip_b(X,\tau,\sfd)}\,\iota_*|b|_W
=\|\hat f\|_{\Lip_b(\hat X,\hat\tau,\hat\sfd)}\,\iota_*|b|_W,\\
\iota_*|b(f)|&=|(\iota_*b)(\Gamma(f))|\leq\|\Gamma(f)\|_{\Lip_b(\hat X,\hat\tau,\hat\sfd)}|\iota_*b|_W
=\|f\|_{\Lip_b(X,\tau,\sfd)}|\iota_*b|_W,
\end{split}\]
which hold for all \(\hat f\in\Lip_b(\hat X,\hat\tau,\hat\sfd)\) and \(f\in\Lip_b(X,\tau,\sfd)\), that
\(|\iota_*b|_W=\iota_*|b|_W\). This proves ii).\\
{\bf iii)} Note that \(\1_{\{\Gamma(f)=0\}}=\iota_*\1_{\{f=0\}}\) holds \(\hat\mm\)-a.e.\ on \(\hat X\) for every \(f\in\Lip_b(X,\tau,\sfd)\). In particular,
\[
\1_{\{\Gamma(f)=0\}}(\iota_*b)(\Gamma(f))=\iota_*(\1_{\{f=0\}}b(f))\quad\text{ holds }\hat\mm\text{-a.e.\ on }\hat X,
\]
whence it follows that \((\iota_*b)(\Gamma(f))=0\) \(\hat\mm\)-a.e.\ on \(\{\Gamma(f)=0\}\) if and only if \(b(f)=0\) \(\mm\)-a.e.\ on \(\{f=0\}\).
As \(\Gamma\colon\Lip_b(X,\tau,\sfd)\to\Lip_b(\hat X,\hat\tau,\hat\sfd)\) is bijective, we deduce that \(b\) is local if and only if \(\iota_*b\) is local.\\
{\bf iv)} If \(b\in\Der^0(\mathbb X)\), then by applying \eqref{eq:ineq_lip_a_cpt_a.e.} we obtain the \(\hat\mm\)-a.e.\ inequalities
\[
|(\iota_*b)(\hat f)|=\iota_*|b(\iota^*\hat f)|\leq(\iota_*|b|)(\iota_*\lip_\sfd(\iota^*\hat f))\leq(\iota_*|b|)\,\lip_{\hat\sfd}(\hat f)
\quad\text{ for every }\hat f\in\Lip_b(\hat X,\hat\tau,\hat\sfd),
\]
whence it follows that \(\iota_*b\in\Der^0(\hat{\mathbb X})\) and \(|\iota_*b|\leq\iota_*|b|\).\\
{\bf v)} Fix any \(\hat b\in\Der^q_q(\hat{\mathbb X})\). We know from Corollary \ref{cor:DM_der_local} if \(q<\infty\), or from Theorems \ref{thm:W_vs_DM}
and \ref{thm:Weaver_locality_and_bdd} if \(q=\infty\), that \(\hat b\) is a local derivation. For any \(f\in\Lip_b(X,\tau,\sfd)\), we have the \(\mm\)-a.e.\ inequalities
\[
|(\iota^*\hat b)(f)|=\iota^*|\hat b(\Gamma(f))|\leq(\iota^*|\hat b|)\big(\iota^*\lip_{\hat\sfd}(\Gamma(f))\big)\leq\Lip(\Gamma(f),\hat\sfd)(\iota^*|\hat b|)
\leq\|f\|_{\Lip_b(X,\tau,\sfd)}\iota^*|\hat b|.
\]
Therefore, Proposition \ref{prop:glob_to_loc} guarantees that for every \(\tau\)-compact set \(K\subseteq X\) we have that
\[
|(\iota^*\hat b)(f)|\leq(\iota^*|\hat b|)\,\lip_{\sfd_K}(f|_K)\quad\text{ holds }\mm\text{-a.e.\ on }K,\text{ for every }f\in\Lip_b(X,\tau,\sfd).
\]
Since the Radon measure \(\mm\) is concentrated on the union \(\bigcup_n K_n\) of countably many \(\tau\)-compact subsets \((K_n)_{n\in\N}\) of \(X\),
we deduce that \(|(\iota^*\hat b)(f)|\leq(\iota^*|\hat b|)\,\lip_\sfd(f)\) \(\mm\)-a.e.\ on \(X\), so that \(\iota^*\hat b\in\Der^q(\mathbb X)\)
and \(|\iota^*\hat b|\leq\iota^*|\hat b|\). Taking also i) and iv) into account, we can finally conclude that v) holds.
\end{proof}
\section{Sobolev spaces via Lipschitz derivations}\label{s:Sob_space}
\subsection{The space \texorpdfstring{\(W^{1,p}\)}{W1p}}\label{s:def_W1p}
We introduce a new notion of metric Sobolev space \(W^{1,p}(\mathbb X)\) over an e.m.t.m.\ space \(\mathbb X\), defined via an integration-by-parts
formula in duality with the space \(\Der^q_q(\mathbb X)\) of Di Marino derivations with divergence. Our definition generalises Di Marino's notion of
\(W^{1,p}\) space for metric measure spaces (\cite[Definition 1.5]{DiMar:14}, \cite[Definition 7.1.4]{DiMarPhD}) to the extended setting.
\begin{definition}[The Sobolev space \(W^{1,p}(\mathbb X)\)]\label{def:Sobolev_space_via_der}
Let \(\mathbb X=(X,\tau,\sfd,\mm)\) be an e.m.t.m.\ space. Let \(p,q\in(1,\infty)\) be conjugate exponents.
Then we define the \textbf{Sobolev space} \(W^{1,p}(\mathbb X)\) as the set of all functions \(f\in L^p(\mm)\)
for which there exists a linear operator \(L_f\colon{\rm Der}^q_q(\mathbb X)\to L^1(\mm)\) such that:
\begin{itemize}
\item[\(\rm i)\)] There exists a function \(g\in L^q(\mm)^+\) such that \(|L_f(b)|\leq g|b|\) for every \(b\in{\rm Der}^q_q(\mathbb X)\).
\item[\(\rm ii)\)] \(L_f(hb)=h\,L_f(b)\) for every \(h\in\Lip_b(X,\tau,\sfd)\) and \(b\in\Der^q_q(\mathbb X)\).
\item[\(\rm iii)\)] The following integration-by-parts formula holds:
\[
\int L_f(b)\,\d\mm=-\int f\,{\rm div}(b)\,\d\mm\quad\text{ for every }b\in{\rm Der}^q_q(\mathbb X).
\]
\end{itemize}
Given any function \(f\in W^{1,p}(\mathbb X)\), we define its \textbf{minimal \(p\)-weak gradient} \(|Df|\in L^p(\mm)^+\) as
\[
|Df|\coloneqq\bigwedge\big\{g\in L^p(\mm)^+\;\big|\;|L_f(b)|\leq g|b|\;\;\forall b\in{\rm Der}^q_q(\mathbb X)\big\}
=\bigvee_{b\in{\rm Der}^q_q(\mathbb X)}\1_{\{|b|>0\}}\frac{|L_f(b)|}{|b|}.
\]
\end{definition}

We use the notation \(|Df|\) (instead e.g.\ of \(|Df|_W\)) because the space \(W^{1,p}(\mathbb X)\) will be our main object of study in
the rest of the paper. Note that \(|L_f(b)|\leq|Df||b|\) holds \(\mm\)-a.e.\ for every \(f\in W^{1,p}(\mathbb X)\)
and \(b\in\Der^q_q(\mathbb X)\). It can also be readily checked that
\[
\|f\|_{W^{1,p}(\mathbb X)}\coloneqq\big(\|f\|_{L^p(\mm)}^p+\||Df|\|_{L^p(\mm)}^p\big)^{1/p}\quad\text{ for every }f\in W^{1,p}(\mathbb X)
\]
defines a complete norm on \(W^{1,p}(\mathbb X)\), so that \((W^{1,p}(\mathbb X),\|\cdot\|_{W^{1,p}(\mathbb X)})\) is a Banach space.
\medskip

Some more comments on the Sobolev space \(W^{1,p}(\mathbb X)\):
\begin{itemize}
\item Since \(\int h\,L_f(b)\,\d\mm=-\int f\,\div(hb)\,\d\mm\) for every \(h\in\Lip_b(X,\tau,\sfd)\), and \(\Lip_b(X,\tau,\sfd)\)
is weakly\(^*\) dense in \(L^\infty(\mm)\) by \eqref{eq:Lip_dense_Lp}, the map \(L_f\colon\Der^q_q(\mathbb X)\to L^1(\mm)\)
is uniquely determined.
\item It easily follows from the uniqueness of \(L_f\) that \(W^{1,p}(\mathbb X)\ni f\mapsto L_f\) is a linear operator, whose target is
the vector space of all linear operators from \(\Der^q_q(\mathbb X)\) to \(L^1(\mm)\).
\item \(\Lip_b(X,\tau,\sfd)\subseteq W^{1,p}(\mathbb X)\) and \(L_f(b)=b(f)\) for every \(f\in\Lip_b(X,\tau,\sfd)\) and
\(b\in\Der^q_q(\mathbb X)\), thus in particular \(|Df|\leq\lip_\sfd(f)\) holds \(\mm\)-a.e.\ on \(X\) for every \(f\in\Lip_b(X,\tau,\sfd)\).
\item For any \(f\in W^{1,p}(\mathbb X)\), the operator \(L_f\colon\Der^q_q(\mathbb X)\to L^1(\mm)\) can be uniquely extended to an element \(L_f\in L^q_\Lip(T\mathbb X)^*\), whose pointwise norm \(|L_f|\) coincides with \(|Df|\).
\end{itemize}
\begin{example}{\rm
Let \((X,\tau,\sfd_{\rm discr})\) be a `purely-topological' e.m.t.\ space (as in Example \ref{ex:Schultz's_space}) together with a finite Radon measure \(\mm\),
so that \(\mathbb X\coloneqq(X,\tau,\sfd_{\rm discr},\mm)\) is an e.m.t.m.\ space. For any given function \(f\in\Lip_b(X,\tau,\sfd_{\rm discr})\),
we have that \(\Lip(f,U,\sfd_{\rm discr})={\rm Osc}_U(f)\) for every \(U\in\tau\), thus the \(\tau\)-continuity of \(f\) implies that \(\lip_{\sfd_{\rm discr}}(f)(x)=0\)
for all \(x\in X\). In particular, \(\Der^q_q(\mathbb X)=\Der^q(\mathbb X)=\{0\}\) for every \(q\in[1,\infty]\), whence it follows that
\(W^{1,p}(\mathbb X)=L^p(\mm)\) for every \(p\in(1,\infty)\), with \(L_f=0\) and thus \(|Df|=0\) for every \(f\in W^{1,p}(\mathbb X)\).
\fr}\end{example}
\subsection{The equivalence \texorpdfstring{\(H^{1,p}=W^{1,p}\)}{H=W}}\label{s:H=W}
The goal of this section is to prove that the metric Sobolev spaces \(W^{1,p}(\mathbb X)\) and \(H^{1,p}(\mathbb X)\) coincide on \emph{any} e.m.t.m.\ space.
In the setting of (complete) metric measure spaces, such equivalence was previously known (see \cite[Section 2]{DiMar:14} or \cite[Section 7.2]{DiMarPhD}),
but the result seems to be new for non-complete metric measure spaces; see Theorem \ref{thm:H=W} below. Our proof of the inclusion
\(H^{1,p}(\mathbb X)\subseteq W^{1,p}(\mathbb X)\) follows along the lines of \cite[Section 2.1]{DiMar:14}, whereas our proof of the converse inclusion
(inspired by \cite[Theorem 3.3]{Luc:Pas:23}) relies on a new argument using tools in Convex Analysis. The latter is robust enough to be potentially useful in other contexts.
\medskip

Fix an e.m.t.m.\ space \(\mathbb X=(X,\tau,\sfd,\mm)\) and \(p\in(1,\infty)\). The differential
\(\d\colon H^{1,p}(\mathbb X)\to L^p(T^*\mathbb X)\) given by Theorem \ref{thm:cotg_mod} induces an unbounded operator
\(\d\colon L^p(\mm)\to L^p(T^*\mathbb X)\) whose domain is \(D(\d)=H^{1,p}(\mathbb X)\); see Appendix \ref{app:conv_an}.
As \(\Lip_b(X,\tau,\sfd)\) is contained in \(H^{1,p}(\mathbb X)\), and it is dense in \(L^p(\mm)\) by \eqref{eq:Lip_dense_Lp},
we deduce that \(\d\) is densely defined, thus its adjoint operator \(\d^*\colon L^p(T^*\mathbb X)'\to L^q(\mm)\)
is well posed. Letting \(\textsc{I}_{p,\mathbb X}\colon L^q(T\mathbb X)\to L^p(T^*\mathbb X)'\) be as in \eqref{eq:def_I_pX},
the operator \(\d^*\) is characterised by
\begin{equation}\label{eq:characterisation_d_star}
\int f\,\d^*V\,\d\mm=\langle V,\d f\rangle=\int\d f(\textsc{I}_{p,\mathbb X}^{-1}(V))\,\d\mm
\quad\text{ for every }f\in H^{1,p}(\mathbb X)\text{ and }V\in D(\d^*).
\end{equation}
The next result shows that each element of \(D(\d^*)\) induces a Di Marino derivation with divergence:
\begin{lemma}[Derivation induced by a vector field]\label{lem:vf_induces_der}
Let \(\mathbb X=(X,\tau,\sfd,\mm)\) be an e.m.t.m.\ space and \(q\in(1,\infty)\).
Fix any \(v\in L^q(T\mathbb X)\). Define the operator \(b_v\colon\Lip_b(X,\tau,\sfd)\to L^1(\mm)\) as
\[
b_v(f)\coloneqq\d f(v)\quad\text{ for every }f\in\Lip_b(X,\tau,\sfd).
\]
Then it holds that \(b_v\in{\rm Der}^q(\mathbb X)\) and \(|b_v|\leq|v|\). If in addition
\(V\coloneqq{\rm I}_{p,\mathbb X}(v)\in D(\d^*)\), then
\[
b_v\in{\rm Der}^q_q(\mathbb X),\qquad\div(b_v)=-\d^*V.
\]
\end{lemma}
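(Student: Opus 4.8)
The plan is to verify the two claimed properties in turn, relying on the cotangent module machinery from Theorem \ref{thm:cotg_mod} and the definition of the differential as an unbounded operator. First I would check that $b_v$ is a Lipschitz derivation: linearity of $b_v$ is immediate from linearity of $\d$, and the Leibniz rule $b_v(fg)=f\,b_v(g)+g\,b_v(f)$ follows directly from the Leibniz rule \eqref{eq:Leibniz_rule_diff} for the differential, $\d(fg)=f\cdot\d g+g\cdot\d f$, after pairing with $v$ (note that $\Lip_b(X,\tau,\sfd)\subseteq H^{1,p}(\mathbb X)\cap L^\infty(\mm)$, so \eqref{eq:Leibniz_rule_diff} applies). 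For the bound, recall $|\d f|=|Df|_H\leq\lip_\sfd(f)$ $\mm$-a.e.\ for $f\in\Lip_b(X,\tau,\sfd)$; combined with the pointwise-norm inequality $|\d f(v)|\leq|\d f|\,|v|$ in the module $L^p(T^*\mathbb X)$ (and its dual $L^q(T\mathbb X)$), this yields
\[
|b_v(f)|=|\d f(v)|\leq|\d f|\,|v|\leq\lip_\sfd(f)\,|v|\quad\mm\text{-a.e.}
\]
Since $|v|\in L^q(\mm)^+$, this shows $b_v\in\Der^q(\mathbb X)$ and, by definition of $|b_v|$ as the minimal such function, $|b_v|\leq|v|$.

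Next I would treat the divergence claim under the hypothesis $V\coloneqq\textsc{I}_{p,\mathbb X}(v)\in D(\d^*)$. Since $b_v(f)\in L^1(\mm)$ for all $f\in\Lip_b(X,\tau,\sfd)$ has already been established, it remains to produce $\div(b_v)\in L^1(\mm)$ — in fact in $L^q(\mm)$, since $\d^*V\in L^q(\mm)$ — satisfying \eqref{eq:def_div}. The natural candidate is $\div(b_v)\coloneqq-\d^*V$. For any $f\in\Lip_b(X,\tau,\sfd)\subseteq H^{1,p}(\mathbb X)$, the characterisation \eqref{eq:characterisation_d_star} of the adjoint gives
\[
\int f\,\d^*V\,\d\mm=\int\d f\bigl(\textsc{I}_{p,\mathbb X}^{-1}(V)\bigr)\,\d\mm=\int\d f(v)\,\d\mm=\int b_v(f)\,\d\mm,
\]
which is exactly $\int b_v(f)\,\d\mm=-\int f\,\div(b_v)\,\d\mm$ with $\div(b_v)=-\d^*V$. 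Hence $b_v\in D(\div;\mathbb X)$ with this divergence, and since $-\d^*V\in L^q(\mm)$ we conclude $b_v\in\Der^q_q(\mathbb X)$.

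I do not expect a serious obstacle here; the statement is essentially a bookkeeping exercise chaining together the defining property of $\d$ (via $|\d f|=|Df|_H$ and $|Df|_H\leq\lip_\sfd(f)$), the module-duality inequality $|\omega(v)|\leq|\omega|\,|v|$, the Leibniz rule \eqref{eq:Leibniz_rule_diff}, and the definition \eqref{eq:characterisation_d_star} of $\d^*$. The one point that deserves a line of care is the compatibility of the two meanings of $\d$ — as a bounded map $H^{1,p}(\mathbb X)\to L^p(T^*\mathbb X)$ and as a densely defined unbounded operator $L^p(\mm)\to L^p(T^*\mathbb X)$ with domain $H^{1,p}(\mathbb X)$ — but this is exactly the set-up recorded just before the lemma and in the referenced appendix, so the pairing $\d f(v)$ appearing in the definition of $b_v$ is literally the one entering \eqref{eq:characterisation_d_star}. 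One should also note in passing that $b_v$ does not depend on the choice of $v$ within its $\mm$-a.e.\ class and that the identity $\textsc{I}_{p,\mathbb X}^{-1}(V)=v$ holds by definition of $V$, so no ambiguity arises.
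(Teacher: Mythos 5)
Your proposal is correct and follows essentially the same route as the paper's proof: the Leibniz rule for $b_v$ is read off from \eqref{eq:Leibniz_rule_diff}, the bound $|b_v(f)|\le|v|\,|Df|_H\le|v|\,\lip_\sfd(f)$ gives $b_v\in\Der^q(\mathbb X)$ with $|b_v|\le|v|$, and the divergence identity is exactly the adjoint characterisation \eqref{eq:characterisation_d_star} tested against $f\in\Lip_b(X,\tau,\sfd)$. No gaps.
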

\begin{proof}
The map \(b_v\) is linear by construction and satisfies the Leibniz rule \eqref{eq:Leibniz_Lip_der}
by \eqref{eq:Leibniz_rule_diff}, thus it is a Lipschitz derivation. Since \(|b_v(f)|\leq|v||Df|_H\leq|v|\,\lip_\sfd(f)\)
holds \(\mm\)-a.e.\ on \(X\), we deduce that \(b_v\in{\rm Der}^q(\mathbb X)\) and \(|b_v|\leq|v|\).
Now, let us assume that \(V\coloneqq{\rm I}_{p,\mathbb X}(v)\in D(\d^*)\). Then \eqref{eq:characterisation_d_star} yields
\[
\int b_v(f)\,\d\mm=\int\d f(v)\,\d\mm=\int f\,\d^*V\,\d\mm\quad\text{ for every }f\in\Lip_b(X,\tau,\sfd),
\]
whence it follows that \(b_v\in{\rm Der}^q_q(\mathbb X)\) and \(\div(b_v)=-\d^*V\). Hence, the statement is achieved.
\end{proof}

We now pass to the equivalence result between \(W^{1,p}\) and \(H^{1,p}\). We will use ultralimit techniques (see Appendix
\ref{app:ultralim}) to obtain one of the two inclusions, and tools in Convex Analysis (see Appendix \ref{app:conv_an})
to prove the other one.
\begin{theorem}[\(H^{1,p}=W^{1,p}\)]\label{thm:H=W}
Let \(\mathbb X=(X,\tau,\sfd,\mm)\) be an e.m.t.m.\ space and \(p\in(1,\infty)\). Then
\[
H^{1,p}(\mathbb X)=W^{1,p}(\mathbb X)
\]
and it holds that \(|Df|=|Df|_H\) for every \(f\in W^{1,p}(\mathbb X)\).
\end{theorem}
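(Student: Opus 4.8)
The plan is to prove the two inclusions separately, with matching equalities of the gradients.

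\textbf{The inclusion $H^{1,p}(\mathbb X)\subseteq W^{1,p}(\mathbb X)$ with $|Df|\leq|Df|_H$.} Given $f\in H^{1,p}(\mathbb X)$, I would define the candidate operator $L_f\colon\Der^q_q(\mathbb X)\to L^1(\mm)$ directly by approximation. Take a sequence $(f_n)_n\subseteq\Lip_b(X,\tau,\sfd)$ with $f_n\to f$ and $\lip_\sfd(f_n)\to|Df|_H$ strongly in $L^p(\mm)$ (this exists by the properties of relaxed slopes recalled in Section~\ref{s:H1p}). For $b\in\Der^q_q(\mathbb X)$ the functions $b(f_n)$ satisfy $|b(f_n)|\le|b|\,\lip_\sfd(f_n)$, so $(b(f_n))_n$ is dominated in $L^1(\mm)$ and, after passing to a subsequence depending on $b$, converges weakly in $L^1(\mm)$; using the integration-by-parts identity $\int b(f_n)\,\d\mm=-\int f_n\,\div(b)\,\d\mm$ together with the Leibniz rule to test against $h\in\Lip_b(X,\tau,\sfd)$, one checks the limit is independent of the subsequence and depends linearly on $b$. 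This defines $L_f$, and passing to the limit in $|b(f_n)|\le|b|\,\lip_\sfd(f_n)$ gives $|L_f(b)|\le|Df|_H\,|b|$, i.e.\ conditions~i) and iii) of Definition~\ref{def:Sobolev_space_via_der}; condition~ii), $L_f(hb)=h\,L_f(b)$, follows from $b(hf_n)=h\,b(f_n)+f_n\,b(h)$ passing to the limit. Hence $f\in W^{1,p}(\mathbb X)$ and $|Df|\le|Df|_H$. This is essentially the argument of \cite[Section~2.1]{DiMar:14}, modulo replacing curves/plans by the abstract relaxation machinery of \cite{Savare22}.

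\textbf{The inclusion $W^{1,p}(\mathbb X)\subseteq H^{1,p}(\mathbb X)$ with $|Df|_H\leq|Df|$.} Here I would use the convex-analytic strategy. View $\d\colon L^p(\mm)\to L^p(T^*\mathbb X)$ as a densely defined closed unbounded operator with $D(\d)=H^{1,p}(\mathbb X)$, and consider the proper convex lower semicontinuous functional $\mathcal E_p$ on $L^p(\mm)$, which has the variational representation $\mathcal E_p(f)=\frac1p\int|\d f|^p\,\d\mm$ on its domain. Its convex conjugate $\mathcal E_p^*$ on $L^q(\mm)$ can be computed, via Fenchel duality and the identification $L^p(T^*\mathbb X)'=L^q(T\mathbb X)=\Der$-type objects, in terms of $\d^*$: one obtains a formula of the shape $\mathcal E_p^*(g)=\frac1q\inf\{\int|v|^q\,\d\mm : v\in L^q(T\mathbb X),\ \textsc{I}_{p,\mathbb X}(v)\in D(\d^*),\ \d^*\textsc{I}_{p,\mathbb X}(v)=g\}$, with the infimum $+\infty$ if $g$ is not of this form. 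By biduality $\mathcal E_p=\mathcal E_p^{**}$, so $f\in H^{1,p}(\mathbb X)$ iff $\sup_g\{\int fg\,\d\mm-\mathcal E_p^*(g)\}<\infty$, and the supremum can be restricted to $g=\d^*V$ with $V=\textsc{I}_{p,\mathbb X}(v)$. Now if $f\in W^{1,p}(\mathbb X)$, Lemma~\ref{lem:vf_induces_der} tells us every such $v$ induces $b_v\in\Der^q_q(\mathbb X)$ with $\div(b_v)=-\d^*V$ and $|b_v|\le|v|$, so the integration-by-parts formula of $W^{1,p}$ gives $\int f\,\d^*V\,\d\mm=-\int f\,\div(b_v)\,\d\mm=\int L_f(b_v)\,\d\mm$ with $|L_f(b_v)|\le|Df|\,|b_v|\le|Df|\,|v|$. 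Feeding this into the dual representation bounds $\int fg\,\d\mm-\mathcal E_p^*(g)$ uniformly (by a Young-inequality estimate $\int|Df|\,|v|\,\d\mm-\frac1q\int|v|^q\,\d\mm\le\frac1p\int|Df|^p\,\d\mm$), hence $f\in H^{1,p}(\mathbb X)$ and $\mathcal E_p(f)\le\frac1p\int|Df|^p\,\d\mm$, i.e.\ $|Df|_H\le|Df|$.

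\textbf{Main obstacle.} The delicate point is the identification of $\mathcal E_p^*$, i.e.\ showing that the Fenchel conjugate of the Cheeger energy is exactly represented by vector fields $v$ whose associated covector $\textsc{I}_{p,\mathbb X}(v)$ lies in $D(\d^*)$, and that these are in bijective correspondence with enough test derivations in $\Der^q_q(\mathbb X)$. This requires care about (i) density of $\Lip_b(X,\tau,\sfd)$ in $H^{1,p}(\mathbb X)$ so that $\d^*$ is well defined and its characterisation~\eqref{eq:characterisation_d_star} can be tested on Lipschitz functions, (ii) the module-theoretic duality $L^p(T^*\mathbb X)'\cong L^q(T\mathbb X)$ from Section~\ref{s:H1p}, and (iii) the surjectivity/closedness needed for the biduality $\mathcal E_p^{**}=\mathcal E_p$, which holds since $\mathcal E_p$ is convex and $L^p(\mm)$-lower semicontinuous. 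The rest is a routine Young-inequality bookkeeping, together with the observation that the two inclusions, being reverse inequalities $|Df|\le|Df|_H$ and $|Df|_H\le|Df|$, force $|Df|=|Df|_H$ on all of $W^{1,p}(\mathbb X)=H^{1,p}(\mathbb X)$.
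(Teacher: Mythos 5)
Your proposal is correct and follows essentially the same route as the paper: the inclusion \(H^{1,p}(\mathbb X)\subseteq W^{1,p}(\mathbb X)\) via an optimal Lipschitz approximating sequence and weak \(L^1\) limits of \(b(f_n)\) (the paper packages the compactness step with a non-principal ultrafilter instead of your subsequence-plus-uniqueness argument, but the identification of the limit by testing against \(\Lip_b(X,\tau,\sfd)\) through \(\div(b)\) is the same), and the converse inclusion via Fenchel--Moreau, the Bouchitt\'e--Buttazzo--Seppecher formula for \(\big(\tfrac1p\|\cdot\|_{L^p(T^*\mathbb X)}^p\circ\d\big)^*\), Lemma \ref{lem:vf_induces_der} and Young's inequality, exactly as in the paper. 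Two cosmetic points only: \((b(f_n))_n\) is not literally dominated in \(L^1(\mm)\) (pass to a subsequence admitting an \(L^p\) majorant of \(\lip_\sfd(f_n)\), or invoke equi-integrability and Dunford--Pettis), and condition ii) of Definition \ref{def:Sobolev_space_via_der} follows simply from \((hb)(f_n)=h\,b(f_n)\) together with the weak \(L^1\) continuity of multiplication by the bounded function \(h\), not from the Leibniz rule you cite.
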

\begin{proof}
Fix a non-principal ultrafilter \(\omega\) on \(\N\). Let \(f\in H^{1,p}(\mathbb X)\) be a given function.
Take a sequence \((f_n)_n\subseteq\Lip_b(X,\tau,\sfd)\) such that \(f_n\to f\) and \(\lip_\sfd(f_n)\to|Df|_H\)
strongly in \(L^p(\mm)\). Up to passing to a non-relabelled subsequence, we can also assume that there exists a function
\(h\in L^p(\mm)^+\) such that \(\lip_\sfd(f_n)\leq h\) holds \(\mm\)-a.e.\ for every \(n\in\N\). In particular,
\(|b(f_n)|\leq|b|h\in L^1(\mm)\) holds for every \(b\in{\rm Der}^q_q(\mathbb X)\) and \(n\in\N\). Therefore,
by virtue of Lemma \ref{lem:lsc_ultralim} the following map is well defined:
\[
L_f(b)\coloneqq\omega\text{-}\lim_n b(f_n)\in L^1(\mm)\quad\text{ for every }b\in{\rm Der}^q_q(\mathbb X),
\]
where the ultralimit is intended with respect to the weak topology of \(L^1(\mm)\). Moreover:
\begin{itemize}
\item Fix \(\lambda_1,\lambda_2\in\R\) and \(b_1,b_2\in{\rm Der}^q_q(\mathbb X)\). Since
\(L^1(\mm)\times L^1(\mm)\ni(g_1,g_2)\mapsto\lambda_1 g_1+\lambda_2 g_2\in L^1(\mm)\) is continuous if the
domain is endowed with the product of the weak topologies and the codomain with the weak topology, by applying
Lemma \ref{lem:cont_ultralim} we obtain that
\[\begin{split}
L_f(\lambda_1 b_1+\lambda_2 b_2)&=\omega\text{-}\lim_n\big(\lambda_1\,b_1(f_n)+\lambda_2\,b_2(f_n)\big)\\
&=\lambda_1\big(\omega\text{-}\lim_n b_1(f_n)\big)+\lambda_2\big(\omega\text{-}\lim_n b_2(f_n)\big)
=\lambda_1 L_f(b_1)+\lambda_2 L_f(b_2).
\end{split}\]
This proves that \(L_f\colon{\rm Der}^q_q(\mathbb X)\to L^1(\mm)\) is a linear operator.
\item Fix \(b\in{\rm Der}^q_q(\mathbb X)\). Lemma \ref{lem:lsc_ultralim} and the weak continuity of \(L^p(\mm)\ni g\mapsto|b|g\in L^1(\mm)\) yield
\[
|L_f(b)|=\big|\omega\text{-}\lim_n b(f_n)\big|\leq\omega\text{-}\lim_n|b(f_n)|\leq\omega\text{-}\lim_n\big(|b|\,\lip_\sfd(f_n)\big)=|b||Df|_H.
\]
\item Fix \(b\in{\rm Der}^q_q(\mathbb X)\) and \(h\in\Lip_b(X,\tau,\sfd)\). Then Lemma \ref{lem:cont_ultralim} implies that
\[
L_f(hb)=\omega\text{-}\lim_n\big(h\,b(f_n)\big)=h\big(\omega\text{-}\lim_n b(f_n)\big)=h\,L_f(b).
\]
\item Since \(L^1(\mm)\ni g\mapsto\int g\,\d\mm\in\R\) is weakly continuous and \(L^p(\mm)\ni\tilde f\mapsto\int\tilde f\,{\rm div}(b)\,\d\mm\in\R\)
is strongly continuous for every \(b\in{\rm Der}^q_q(\mathbb X)\), by applying Lemma \ref{lem:cont_ultralim} we obtain that
\[
\int L_f(b)\,\d\mm=\omega\text{-}\lim_n\int b(f_n)\,\d\mm=-\omega\text{-}\lim_n\int f_n{\rm div}(b)\,\d\mm=-\int f\,{\rm div}(b)\,\d\mm.
\]
\end{itemize}
All in all, we showed that \(L_f\) verifies the conditions of Definition \ref{def:Sobolev_space_via_der}
and that \(|L_f(b)|\leq|Df|_H|b|\) holds for every \(b\in{\rm Der}^q_q(\mathbb X)\). Consequently, we can conclude
that \(f\in W^{1,p}(\mathbb X)\) and \(|Df|\leq|Df|_H\).

Conversely, let \(f\in W^{1,p}(\mathbb X)\) be given. Since \(\mathcal E_p\) is convex and \(L^p(\mm)\)-lower semicontinuous,
we have that \(\mathcal E_p=\mathcal E_p^{**}\) by the Fenchel--Moreau theorem. Note also that
\(\mathcal E_p=\frac{1}{p}\|\cdot\|_{L^p(T^*\mathbb X)}^p\circ\d\). Therefore, by applying Theorem \ref{thm:Fenchel_comp},
\eqref{eq:conj_norm}, Lemma \ref{lem:vf_induces_der} and Young's inequality, we obtain that
\[\begin{split}
\mathcal E_p(f)&=\mathcal E_p^{**}(f)=\sup_{g\in L^q(\mm)}\bigg(\int gf\,\d\mm-\mathcal E_p^*(g)\bigg)
=\sup_{g\in L^q(\mm)}\bigg(\int gf\,\d\mm-\bigg(\frac{1}{p}\|\cdot\|_{L^p(T^*\mathbb X)}^p\circ\d\bigg)^*(g)\bigg)\\
&=\sup_{g\in L^q(\mm)}\bigg(\int gf\,\d\mm-\inf\bigg\{\frac{1}{q}\|V\|_{L^p(T^*\mathbb X)'}^q\;\bigg|
\;V\in D(\d^*),\,\d^*V=g\bigg\}\bigg)\\
&\leq\sup_{g\in L^q(\mm)}\bigg(\int gf\,\d\mm-\inf\bigg\{\frac{1}{q}\|b\|_{{\rm Der}^q(\mathbb X)}^q\;\bigg|
\;b\in{\rm Der}^q_q(\mathbb X),\,-\div(b)=g\bigg\}\bigg)\\
&=\sup_{b\in{\rm Der}^q_q(\mathbb X)}\bigg(-\int f\,\div(b)\,\d\mm-\frac{1}{q}\|b\|_{{\rm Der}^q(\mathbb X)}^q\bigg)
=\sup_{b\in{\rm Der}^q_q(\mathbb X)}\int L_f(b)-\frac{1}{q}|b|^q\,\d\mm\\
&\leq\sup_{b\in{\rm Der}^q_q(\mathbb X)}\int|Df||b|-\frac{1}{q}|b|^q\,\d\mm
\leq\frac{1}{p}\int|Df|^p\,\d\mm<+\infty.
\end{split}\]
It follows that \(f\in H^{1,p}(\mathbb X)\) and \(\int|Df|_H^p\,\d\mm=p\,\mathcal E_p(f)\leq\int|Df|^p\,\d\mm\).
Since we also know from the first part of the proof that \(|Df|\leq|Df|_H\), we can finally conclude that
\(W^{1,p}(\mathbb X)=H^{1,p}(\mathbb X)\) and \(|Df|_H=|Df|\) for every \(f\in W^{1,p}(\mathbb X)\), thus proving the statement.
\end{proof}
\begin{example}[Derivations on abstract Wiener spaces]\label{ex:der_Wiener}{\rm
Let \(\mathbb X_\gamma\coloneqq(X,\tau,\sfd,\gamma)\) be the e.m.t.m.\ space obtained by equipping an abstract Wiener space \((X,\gamma)\) with
the norm topology \(\tau\) of \(X\) and with the extended distance \(\sfd\) induced by its Cameron--Martin space; see Section \ref{s:examples_emtms}.
We claim that the space \(\mathbb X_\gamma\) is `purely non-\(\sfd\)-separable', meaning that
\[
\gamma({\rm S}_{\mathbb X_\gamma})=0.
\]
To prove it, we denote by \((H(\gamma),|\cdot|_{H(\gamma)})\) the Cameron--Martin space of \((X,\gamma)\). We recall that
\[
\sfd(x,y)=\left\{\begin{array}{ll}
|x-y|_{H(\gamma)}\\
+\infty
\end{array}\quad\begin{array}{ll}
\text{ if }x,y\in X\text{ and }x-y\in H(\gamma),\\
\text{ if }x,y\in X\text{ and }x-y\notin H(\gamma),
\end{array}\right.
\]
and that \(\gamma(x+H(\gamma))=0\) for every \(x\in X\); see \cite{Bogachev15}. Hence, if \(E\in\mathscr B(X,\tau)\) is a given \(\sfd\)-separable subset of \(X\)
and \((x_n)_n\) is a \(\sfd\)-dense sequence in \(E\), then \(E\subseteq\bigcup_{n\in\N}B^\sfd_1(x_n)\subseteq\bigcup_{n\in\N}(x_n+H(\gamma))\) and thus accordingly
\(\gamma(E)\leq\sum_{n\in\N}\gamma(x_n+H(\gamma))=0\), whence it follows that \(\gamma({\rm S}_{\mathbb X_\gamma})=0\).

By taking Corollary \ref{cor:w*_cont_trivial} i) into account, we deduce that the unique weakly\(^*\)-type continuous derivation on \(\mathbb X_\gamma\)
is the null derivation. On the other hand, we know from \cite[Example 5.3.14]{Savare22} that \(H^{1,p}(\mathbb X_\gamma)\) coincides with the usual Sobolev
space on \(\mathbb X_\gamma\) defined as the completion of \emph{cylindrical functions} \cite{Bogachev15}. In particular, the identity
\(W^{1,p}(\mathbb X_\gamma)=H^{1,p}(\mathbb X_\gamma)\) we proved in Theorem \ref{thm:H=W} guarantees the existence of (many) non-null Di Marino derivations
with divergence, and thus (by Lemma \ref{lem:suff_cond_wstar_seq_cont}) of non-null weakly\(^*\)-type sequentially continuous derivations.
\fr}\end{example}
\subsection{The equivalence \texorpdfstring{\(W^{1,p}=B^{1,p}\)}{W=B}}\label{s:W=B}
In this section, we investigate the relation between the spaces \(W^{1,p}(\mathbb X)\) and \(B^{1,p}(\mathbb X)\).
By combining Theorem \ref{thm:H=W} with Theorem \ref{thm:H=B}, we see that a sufficient condition for the identity \(W^{1,p}(\mathbb X)=B^{1,p}(\mathbb X)\)
to hold is the completeness of the extended metric space \((X,\sfd)\):
\begin{corollary}[\(W^{1,p}=B^{1,p}\) on complete e.m.t.m.\ spaces]\label{cor:W=B}
Let \(\mathbb X=(X,\tau,\sfd,\mm)\) be an e.m.t.m.\ space such that \((X,\sfd)\) is a complete extended metric space.
Let \(p\in(1,\infty)\) be given. Then
\[
W^{1,p}(\mathbb X)=B^{1,p}(\mathbb X).
\]
Moreover, it holds that \(|Df|_B=|Df|\) for every \(f\in W^{1,p}(\mathbb X)\).
\end{corollary}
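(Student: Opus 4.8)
The plan is simply to chain together the two equivalence results already at our disposal, so the argument is essentially a one-line deduction. First I would invoke Theorem \ref{thm:H=W}, which holds on an \emph{arbitrary} e.m.t.m.\ space and requires no completeness hypothesis, to obtain the identification $W^{1,p}(\mathbb X)=H^{1,p}(\mathbb X)$ as subspaces of $L^p(\mm)$, together with the pointwise-a.e.\ equality $|Df|=|Df|_H$ for every $f$ in this common space.

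Next I would use the hypothesis that $(X,\sfd)$ is a complete extended metric space to apply Theorem \ref{thm:H=B} (Savar\'{e}'s equivalence $H^{1,p}=B^{1,p}$ on complete e.m.t.m.\ spaces), which yields $H^{1,p}(\mathbb X)=B^{1,p}(\mathbb X)$ and $|Df|_H=|Df|_B$ for every $f$ in this space. This is the only point at which the completeness assumption enters the proof.

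Combining the two identifications gives $W^{1,p}(\mathbb X)=H^{1,p}(\mathbb X)=B^{1,p}(\mathbb X)$, and for any $f$ in this common space one reads off $|Df|=|Df|_H=|Df|_B$ in the $\mm$-a.e.\ sense, which is exactly the assertion of the corollary. I do not expect any genuine obstacle here: all the substantive work has been done in Theorems \ref{thm:H=W} and \ref{thm:H=B}, and the completeness hypothesis is \emph{necessary} in the statement precisely because it is necessary in Theorem \ref{thm:H=B} (cf.\ the counterexample $(-1,1)\setminus\{0\}$ discussed after that theorem, where $\1_{(0,1)}$ lies in $B^{1,p}$, hence in $W^{1,p}$ by the inclusion $W^{1,p}\subseteq B^{1,p}$ of Theorem \ref{thm:W_in_B}, but not in $H^{1,p}$).
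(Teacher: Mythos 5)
Your proof is correct and is exactly the paper's argument: Theorem \ref{thm:H=W} gives \(W^{1,p}(\mathbb X)=H^{1,p}(\mathbb X)\) with \(|Df|=|Df|_H\) on an arbitrary e.m.t.m.\ space, and the completeness hypothesis enters only through Theorem \ref{thm:H=B}, so chaining the two identifications yields the corollary. Only your closing parenthetical contains a slip that plays no role in the proof: \(\1_{(0,1)}\) does \emph{not} lie in \(W^{1,p}\) (membership in \(B^{1,p}\) together with the inclusion \(W^{1,p}\subseteq B^{1,p}\) cannot yield membership in \(W^{1,p}\); indeed \(\1_{(0,1)}\notin H^{1,p}=W^{1,p}\), which is precisely why that example shows \(B^{1,p}\not\subseteq W^{1,p}\) without completeness).
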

\begin{remark}[Relation with the Newtonian space \(N^{1,p}\)]\label{rmk:Newt}{\rm
The \textbf{Newtonian space} \(N^{1,p}(\mathbb X)\) over an e.m.t.m.\ space \(\mathbb X\) has been introduced by
Savar\'{e} in \cite[Definition 5.1.19]{Savare22}, thus generalising the notion of Newtonian space for metric measure
spaces introduced by Shanmugalingam in \cite{Shanmugalingam00}. It follows from Corollary \ref{cor:W=B} and
\cite[Corollary 5.1.26]{Savare22} that if \(\mathbb X=(X,\tau,\sfd,\mm)\) is an e.m.t.m.\ space such that \((X,\sfd)\)
is complete and \((X,\tau)\) is a \emph{Souslin space} (i.e.\ the continuous image of a complete separable metric
space), then the Sobolev space \(W^{1,p}(\mathbb X)\) is fully consistent with \(N^{1,p}(\mathbb X)\).
\fr}\end{remark}

On an arbitrary e.m.t.m.\ space \(\mathbb X\), it can happen that the spaces \(W^{1,p}(\mathbb X)\) and \(B^{1,p}(\mathbb X)\) are different,
as the example we discussed in the last paragraph of Section \ref{s:def_B} shows. Nevertheless, we are going to show that every \(\mathcal T_q\)-test plan
\(\ppi\) on \(\mathbb X\) induces a Di Marino derivation with divergence \(b_\sppi\in\Der^q_q(\mathbb X)\) (Proposition \ref{prop:der_induced_by_tp}),
and as a corollary we will prove that \(W^{1,p}(\mathbb X)\) is always contained in \(B^{1,p}(\mathbb X)\) and that \(|Df|_B\leq|Df|\) for every
\(f\in W^{1,p}(\mathbb X)\) (Theorem \ref{thm:W_in_B}).
\medskip

For brevity, we denote by \(\mathscr L_1\) the restriction of the \(1\)-dimensional Lebesgue measure \(\mathscr L^1\)
to the unit interval \([0,1]\subseteq\R\). To any given \(\mathcal T_q\)-test plan \(\ppi\in\mathcal T_q(\mathbb X)\), we associate the product measure
\[
\hat\ppi\coloneqq\ppi\otimes\mathscr L_1\in\mathcal M_+({\rm RA}(X,\sfd)\times[0,1]),
\]
where the space \({\rm RA}(X,\sfd)\times[0,1]\) is endowed with the product topology.
\medskip

The next result is inspired by (and generalises) \cite[Proposition 2.4]{DiMar:14} and \cite[Proposition 4.10]{AILP24}.
\begin{proposition}[Derivation induced by a \(\mathcal T_q\)-test plan]\label{prop:der_induced_by_tp}
Let \(\mathbb X=(X,\tau,\sfd,\mm)\) be an e.m.t.m.\ space and \(q\in(1,\infty)\).
Let \(\ppi\in\mathcal T_q(\mathbb X)\) be given. Then for any \(f\in\Lip_b(X,\tau,\sfd)\) we have that
\[
\hat{\sf e}_\#({\rm D}_f^+\hat\ppi),\hat{\sf e}_\#({\rm D}_f^-\hat\ppi)\ll\mm,\qquad b_\sppi(f)\coloneqq\frac{\d\hat{\sf e}_\#({\rm D}_f^+\hat\ppi)}{\d\mm}-\frac{\d\hat{\sf e}_\#({\rm D}_f^-\hat\ppi)}{\d\mm}\in L^q(\mm),
\]
where \(\hat{\sf e}\) denotes the arc-length evaluation map \eqref{eq:def_e}, while \({\rm D}_f^+\) and
\({\rm D}_f^-\) denote the positive and the negative parts, respectively, of the function \({\rm D}_f\) defined
in Lemma \ref{cor:D_f_univ_Lusin_meas}. Moreover, the resulting map \(b_\sppi\colon\Lip_b(X,\tau,\sfd)\to L^q(\mm)\)
belongs to \({\rm Der}^q_q(\mathbb X)\) and it holds that
\begin{equation}\label{eq:formulas_b_sppi}
|b_\sppi|\leq h_\sppi,\qquad\div(b_\sppi)=\frac{\d(\hat{\sf e}_0)_\#\ppi}{\d\mm}-\frac{\d(\hat{\sf e}_1)_\#\ppi}{\d\mm}.
\end{equation}
\end{proposition}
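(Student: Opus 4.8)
The plan is to reduce the whole statement to two facts about the product measure $\hat\ppi=\ppi\otimes\mathscr L_1$: a comparison of push-forward measures, and a duality formula identifying $b_\sppi(f)$. First I would collect the measurability prerequisites. By Lemma~\ref{lem:hat_e_univ_Lusin_meas} the arc-length evaluation map $\hat{\sf e}$ is universally Lusin measurable, by Corollary~\ref{cor:D_f_univ_Lusin_meas} so is ${\rm D}_f$, and $\hat\ppi$ is a finite Radon measure on ${\rm RA}(X,\sfd)\times[0,1]$; hence $g\circ\hat{\sf e}$ is $\hat\ppi$-measurable for every bounded Borel function $g\colon X\to\R$, and Fubini's theorem is available on the product. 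Combining the pointwise bound \eqref{eq:ineq_D_f} with the identity $\int_\gamma\varphi=\ell(\gamma)\int_0^1\varphi(R_\gamma(t))\,\d t$, the definition of the barycenter $\mu_\sppi$ and $\mu_\sppi=h_\sppi\mm$, one obtains
\[
\int|{\rm D}_f|\,\d\hat\ppi\leq\int\Big(\int_\gamma\lip_\sfd(f)\Big)\,\d\ppi(\gamma)=\int\lip_\sfd(f)\,\d\mu_\sppi=\int\lip_\sfd(f)\,h_\sppi\,\d\mm<+\infty .
\]
Thus ${\rm D}_f$ is $\hat\ppi$-integrable, the measures ${\rm D}_f^{\pm}\hat\ppi$ are finite and Radon on the product, and their images under $\hat{\sf e}$ are finite Radon measures on $X$.

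Next I would prove the measure comparison $\hat{\sf e}_\#(|{\rm D}_f|\hat\ppi)\leq\lip_\sfd(f)\,h_\sppi\,\mm$. Indeed, for bounded Borel $g\colon X\to[0,+\infty)$ the left-hand side tested against $g$ equals $\int(g\circ\hat{\sf e})\,|{\rm D}_f|\,\d\hat\ppi$, which by \eqref{eq:ineq_D_f} and the same Fubini/barycenter computation as above is at most $\int g\,\lip_\sfd(f)\,\d\mu_\sppi=\int g\,\lip_\sfd(f)\,h_\sppi\,\d\mm$ (here one uses that $\lip_\sfd(f)$ is $\tau$-upper semicontinuous, hence Borel and bounded, so $g\,\lip_\sfd(f)$ is an admissible test function for $\mu_\sppi$). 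In particular $\hat{\sf e}_\#({\rm D}_f^{\pm}\hat\ppi)\ll\mm$, so that $b_\sppi(f)$ is well defined as the $\mm$-a.e.\ difference of the two Radon--Nikodym densities, it lies in $L^1(\mm)$, and since ${\rm D}_f^{+}+{\rm D}_f^{-}=|{\rm D}_f|$ one gets $|b_\sppi(f)|\leq\tfrac{\d\hat{\sf e}_\#(|{\rm D}_f|\hat\ppi)}{\d\mm}\leq\lip_\sfd(f)\,h_\sppi$ $\mm$-a.e.; as $h_\sppi\in L^q(\mm)$ and $\lip_\sfd(f)\leq\Lip(f,\sfd)$, this yields $b_\sppi(f)\in L^q(\mm)$, and — once linearity and the Leibniz rule are in place — also $b_\sppi\in{\rm Der}^q(\mathbb X)$ with $|b_\sppi|\leq h_\sppi$.

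The algebraic properties follow from the duality formula
\[
\int g\,b_\sppi(f)\,\d\mm=\int(g\circ\hat{\sf e})\,{\rm D}_f\,\d\hat\ppi\qquad\text{for every bounded Borel }g\colon X\to\R,
\]
obtained by subtracting the change-of-variables identities for $\hat{\sf e}_\#({\rm D}_f^{\pm}\hat\ppi)$ and using the $\hat\ppi$-integrability of ${\rm D}_f$. By \eqref{eq:prop_D_f}, ${\rm D}_f$ agrees $\mathscr L^1$-a.e.\ along each $R_\gamma$ with $(f\circ R_\gamma)'$, so the classical calculus rules give ${\rm D}_{f_1+\lambda f_2}={\rm D}_{f_1}+\lambda{\rm D}_{f_2}$ and ${\rm D}_{f_1f_2}=(f_1\circ\hat{\sf e}){\rm D}_{f_2}+(f_2\circ\hat{\sf e}){\rm D}_{f_1}$ $\hat\ppi$-a.e. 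Inserting these into the duality formula, and applying it again with the test functions $g\,f_1$ and $g\,f_2$, one deduces $\int g\,b_\sppi(f_1+\lambda f_2)\,\d\mm=\int g\,(b_\sppi(f_1)+\lambda b_\sppi(f_2))\,\d\mm$ and $\int g\,b_\sppi(f_1f_2)\,\d\mm=\int g\,(f_1\,b_\sppi(f_2)+f_2\,b_\sppi(f_1))\,\d\mm$ for all bounded Borel $g$; since the functions involved are in $L^1(\mm)$, this forces the corresponding $\mm$-a.e.\ identities, i.e.\ $b_\sppi\in{\rm Der}(\mathbb X)$, and hence $b_\sppi\in{\rm Der}^q(\mathbb X)$ with $|b_\sppi|\leq h_\sppi$.

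Finally, taking $g\equiv 1$ in the duality formula and applying the fundamental theorem of calculus to the Lipschitz function $f\circ R_\gamma$ gives
\[
\int b_\sppi(f)\,\d\mm=\int\big(f(\gamma_1)-f(\gamma_0)\big)\,\d\ppi(\gamma)=\int f\,\d(\hat{\sf e}_1)_\#\ppi-\int f\,\d(\hat{\sf e}_0)_\#\ppi ,
\]
so $b_\sppi$ has divergence $\tfrac{\d(\hat{\sf e}_0)_\#\ppi}{\d\mm}-\tfrac{\d(\hat{\sf e}_1)_\#\ppi}{\d\mm}$, which lies in $L^q(\mm)$ by the very definition of a $\mathcal T_q$-test plan; thus $b_\sppi\in{\rm Der}^q_q(\mathbb X)$ and \eqref{eq:formulas_b_sppi} holds. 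I expect the main obstacle to be the careful handling of joint measurability on the product ${\rm RA}(X,\sfd)\times[0,1]$ and the attendant Fubini arguments — in particular verifying that $|{\rm D}_f|\hat\ppi$ is genuinely a Radon measure, so that $\hat{\sf e}_\#(|{\rm D}_f|\hat\ppi)$ is meaningful — together with the push-forward comparison of the second step, which is the analytic heart of the proof.
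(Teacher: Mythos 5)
Your proposal is correct and follows essentially the same route as the paper's proof: the push-forward comparison via the barycenter identity and \eqref{eq:ineq_D_f}, the $\hat\ppi$-a.e.\ calculus rules for ${\rm D}_f$ obtained from \eqref{eq:prop_D_f}, and the fundamental theorem of calculus along $R_\gamma$ for the divergence. The only differences are cosmetic — you bound the single measure $\hat{\sf e}_\#(|{\rm D}_f|\hat\ppi)$ (which gives the sharp bound $|b_\sppi(f)|\leq\lip_\sfd(f)\,h_\sppi$ in one step, where the paper first passes through a factor $2$) and you transfer the algebraic identities through a duality formula against bounded Borel test functions rather than by linearity of the push-forward of measures.
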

\begin{proof}
First of all, observe that \({\rm D}_f^\pm\hat\ppi\) are Radon measures because \({\rm D}_f^\pm\) is Borel \(\hat\ppi\)-measurable
(by Corollary \ref{cor:D_f_univ_Lusin_meas}) and \(\hat\ppi\) is a Radon measure. Since \(\hat{\sf e}\) is universally
Lusin measurable by Lemma \ref{lem:hat_e_univ_Lusin_meas}, we have that \(\hat{\sf e}_\#({\rm D}_f^\pm\hat\ppi)\in\mathcal M_+(X)\).
Given any \(f,g\in\Lip_b(X,\tau,\sfd)\) with \(g\geq 0\), we can estimate
\[\begin{split}
\int g\,\d\hat{\sf e}_\#({\rm D}_f^\pm\hat\ppi)&=\int\!\!\!\int_0^1 g(R_\gamma(t)){\rm D}_f^\pm(\gamma,t)\,\d t\,\d\ppi(\gamma)
\overset{\eqref{eq:ineq_D_f}}\leq\int\!\!\!\int_0^1\ell(\gamma)(g\,\lip_\sfd(f))(R_\gamma(t))\,\d t\,\d\ppi(\gamma)\\
&=\int\bigg(\int_\gamma g\,\lip_\sfd(f)\bigg)\,\d\ppi(\gamma)=\int g\,\lip_\sfd(f)\,\d\mu_\sppi=\int g\,\lip_\sfd(f) h_\sppi\,\d\mm.
\end{split}\]
By the arbitrariness of \(g\), we deduce that \(\hat{\sf e}_\#({\rm D}_f^\pm\hat\ppi)\ll\mm\) and that
\(b_\sppi(f)\coloneqq\frac{\d\hat{\sf e}_\#({\rm D}_f^+\hat\ppi)}{\d\mm}-\frac{\d\hat{\sf e}_\#({\rm D}_f^-\hat\ppi)}{\d\mm}\)
satisfies \(|b_\sppi(f)|\leq 2\,\lip_\sfd(f)h_\sppi\), so that \(b_\sppi(f)\in L^q(\mm)\). By \eqref{eq:prop_D_f},
for every \(f,g\in\Lip_b(X,\tau,\sfd)\), \(\alpha,\beta\in\R\) and \(\gamma\in{\rm RA}(X,\sfd)\) we have that
\[\begin{split}
{\rm D}_{\alpha f+\beta g}(\gamma,t)&=\alpha\,{\rm D}_f(\gamma,t)+\beta\,{\rm D}_g(\gamma,t),\\
{\rm D}_{fg}(\gamma,t)&={\rm D}_f(\gamma,t)g(R_\gamma(t))+{\rm D}_g(\gamma,t)f(R_\gamma(t))
\end{split}\]
hold for \(\mathscr L_1\)-a.e.\ \(t\in[0,1]\). In particular, \({\rm D}_{\alpha f+\beta g}=\alpha\,{\rm D}_f+\beta\,{\rm D}_g\) and
\({\rm D}_{fg}=g\circ\hat{\sf e}\,{\rm D}_f+f\circ\hat{\sf e}\,{\rm D}_g\) are verified in the \(\hat\ppi\)-a.e.\ sense. It follows that
\[\begin{split}
b_\sppi(\alpha f+\beta g)&=\frac{\d\hat{\sf e}_\#((\alpha\,{\rm D}_f+\beta\,{\rm D}_g)\hat\ppi)}{\d\mm}
=\alpha\frac{\d\hat{\sf e}_\#({\rm D}_f\hat\ppi)}{\d\mm}+\beta\frac{\d\hat{\sf e}_\#({\rm D}_g\hat\ppi)}{\d\mm}=\alpha\,b_\sppi(f)+\beta\,b_\sppi(g),\\
b_\sppi(fg)&=\frac{\d\hat{\sf e}_\#((g\circ\hat{\sf e}\,{\rm D}_f)\hat\ppi)}{\d\mm}+\frac{\d\hat{\sf e}_\#((f\circ\hat{\sf e}\,{\rm D}_g)\hat\ppi)}{\d\mm}
=\frac{\d(g\,\hat{\sf e}_\#({\rm D}_f\hat\ppi))}{\d\mm}+\frac{\d(f\,\hat{\sf e}_\#({\rm D}_g\hat\ppi))}{\d\mm}\\
&=b_\sppi(f)g+b_\sppi(g)f.
\end{split}\]
Hence, \(b_\sppi\colon\Lip_b(X,\tau,\sfd)\to L^q(\mm)\) is a linear operator satisfying the Leibniz rule, thus it is a Lipschitz derivation on \(\mathbb X\).
Given any \(f,g\in\Lip_b(X,\tau,\sfd)\) with \(g\geq 0\), we can now estimate
\[\begin{split}
\bigg|\int g\,b_\sppi(f)\,\d\mm\bigg|
&\overset{\phantom{\eqref{eq:ineq_D_f}]}}=\bigg|\int\!\!\!\int_0^1 g(R_\gamma(t)){\rm D}_f(\gamma,t)\,\d t\,\d\ppi(\gamma)\bigg|
\leq\int\!\!\!\int_0^1 g(R_\gamma(t))|{\rm D}_f(\gamma,t)|\,\d t\,\d\ppi(\gamma)\\
&\overset{\eqref{eq:ineq_D_f}}\leq\int\!\!\!\int_0^1\ell(\gamma)(g\,\lip_\sfd(f))(R_\gamma(t))\,\d t\,\d\ppi(\gamma)
=\int g\,\lip_\sfd(f) h_\sppi\,\d\mm,
\end{split}\]
so that \(|b_\sppi(f)|\leq\lip_\sfd(f)h_\sppi\) for every \(f\in\Lip_b(X,\tau,\sfd)\). Therefore, \(b_\sppi\in{\rm Der}^q(\mathbb X)\) and \(|b_\sppi|\leq h_\sppi\).
Moreover, for any \(f\in\Lip_b(X,\tau,\sfd)\) we can compute
\[\begin{split}
\int b_\sppi(f)\,\d\mm&=\int{\rm D}_f\,\d\hat\ppi\overset{\eqref{eq:prop_D_f}}=\int\!\!\!\int_0^1(f\circ R_\gamma)'(t)\,\d t\,\d\ppi(\gamma)
=\int f(\gamma_1)-f(\gamma_0)\,\d\ppi(\gamma)\\
&=-\int f\bigg(\frac{\d(\hat{\sf e}_0)_\#\ppi}{\d\mm}-\frac{\d(\hat{\sf e}_1)_\#\ppi}{\d\mm}\bigg)\,\d\mm,
\end{split}\]
which shows that \(b_\sppi\in{\rm Der}^q_q(\mathbb X)\) and \(\div(b_\sppi)=\frac{\d(\hat{\sf e}_0)_\#\ppi}{\d\mm}-\frac{\d(\hat{\sf e}_1)_\#\ppi}{\d\mm}\). The proof is complete.
\end{proof}

As a consequence of Proposition \ref{prop:der_induced_by_tp}, the space \(W^{1,p}(\mathbb X)\) is always contained in \(B^{1,p}(\mathbb X)\):
\begin{theorem}[\(W^{1,p}\subseteq B^{1,p}\)]\label{thm:W_in_B}
Let \(\mathbb X=(X,\tau,\sfd,\mm)\) be an e.m.t.m.\ space and \(p\in(1,\infty)\). Then
\[
W^{1,p}(\mathbb X)\subseteq B^{1,p}(\mathbb X).
\]
Moreover, it holds that \(|Df|_B\leq|Df|\) for every \(f\in W^{1,p}(\mathbb X)\).
\end{theorem}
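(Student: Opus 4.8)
The plan is to show that every $f\in W^{1,p}(\mathbb X)$ has a $\mathcal T_q$-weak upper gradient in $L^p(\mm)$, with $|Df|$ itself serving as such a gradient; by minimality of $|Df|_B$ this immediately gives $|Df|_B\leq|Df|$ and hence the inclusion. By Lemma \ref{lem:integral_def_B1p}, it suffices to verify the integral form of the upper gradient inequality, namely
\[
\int f(\gamma_1)-f(\gamma_0)\,\d\ppi(\gamma)\leq\int|Df|\,h_\sppi\,\d\mm\quad\text{ for every }\ppi\in\mathcal T_q(\mathbb X).
\]
So fix a $\mathcal T_q$-test plan $\ppi$ and let $b_\sppi\in\Der^q_q(\mathbb X)$ be the derivation it induces via Proposition \ref{prop:der_induced_by_tp}, which satisfies $|b_\sppi|\leq h_\sppi$ and $\div(b_\sppi)=\frac{\d(\hat{\sf e}_0)_\#\ppi}{\d\mm}-\frac{\d(\hat{\sf e}_1)_\#\ppi}{\d\mm}$.

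The key step is then a direct computation: by the integration-by-parts formula in Definition \ref{def:Sobolev_space_via_der} iii) applied to the test derivation $b_\sppi$,
\[
\int L_f(b_\sppi)\,\d\mm=-\int f\,\div(b_\sppi)\,\d\mm
=\int f\bigg(\frac{\d(\hat{\sf e}_1)_\#\ppi}{\d\mm}-\frac{\d(\hat{\sf e}_0)_\#\ppi}{\d\mm}\bigg)\,\d\mm
=\int f(\gamma_1)-f(\gamma_0)\,\d\ppi(\gamma),
\]
where the last equality uses the definitions of pushforward and of $\hat{\sf e}_0,\hat{\sf e}_1$ (recall $f\in L^p(\mm)$ and the marginals of $\ppi$ under $\hat{\sf e}_0,\hat{\sf e}_1$ have $L^q(\mm)$ densities, so the integrals make sense). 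On the other hand, by Definition \ref{def:Sobolev_space_via_der} i) and the bound $|b_\sppi|\leq h_\sppi$ we have $|L_f(b_\sppi)|\leq|Df|\,|b_\sppi|\leq|Df|\,h_\sppi$ in the $\mm$-a.e.\ sense. Combining,
\[
\int f(\gamma_1)-f(\gamma_0)\,\d\ppi(\gamma)=\int L_f(b_\sppi)\,\d\mm\leq\int|Df|\,h_\sppi\,\d\mm,
\]
which is exactly the inequality required by Lemma \ref{lem:integral_def_B1p}. Since $|Df|\in L^p(\mm)^+$ by definition of $W^{1,p}(\mathbb X)$, we conclude that $|Df|$ (or rather a $\tau$-Borel representative of it) is a $\mathcal T_q$-weak upper gradient of $f$, whence $f\in B^{1,p}(\mathbb X)$ and $|Df|_B\leq|Df|$ $\mm$-a.e.

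I do not expect a genuine obstacle here: the heavy lifting has already been done in Proposition \ref{prop:der_induced_by_tp} (constructing $b_\sppi$ and computing its divergence and pointwise norm) and in Lemma \ref{lem:integral_def_B1p} (reducing the weak upper gradient inequality to its integrated form). The only minor care needed is bookkeeping: to apply Lemma \ref{lem:integral_def_B1p} one should pass to a $\tau$-Borel representative $G$ of $|Df|$, and one should note that the a.e.\ bound $|L_f(b_\sppi)|\leq|Df|\,h_\sppi$ together with $h_\sppi\in L^q(\mm)$ and $|Df|\in L^p(\mm)$ guarantees all the integrals above are finite and that the manipulations (in particular rewriting $-\int f\,\div(b_\sppi)\,\d\mm$ as an integral against $\ppi$) are legitimate. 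The statement about $|Df|_B\leq|Df|$ for all $f\in W^{1,p}(\mathbb X)$ then follows from the minimality property i) of the minimal $\mathcal T_q$-weak upper gradient recalled after Lemma \ref{lem:integral_def_B1p}.
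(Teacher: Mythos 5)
Your proposal is correct and follows essentially the same route as the paper's proof: fix a $\mathcal T_q$-test plan $\ppi$, use the derivation $b_\sppi$ from Proposition \ref{prop:der_induced_by_tp} together with the integration-by-parts formula and the bounds $|L_f(b_\sppi)|\leq|Df||b_\sppi|\leq|Df|h_\sppi$, and conclude via Lemma \ref{lem:integral_def_B1p} applied to a $\tau$-Borel representative of $|Df|$. No gaps; this matches the paper's argument step for step.
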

\begin{proof}
Let \(f\in W^{1,p}(\mathbb X)\) be given. Fix some \(\tau\)-Borel representative \(G_f\colon X\to[0,+\infty)\) of \(|Df|\). For any \(\ppi\in\mathcal T_q(\mathbb X)\) (where \(q\in(1,\infty)\)
denotes the conjugate exponent of \(p\)), the derivation \(b_\sppi\in{\rm Der}^q_q(\mathbb X)\) given by Proposition \ref{prop:der_induced_by_tp} satisfies
\[
\int f(\gamma_1)-f(\gamma_0)\,\d\ppi(\gamma)\overset{\eqref{eq:formulas_b_sppi}}=-\int f\,\div(b_\sppi)\,\d\mm=\int L_f(b_\sppi)\,\d\mm\leq\int|Df||b_\sppi|\,\d\mm
\overset{\eqref{eq:formulas_b_sppi}}\leq\int G_f\,h_\sppi\,\d\mm.
\]
By virtue of Lemma \ref{lem:integral_def_B1p}, we deduce that \(G_f\) is a \(\mathcal T_q\)-weak upper gradient of \(f\). Therefore, we proved that
\(f\in B^{1,p}(\mathbb X)\) and \(|Df|_B\leq|Df|\), whence the statement follows.
\end{proof}
\subsection{\texorpdfstring{\(W^{1,p}\)}{W1p} as a dual space}\label{s:predual_W1p}
In this section, our aim is to provide a new description of some \emph{isometric predual} of the metric Sobolev space, and
the formulation of Sobolev space in terms of derivations serves this purpose very well. More precisely, in Theorem \ref{thm:predual_W1p}
we give an explicit construction of a Banach space whose dual is isometrically isomorphic to \(W^{1,p}(\mathbb X)\). The existence
and the construction of an isometric predual of the space \(H^{1,p}(\mathbb X)\) were previously obtained by Ambrosio and Savar\'{e}
in \cite[Corollary 3.10]{Amb:Sav:21}.
\medskip

In the proof of Theorem \ref{thm:predual_W1p}, we use some facts in Functional Analysis that we collect below:
\begin{itemize}
\item If \(\mathbb B\), \(\mathbb V\) are Banach spaces and \(q\in(1,\infty)\), the product vector space \(\mathbb B\times\mathbb V\)
is a Banach space if endowed with the \emph{\(q\)-norm}
\[
\|(v,w)\|_q\coloneqq\big(\|v\|_{\mathbb B}^q+\|w\|_{\mathbb V}^q\big)^{1/q}\quad\text{ for every }(v,w)\in\mathbb B\times\mathbb V. 
\]
We write \(\mathbb B\times_q\mathbb V\) to indicate the Banach space \((\mathbb B\times\mathbb V,\|\cdot\|_q)\).
\item If \(p,q\in(1,\infty)\) are conjugate exponents, then \((\mathbb B\times_q\mathbb V)'\) and \(\mathbb B'\times_p\mathbb V'\)
are isometrically isomorphic. The canonical duality pairing between \(\mathbb B'\times_p\mathbb V'\) and \(\mathbb B\times_q\mathbb V\) is given by
\[
\langle(\omega,\eta),(v,w)\rangle=\langle\omega,v\rangle+\langle\eta,w\rangle\quad\text{ for every }(\omega,\eta)\in\mathbb B'\times\mathbb V'
\text{ and }(v,w)\in\mathbb B\times\mathbb V.
\]
\item The \textbf{annihilator} \(\mathbb W^\perp\) of a closed vector subspace \(\mathbb W\) of \(\mathbb B\) is defined as
\[
\mathbb W^\perp\coloneqq\big\{\omega\in\mathbb B'\;\big|\;\langle\omega,v\rangle=0\text{ for every }v\in\mathbb W\big\}.
\]
Then \(\mathbb W^\perp\) is a closed vector subspace of \(\mathbb B'\). Moreover, \(\mathbb W^\perp\) is isometrically isomorphic to the dual
\((\mathbb B/\mathbb W)'\) of the quotient Banach space \(\mathbb B/\mathbb W\).
\end{itemize}
\begin{theorem}[A predual of \(W^{1,p}\)]\label{thm:predual_W1p}
Let \(\mathbb X=(X,\tau,\sfd,\mm)\) be an e.m.t.m.\ space. Let \(p,q\in(1,\infty)\) be conjugate exponents.
We define the closed vector subspace \(\mathbb B_{\mathbb X,q}\) of \(L^q(\mm)\times_q L^q_\Lip(T\mathbb X)\) as the closure of its vector subspace
\[
\big\{(g,b)\in L^q(\mm)\times{\rm Der}^q_q(\mathbb X)\;\big|\;g=\div(b)\big\}.
\]
Then \(W^{1,p}(\mathbb X)\) is isometrically isomorphic to the dual of the quotient \((L^q(\mm)\times_q L^q_\Lip(T\mathbb X))/\mathbb B_{\mathbb X,q}\).
\end{theorem}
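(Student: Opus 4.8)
The strategy is to identify $W^{1,p}(\mathbb X)$ with the annihilator $\mathbb B_{\mathbb X,q}^\perp$ inside $(L^q(\mm)\times_q L^q_\Lip(T\mathbb X))' = L^p(\mm)\times_p L^q_\Lip(T\mathbb X)^*$, and then invoke the last bullet point from the functional-analytic preliminaries, namely that $\mathbb W^\perp$ is isometrically isomorphic to $(\mathbb B/\mathbb W)'$, applied with $\mathbb B = L^q(\mm)\times_q L^q_\Lip(T\mathbb X)$ and $\mathbb W = \mathbb B_{\mathbb X,q}$. Recall that, by the discussion after Definition \ref{def:Sobolev_space_via_der}, every $f\in W^{1,p}(\mathbb X)$ induces an element $L_f\in L^q_\Lip(T\mathbb X)^*$ with $|L_f|=|Df|$, so it is natural to send $f\mapsto(f,-L_f)\in L^p(\mm)\times_p L^q_\Lip(T\mathbb X)^*$. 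The integration-by-parts condition iii) in Definition \ref{def:Sobolev_space_via_der} says exactly that $\int f\,g\,\d\mm - \int L_f(b)\,\d\mm = 0$ whenever $(g,b)$ lies in the defining subspace of $\mathbb B_{\mathbb X,q}$ (i.e.\ $g=\div(b)$), which under the pairing $\langle(\omega,\eta),(g,b)\rangle = \int \omega g\,\d\mm + \langle\eta,b\rangle$ (recalling $L^q_\Lip(T\mathbb X)^* = L^q_\Lip(T\mathbb X)'$ via $\textsc{Int}$) is precisely the statement that $(f,-L_f)\in\mathbb B_{\mathbb X,q}^\perp$; by density the pairing vanishes on all of $\mathbb B_{\mathbb X,q}$.

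First I would set up the map $\Psi\colon W^{1,p}(\mathbb X)\to\mathbb B_{\mathbb X,q}^\perp$, $\Psi(f) := (f,-L_f)$, and check it is well defined, linear and injective (injectivity is immediate from the first coordinate). Linearity follows from the uniqueness and linearity of $f\mapsto L_f$ noted after the definition. The membership in $\mathbb B_{\mathbb X,q}^\perp$ is as described above. Next I would verify surjectivity: given any $(\omega,\eta)\in\mathbb B_{\mathbb X,q}^\perp$ with $\omega\in L^p(\mm)$ and $\eta\in L^q_\Lip(T\mathbb X)^*$, set $f:=\omega$ and define $L_f:=-\eta|_{\Der^q_q(\mathbb X)}\colon\Der^q_q(\mathbb X)\to L^1(\mm)$. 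Condition i) of Definition \ref{def:Sobolev_space_via_der} holds with $g:=|\eta|\in L^q(\mm)^+$ since $\eta\in L^q_\Lip(T\mathbb X)^*$; condition ii) ($L_f(hb)=h\,L_f(b)$ for $h\in\Lip_b(X,\tau,\sfd)$) holds because $\eta$ is $L^\infty(\mm)$-linear as an element of the module dual (using that $L^q_\Lip(T\mathbb X)$ is a $\Lip_b$-submodule and $\Der^q_q(\mathbb X)$ is closed under multiplication by $\Lip_b$-functions); condition iii) is exactly the vanishing of the pairing against $(\div(b),b)$. Hence $f\in W^{1,p}(\mathbb X)$ with that $L_f$, and $\Psi(f)=(\omega,\eta)$, provided one also checks $\eta = -L_f$ as an element of $L^q_\Lip(T\mathbb X)^*$, which follows since $\Der^q_q(\mathbb X)$ is dense in $L^q_\Lip(T\mathbb X)$ by definition \eqref{eq:def_Lip_tg_mod} and both sides are continuous.

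The remaining point is that $\Psi$ is isometric, i.e.\ $\|f\|_{W^{1,p}(\mathbb X)} = \|(f,-L_f)\|_p = (\|f\|_{L^p(\mm)}^p + \||L_f|\|_{L^q(\mm)... }^p)^{1/p}$; here one uses that the $p$-norm on $L^p(\mm)\times_p L^q_\Lip(T\mathbb X)^*$ of $(f,-L_f)$ is $(\|f\|_{L^p(\mm)}^p + \|L_f\|_{L^q_\Lip(T\mathbb X)^*}^p)^{1/p}$ and that $\|L_f\|_{L^q_\Lip(T\mathbb X)^*} = \||L_f|\|_{L^p(\mm)} = \||Df|\|_{L^p(\mm)}$ because $\textsc{Int}$ is an isometric isomorphism and $|L_f| = |Df|$. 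This matches $\|f\|_{W^{1,p}(\mathbb X)}$ verbatim. Combining, $\Psi$ is an isometric isomorphism of $W^{1,p}(\mathbb X)$ onto $\mathbb B_{\mathbb X,q}^\perp \cong (L^q(\mm)\times_q L^q_\Lip(T\mathbb X))/\mathbb B_{\mathbb X,q})'$, which is the assertion.

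**Main obstacle.** The delicate step is the surjectivity argument, specifically verifying that an arbitrary $\eta\in L^q_\Lip(T\mathbb X)^*$ annihilating $\mathbb B_{\mathbb X,q}$ restricts to an operator on $\Der^q_q(\mathbb X)$ satisfying the algebraic condition ii) and that the resulting $L_f$ genuinely recovers $\eta$ on all of $L^q_\Lip(T\mathbb X)$ — this hinges on the density \eqref{eq:def_Lip_tg_mod}, on the $\Lip_b(X,\tau,\sfd)$-module structure of $\Der^q_q(\mathbb X)$ (so that $hb\in\Der^q_q(\mathbb X)$ and condition iii) can be applied to $hb$), and on correctly matching the module pairing with the Banach-space pairing. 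Care is also needed to ensure the first coordinate of an element of $\mathbb B_{\mathbb X,q}^\perp$ lies in $L^p(\mm)$ (rather than in some larger dual), which is automatic from $(L^q(\mm)\times_q \cdot)' = L^p(\mm)\times_p \cdot$.
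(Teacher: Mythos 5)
Your proof follows exactly the paper's route: embed \(W^{1,p}(\mathbb X)\) into \(L^p(\mm)\times_p L^q_\Lip(T\mathbb X)'\) via \(f\mapsto(f,\textsc{Int}_{L^q_\Lip(T\mathbb X)}(L_f))\), identify the image with the annihilator \(\mathbb B_{\mathbb X,q}^\perp\) (using the \(L^\infty(\mm)\)-linearity of elements of the module dual for condition ii), the vanishing of the pairing against \((\div(b),b)\) for condition iii), and the density in \eqref{eq:def_Lip_tg_mod} to recover \(\eta\)), and conclude with \(\mathbb W^\perp\cong(\mathbb B/\mathbb W)'\); the isometry is obtained, as in the paper, from \(\|L_f\|_{L^q_\Lip(T\mathbb X)^*}=\||Df|\|_{L^p(\mm)}\). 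The one flaw is a sign slip: the integration-by-parts formula in Definition \ref{def:Sobolev_space_via_der} iii) reads \(\int L_f(b)\,\d\mm=-\int f\,\div(b)\,\d\mm\), not \(\int L_f(b)\,\d\mm=\int f\,\div(b)\,\d\mm\), so the pairing of your \((f,-L_f)\) with \((\div(b),b)\) equals \(2\int f\,\div(b)\,\d\mm\), which need not vanish; the correct embedding is \(f\mapsto(f,+L_f)\), and correspondingly in the surjectivity step one sets \(L_f:=\eta|_{\Der^q_q(\mathbb X)}\) without the minus sign. Once these compensating minus signs are removed the argument coincides with the paper's proof; also note that the dominating function in the surjectivity step is \(|\eta|\in L^p(\mm)^+\) (the pointwise norm of an element of the dual of an \(L^q(\mm)\)-normed module), consistent with the definition of \(|Df|\), rather than an element of \(L^q(\mm)^+\).
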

\begin{proof}
For any \(f\in W^{1,p}(\mathbb X)\), we define \(\mathfrak L_f\coloneqq\textsc{Int}_{L^q_\Lip(T\mathbb X)}(L_f)\in L^q_\Lip(T\mathbb X)'\),
so that accordingly
\begin{equation}\label{eq:predual_W1p_aux1}
\|\mathfrak L_f\|_{L^q_\Lip(T\mathbb X)'}=\|L_f\|_{L^q_\Lip(T\mathbb X)^*}=\||L_f|\|_{L^p(\mm)}=\||Df|\|_{L^p(\mm)}.
\end{equation}
Clearly, \(W^{1,p}(\mathbb X)\ni f\mapsto\mathfrak L_f\in L^q_\Lip(T\mathbb X)'\) is linear.
Define \(\phi\colon W^{1,p}(\mathbb X)\to L^p(\mm)\times_p L^q_\Lip(T\mathbb X)'\) as
\[
\phi(f)\coloneqq(f,\mathfrak L_f)\in L^p(\mm)\times L^q_\Lip(T\mathbb X)'\quad\text{ for every }f\in W^{1,p}(\mathbb X).
\]
It follows from \eqref{eq:predual_W1p_aux1} and the definition of \(\|\cdot\|_{W^{1,p}(\mathbb X)}\) that
\(\phi\) is a linear isometry. We claim that
\begin{equation}\label{eq:predual_W1p_aux2}
\phi(W^{1,p}(\mathbb X))=\mathbb B_{\mathbb X,q}^\perp,
\end{equation}
where we are identifying \(\mathbb B_{\mathbb X,q}^\perp\subseteq(L^q(\mm)\times_q L^q_\Lip(T\mathbb X))'\) with a subspace of \(L^p(\mm)\times_p L^q_\Lip(T\mathbb X)'\).
To prove \(\phi(W^{1,p}(\mathbb X))\subseteq\mathbb B_{\mathbb X,q}^\perp\), it suffices to observe that for any \(f\in W^{1,p}(\mathbb X)\) and \(b\in{\rm Der}^q_q(\mathbb X)\) it holds
\[
\langle\phi(f),(\div(b),b)\rangle=\langle f,\div(b)\rangle+\mathfrak L_f(b)=\int f\,\div(b)\,\d\mm+\int L_f(b)\,\d\mm=0.
\]
We now prove the converse inclusion \(\mathbb B_{\mathbb X,q}^\perp\subseteq\phi(W^{1,p}(\mathbb X))\). Fix
\((f,\mathfrak L)\in\mathbb B_{\mathbb X,q}^\perp\subseteq L^p(\mm)\times_p L^q_\Lip(T\mathbb X)'\). Letting
\(L\coloneqq\textsc{Int}_{L^q_\Lip(T\mathbb X)}^{-1}(\mathfrak L)\in L^q_\Lip(T\mathbb X)^*\), we have in particular
that \(L|_{\Der^q_q(\mathbb X)}\colon\Der^q_q(\mathbb X)\to L^1(\mm)\) is a linear operator satisfying \(|L(b)|\leq|L||b|\)
for every \(b\in\Der^q_q(\mathbb X)\), for some function \(|L|\in L^p(\mm)^+\) such that
\(\||L|\|_{L^p(\mm)}=\|\mathfrak L\|_{L^q_\Lip(T\mathbb X)'}\). Moreover, the \(L^\infty(\mm)\)-linearity of \(L\)
implies \(L(hb)=h\,L(b)\) for every \(h\in\Lip_b(X,\tau,\sfd)\) and \(b\in\Der^q_q(\mathbb X)\),
and using that \((\div(b),b)\in\mathbb B_{\mathbb X,q}\) we deduce that
\[
\int f\,\div(b)\,\d\mm+\int L(b)\,\d\mm=\langle f,\div(b)\rangle+\mathfrak L(b)=\langle(f,\mathfrak L),(\div(b),b)\rangle=0,
\]
so that \(\int L(b)\,\d\mm=-\int f\,\div(b)\,\d\mm\). All in all, we proved that \(f\in W^{1,p}(\mathbb X)\)
and \(L_f=L\), which gives \((f,\mathfrak L)=(f,\mathfrak L_f)=\phi(f)\in\phi(W^{1,p}(\mathbb X))\).
Consequently, the claimed identity \eqref{eq:predual_W1p_aux2} is proved.
Writing \(\cong\) to indicate that two Banach spaces are isometrically isomorphic, we then conclude that
\[
W^{1,p}(\mathbb X)\cong\phi(W^{1,p}(\mathbb X))\cong\mathbb B_{\mathbb X,q}^\perp\cong
\big((L^q(\mm)\times_q L^q_\Lip(T\mathbb X))/\mathbb B_{\mathbb X,q}\big)',
\]
proving the statement.
\end{proof}
\appendix
\section{Ultrafilters and ultralimits}\label{app:ultralim}
We collect here some definitions and results concerning ultrafilters and ultralimits, which we use in the proof of Theorem \ref{thm:H=W}.
See e.g.\ \cite{Jech78} or \cite[Chapter 10]{DrutuKapovich18} for more on these topics.
\medskip

Let \(\omega\) be a \textbf{filter} on \(\N\), i.e.\ a collection of subsets of \(\N\) that is closed under supersets and finite intersections.
Then we say that \(\omega\) is an \textbf{ultrafilter} provided it is a maximal filter with respect to inclusion, or equivalently if for any
subset \(A\subseteq\N\) we have that either \(A\in\omega\) or \(\N\setminus A\in\omega\). Moreover, we say that \(\omega\) is \textbf{non-principal}
provided it does not contain any finite subset of \(\N\). The existence of non-principal ultrafilters on \(\N\) follows e.g.\ from the so-called
\emph{Ultrafilter Lemma} \cite[Lemma 10.18]{DrutuKapovich18}, which is (in ZF) strictly weaker than the Axiom of Choice \cite{Tarski30,Halpern64}.
It holds that an ultrafilter \(\omega\) on \(\N\) is non-principal if and only if it contains the \emph{Fr\'{e}chet filter} (i.e.\ the
collection of all cofinite subsets of \(\N\)).
\medskip

Let \(\omega\) be a non-principal ultrafilter on \(\N\), \((X,\tau)\) a Hausdorff topological space and \((x_n)_{n\in\N}\subseteq X\)
a given sequence. Then we say that an element \(\omega\text{-}\lim_n x_n\in X\) is the \textbf{ultralimit} of \((x_n)_n\) provided
\[
\{n\in\N\;|\;x_n\in U\}\in\omega\quad\text{ for every }U\in\tau\text{ with }\omega\text{-}\lim_n x_n\in U.
\]
The Hausdorff assumption on \(\tau\) ensures that if the ultralimit exists, then it is unique. The existence of the ultralimits of all sequences
in \((X,\tau)\) is guaranteed when the topology \(\tau\) is compact.
\medskip

We now discuss technical results about ultralimits, which we prove for the reader's convenience.
\begin{lemma}\label{lem:cont_ultralim}
Let \(\omega\) be a non-principal ultrafilter on \(\N\). Let \(X_1,\ldots,X_k,Y\) be Hausdorff topological spaces,
for some \(k\in\N\) with \(k\geq 1\). Let \(\varphi\colon X_1\times\ldots\times X_k\to Y\) be a continuous map,
where the domain \(X_1\times\ldots\times X_k\) is endowed with the product topology. For any \(i=1,\ldots,k\),
let \((x_i^n)_{n\in\N}\subseteq X_i\) be a sequence whose ultralimit \(x_i\coloneqq\omega\text{-}\lim_n x_i^n\in X_i\)
exists. Then it holds that
\begin{equation}\label{eq:cont_ultralim}
\exists\,\omega\text{-}\lim_n\varphi(x_1^n,\ldots,x_k^n)=\varphi(x_1,\ldots,x_k)\in Y.
\end{equation}
\end{lemma}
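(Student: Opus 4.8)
The plan is to reduce the statement to the one-variable case and then establish that single case directly from the definition of ultralimit. First I would observe that a map into a finite product $X_1 \times \ldots \times X_k$ converges to a point in the product topology if and only if each coordinate converges. Concretely, consider the sequence $(z^n)_n \subseteq X_1 \times \ldots \times X_k$ given by $z^n \coloneqq (x_1^n, \ldots, x_k^n)$ and the candidate limit $z \coloneqq (x_1, \ldots, x_k)$. I claim that $\omega\text{-}\lim_n z^n = z$. To see this, take any open neighbourhood $W$ of $z$ in the product topology; by definition of the product topology there is a basic open box $U_1 \times \ldots \times U_k \subseteq W$ with $x_i \in U_i$ for each $i$. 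By hypothesis $\{n : x_i^n \in U_i\} \in \omega$ for every $i$, and since $\omega$ is closed under finite intersections, $\bigcap_{i=1}^k \{n : x_i^n \in U_i\} \in \omega$; this set is contained in $\{n : z^n \in W\}$, which therefore belongs to $\omega$ as well (filters are closed under supersets). Hence $z^n \to z$ along $\omega$, and since $X_1 \times \ldots \times X_k$ is Hausdorff (a finite product of Hausdorff spaces), this ultralimit is unique.

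Next I would handle the composition with the continuous map $\varphi$. It suffices to prove the following one-variable fact: if $(z^n)_n$ is a sequence in a Hausdorff space $Z$ with $\omega\text{-}\lim_n z^n = z$, and $\varphi \colon Z \to Y$ is continuous with $Y$ Hausdorff, then $\omega\text{-}\lim_n \varphi(z^n) = \varphi(z)$. For this, fix an open neighbourhood $V$ of $\varphi(z)$ in $Y$. By continuity, $\varphi^{-1}(V)$ is an open neighbourhood of $z$ in $Z$, so $\{n : z^n \in \varphi^{-1}(V)\} \in \omega$ by definition of the ultralimit. But $\{n : z^n \in \varphi^{-1}(V)\} = \{n : \varphi(z^n) \in V\}$, so the latter lies in $\omega$. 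Since $V$ was an arbitrary open neighbourhood of $\varphi(z)$, this shows $\varphi(z) = \omega\text{-}\lim_n \varphi(z^n)$; uniqueness of the ultralimit in the Hausdorff space $Y$ completes the argument. Applying this with $Z = X_1 \times \ldots \times X_k$, $z^n = (x_1^n, \ldots, x_k^n)$ and $z = (x_1, \ldots, x_k)$ yields precisely \eqref{eq:cont_ultralim}.

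There is no serious obstacle here: the result is essentially the observation that ultralimits behave exactly like ordinary topological limits with respect to continuous maps and finite products, and the only subtlety is the bookkeeping of which set-theoretic operations on index sets preserve membership in $\omega$ (supersets and finite intersections — exactly the filter axioms). The Hausdorff hypotheses are used only to guarantee uniqueness of the ultralimits involved, so that the equality in \eqref{eq:cont_ultralim} — rather than merely the existence of \emph{an} ultralimit — is meaningful. I would present the argument in the two stages above, perhaps even folding the product reduction into the proof of the one-variable statement to keep it short.
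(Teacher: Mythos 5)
Your proof is correct and follows essentially the same route as the paper: both arguments pull back a neighbourhood of $\varphi(x_1,\ldots,x_k)$ through the continuous map, fit a basic box $U_1\times\ldots\times U_k$ inside the preimage, and use closure of $\omega$ under finite intersections and supersets. The only cosmetic difference is that you factor the argument through convergence of the product sequence in $X_1\times\ldots\times X_k$, whereas the paper performs the two steps in a single pass.
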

\begin{proof}
Fix a neighbourhood \(U\) of \(\varphi(x_1,\ldots,x_k)\) in \(Y\). Since \(\varphi\) is continuous, \(\varphi^{-1}(U)\) is a neighbourhood of \((x_1,\ldots,x_k)\).
Thus, for any \(i=1,\ldots,k\) there exists a neighbourhood \(U_i\) of \(x_i\) in \(X_i\) such that \(U_1\times\ldots\times U_k\subseteq\varphi^{-1}(U)\). Recalling
that \(x_i=\omega\text{-}\lim_n x_i^n\) for all \(i=1,\ldots,k\), we get that
\[
\omega\ni\bigcap_{i=1}^k\{n\in\N\;|\;x_i^n\in U_i\}\subseteq\big\{n\in\N\;\big|\;\varphi(x_1^n,\ldots,x_k^n)\in U\big\}
\]
and thus \(\big\{n\in\N\;\big|\;\varphi(x_1^n,\ldots,x_k^n)\in U\big\}\in\omega\). Thanks to the arbitrariness of \(U\), \eqref{eq:cont_ultralim} is proved.
\end{proof}
\begin{remark}\label{rmk:conseq_Dunford-Pettis}{\rm
Let \((X,\Sigma,\mm)\) be a finite measure space. Let \(h\in L^1(\mm)^+\) be given. Then
\begin{equation}\label{eq:conseq_Dunford-Pettis_cpt}
\mathcal F_h\coloneqq\big\{f\in L^1(\mm)\;\big|\;|f|\leq h\big\}\;\text{ is a weakly compact subset of }L^1(\mm).
\end{equation}
The validity of this property follows from the Dunford--Pettis theorem and the fact that
\(\mathcal F_h\) is a weakly closed subset of \(L^1(\mm)\).
\fr}\end{remark}
\begin{lemma}\label{lem:lsc_ultralim}
Let \(\omega\) be a non-principal ultrafilter on \(\N\). Let \((X,\Sigma,\mm)\) be a finite measure space.
Assume that \((f_n)_n\subseteq L^1(\mm)\) and \(h\in L^1(\mm)^+\) satisfy \(|f_n|\leq h\) for every \(n\in\N\).
Then the weak ultralimits \(f\coloneqq\omega\text{-}\lim_n f_n\in L^1(\mm)\) and \(\omega\text{-}\lim_n|f_n|\in L^1(\mm)\)
exist. Moreover, it holds that
\begin{equation}\label{eq:lsc_ultralim}
|f|\leq\omega\text{-}\lim_n|f_n|\leq h.
\end{equation}
\end{lemma}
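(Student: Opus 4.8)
The plan is to exploit the weak compactness furnished by Remark \ref{rmk:conseq_Dunford-Pettis} together with the continuity/semicontinuity behaviour of ultralimits established in Lemma \ref{lem:cont_ultralim}. First I would note that, by hypothesis, $(f_n)_n\subseteq\mathcal F_h$ and $(|f_n|)_n\subseteq\mathcal F_h$, where $\mathcal F_h$ is the weakly compact set $\{g\in L^1(\mm):|g|\le h\}$ of \eqref{eq:conseq_Dunford-Pettis_cpt}. Since every sequence in a compact Hausdorff topological space has an ultralimit along any non-principal ultrafilter, both $f\coloneqq\omega\text{-}\lim_n f_n$ and $\omega\text{-}\lim_n|f_n|$ exist as elements of $(\mathcal F_h,\tau_w)\subseteq L^1(\mm)$; in particular $\omega\text{-}\lim_n|f_n|\le h$, being a limit of elements of the weakly closed (in fact weakly compact) set $\mathcal F_h$, which is contained in $\{g:g\le h\}$, itself weakly closed. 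This already gives the right-hand inequality in \eqref{eq:lsc_ultralim}.

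For the left-hand inequality $|f|\le\omega\text{-}\lim_n|f_n|$, I would argue by testing against non-negative functions in $L^\infty(\mm)$. Fix $\varphi\in L^\infty(\mm)^+$. The map $L^1(\mm)\ni g\mapsto\int\varphi g\,\d\mm\in\R$ is weakly continuous, and by Lemma \ref{lem:cont_ultralim} (applied with $k=1$) we get $\int\varphi f\,\d\mm=\omega\text{-}\lim_n\int\varphi f_n\,\d\mm$ and $\int\varphi\,(\omega\text{-}\lim_n|f_n|)\,\d\mm=\omega\text{-}\lim_n\int\varphi|f_n|\,\d\mm$. Since $\int\varphi f_n\,\d\mm\le\int\varphi|f_n|\,\d\mm$ for every $n$, and since ultralimits of real sequences preserve non-strict inequalities (if $a_n\le b_n$ for all $n$ then $\omega\text{-}\lim_n a_n\le\omega\text{-}\lim_n b_n$, because $\{n:a_n\le b_n\}=\N\in\omega$ and ultralimits of reals exist in $\R\cup\{\pm\infty\}$ and are monotone), we obtain $\int\varphi f\,\d\mm\le\int\varphi\,(\omega\text{-}\lim_n|f_n|)\,\d\mm$. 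Replacing $\varphi$ with $\varphi\1_{\{f\ge 0\}}$ and with $\varphi\1_{\{f<0\}}$ (applied to $-f_n$, whose moduli are the same), and then combining, yields $\int\varphi|f|\,\d\mm\le\int\varphi\,(\omega\text{-}\lim_n|f_n|)\,\d\mm$ for every $\varphi\in L^\infty(\mm)^+$. By the arbitrariness of $\varphi$ this forces $|f|\le\omega\text{-}\lim_n|f_n|$ $\mm$-a.e., which is the remaining inequality.

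The only mild subtlety — and the step I would be most careful about — is the passage from the scalar inequality $\int\varphi f_n\,\d\mm\le\int\varphi|f_n|\,\d\mm$ to the a.e.\ inequality between the weak ultralimits; this is handled cleanly by first fixing the sign sets of the \emph{limit} $f$ (which are fixed, $n$-independent sets) rather than those of $f_n$, so that on $\{f\ge0\}$ one uses $f_n\le|f_n|$ and on $\{f<0\}$ one uses $-f_n\le|f_n|$, then uses that $g\mapsto\int_E g\,\d\mm$ is weakly continuous for every fixed $E\in\Sigma$. Everything else is a routine invocation of weak compactness (Remark \ref{rmk:conseq_Dunford-Pettis}) and of the elementary monotonicity of ultralimits, so no real obstacle is expected.
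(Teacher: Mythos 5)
Your proposal is correct and follows essentially the same route as the paper: existence via the weak compactness of $\mathcal F_h$ (Dunford--Pettis), then both inequalities by pairing with arbitrary $\varphi\in L^\infty(\mm)^+$, using weak continuity of $g\mapsto\int\varphi g\,\d\mm$ together with Lemma \ref{lem:cont_ultralim} and the monotonicity of scalar ultralimits. The only cosmetic difference is how $|f|$ is recovered: the paper bounds $\bigl|\int f\varphi\,\d\mm\bigr|=\omega\text{-}\lim_n|\varphi_\varphi(f_n)|\leq\omega\text{-}\lim_n\int|f_n|\varphi\,\d\mm$ directly, while you split the test function over the sign sets of the limit $f$; both are valid and equivalent.
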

\begin{proof}
The existence of the ultralimits \(\omega\text{-}\lim_n f_n\) and \(\omega\text{-}\lim_n|f_n|\) in the weak topology of \(L^1(\mm)\) follows
from Remark \ref{rmk:conseq_Dunford-Pettis}. For any \(g\in L^\infty(\mm)^+\), we consider the functional \(\varphi_g\colon L^1(\mm)\to\R\)
given by \(\varphi_g(\tilde f)\coloneqq\int\tilde f g\,\d\mm\) for every \(\tilde f\in L^1(\mm)\), which is weakly continuous. Hence, Lemma
\ref{lem:cont_ultralim} yields
\[
\bigg|\int fg\,\d\mm\bigg|=|\varphi_g(f)|=\big|\omega\text{-}\lim_n\varphi_g(f_n)\big|=\omega\text{-}\lim_n|\varphi_g(f_n)|
\leq\omega\text{-}\lim_n\varphi_g(|f_n|)=\varphi_g\big(\omega\text{-}\lim_n|f_n|\big)
\]
and \(\int(\omega\text{-}\lim_n|f_n|)g\,\d\mm=\omega\text{-}\lim_n\varphi_g(|f_n|)\leq\varphi_g(h)=\int hg\,\d\mm\),
whence the claimed inequalities in \eqref{eq:lsc_ultralim} follow thanks to the arbitrariness of \(g\in L^\infty(\mm)^+\).
\end{proof}
\section{Tools in Convex Analysis}\label{app:conv_an}
Let \(\mathbb B\), \(\mathbb V\) be Banach spaces. Then by an \textbf{unbounded operator} \(A\colon\mathbb B\to\mathbb V\) we mean a vector subspace
\(D(A)\) of \(\mathbb B\) (called the \textbf{domain} of \(A\)) together with a linear operator \(A\colon D(A)\to\mathbb V\). When \(A\) if \textbf{densely defined}
(i.e.\ the set \(D(A)\) is dense in \(\mathbb B\)), it is possible to define its \textbf{adjoint} operator
\(A^*\colon\mathbb V'\to\mathbb B'\), which is characterised by
\[\begin{split}
D(A^*)&\coloneqq\big\{\eta\in\mathbb V'\;\big|\;\mathbb B\ni v\mapsto\langle\eta,A(v)\rangle\in\R\text{ is continuous}\big\},\\
\langle\eta,A(v)\rangle&=\langle A^*(\eta),v\rangle\quad\text{ for every }\eta\in D(A^*)\text{ and }v\in D(A).
\end{split}\]
See e.g.\ \cite[Chapter 5]{pedersen1989analysis} for more on unbounded operators.
\medskip

Given any function \(f\colon\mathbb B\to[-\infty,+\infty]\), we denote by \(f^*\colon\mathbb B'\to[-\infty,+\infty]\) its \textbf{Fenchel conjugate}, which is defined as
\[
f^*(\omega)\coloneqq\sup\big\{\langle\omega,v\rangle-f(v)\;\big|\;v\in\mathbb B\big\}\quad\text{ for every }\omega\in\mathbb B'.
\]
Assuming \(\mathbb B\) is reflexive, we have (unless the function \(f\) is identically equal to \(+\infty\) or identically equal to \(-\infty\))
that the \textbf{Fenchel biconjugate} \(f^{**}\coloneqq(f^*)^*\colon\mathbb B\to[-\infty,+\infty]\) coincides with \(f\) if and only if \(f\) is convex and lower semicontinuous.
This follows from the \emph{Fenchel--Moreau theorem}. Furthermore, if \(p,q\in(1,\infty)\) are conjugate exponents, then it is straightforward to check that
\begin{equation}\label{eq:conj_norm}
\bigg(\frac{1}{p}\|\cdot\|_\B^p\bigg)^*=\frac{1}{q}\|\cdot\|_{\B'}^q.
\end{equation}
See e.g.\ \cite{Rockafellar74} for a thorough discussion on Fenchel conjugates.
\medskip

In Theorem \ref{thm:H=W} we use the following result, for whose proof we refer to \cite[Theorem 5.1]{BBS97}.
\begin{theorem}\label{thm:Fenchel_comp}
Let \(\B\) and \(\V\) be Banach spaces. Let \(A\colon\B\to\V\) be a densely-defined unbounded operator.
Let \(\phi\colon\V\to\R\) be a convex function that is continuous at some point of \(A(D(A))\). Then
\[
(\phi\circ A)^*(\omega)=\inf\big\{\phi^*(\eta)\;\big|\;\eta\in D(A^*),\,A^*(\eta)=\omega\big\}\quad\text{ for every }\omega\in\B',
\]
where we adopt the convention that \((\phi\circ A)(v)\coloneqq+\infty\) for every \(v\in\B\setminus D(A)\).
\end{theorem}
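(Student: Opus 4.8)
The plan is to reduce the identity to the classical Fenchel--Rockafellar duality theorem applied on the product space $\B\times\V$. First I would dispose of the easy inequality $(\phi\circ A)^*(\omega)\le\inf\{\phi^*(\eta):\eta\in D(A^*),\,A^*(\eta)=\omega\}$: for any such $\eta$ and any $v\in D(A)$ one has
\[
\langle\omega,v\rangle-\phi(A(v))=\langle A^*(\eta),v\rangle-\phi(A(v))=\langle\eta,A(v)\rangle-\phi(A(v))\le\phi^*(\eta),
\]
so taking the supremum over $v\in D(A)$ (the definition of $(\phi\circ A)^*(\omega)$, recalling the convention $(\phi\circ A)\equiv+\infty$ off $D(A)$) and then the infimum over $\eta$ yields the claim. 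In particular, if $\{\eta\in D(A^*):A^*(\eta)=\omega\}$ is empty the right-hand side is $+\infty$ and, by the displayed estimate, $(\phi\circ A)^*(\omega)=+\infty$ too; so the identity is trivial in that case, and one may assume that set is non-empty from now on.

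For the reverse inequality I would work in the Banach space $\B\times\V$ with the two proper convex functions $f_1,f_2\colon\B\times\V\to(-\infty,+\infty]$ defined by $f_1(v,w)\coloneqq\phi(w)-\langle\omega,v\rangle$ and $f_2(v,w)\coloneqq 0$ if $(v,w)$ lies in the graph $G\coloneqq\{(v,A(v)):v\in D(A)\}$ of $A$ and $f_2(v,w)\coloneqq+\infty$ otherwise. Then
\[
\inf_{(v,w)\in\B\times\V}\big(f_1(v,w)+f_2(v,w)\big)=\inf_{v\in D(A)}\big(\phi(A(v))-\langle\omega,v\rangle\big)=-(\phi\circ A)^*(\omega).
\]
The constraint qualification required to apply Fenchel--Rockafellar is that $f_1$ be continuous at some point of $\mathrm{dom}(f_2)=G$; since $v\mapsto\langle\omega,v\rangle$ is bounded linear and, by hypothesis, $\phi$ is continuous at some $w_0=A(v_0)\in A(D(A))$, the function $f_1$ is continuous at $(v_0,w_0)\in G$, so the qualification holds. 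This is the one place where the standing assumption on $\phi$ is used, and it is the step I would be most careful to state precisely.

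It then remains to compute the Fenchel conjugates on $(\B\times\V)'=\B'\times\V'$ and match terms: a direct calculation gives $f_1^*(\zeta,\xi)=\phi^*(\xi)$ if $\zeta=-\omega$ and $+\infty$ otherwise, while $f_2^*(\zeta,\xi)=\sup_{v\in D(A)}\big(\langle\zeta,v\rangle+\langle\xi,A(v)\rangle\big)$ equals $0$ exactly when $v\mapsto\langle\xi,A(v)\rangle$ extends to a bounded functional on $\B$ with $A^*(\xi)=-\zeta$ — that is, when $\xi\in D(A^*)$ and $\zeta=-A^*(\xi)$ — and equals $+\infty$ otherwise; thus $f_2^*$ is precisely the encoding of the adjoint $A^*$. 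Feeding these into the Fenchel--Rockafellar identity
\[
\inf_{\B\times\V}(f_1+f_2)=\max_{(\zeta,\xi)\in\B'\times\V'}\big(-f_1^*(-\zeta,-\xi)-f_2^*(\zeta,\xi)\big),
\]
the right-hand side is finite only when $\zeta=\omega$, $\xi\in D(A^*)$ and $\omega=-A^*(\xi)$, where its value is $-\phi^*(-\xi)$; writing $\eta\coloneqq-\xi\in D(A^*)$ (a linear subspace) this reads $A^*(\eta)=\omega$ with value $-\phi^*(\eta)$. Hence $-(\phi\circ A)^*(\omega)=\max\{-\phi^*(\eta):\eta\in D(A^*),\,A^*(\eta)=\omega\}=-\inf\{\phi^*(\eta):\eta\in D(A^*),\,A^*(\eta)=\omega\}$, which rearranges to the assertion (and, as a bonus, shows the infimum is attained whenever finite). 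If a self-contained treatment were preferred, the instance of Fenchel--Rockafellar used here can itself be obtained by a Hahn--Banach separation in $\B\times\V\times\R$ between the epigraph of $f_1$ and a suitable convex set built from $f_2$ and the value $\inf_{\B\times\V}(f_1+f_2)$, the continuity of $f_1$ ensuring that its epigraph has non-empty interior; but invoking the classical theorem (see, e.g., \cite{Rockafellar74}) is the most economical route.
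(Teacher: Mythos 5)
Your argument is correct. Note that the paper does not actually prove this theorem: it only cites \cite[Theorem 5.1]{BBS97}, so your reduction to Fenchel--Rockafellar duality on the product space \(\B\times\V\) supplies a self-contained proof where the paper offers just a reference. The decomposition \(f_1(v,w)=\phi(w)-\langle\omega,v\rangle\), \(f_2=\) indicator of the graph \(G\) of \(A\), the verification of the constraint qualification at \((v_0,A(v_0))\) (which is indeed the only place the continuity hypothesis on \(\phi\) enters), and the computation of the conjugates (with \(f_2^*\) encoding \(D(A^*)\) and \(A^*\) precisely because \(G\) is a linear subspace, so a linear functional with finite supremum on it must vanish identically) are all sound; the attainment of the infimum when finite is a genuine bonus. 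One sentence should be repaired: the claim that the case \(\{\eta\in D(A^*):A^*(\eta)=\omega\}=\varnothing\) is ``trivial by the displayed estimate'' is a non sequitur. That estimate only bounds \((\phi\circ A)^*(\omega)\) \emph{from above} by \(\phi^*(\eta)\) for feasible \(\eta\), so it says nothing when there are none, and the assertion \((\phi\circ A)^*(\omega)=+\infty\) in that case is exactly an instance of the hard inequality. Fortunately your duality computation covers it without the standing non-emptiness assumption: if no \((\zeta,\xi)\) makes both conjugates finite, the dual supremum is \(-\infty\), hence so is the primal infimum \(-(\phi\circ A)^*(\omega)\), which is the desired conclusion. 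So simply delete the reduction to the non-empty case and let Fenchel--Rockafellar handle both cases uniformly.
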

%
%
%
%
%
%
\def\cprime{$'$} \def\cprime{$'$}

\end{document}